\newcommand{\indep}[4]{#1\underset{#2}{\downfree^{#4}}#3}
\newcommand{\notIndep}[4]{#1\underset{#2}{{\not\downfree}^{#4}}#3}
\def\mod{\textup{ mod }}
\theoremstyle{plain}
\newtheorem{theorem}{Theorem}[section]
\newtheorem{proposition}[theorem]{Proposition}
\newtheorem{fact}[theorem]{Fact}
\newtheorem{lemma}[theorem]{Lemma}
\newtheorem{corollary}[theorem]{Corollary}
\theoremstyle{remark}
\newtheorem{remark}[theorem]{Remark}
\newtheorem{example}[theorem]{Example}%[definition]
\theoremstyle{definition}
\newtheorem{definition}[theorem]{Definition}
\newtheorem{assumptions}[theorem]{Assumptions}
\newcommand{\tp}{\textup{tp}}
\newcommand{\qftp}{\textup{qftp}}
\newcommand{\GL}{\textup{GL}}
\newcommand{\ct}{\textup{ct}}
\newcommand{\acl}{\textup{acl}}
\newcommand{\dcl}{\textup{dcl}}
\newcommand*{\boldone}{\text{\usefont{U}{bbold}{m}{n}1}}
\renewcommand{\subset}{\subseteq}
\renewcommand{\supset}{\supseteq}
\renewcommand{\epsilon}{\varepsilon}
\newcommand{\stab}{\textup{Stab}}
\newcommand{\ram}{\textup{ram}}
\newcommand{\qf}{\textup{qf}}
\newcommand{\LC}{\textup{LC}}
\renewcommand{\div}{\textup{div}}
\newcommand{\DOAG}{\textup{DOAG}}
\newcommand{\ROAG}{\textup{ROAG}}
\newcommand{\val}{\textup{val}}
\newcommand{\Aut}{\textup{Aut}}
\newcommand{\DLO}{\textup{DLO}}
\renewcommand{\d}{\mathbf{d}}
\newcommand{\f}{\mathbf{f}}
\newcommand{\inv}{\mathbf{inv}}
\newcommand{\bo}{\mathbf{bo}}
\newcommand{\Sh}{\mathbf{Sh}}
\newcommand{\cut}{\mathbf{cut}}
\newcommand{\alg}{\mathbf{alg}}
\newcommand{\seq}{\mathbf{seq}}
\theoremstyle{plain}
\newtheorem{introThm}{Theorem}[]
\begin{document}
\author{Akash HOSSAIN}\thanks{The author was partially supported by GeoMod AAPG2019 (ANR-DFG), Geometric and Combinatorial Configurations in Model Theory.}\address{Département de mathématiques d'Orsay, Université Paris-Saclay, France}
\title[Forking and invariant types in ROAG]{Forking and invariant types in regular ordered Abelian groups}
\date{}
\begin{abstract}
We give a characterization of forking in regular ordered Abelian groups. In particular, we prove that $\indep{C}{A}{B}{f}$ if and only if $\indep{c}{A}{B}{f}$ for each singleton $c\in\dcl(AC)$ in these structures.
\end{abstract}
\maketitle

%\tableofcontents
%!TEX root = main.tex

\section*{Introduction}
The non-forking global extensions of some unary type in the theory of divisible ordered Abelian groups (\DOAG) are very easy to describe: they correspond to the invariant cuts, and there are at most two of them for any base parameter set. Now, if we look at a finite tuple $c=(c_1,...,c_n)$ in \DOAG, a necessary condition for $\tp(c/AB)$ to be non-forking over $A$ ($\indep{c_1,...,c_n}{A}{B}{f}$) is to have $\indep{d}{A}{B}{f}$ for every $\mathbb{Q}$-linear combination $d$ of the $c_i$. As the type in \DOAG~ of a tuple is characterized by the cuts of the $\mathbb{Q}$-linear combinations of its components, one can wonder whether this condition is sufficient. We show in this paper (\cref{thmTechniqueEnonce}) that the answer is yes. Moreover, by using quantifier elimination in the Presburger language, we can extend our results to find a characterization of forking in the whole class of regular ordered Abelian groups (\ROAG, see \cref{defROAG} and \cref{equROAG}):
\begin{introThm}\label{mainThm}
Let $M\models$\ROAG, $A$, $B$ parameter subsets of $M$, and $c=c_1...c_n\in M^n$. Let $\kappa=\max(|AB|, 2^{\aleph_0})^+$, and suppose ${M}$ is $\kappa$-saturated and strongly-$\kappa$-homogeneous. If $M$ is discrete, interpret $1$ as its least positive element, else interpret $1$ as $0$ (this is the standard interpretation of $1$ in the Presburger language). Let $C$ be the subgroup of $M$ generated by $c$, and $A', B'$ the relative divisible closures in $M$ of the respective subgroups of $M$ generated by $A\cup\{1\}, AB\cup\{1\}$. Then $\indep{c}{A}{B}{f}$ if and only if $\indep{c}{A}{B}{d}$, if and only if $\tp(c/AB)$ admits a global $\Aut(M^{eq}/A'\cup\acl^{eq}(\emptyset))$-invariant extension in $S(M^{eq})$, if and only if the following conditions hold:
\begin{enumerate}
\item Every closed bounded interval of $B'$ that has a point in $C$ already has a point in the divisible closure of $A'$.
\item For all prime $l$, if $[M:lM]$ is infinite, then we have $(C+l^NM)\cap (B'+l^NM)=A'+l^NM$ for all $N>0$.
\end{enumerate}
Moreover, $\tp(c/AB)$ admits an $\Aut(M/A)$-invariant extension inside $S(M)$ if and only if the above conditions hold, and, additionally, for every prime $l$ for which $[M:lM]$ is finite, we have $C\subset \bigcap\limits_{N>0}A'+ l^NM$.
\end{introThm}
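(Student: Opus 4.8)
The plan is to establish a cycle of implications: existence of the global $\Aut(M^{eq}/A'\cup\acl^{eq}(\emptyset))$-invariant extension implies $\indep{c}{A}{B}{f}$, which implies $\indep{c}{A}{B}{d}$, which implies conditions (1)--(2), which in turn implies back the existence of that invariant extension; the $S(M)$-statement is then obtained by re-running the last step while demanding invariance in the real sorts. The engine throughout is quantifier elimination for $\ROAG$ in the Presburger language (with the constant $1$ in the discrete case): I would use it to decompose $\tp(c/AB)$ into independent ``coordinates'' --- the $\DOAG$-type of $c$ over $B'$ inside the divisible hull $\Gamma:=\mathrm{div}(M)\models\DOAG$ (equivalently, the cut over $\mathrm{div}(B')$ of each $\mathbb{Z}$-linear combination $\sum n_ic_i$), together with, for each prime $l$ and each $N>0$, the class in $M/l^NM$ of each $\sum n_ic_i$ measured against the images of $A'$ and $B'$. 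The key structural lemma I would prove (a normal-form argument on quantifier-free Presburger formulas over $AB$) is that dividing and the existence of invariant extensions both \emph{factor} through this decomposition: a formula of $\tp(c/AB)$ divides over $A$ iff, modulo $\tp(c/AB)$, it is implied by a single coordinate formula that already divides over $A$; and an invariant global extension exists iff one can choose each coordinate invariantly and amalgamate the choices (the amalgamation being legitimate by QE).

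For the $\DOAG$-coordinate I would work inside $\Gamma$ with the finitely generated subgroup $C=\langle c\rangle$ and invoke \cref{thmTechniqueEnonce}: non-forking of $c$ over $B'$ in $\Gamma$ is equivalent to non-forking of every singleton $\sum n_ic_i\in\dcl(A'C)$, and since a unary type in $\DOAG$ has at most two global non-forking extensions --- the cuts invariant over $\mathrm{div}(A')$ --- this unwinds to exactly condition (1): no closed bounded interval with endpoints in $B'$ meets $C$ without meeting $\mathrm{div}(A')$. Conversely, if (1) fails, the witnessing interval formula ``$\beta_1\le\sum n_ix_i\le\beta_2$'' divides over $A$: the interval contains no point of $\mathrm{div}(A')\supset\dcl(A)$, so by strong $\kappa$-homogeneity it can be translated by $\Aut(M/A)$ into infinitely many pairwise disjoint conjugates.

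For the congruence coordinates, fix a prime $l$ and $N>0$ and consider $M/l^NM$ with its induced structure --- essentially a pure $\mathbb{Z}/l^N$-module, all of which is named by $\acl^{eq}(\emptyset)$ precisely when $[M:lM]$ is finite. If $[M:lM]$ is infinite, the quotient is an infinite-dimensional $\mathbb{F}_l$-space-like object on which the relevant forking calculus is transparent: the class-formula ``$D_{l^N}(\sum n_ix_i-b)$'' divides over $A$ iff the class of $\sum n_ic_i$ lies in $B'+l^NM$ but not in $A'+l^NM$, which is precisely the failure of condition (2); and when (2) holds one can choose a class generic over $B'$ relative to $A'$ --- possible already inside $S(M)$ by saturation --- which is invariant over the image of $A'$. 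If $[M:lM]$ is finite, the quotient lies in $\acl^{eq}(\emptyset)$, so every class is automatically $\Aut(M^{eq}/A'\cup\acl^{eq}(\emptyset))$-invariant and condition (2) imposes nothing there for the $S(M^{eq})$-statement. For an $\Aut(M/A)$-invariant global type in $S(M)$, however, each chosen class mod $l^NM$ must be an $\Aut(M/A)$-invariant coset of $M$; tracking these (over $A'$ they form a bounded-index subgroup of each finite quotient) against the cosets that $\tp(c/AB)$ forces, a compatible system over all $N$ exists iff every $\mathbb{Z}$-linear combination of the $c_i$ lies in $A'+l^NM$ for all $N$ --- i.e.\ $C\subset\bigcap_{N>0}A'+l^NM$ at the finite-index primes, the extra clause.

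Assembling: the invariant extension over $A'\cup\acl^{eq}(\emptyset)$ yields non-forking over $A$ because $\acl^{eq}(A)\subset\dcl^{eq}(A'\cup\acl^{eq}(\emptyset))$ --- here one uses $\acl(A)=\dcl(A)=A'$ in the real sort and that, once the finite quotient sorts are adjoined, the imaginaries of $\ROAG$ reduce to the real sort and these quotients, so the only algebraic-over-$A$ imaginaries outside $\dcl^{eq}(A')$ live in the quotients named by $\acl^{eq}(\emptyset)$ --- and a global $\acl^{eq}(A)$-invariant extension does not fork over $A$. Non-forking implies non-dividing since dividing implies forking. Non-dividing implies (1)--(2) by the two failure directions above, fed through the factorization lemma. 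And (1)--(2) implies the $S(M^{eq})$-invariant extension by amalgamating the coordinate choices of the previous two paragraphs, the saturation and strong homogeneity of $M$ guaranteeing that the required generic cuts, generic classes and $\Aut(M/A)$-conjugates actually occur; the $S(M)$-statement follows by the same amalgamation carried out with $\Aut(M/A)$-invariance in the real sorts, which by the congruence analysis needs exactly the extra finite-index clause on top of (1)--(2). The step I expect to be the main obstacle is the factorization lemma: proving from Presburger QE that dividing in $\ROAG$ genuinely splits as ``dividing in the $\DOAG$-part, or dividing in a single congruence-part,'' with no real interaction between the ordered and the congruence data --- this is the technical heart, and it is also where one must be scrupulous about the imaginary quotient sorts and about the interplay of $1$, of $\acl^{eq}(\emptyset)$, and of the various (relative, absolute) divisible closures.
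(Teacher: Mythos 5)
Your proposal follows essentially the same route as the paper: decompose the type of a $\mathbb{Q}$-free tuple via Presburger quantifier elimination into its order-part and its $l$-parts (\cref{bijectionQLibre}, \cref{indepCorrespProduit}), reduce the order-part to \DOAG~ and \cref{thmTechniqueEnonce} (\cref{ordreOrbites}), and split the congruence analysis according to whether $[M:lM]$ is finite or infinite, which is exactly where the extra clause for real-sort invariance comes from. The one remark worth making is that the ``factorization lemma'' for dividing that you single out as the main obstacle is never needed in its hard direction: the cycle is closed by exhibiting a bounded-orbit (resp.\ invariant) global extension whenever conditions (1)--(2) hold, so one only needs that a failed condition yields an explicitly dividing formula, and the genuine technical content sits instead in the joint consistency of the coordinate choices (\cref{bijectionQLibre}) and in realizing non-forking \DOAG-extensions inside $M$ itself despite $\div(M)$ (or, in the discrete case, $\faktor{M}{\mathbb{Z}}$) not being automatically saturated (\cref{realiserExtNonDev}).
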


This description of forking is the most simple and natural way to assemble together the (well-known) descriptions of forking in the theory of torsion-free Abelian groups, and in DLO. Our results clearly imply that forking in \ROAG~ is a phenomenon that happens in dimension one: we have $\indep{C}{A}{B}{f}$ if and only if,  for each singleton $c\in\acl(AC)$ (which coincides here with $\dcl(AC)$), we have $\indep{c}{A}{B}{f}$. Note that this particular property of unary forking is well-known in stable $1$-based theories. This raises the question of how this property relates to local modularity in, say, o-minimal theories. It also suggests that one might define over unstable theories a notion (satisfied by \ROAG) which would extend that of $1$-basedness in the stable world. Note that we cannot hope to have this property in infinite (expansions of) fields, as it does not even hold in ACF.

\par Our main results rely heavily on our description of forking in \DOAG, which really is the core of our paper. This theory is an enrichment of DLO and the theory of non-trivial $\mathbb{Q}$-vector spaces, and it is a reduct of RCF, three very common theories in which we know exactly what is forking. However, the current literature lacks of a satisfying description of forking in \DOAG, and this paper fills that gap. Simon gave in (\cite{Simon2011OnDO}, Proposition 2.5) a characterization of forking in dimension one (essentially the one we describe in the first condition of \cref{mainThm}) for dp-minimal ordered structures, which include dp-minimal ordered groups. One can also use Theorem 13.7 from \cite{HHM} to establish our characterization of forking in \DOAG~ in the particular case where the base parameter set is an Archimedean-complete model.
\par Dolich gave in \cite{Dolich} (see \cite{Starchenko2008ANO} for a survey) a characterization of forking in o-minimal expansions of real-closed fields. This very nice characterization naturally links forking with our intuition on o-minimal geometry. The independence notion introduced by Dolich is shown to be stronger or equal to forking in any o-minimal theory, but, for technical reasons, one needs to be in an expansion of RCF to prove equality. In these extensions, Dolich shows that their independence coincides with the abstract model-theoretic notion of “non-1-dividing in semi-simple theories" (see their Section 8). We do not know whether non-forking coincides with the independence of Dolich in DOAG, however we point out in subsection \ref{sectDolich} that forking does coincide with 1-dividing.
\par Our work is very close in spirit to the work of Menunni and Hils-Menunni (\cite{Mennuni2020TheDM}, \cite{Hils2021TheDM}) on the domination monoid. They focus on a different notion, in a larger class of theories. However, while they describe the space of invariant types (over arbitrary small parameter sets) up to equidomination, we essentially study the space of $A$-invariant types (with $A$ fixed) up to $A$-interdefinability. One new difficulty that comes with having to deal with $A$-invariant types instead of arbitrary invariant types is that we have to deal with \textit{Archimedean extensions}, the analogue of immediate extensions for the Archimedean/convex valuation (see the end of page 58 of \cite{Gravett1956ORDEREDAG} for a formal definition).
\par This paper aims to be the first step towards a systematic  characterization of forking in every complexity class of arbitrary ordered Abelian groups. Interesting future work could be to get a nice characterization of forking for dp-minimal ordered groups, as quantifier elimination is not very complicated in this class, and the case in dimension one is already covered by the work of Simon.
\par Let us give a brief outline of the proof of our description of forking in \DOAG. Let $A$, $B$, $c$ be as in \cref{mainThm} so that the conditions of the list hold. First of all, we show that $c$ is $A$-interdefinable with a tuple $d$ which is under “normal form". After that, we split $d$ into fibers of ad-hoc group valuations with respect to which being in normal form ensures that $d$ is \textit{separated} (see \cref{defSep}). For each subtuple $d'$ of this partition of $d$, we build a global $A$-invariant extension of $\tp(d'/AB)$. Finally, we show that we can “glue" all those extensions into a global $A$-invariant extension of the full type $\tp(d/AB)$, and we are done. We will try to be as explicit as possible in our manipulations, and we will show that our ad-hoc valuations interact in a meaningful way with the model-theoretic notions of tensor product and weak orthogonality, which play an important role in \cite{Mennuni2020TheDM}, \cite{Hils2021TheDM}.
\par In the first section of this paper, we write basic definitions, and we give a more detailed and formalized outline of the proof of our result in \DOAG.
\par The way we build our global invariant extension is typically a bottom-up approach. However, the concepts have to be introduced in a particular order to be coherent, and it will be more convenient for us to present the steps of this process in a top-down order:
\par In the second section, we define our ad-hoc valuations, and we show, given global $A$-invariant extensions of the $\tp(d'/AB)$, how to glue them into a global $A$-invariant extension of $\tp(d/AB)$.
\par In the third section, we show how to build a global $A$-invariant extension of each of the $\tp(d'/AB)$.
\par In the fourth section, we show how to build the normal form $d$ from $c$. The transformation of a family into a family under normal form is carried out in a very procedural way which goes through many steps, where we apply succesively $A$-translations and maps from $\GL^n(\mathbb{Q})$ to $c$. Once $d$ is built, our characterization of forking for \DOAG~ will be established.
\par We extend our results in \DOAG~ to \ROAG~ in the fifth section. We use quantifier elimination to describe the types, their global extensions and their conjugates. We end the section by proving our main result, \cref{resultatPpal}, where we give our characterization of forking for \ROAG.
\par Let us also describe the outline of the proof. In \ROAG, we can see a complete type as a reunion of countably many partial types ($\Phi$-types) which are “independent" to each other (see \cref{bijectionQLibre}). For each such partial type, we give algebraic characterizations (essentially the conditions of \cref{mainThm}) of when there exists a partial global extension which does not fork over $A$, and when this extension is $A$-invariant. Our results in \DOAG~ allow us to deal with partial types that involve equality and order. The results about the other partial types may already follow from literature on the model theory of torsion free Abelian groups, we still prove them explicitly for completeness.

\section{Forking in DOAG}\label{sectOAG}

\subsection{Cut-independence}\label{enonce}

\paragraph{\textbf{Notation}} In the theory of some total order $<$, a set is considered an interval (be it closed, open, half-open, possibly with infinite bounds) when its lower bound is smaller or equal to its upper bound. For instance, if we consider, say an interval $]a, b[$, then it is implicit that $a\leqslant b$.

\begin{assumptions}
Let $(M, <, ...)$ be the expansion of an infinite totally ordered first-order structure. Let $C\supset A\subset B$ be parameter subsets of $M$. Suppose $M$ is $|AB|^+$-saturated and strongly-$|AB|^+$-homogeneous.
\end{assumptions}

\par In this paper, we will manipulate various linear orders (not only ordered Abelian groups, but also the chains of their convex subgroups, and other ad-hoc orders). In these linear orders, the cuts will be seen as type-definable sets. The formal definition is as follows:

\begin{definition}\label{cutDef}
Let $c\in M$. The \textit{cut} of $c$ over $A$ will be the $A$-type-definable set $\ct(c/A)$, defined as the intersection of every interval containing $c$ with bounds in $A\cup\{\pm\infty\}$ (a singleton is a closed interval).
\par Let $P$ be some partition of $M$ into convex sets. Then there is a natural linear order on $P$, the only linear order that makes the projection $M\longrightarrow P$ an increasing map.
\par The set of cuts over $A$ is clearly a partition of $M$ into convex sets, therefore the above paragraph applies.
\par Let us also write $x>A$ when $\forall a\in A,\ x>a$. If $\emptyset\neq A'\subset A$, define $\ct_>(A'/A)$ (resp. $\ct_<(A'/A)$) as the cut over $A$ that corresponds to the elements that are $>A'$ (resp. $<A'$), and strictly smaller (resp. larger) than any element of $A$ that is $>A'$ (resp. $<A'$). More generally, if $X$ is some (type/$\vee$)-definable subset of $M$, we write $\ct_<(X/A)=\ct_<(X(A)/A)$, and similarly for $\ct_>(X/A)$.

\begin{center}
\begin{tikzpicture}
\draw[black] (-4, 0) -- (4, 0);
\draw[red, very thick] (-2, 0) -- (2, 0);
\draw[blue, very thick] (-1, 0) -- (1, 0);
\draw[red, very thick] (-2, 0) node[anchor=south]{$A$};
\draw[blue, very thick] (0, 0) node[anchor=south]{$A'$};
\filldraw[gray, thick] (-1, 0) circle(2pt) node[anchor=north]{$\ct_<(A'/A)$};
\filldraw[gray, thick] (1, 0) circle(2pt) node[anchor=north]{$\ct_>(A'/A)$};
\end{tikzpicture}
\end{center}
\end{definition}

\begin{definition}\label{indep}
Let us define the following independence notions, which are ternary relations defined on the small subsets of $M$:
\begin{itemize}
\item $\indep{}{}{}{d}$ is the standard non-dividing independence (see \cite{Tent2012ACI}, Definition 7.1.2).
\item $\indep{}{}{}{f}$ is the standard non-forking independence (see \cite{Tent2012ACI}, Definition 7.1.7).
\item Let $c_1...c_n\in M$. Define $\indep{c_1...c_n}{A}{B}{inv}$ when $\tp(c_1...c_n/AB)$ admits a global (in $S(M)$) $\Aut(M/A)$-invariant extension. Define $\indep{C}{A}{B}{inv}$ when $\indep{c}{A}{B}{inv}$ for each finite tuple $c$ of $C$.
\item $\indep{}{}{}{bo}$ (for “Bounded Orbit") is a weaker notion than $\indep{}{}{}{inv}$, where we require the orbit  under $\Aut(M/A)$ of the global extension to have a bounded cardinal instead of being a single point.
\end{itemize}
\end{definition}

\begin{fact}\label{inclusionsIndepTriviales}
The inclusions $\indep{}{}{}{inv}\subset \indep{}{}{}{bo}\subset \indep{}{}{}{f}\subset\indep{}{}{}{d}$ always hold in any first-order structure.
\end{fact}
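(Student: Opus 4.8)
The plan is to treat the three claimed inclusions $\indep{}{}{}{inv}\subset\indep{}{}{}{bo}$, $\indep{}{}{}{bo}\subset\indep{}{}{}{f}$ and $\indep{}{}{}{f}\subset\indep{}{}{}{d}$ separately; each reduces, by the finite character of forking and dividing, to the case of a finite tuple $c$. All three are classical and only the middle one has content; I would also note at the outset that none of the four relations is affected by enlarging the ambient model, so that we may take $M$ to be as saturated and homogeneous as convenient.

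The first inclusion is immediate: if $\tp(c/AB)$ has a global $\Aut(M/A)$-invariant extension $p\in S(M)$, then the $\Aut(M/A)$-orbit of $p$ is the singleton $\{p\}$, of cardinality $1$, which is bounded. The last inclusion is equally immediate: any formula dividing over $A$ forks over $A$, being a one-term disjunction of formulas dividing over $A$; hence a partial type containing a formula that divides over $A$ also contains a formula that forks over $A$, and contrapositively $\indep{c}{A}{B}{f}$, i.e.\ $\tp(c/AB)$ does not fork over $A$, entails $\indep{c}{A}{B}{d}$.

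The substance is $\indep{}{}{}{bo}\subset\indep{}{}{}{f}$, proved by the standard argument that conjugates a dividing witness along a long indiscernible sequence. Assume $\indep{c}{A}{B}{bo}$ and fix a global extension $p\in S(M)$ of $\tp(c/AB)$ whose $\Aut(M/A)$-orbit has cardinality at most some bound $\mu$. It suffices to show $p$ does not fork over $A$, since then neither does its restriction $\tp(c/AB)$; and as $p$ is a complete global type, forking over $A$ and dividing over $A$ coincide for it (if $p$ forks over $A$ it contains, being complete, one of the finitely many disjuncts witnessing this, and that disjunct divides over $A$). So it is enough to check that no $\varphi(x,b)\in p$ divides over $A$. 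Suppose one did: by Ramsey's theorem and compactness there is an $A$-indiscernible sequence $(b_i)_{i<\mu^+}$ with $b_0=b$ and with $\{\varphi(x,b_i):i<\mu^+\}$ $k$-inconsistent for some $k<\omega$. Every $b_i$ realises $\tp(b/A)$, so by strong homogeneity there are $\sigma_i\in\Aut(M/A)$ with $\sigma_i(b)=b_i$, whence $\varphi(x,b_i)=\sigma_i(\varphi(x,b))\in\sigma_i(p)$. Since the $\sigma_i(p)$ range over an orbit of size $\leq\mu<\mu^+$, the pigeonhole principle yields indices $i_1<\dots<i_k$ with $\sigma_{i_1}(p)=\dots=\sigma_{i_k}(p)=:q$; but then the consistent type $q$ contains $\varphi(x,b_{i_1}),\dots,\varphi(x,b_{i_k})$, contradicting $k$-inconsistency.

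I expect no genuine obstacle, all of this being textbook material; the single point requiring care is what it means for the orbit to be bounded. The bound $\mu$ must be chosen not to depend on the particular sufficiently saturated homogeneous model $M$ — the canonical choice being $\mu=2^{|T|+|A|}$, the number of types over $A$ — so that a dividing witness can always be stretched to an $A$-indiscernible sequence of length $\mu^+$. I would make this convention explicit, and, when $M$ is not already $\mu^+$-saturated, perform the stretching inside a larger elementary extension; this is harmless, because whether a formula over $AB$ divides or forks over $A$, and hence whether $\tp(c/AB)$ forks over $A$, does not depend on the ambient model.
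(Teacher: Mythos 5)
Your proof is correct. The paper states this as a \emph{Fact} with no proof at all, treating it as standard, and your argument is exactly the textbook one it implicitly relies on: the first and third inclusions are formal, and the middle one is the usual conjugation-along-a-long-indiscernible-sequence plus pigeonhole argument (together with the observation that a complete global type forks over $A$ iff it divides over $A$). Your closing remark about fixing the bound $\mu=2^{|T|+|A|}$ independently of the ambient model, and stretching the indiscernible sequence in a larger elementary extension if necessary, is precisely the one point of care the paper's terse Definition 1.3 leaves implicit, so it is well taken.
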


%\begin{remark}
%The items $\indep{}{}{}{d}, \indep{}{}{}{f}$ are well-known model-theoretic independence relations. In this paper, we will work with theories where they notoriously coincide with less technical independence notions. Indeed, in \DOAG, if $A$ is not included in $\{0\}$, then $\dcl(A)$ is a model. In an o-minimal theory (in fact in any NIP theory), $\indep{}{}{}{f}$, $\indep{}{}{}{d}$ and $\indep{}{}{}{inv}$ all coincide over models, and one can easily check by hand that it is also the case in \DOAG~ for $A\subset\{0\}$. We will show later that $\indep{}{}{}{f}$, $\indep{}{}{}{d}$ and $\indep{}{}{}{bo}$ all coincide in \ROAG.
%\end{remark}

\begin{remark}\label{DOAGRappelsNIP}
Note that in \DOAG, if $A$ is not included in $\{0\}$, then $\dcl(A)$ is a model. In an o-minimal theory (in fact in any NIP theory), $\indep{}{}{}{\f}$, $\indep{}{}{}{\d}$ and $\indep{}{}{}{\inv}$ all coincide over models, and one can easily check by hand that it is also the case in \DOAG~ for $A\subset\{0\}$. More precisely, the fact that $\indep{}{}{}{\f}=\indep{}{}{}{\d}$ follows from (\cite{KapCherNTP2}, Theorem 1.1), while $\indep{}{}{}{\f}=\indep{}{}{}{\inv}$ over models trivially follows from the fact that non-forking coincides with the independence notion given by Lascar-invariance (see for instance \cite{hrushovskiPillay}, Proposition 2.1). As a result, the abstract equality $\indep{}{}{}{\f}=\indep{}{}{}{\inv}$ is already well-known in DOAG, though this equality does not help in any way to relate forking with more concrete geometric phenomena.
\par We show in \cref{resultatPpal} that $\indep{}{}{}{\f}$, $\indep{}{}{}{\d}$ and $\indep{}{}{}{\bo}$ all coincide in regular groups.
\end{remark}

Now, we are looking at these abstract model-theoretic notions in ordered Abelian groups, which are structures that come with much more concrete geometric notions (the atomic definable sets are the solution sets of $\mathbb{Q}$-linear equations and inequations with parameters). The theory \DOAG~ is even o-minimal. Thus we expect to find a purely geometric description of our abstract independence relations. The most basic example of a dependence behavior is the following: %We have the beginning of an answer in dimension one:

\begin{example}
    Suppose $M\models\DLO$, and identify some elementary substructure of $M$ with $\mathbb{Q}$. Let $A=\emptyset$, $B=\{0, 2\}$, $C=\{1\}$. Then $\acl(AC)\cap\acl(AB)=\acl(A)=A$, but $\notIndep{C}{A}{B}{\d}$. Indeed, if we define $I_n=\left[4n, 4n+2\right]$, then the $(I_n)_{n<\omega}$ are pairwise-disjoint and $A$-conjugates, thus they divide over $A$, and $1\in I_0$.
\end{example}

In this example, the definable set that forks is an interval which is disjoint from some conjugates. We can see with the same reasoning that such intervals always divide:

\begin{lemma}\label{divisionTriviale1}
    In the expansion of some total order, let $I$ be an interval with non-empty interior such that its bounds are $A$-conjugates. Then $I$ divides over $A$.
\end{lemma}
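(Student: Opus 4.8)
The plan is to produce, for some finite $k$, an $A$-indiscernible sequence of intervals all conjugate to $I$ whose associated instances of the formula ``$x\in I$'' are $k$-inconsistent; this is exactly what dividing asks for. Write $a<b$ for the two bounds of $I$: these are genuine elements of $M$, since $I$ has non-empty interior and since $\pm\infty$ cannot be $A$-conjugate to an element. By hypothesis there is $\sigma\in\Aut(M/A)$ with $\sigma(a)=b$. The key point is that $\sigma$, being an automorphism of a linear order, is order-preserving, and $\sigma(a)=b>a$; applying $\sigma^{i}$ to the inequality $a<\sigma(a)$ therefore gives $\sigma^{i}(a)<\sigma^{i+1}(a)$ for every $i$, so $(\sigma^{i}(a))_{i<\omega}$ is strictly increasing.

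Next I would set $J_{i}:=\sigma^{i}(I)$, so that $J_{0}=I$ and $J_{i}$ is an interval of the same kind (open, closed, or half-open) as $I$ with bounds $\sigma^{i}(a)<\sigma^{i+1}(a)$. Since $\sigma^{i}(a)<\sigma^{i+1}(a)<\sigma^{i+2}(a)$, every element of $J_{i}$ is $\le\sigma^{i+1}(a)$ while every element of $J_{i+2}$ is $\ge\sigma^{i+2}(a)$, hence $J_{i}\cap J_{i+2}=\emptyset$; as among any three indices some two differ by at least $2$, any three of the $J_{i}$ have empty common intersection. Writing $\phi(x;\bar d_{i})$ for ``$x\in J_{i}$'', where $\bar d_{i}$ is the pair of bounds of $J_{i}$, we have $\bar d_{i}=\sigma^{i}(\bar d_{0})$, so all $\bar d_{i}$ realise $\tp(\bar d_{0}/A)$ and $\{\phi(x;\bar d_{i}):i<\omega\}$ is $3$-inconsistent.

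To conclude I would invoke the standard characterisation of dividing (see \cite{Tent2012ACI}, around Definition 7.1.2): from a sequence $(\bar d_{i})_{i<\omega}$ of realisations of $\tp(\bar d_{0}/A)$ along which $\{\phi(x;\bar d_{i})\}$ is $k$-inconsistent, one extracts by Ramsey and compactness an $A$-indiscernible sequence with the same EM-type over $A$, along which the instances of $\phi$ stay $k$-inconsistent and whose first term can be moved onto $\bar d_{0}$ by an automorphism over $A$; this witnesses that $\phi(x;\bar d_{0})$, i.e. ``$x\in I$'', divides over $A$. There is no real obstacle here, the argument being entirely soft; the only points needing a little care are bookkeeping: two consecutive closed $J_{i},J_{i+1}$ share the endpoint $\sigma^{i+1}(a)$, which is why one uses $3$-inconsistency (alternatively, one may pass to the subsequence of even indices to obtain genuinely pairwise-disjoint, hence $2$-inconsistent, intervals), and one should cite the form of the dividing criterion that starts from a sequence of $A$-conjugates rather than from an already-indiscernible sequence.
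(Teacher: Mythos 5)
Your proof is correct and follows exactly the route the paper intends: the lemma is justified there by "the same reasoning" as the preceding example, namely producing infinitely many pairwise-disjoint (or uniformly $k$-inconsistent) $A$-conjugate copies of $I$, which you do by iterating an automorphism over $A$ sending the lower bound to the upper bound. Your handling of the shared endpoint for closed intervals (using $3$-inconsistency or passing to even indices) and the exclusion of infinite bounds are the right bookkeeping points.
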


\begin{corollary}\label{typePasInvariant}
Suppose that the theory of $M$ is o-minimal, and let $I=[b_1, b_2]$ be an interval which is closed, bounded and disjoint from $\dcl(A)$ (in particular, $b_1$ and $b_2$ are $A$-conjugates). Then $I$ divides over $A$.
\end{corollary}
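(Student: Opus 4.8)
The plan is to derive this straight from \cref{divisionTriviale1}, so that the work reduces to justifying the parenthetical claim that $b_1$ and $b_2$ are $A$-conjugates. This is where o-minimality enters. First I would record the standard consequences: in an o-minimal theory $\acl(A)=\dcl(A)$, and every $A$-definable subset of $M$ is a finite union of singletons lying in $\dcl(A)$ together with intervals whose endpoints lie in $\dcl(A)\cup\{\pm\infty\}$ — indeed each endpoint of an $A$-definable interval is individually determined by that interval, hence fixed by $\Aut(M/A)$, hence in $\dcl(A)\cup\{\pm\infty\}$ by the saturation and strong homogeneity of $M$. It follows that the type over $A$ of any element of $M\setminus\dcl(A)$ is determined by the cut it induces on $\dcl(A)$.

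Next I would use the hypotheses on $I$. Since $I=[b_1,b_2]$ is closed and disjoint from $\dcl(A)$, the endpoints $b_1,b_2$ themselves are not in $\dcl(A)$, and no element of $\dcl(A)$ lies in the closed interval between them; hence $b_1$ and $b_2$ induce the same cut on $\dcl(A)$, so $\tp(b_1/A)=\tp(b_2/A)$, and by strong homogeneity there is $\sigma\in\Aut(M/A)$ with $\sigma(b_1)=b_2$. This establishes the parenthetical assertion.

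To finish: if $b_1<b_2$, then, the underlying order of an o-minimal structure being dense, $I$ is an interval with non-empty interior whose bounds are $A$-conjugates, and \cref{divisionTriviale1} gives that $I$ divides over $A$. (In the degenerate case $b_1=b_2$, the set $I=\{b_1\}$ with $b_1\notin\acl(A)$ has an infinite $\Aut(M/A)$-orbit consisting of pairwise distinct, hence pairwise disjoint, singletons, so it still divides over $A$.) The only step that requires any care is the o-minimal normal form for $A$-definable subsets of the line used to obtain the conjugacy; everything else is immediate.
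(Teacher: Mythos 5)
Your proof is correct and follows exactly the route the paper intends: establish via the o-minimal description of unary $A$-definable sets that $b_1$ and $b_2$ realize the same cut over $\dcl(A)$, hence are $A$-conjugates, and then invoke \cref{divisionTriviale1} (with the singleton case handled by the infinite orbit of a non-algebraic point). The paper leaves this as an immediate corollary, and your write-up supplies precisely the missing justification of the parenthetical conjugacy claim.
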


Note that we only consider bounded intervals: the bounds are not in $\{\pm\infty\}$, for they must be $A$-conjugates. Those considerations allow us to characterize forking in dimension one:

%\begin{proposition}\label{typePasInvariant}
%Suppose the theory of $M$ is weakly o-minimal. Let $I=[b_1, b_2]$ be an interval which is closed, bounded and disjoint from $\dcl(A)$. Then $I$ divides over $A$.
%\end{proposition}
%
%Note that we only consider bounded intervals: the bounds are not in $\{\pm\infty\}$.
%
%\begin{proof}
%By hypothesis on $I$, $\ct(b_1/\dcl(A))=\ct(b_2/\dcl(A))$. By weak o-minimality $\tp(b_1/A)=\tp(b_2/A)$. Let $\sigma\in \Aut(M/A)$ so that $\sigma(b_1)=b_2$. In the particular case where $b_1=b_2$, as $I$ does not intersect $\dcl(A)$, we define instead $\sigma$ in order to have $\sigma(b_2)>b_2$. In any case, we have $\sigma^2(b_1)>b_2$, so the intervals $(\sigma^{2n}(I))_{n<\omega}$ are pairwise-disjoint and $A$-conjugates, and we can conclude.
%\end{proof}

\begin{proposition}\label{divisionTriviale0}
Suppose the theory of $M$ is o-minimal. Then, for every singleton $c\in M$, we have $\indep{c}{A}{B}{inv}$ if and only if every closed interval with bounds in $\dcl(AB)$ containing $c$ has a point in $\dcl(A)$.
\end{proposition}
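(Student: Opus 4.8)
The plan is to translate the statement into the language of cuts and then argue order-theoretically. I will use two standard facts about o-minimal theories: for any parameter set $D\subseteq M$ and any singleton $c$, the type $\tp(c/D)$ is determined by the cut $\ct(c/\dcl(D))$; and sending a type in $S_1(M)$ to its cut is an $\Aut(M)$-equivariant bijection between $S_1(M)$ and the cuts of the (automatically dense) order $M$. Write $\Lambda=\dcl(A)$ and $\Delta=\dcl(AB)$. Since each element of $\Delta$ is $AB$-definable, $\tp(c/AB)$ and $\ct(c/\Delta)$ determine one another, so the first reduction will be: $\indep{c}{A}{B}{inv}$ holds if and only if $M$ carries an $\Aut(M/A)$-invariant cut $\gamma$ with $\gamma\restriction\Delta=\ct(c/\Delta)$. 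For the ``only if'', a global $\Aut(M/A)$-invariant extension of $\tp(c/AB)$ has, by equivariance of the cut-type bijection, an $\Aut(M/A)$-invariant cut restricting to $\ct(c/\Delta)$ on $\Delta$; for the ``if'', such a cut produces an $\Aut(M/A)$-invariant type whose restriction to $\Delta$ has cut $\ct(c/\Delta)$, hence (by the first fact, applied over $\Delta$) equals $\tp(c/\Delta)\supseteq\tp(c/AB)$.

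For the forward direction I would argue contrapositively. Suppose a closed interval $I=[b_1,b_2]$ with $b_1,b_2\in\Delta$ contains $c$ but misses $\Lambda$. If $b_1<b_2$, then $I$ is a closed bounded interval disjoint from $\dcl(A)$, so $I$ divides over $A$ by \cref{typePasInvariant}; since the $AB$-formula defining $I$ lies in $\tp(c/AB)$, this gives $\notIndep{c}{A}{B}{d}$, hence $\notIndep{c}{A}{B}{inv}$ by \cref{inclusionsIndepTriviales}. If $b_1=b_2$, then $c\in\Delta\setminus\Lambda=\dcl(AB)\setminus\acl(A)$ (using $\acl=\dcl$ in o-minimal theories), so writing $c=f(B)$ for an $A$-definable $f$ and extracting an $A$-indiscernible sequence $(B_i)_{i<\omega}$ with $B_0=B$ out of pairwise-distinct conjugates of $B$ with pairwise-distinct $f$-values (possible since $\tp(c/A)$ has infinitely many realizations), one sees that $x=c\in\tp(c/AB)$ divides over $A$, and we conclude as before.

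For the converse, assume every closed interval with endpoints in $\Delta$ through $c$ meets $\Lambda$; applying this to $[c,c]$ settles the case $c\in\Delta$, where $c\in\Lambda=\dcl(A)$ and $x=c$ is already a global $\Aut(M/A)$-invariant extension of $\tp(c/AB)$. So assume $c\notin\Delta$ (hence $c\notin\Lambda$), and set $L_\Lambda=\{a\in\Lambda:a<c\}$, $R_\Lambda=\{a\in\Lambda:a>c\}$, and likewise $L_\Delta,R_\Delta$, so that $\Lambda=L_\Lambda\sqcup R_\Lambda$ and $\Delta=L_\Delta\sqcup R_\Delta$. The key step is the dichotomy that the hypothesis forces either $L_\Lambda$ cofinal in $L_\Delta$ or $R_\Lambda$ coinitial in $R_\Delta$: if $L_\Lambda$ is not cofinal in $L_\Delta$, pick $b_1\in L_\Delta$ above all of $L_\Lambda$; then for every $b_2\in R_\Delta$ the hypothesis applied to $[b_1,b_2]\ni c$ yields some $a\in\Lambda\cap[b_1,b_2]$, and $a\notin L_\Lambda$ because $a\geq b_1$, so $a\in R_\Lambda\cap(c,b_2]$, which makes $R_\Lambda$ coinitial in $R_\Delta$.

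Granting the dichotomy, in the first case I would take $\gamma$ to be the cut of $M$ with lower part $\{m\in M:\exists a\in L_\Lambda,\ m\leq a\}$ (the cut $-\infty$ if $L_\Lambda=\emptyset$), and in the second case the cut with upper part $\{m\in M:\exists a\in R_\Lambda,\ m\geq a\}$. Each is $\Aut(M/A)$-invariant because $\Aut(M/A)$ fixes $\Lambda$ pointwise, hence fixes $L_\Lambda$ (resp.\ $R_\Lambda$) setwise. In the first case, the lower part of $\gamma$ intersects $\Delta$ in $\{b\in\Delta:\exists a\in L_\Lambda,\ b\leq a\}$; this is contained in $L_\Delta$ since $L_\Lambda\subseteq(-\infty,c)$, and contains $L_\Delta$ by cofinality, so (using $c\notin\Delta$) $\gamma\restriction\Delta=\ct(c/\Delta)$; the second case is symmetric. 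By the reduction of the first paragraph this gives $\indep{c}{A}{B}{inv}$. I expect the main obstacle to be exactly this dichotomy: the hypothesis only provides, for each pair of points of $\Delta$ straddling $c$, that $\dcl(A)$ approaches $c$ from one of the two sides, and the content is to upgrade this pointwise ``or'' to a uniform one; everything else (the cut-type correspondence, invariance, the cofinality bookkeeping) is routine once o-minimality and $\acl=\dcl$ are in hand.
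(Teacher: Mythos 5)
Your proof is correct and follows essentially the same route as the paper's: the forward direction is the citation of \cref{typePasInvariant} together with \cref{inclusionsIndepTriviales} (your separate treatment of the degenerate case $b_1=b_2$ is just the singleton case of that corollary made explicit), and your cofinality/coinitiality dichotomy is exactly the paper's observation that the elements of $\dcl(AB)$ sharing the cut of $c$ over $\dcl(A)$ all lie on one side of $c$, after which the invariant global type is one of the two cuts $\ct_<(\ct(c/\dcl(A))/M)$ or $\ct_>(\ct(c/\dcl(A))/M)$ adjacent to the $A$-invariant set $\ct(c/\dcl(A))$. The only difference is that you spell out the cut--type correspondence and the edge cases that the paper leaves implicit.
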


\begin{proof}
\par The left-to-right implication is a consequence of \cref{typePasInvariant} and \cref{inclusionsIndepTriviales}. For the other direction, assume the condition on the right holds. Then the elements of $\dcl(AB)$ having the same cut as $c$ must all be either strictly smaller or strictly larger than $c$. Then, either $\ct_<(\ct(c/\dcl(A))/M)$ or $\ct_>(\ct(c/\dcl(A))/M)$ corresponds to a global (and complete by o-minimality) unary type that extends $\tp(c/AB)$. As $\ct(c/\dcl(A))$ is $A$-invariant, so are both those types, and we get the proposition.\qedhere

\begin{center}
\begin{tikzpicture}
\draw[black] (-4, 0) -- (4, 0);
\draw[red, very thick] (-3, 0) -- (3, 0);
\draw[green, very thick] (-2, 0) -- (2, 0);
\draw[blue, very thick] (-1, 0) -- (1, 0);
\draw[red, very thick] (-2.5, 0) node[anchor=south]{$A$};
\draw[green, very thick] (-1.5, 0) node[anchor=south]{$M$};
\draw[blue, very thick] (0, 0) node[anchor=south]{$B$};
\filldraw[gray, thick] (0, 0) circle(2pt) node[anchor=north]{$c$};
\filldraw[gray, thick] (-0.5, 0) circle(2pt) node[anchor=north]{$b_1$};
\filldraw[gray, thick] (0.5, 0) circle(2pt) node[anchor=north]{$b_2$};
\filldraw[brown, thick] (-2, 0) circle(2pt) node[anchor=north]{};
\filldraw[brown, thick] (2, 0) circle(2pt) node[anchor=south]{\textup{invariant cuts}};
\end{tikzpicture}
\end{center}
\end{proof}

\begin{definition}\label{LC}
Let $R$ be $\mathbb{Z}$ or $\mathbb{Q}$. Suppose $M$ is an expansion of some $R$-module, and let $f$ be some $n$-ary $\emptyset$-definable function in this structure. We will write $f\in \LC^n(R)$ when there exist $\lambda_1...\lambda_n\in R$ so that $f$ coincides with the function $x_1...x_n\longmapsto\sum\limits_i\lambda_i x_i$. We will write $\LC(R)$ when $n$ is implicit.
\end{definition}

\begin{assumptions}\label{DOAGDeclarationSEV}
Work in \DOAG, in the language of ordered $\mathbb{Q}$-vector spaces. In this language, every substructure is a definably closed $\mathbb{Q}$-vector-subspace. The independence notions we are looking at are insensitive to definable closure (i.e. we have $\indep{C}{A}{B}{}$ if and only if $\indep{\dcl(AC)}{\dcl(A)}{\dcl(AB)}{}$), so we fix $C\geqslant A\leqslant B$ $\mathbb{Q}$-vector subspaces of $M$.
\end{assumptions}

\par It is well-known that \DOAG~ is complete, has quantifier elimination and is o-minimal in this language. Moreover, for $c_1...c_n,d_1...d_n\in M$, we have :
$$(c_i)_i\equiv_A (d_i)_i\Longleftrightarrow \forall f\in \LC^n(\mathbb{Q}),\ \ct\left(f(c)/A\right)=\ct\left(f(d)/A\right)$$
\par By \cref{divisionTriviale0}, one can see that $\indep{C}{A}{B}{inv}$ if and only if for all $c_1...c_n\in C$, there exists $p$ a global extension of $\tp(c_1...c_n/B)$ so that for all $f\in \LC^n(\mathbb{Q})$, for all closed interval $I$ with bounds in $M$ not disjoint from $A$, we have $p(x)\models f(x)\not\in I$.
\par The above description of $\indep{}{}{}{inv}$ is still not geometric, because it is existential in $p\in S(M)$. There is a geometric description of a weaker version of this, where we swap the first two quantifiers:

\begin{definition}\label{defCutIndep}
Define $\indep{C}{A}{B}{cut}$ if the following equivalent conditions hold:

\begin{itemize}
\item $\forall c_1...c_n\in C,\ \forall f\in \LC^n(\mathbb{Q}),\ \indep{\left(f(c)\right)}{A}{B}{inv}$ (the quantifier over $p$ has been swapped with the one over $f$).
\item $\forall c\in C,\ \indep{c}{A}{B}{inv}$.
\item Every closed interval with bounds in $B$ that has a point in $C$ already has a point in $A$.
\end{itemize}
For $c_1...c_n\in M$, we define $\indep{c_1...c_n}{A}{B}{cut}$ when $\indep{(A+\mathbb{Q}c_1+...+\mathbb{Q}c_n)}{A}{B}{cut}$.
\end{definition}

The last item of \cref{defCutIndep} is a purely geometric description which depends only on $A$, $B$, $C$. This description is simple and uses very little algebra: we have to replace $A$ (resp. $B$, $C$) by the $\mathbb{Q}$-vector subspace generated by $A$ (resp. $AB$, $AC$) as done in the notation, then we take every point from $C$, we completely ignore the algebraic relations between those points, and we check for each point a condition that only depends on the linear order.

\begin{lemma}\label{divisionTriviale}
We have $\indep{}{}{}{d}\subset\indep{}{}{}{cut}$ in \DOAG.
\end{lemma}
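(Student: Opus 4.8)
The plan is to prove the contrapositive: assuming $\notIndep{C}{A}{B}{\cut}$, I will exhibit a single formula in $\tp(c/AB)$, for some $c\in C$, that divides over $A$, which already gives $\notIndep{C}{A}{B}{\d}$. Unwinding the third (purely geometric) item of \cref{defCutIndep}, the hypothesis $\notIndep{C}{A}{B}{\cut}$ provides a closed interval $I=[b_1,b_2]$ with bounds $b_1,b_2\in B$ and a point $c\in C$ with $c\in I$, while $I\cap A=\emptyset$. In the language of ordered $\mathbb{Q}$-vector spaces, $\dcl$ of a parameter set is simply the subspace it generates, and here $A$ is already a $\mathbb{Q}$-vector subspace (\cref{DOAGDeclarationSEV}), so $\dcl(A)=A$; thus $I$ is a closed, bounded interval disjoint from $\dcl(A)$.

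Next I would invoke \cref{typePasInvariant}: since \DOAG~ is o-minimal, any closed bounded interval disjoint from $\dcl(A)$ divides over $A$, hence $I$ divides over $A$. Finally, the formula $b_1\leqslant x\leqslant b_2$ (that is, ``$x\in I$'') has parameters in $B\subseteq AB$, it belongs to $\tp(c/AB)$ because $c\in I$, and it divides over $A$. Therefore $\tp(c/AB)$ divides over $A$, i.e. $\notIndep{c}{A}{B}{\d}$; and since $c$ is a singleton tuple from $C$, this yields $\notIndep{C}{A}{B}{\d}$, as required.

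I do not expect any real obstacle here: the lemma is essentially a repackaging of the one-dimensional dividing phenomenon of \cref{typePasInvariant} through the geometric description of $\indep{}{}{}{\cut}$. The only points that deserve a moment of care are the bookkeeping identification of ``has a point in $A$'' (as stated in \cref{defCutIndep}) with ``has a point in $\dcl(A)$'' (as needed in \cref{typePasInvariant}), valid because $A$ is a definably closed subspace, and the elementary remark that a single dividing $AB$-formula satisfied by some tuple from $C$ already witnesses the failure of $\indep{C}{A}{B}{\d}$.
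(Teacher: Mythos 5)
Your proof is correct and is exactly the paper's argument: the paper's proof of this lemma is literally the single line ``See \cref{typePasInvariant}'', i.e.\ it reads off from the geometric form of $\indep{}{}{}{\cut}$ a closed bounded interval with bounds in $B$ containing a point of $C$ and disjoint from $\dcl(A)=A$, and applies \cref{typePasInvariant}. Your extra bookkeeping (identifying $A$ with $\dcl(A)$ and noting that one dividing formula suffices) is exactly the intended, implicit content.
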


\begin{proof}
See \cref{typePasInvariant}.
\end{proof}

It actually turns out that this nice, but weak independence notion $\indep{}{}{}{cut}$ is no weaker than $\indep{}{}{}{inv}$ in \DOAG. This is the fundamental result of this paper:

\begin{theorem}\label{thmTechniqueEnonce}
In \DOAG, we have $\indep{}{}{}{inv}=\indep{}{}{}{cut}$.
\end{theorem}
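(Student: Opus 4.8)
By \cref{inclusionsIndepTriviales} and \cref{divisionTriviale} we already have $\indep{}{}{}{inv}\subset\indep{}{}{}{bo}\subset\indep{}{}{}{f}\subset\indep{}{}{}{d}\subset\indep{}{}{}{cut}$ in \DOAG, so the entire content of the theorem is the reverse inclusion $\indep{}{}{}{cut}\subset\indep{}{}{}{inv}$. By \cref{indep} and the equivalences of \cref{defCutIndep}, it suffices to fix a finite tuple $c=(c_1,\dots,c_n)$ from $C$ with $\indep{c}{A}{B}{cut}$ — equivalently, every closed interval with bounds in $B$ meeting $A+\mathbb{Q}c_1+\dots+\mathbb{Q}c_n$ already meets $A$ — and construct by hand a global $\Aut(M/A)$-invariant extension of $\tp(c/AB)$ in $S(M)$.

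The first step is a reduction to a convenient tuple. Since $\indep{}{}{}{inv}$ is insensitive to $A$-interdefinability, and $\indep{c}{A}{B}{cut}$ depends only on the $\mathbb{Q}$-vector space $A+\mathbb{Q}c_1+\dots+\mathbb{Q}c_n$ (last clause of \cref{defCutIndep}), both statements are preserved when $c$ is replaced by $g(c)+a$ with $g\in\GL^n(\mathbb{Q})$ and $a\in A^n$. I would therefore run a procedural normalization: after finitely many such translations and changes of variables, $c$ becomes a tuple $d$ in \emph{normal form}, engineered so that $d$ is \emph{separated} with respect to a finite family of ad-hoc group valuations — essentially the convex (Archimedean) valuation of $M$ over $A$ together with iterated refinements of it — meaning that along each such valuation the images of the $\mathbb{Q}$-linear combinations of the $d_i$ sit in distinct, controllable positions.

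Next I would cut $d$ into the fibers $d'$ of this valuation data and treat each fiber separately. On a single fiber the problem is essentially one-dimensional: by \cref{divisionTriviale0} and the reformulation of $\indep{}{}{}{inv}$ recorded just before \cref{defCutIndep}, the cut-independence hypothesis forces each $\mathbb{Q}$-linear combination of the $d'_i$ to have its cut $\ct(\,\cdot\,/A)$ realized in $M\setminus A$ only by elements strictly above or strictly below it, so that $\ct_<(\ct(\,\cdot\,/A)/M)$ or $\ct_>(\ct(\,\cdot\,/A)/M)$ yields a global $\Aut(M/A)$-invariant complete unary type. Assembling these across the coordinates of the fiber — and handling the delicate point of \emph{Archimedean extensions}, where the new element shares the valuation class of an old one but occupies a new Archimedean position — produces a global $\Aut(M/A)$-invariant extension of $\tp(d'/AB)$.

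Finally I would glue. The structural fact to establish is that these ad-hoc valuations interact with the model-theoretic tensor product so as to make the fiber types weakly orthogonal over $A$: $\tp(d/AB)$ is determined by the fiber types $\tp(d'/AB)$ together with the valuation data, and a compatible family of global $A$-invariant extensions of the $\tp(d'/AB)$ amalgamates into a single global $\Aut(M/A)$-invariant extension of $\tp(d/AB)$. Transporting this back through $d\leftrightarrow c$ gives $\indep{c}{A}{B}{inv}$, hence $\indep{}{}{}{cut}\subset\indep{}{}{}{inv}$, which is the theorem.

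I expect the gluing step to be the main obstacle: one must show that every constraint coming from a $\mathbb{Q}$-linear combination that mixes coordinates lying in different fibers is already forced by the per-fiber data, and it is precisely here that the normal form and the bookkeeping of Archimedean positions do the work. The normalization procedure itself, though conceptually routine, is also a substantial technical ingredient.
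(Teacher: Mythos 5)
Your plan reproduces the paper's own architecture almost verbatim (normalize to a separated tuple via $A$-translations and $\GL_n(\mathbb{Q})$, build invariant extensions fiber-by-fiber for ad-hoc valuations refining the Archimedean one, then glue by tensor product), so there is no divergence of route to discuss. But as a proof it is only the outline: each of the three steps you defer is a substantial theorem, and the two specific claims you make about how they go are false as stated, which is where the genuine gaps lie.

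First, on a single fiber the problem is \emph{not} "essentially one-dimensional." Choosing, for each $\mathbb{Q}$-linear combination of the $d'_i$, one of the two invariant cuts $\ct_<$ or $\ct_>$ does not automatically yield a consistent completion of $\tp(d'/AB)$, let alone an invariant one: within a single block one must distinguish $B$-Archimedean from $B$-ramified points and \emph{inner} from \emph{outer} unary extensions, and the union of per-coordinate invariant types is consistent with $\tp(d'/B)$ only when the inner ones precede the outer ones in the ordering of the new Archimedean classes $\delta(\cdot/B)$ (\cref{forceBaseInv}); moreover a consistent, complete such union can still fail to be $\Aut(M/A)$-invariant (the example with $c_1=\sqrt2+\epsilon$, $c_2=\epsilon\sqrt2$ following \cref{lemmeCompletionRamGlobal}). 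Second, the gluing is \emph{not} by weak orthogonality of the fiber types: the paper shows that only for the coarsest valuation $\val^1_B$ are the global block types weakly orthogonal (\cref{WOrthVal1}); for $\val^2_B$ only the restrictions to $B$ commute, and for the finest blocks the tensor factors genuinely do not commute, so one must prove that \emph{some specific order} of the tensor product is consistent with $\tp(d/B)$ (\cref{tmpRecollementVal3}), which again hinges on the inner/outer bookkeeping. Until you (i) define the valuations precisely, (ii) prove the normalization terminates, and (iii) carry out the consistency and invariance arguments for block extensions and their ordered tensor products, the plan does not constitute a proof.
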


By \cref{inclusionsIndepTriviales} and \cref{divisionTriviale}, we just need to prove that $\indep{}{}{}{cut}\subset\indep{}{}{}{inv}$.

\begin{assumptions}\label{DOAGHypCutIndep}
Recall from \cref{DOAGDeclarationSEV} that $B\geqslant A\leqslant C$ are $\mathbb{Q}$-vector-subspaces of $M$. We assume $\indep{C}{A}{B}{cut}$. Let $c_1...c_n\in C$.
\end{assumptions}

\par Our goal is to build a global $A$-invariant extension of $\tp(c_1...c_n/B)$. This will be achieved in sections \ref{sectGlue}, \ref{secBaseInv}, \ref{sectionFormesNormales}, and we deal with \ROAG~ and examples in the remaining sections.

\begin{remark}\label{DOAGDichotomieDroiteGauche}
If $c$ is a singleton from $C\setminus A$, then we have $\indep{c}{A}{B}{\cut}$ if and only if at least one of the following conditions holds:
\begin{itemize}
\item Every bounded $B$-definable closed interval included in $\ct(c/A)$ (viewed as an $A$-type-definable set) has all its points smaller than $c$.
\item Every bounded $B$-definable closed interval included in $\ct(c/A)$ has all its points larger than $c$.
\end{itemize}
\end{remark}

\begin{definition}
If the first condition of \cref{DOAGDichotomieDroiteGauche} holds, then we  say that $c$ \textit{leans right} with respect to $A$ and $B$. If the second condition holds, then $c$ \textit{leans left}. The two conditions hold at the same time if and only if $\ct(c/\dcl(A))$ has no point in $\dcl(AB)$.

\begin{center}
\begin{tikzpicture}
\draw[black] (-4, 0) -- (9, 0);
\draw[red, very thick] (-3, 0) -- (-1, 0);
\draw[red, very thick] (-2, 0) node[anchor=south]{$A$};
\filldraw[gray] (-0.5, 0) circle (2pt) node[anchor=north]{left-leaning};
\draw[blue, very thick] (0, 0) -- (2, 0);
\draw[blue, very thick] (1, 0) node[anchor=south]{$B$};
\filldraw[gray] (2.5, 0) circle (2pt) node[anchor=north]{cut-dependent};
\draw[blue, very thick] (3, 0) -- (5, 0);
\draw[blue, very thick] (4, 0) node[anchor=south]{$B$};
\filldraw[gray] (5.5, 0) circle (2pt) node[anchor=north]{right-leaning};
\draw[red, very thick] (6, 0) -- (8, 0);
\draw[red, very thick] (7, 0) node[anchor=south]{$A$};
\end{tikzpicture}
\end{center}
\end{definition}

It is easy to see that $c$ leans right (resp. left) with respect to $A$ and $B$ if and only if there exists $A'\subset A$ so that $\ct(c/B)=\ct_>(A'/B)$ (resp. $\ct_<(A'/B)$).
\par This definition also makes sense (and will be used) in more abstract linear orders that do not come from ordered groups.

%%%%%%%%%%%%%%%%%%%%%%%%%%%%%%%%%%%%%%%%%%%%%%%%%%%%%%%%%%%%%%%%%%%%%%%%%%%%%%%%%%%%%%%%%%%%%%%%%%%%%%%%%%%%%%%%%%%%%%%%%%%%%%%%%%%%%%%%%%%%%%%%%%%%%%%%%%%%%%%%%%%%%%%%%%%%%%%%%%%%%%%%%%%%%%%%%%%%%%%%%%%%%%%%%%%%%%%%%%%%%%%%%%%%%%%%%%%%%%%%%%%%%%%%%%%%%%%%%%%%%%%%%%%%%%%%%%%%%%%%%%%%%%%%%%%%%%%%%%%%%%%%%%%%%%%%%%%%%%%%%%%%%%%%%%%%%%%

\subsection{Valuations}
The content of this subsection is well-known in the literature, and most of the proofs are omitted.

\begin{assumptions}
In this subsection, let $G$ be an Abelian group.
\end{assumptions}

\begin{definition}
Let $H$ be a subgroup of $G$, and $\Gamma$ a linearly-ordered set with a least element $-\infty$. A map $\val:\ G\longrightarrow \Gamma$ is an $H$\textit{-valuation} if it satisfies the following axioms for all $x, y\in G$:
\begin{itemize}
\item $\val(x-y)\leqslant \max(\val(x), \val(y))$
\item $\val(x)=-\infty\Longleftrightarrow x\in H$
\end{itemize}
A \textit{valuation} over $G$ is a $\{0\}$-valuation.
\par If $\val'$ is another $H$-valuation of $G$, we say that $\val'$ \textit{refines} $\val$ if there exists an increasing map $f:\ \val'(G)\longrightarrow \val(G)$ so that $\val=\val'\circ f$.
\end{definition}
Equivalently, an $H$-valuation can be seen as a valuation over $\faktor{G}{H}$.

\begin{remark}\label{triangleIsocele}
\par Let us remark for the readers unfamiliar with valuations that, if $\val(x)<\val(y)$, then it follows from the axioms that $\val(x+y)=\val(y)$.
\end{remark}

\par The next lemma gives us a canonical way to build an $H$-valuation from a preorder over $G$ satisfying certain conditions:

\begin{lemma}\label{valuationPreordre}
Let $P$ be a preorder over $G$. Let $\sim$ be the associated equivalence relation over $G$, $\pi$ the quotient map, and $<$ the associated order on $\faktor{G}{\sim}$. Suppose $<$ is linear, and we have $\pi(x)<\pi(y)\Longrightarrow \pi(x+y)=\pi(y)$ for all $x, y\in G$ (the same relation as in \cref{triangleIsocele}). Then the following conditions hold:
\begin{itemize}
\item $\faktor{G}{\sim}$ has a least element, which is $\pi(0)$.
\item The fiber $\pi^{-1}(\pi(0))$ is a subgroup of $G$.
\item $\pi$ is a $\pi^{-1}(\pi(0))$-valuation.
\end{itemize}
\end{lemma}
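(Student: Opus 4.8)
The plan is to verify the three bullet points in turn, deriving each from the single structural hypothesis
(call it $(\star)$) that $\pi(x)<\pi(y)$ implies $\pi(x+y)=\pi(y)$, together with linearity of $<$ on $\faktor{G}{\sim}$. First I would show that $\pi(0)$ is the least element: if some $x$ had $\pi(x)<\pi(0)$, then applying $(\star)$ to the pair $(x,0)$ would give $\pi(x)=\pi(x+0)=\pi(0)$, a contradiction; by linearity this forces $\pi(0)\leqslant\pi(x)$ for every $x\in G$.

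The next step, which is the technical heart, is to establish the ultrametric inequality $\pi(x-y)\leqslant\max(\pi(x),\pi(y))$. I would first record the auxiliary fact that $\pi(-x)=\pi(x)$ for all $x$: if $\pi(-x)<\pi(x)$, then $(\star)$ on $(-x,x)$ yields $\pi(0)=\pi(-x+x)=\pi(x)$, hence $\pi(-x)<\pi(0)$, contradicting minimality of $\pi(0)$; the reverse strict inequality is excluded by the same argument with $-x$ in place of $x$. Then I argue by cases on $\pi(x)$ versus $\pi(y)$. If $\pi(x)<\pi(y)=\pi(-y)$, then $(\star)$ on $(x,-y)$ gives $\pi(x-y)=\pi(-y)=\pi(y)$, and the case $\pi(y)<\pi(x)$ is symmetric (or follows from the previous one via $\pi(x-y)=\pi(-(y-x))=\pi(y-x)$). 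The remaining case $\pi(x)=\pi(y)$ is where a small rewriting trick is needed: if we had $\pi(x-y)>\pi(x)=\pi(y)$, then writing $x=(x-y)+y$ and applying $(\star)$ to the pair $(y,\,x-y)$ gives $\pi(x)=\pi(x-y)$, contradicting $\pi(x-y)>\pi(x)$; hence $\pi(x-y)\leqslant\pi(x)=\max(\pi(x),\pi(y))$.

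With the ultrametric inequality in hand the remaining points are immediate. For $H:=\pi^{-1}(\pi(0))$ one has $0\in H$, and if $x,y\in H$ then $\pi(x-y)\leqslant\max(\pi(x),\pi(y))=\pi(0)$, so $\pi(x-y)=\pi(0)$ by minimality of $\pi(0)$, i.e.\ $x-y\in H$; thus $H$ is a subgroup of $G$. Finally, taking $\Gamma=\faktor{G}{\sim}$ as the value set, linearly ordered by $<$ with least element $\pi(0)$ playing the role of $-\infty$, the map $\pi$ satisfies the first valuation axiom (the ultrametric inequality just proved, with the sign absorbed since $\pi(-y)=\pi(y)$) and the second one $\pi(x)=-\infty\Leftrightarrow x\in H$ holds by the very definition of $H$; hence $\pi$ is a $\pi^{-1}(\pi(0))$-valuation. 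I expect the equal-valuation case of the ultrametric inequality to be the only point requiring a genuine idea; everything else is a direct application of $(\star)$ and of the minimality of $\pi(0)$.
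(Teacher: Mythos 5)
Your proof is correct and complete. The paper omits the proof of this lemma entirely (the subsection explicitly states that its content is well-known and most proofs are omitted), so there is nothing to compare against; your argument — minimality of $\pi(0)$, the auxiliary identity $\pi(-x)=\pi(x)$, and the three-way case split for the ultrametric inequality with the rewriting $x=(x-y)+y$ in the equal-value case — is exactly the standard verification one would expect, and every step checks out.
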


\par For example, if $P$ is the divisibility relation on an integral local ring $R$ ($P(x, y)$ when $y$ divides $x$), then $P$ is a preorder satisfying the axiom $\pi(x)<\pi(y)\Longrightarrow \pi(x+y)=\pi(y)$ of \cref{valuationPreordre}. In that case, $R/\sim$ is in natural correspondence with the poset of principal ideals of $R$, and $<$ is linear if and only if $R$ is a valuation ring. Then, the group valuation given by the proposition is the natural ring valuation on $R$.

\begin{assumptions}
Suppose now $G$ is an ordered Abelian group.
\end{assumptions}

\begin{proposition}\label{propArch}
Let $x, y\in G$. The following conditions are equivalent:
\begin{enumerate}
\item $x\in\bigcup\limits_{n<\omega}\left[-n|y|, n|y|\right]$.
\item For every convex subgroup $H$ of $G$, if $y\in H$, then $x\in H$.
\item The convex subgroup of $G$ generated by $x$ is included in the one generated by $y$.
\end{enumerate}
\end{proposition}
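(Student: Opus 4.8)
The plan is to identify the set $H_y:=\bigcup_{n<\omega}[-n|y|,n|y|]$ that appears in item (1) as precisely the smallest convex subgroup of $G$ containing $y$; once this is done, all three equivalences are immediate. So the first task is to verify that $H_y$ is a convex subgroup and that it contains $y$.

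That $y\in H_y$ and $0\in H_y$ is clear (take $n=1$, resp. $n=0$). For the subgroup property, if $|g|\leqslant n|y|$ and $|h|\leqslant m|y|$, then $|g-h|\leqslant|g|+|h|\leqslant(n+m)|y|$, so $g-h\in H_y$. For convexity I would use the elementary fact that in any ordered Abelian group $a\leqslant g\leqslant b$ forces $|g|\leqslant\max(|a|,|b|)$ (split on the sign of $g$: if $g\geqslant 0$ then $g\leqslant b\leqslant|b|$, and if $g<0$ then $-g\leqslant-a\leqslant|a|$); hence whenever the two endpoints of an interval lie in $H_y$, so does every point of that interval.

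Next I would note that any convex subgroup $H$ of $G$ with $y\in H$ necessarily contains $H_y$: indeed $|y|\in H$, so $n|y|\in H$ for every $n<\omega$, and then convexity of $H$ gives $[-n|y|,n|y|]\subset H$ for all $n$, i.e. $H_y\subset H$. Together with the previous step this shows $H_y$ is the convex subgroup generated by $y$; applying the same to $x$, the convex subgroup generated by $x$ is $H_x$.

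The three equivalences are now formal. By definition, (1) says exactly $x\in H_y$. Since $H_y$ is a convex subgroup containing $y$, instance (2) applied with $H=H_y$ yields $x\in H_y$, which is (1); conversely, if $x\in H_y$ then every convex subgroup $H$ with $y\in H$ satisfies $H\supset H_y\ni x$, which is (2). Finally $H_x\subset H_y$ holds iff $x\in H_y$ (if $x\in H_y$, then $H_x\subset H_y$ by minimality of $H_x$; the reverse implication is trivial since $x\in H_x$), so (3) $\iff$ (1). There is no genuine obstacle here; the only step that requires a moment's care is the convexity of $H_y$, which rests on the inequality $|g|\leqslant\max(|a|,|b|)$ for $a\leqslant g\leqslant b$.
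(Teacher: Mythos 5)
Your proof is correct: identifying $\bigcup_{n<\omega}[-n|y|,n|y|]$ as the convex subgroup generated by $y$ is the standard argument, and all the verifications (subgroup property, convexity via $|g|\leqslant\max(|a|,|b|)$, minimality) are sound. The paper omits this proof as well-known, so there is no alternative route to compare against; your write-up simply supplies the expected details.
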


\begin{definition}\label{defArchVal}
These equivalent conditions define a relation on $G$ which is clearly a preorder that satisfies the necessary conditions of \cref{valuationPreordre}, with $\pi^{-1}(\pi(0))=\{0\}$. This gives a valuation over $G$, which we  call $\Delta$, the \textit{Archimedean valuation}. For $x\in G$, $\Delta^{-1}(\Delta(x))$ is called the \textit{Archimedean class} of $x$. By abuse of notation, we may identify the Archimedean value of an element with its Archimedean class.
\par The idea behind the statement $\Delta(x)<\Delta(y)$ is that $x$ is ``infinitesimal" compared to $y$, or that $y$ is ``infinitely greater" than $x$.
\par Equivalently:
$$
\begin{array}{rcl}
\Delta(x)<\Delta(y) & \Longleftrightarrow & x\in\bigcap\limits_{n>0}\left]-\dfrac{1}{n}|y|, \dfrac{1}{n}|y|\right[\\
& & \\
 & \Longleftrightarrow & |y|\in \bigcap\limits_{n>0}\left]n|x|, +\infty\right[
\end{array}
$$
\end{definition}

Let us draw a picture (not scaled correctly) which intuitively shows how the Archimedean classes look like. Suppose $G$ has exactly four Archimedean classes $\Delta(0)<\delta_1<\delta_2<\delta_3$. Then $G$ looks like this:
\begin{center}
\begin{tikzpicture}
\draw[red, very thick] (-5, 0) -- (-2, 0);
\draw[red, very thick] (2, 0) -- (5, 0);
\draw[blue, very thick] (-2, 0) -- (-0.5, 0);
\draw[blue, very thick] (0.5, 0) -- (2, 0);
\draw[brown, very thick] (-0.5, 0) -- (0.5, 0);
\draw[red, very thick] (-3.5, 0) node[anchor=south]{$\delta_3$};
\draw[red, very thick] (3.5, 0) node[anchor=south]{$\delta_3$};
\draw[blue, very thick] (-1.25, 0) node[anchor=south]{$\delta_2$};
\draw[blue, very thick] (1.25, 0) node[anchor=south]{$\delta_2$};
\draw[brown, very thick] (0, 0) node[anchor=south]{$\delta_1$};
\filldraw[black] (0, 0) circle(2pt) node[anchor=north]{$0$};
\end{tikzpicture}
\end{center}

There is no standard terminology in the modern literature when it comes to the Archimedean valuation and related notions. Some use the keyword “convex" to refer to all those notions, others use the very obscure names “$i$-extension" or “$i$-completeness". We draw our inspiration from older sources: we choose to use the terminology of \cite{Gravett1956ORDEREDAG}, which, on top of being a good introductory paper, sets up notations which we find more intuitive and easier to work with.

\begin{remark}\label{correspGrpConv_ClassesArch}
A convex subgroup $H\leqslant G$ is equal to the union of the convex subgroups of $G$ generated by each $x\in H$. So $H$ is entirely determined by its direct image $\Delta(H)$, which is an initial segment of $\Delta(G)$. We can note that the set $\mathcal{C}(G)$ of convex subgroups of $G$ is totally ordered by inclusion, and that $H\longmapsto\Delta(H)$ is an order isomorphism between $\mathcal{C}(G)$ and the set of non-empty initial segments of $\Delta(G)$ ordered by inclusion.
\par We can also note that the cosets of some convex subgroup $H\leqslant G$ are all convex, because the translations are strictly increasing. So the quotient of $G$ by some convex subgroup $H$ is naturally endowed with a total order, as defined in \cref{cutDef}.
\end{remark}

\begin{definition}\label{defIndiceClasseArch}
Let $H$ be a subgroup of $G$, and $\delta\in\Delta(G)$. We define $H_{<\delta}=\left\lbrace x\in H|\Delta(x)<\delta\right\rbrace$. We introduce a similar notation for the conditions $\leqslant\delta$, $=\delta$\ldots  More generally, if $P\subset\Delta(G)$, we define $H_{\in P}=\left\lbrace x\in H|\Delta(x)\in P\right\rbrace$.
\end{definition}

\subsection{Outline of the proof}
Although $\indep{}{}{}{\cut}$ and $\indep{}{}{}{\inv}$ are very combinatorial notions, we have to set up a rather technical algebraic machinery to prove that they coincide. Let us explain in this subsection what we do conceptually.

\begin{assumptions}\label{DOAGHypMonstre}
On top of \cref{DOAGHypCutIndep}, we assume that $M$ is $|B|^+$-saturated and strongly $|B|^+$-homogeneous.
%As in subsection \ref{enonce}, we have $M$ a model of \DOAG~ in the language of ordered $\mathbb{Q}$-vector spaces, $c=c_1\ldots c_n\in M$, and $A\leqslant B$ $\mathbb{Q}$-vector subspaces of $M$, such that $M$ is $|B|^+$-saturated and strongly $|B|^+$-homogeneous, and $\indep{c_1\ldots c_n}{A}{B}{\cut}$.
\end{assumptions}
\par We wish to show $\indep{c_1\ldots c_n}{A}{B}{\inv}$. For this, we  partition our family $c$ into smaller subfamilies, that we call “blocks", and we  build a global $\Aut(M/A)$-invariant extension of the type of each block. We get a family of global $\Aut(M/A)$-invariant types, that we  call (in \cref{defBE}) a “block extension", and what is left for us is to “glue" those types into a global $\Aut(M/A)$-invariant extension of the full type of $c$.

\begin{definition}\label{defFaiblementOrth}
    Let $p_1\ldots p_n\in S(A)$. We say that the family $(p_i)_i$ is \textit{weakly orthogonal} if $\bigcup\limits_i p_i(x_i)$ is a complete type over $A$ in the variables $x_1\ldots x_n$.
\end{definition}

\par There is a link between orthogonality and convex subgroups that appears in (\cite{Mennuni2020TheDM}, Proposition 4.5). We state a variant:

\begin{lemma}\label{WOrthVSGrpConvexe}
    Let $G$ be some $\Aut(M/A)$-invariant convex subgroup of $M$, and $c, d\in M$. If $c\in (G+ A)\not\ni d$, then $\tp(c/A)$ and $\tp(d/A)$ are weakly orthogonal.
\end{lemma}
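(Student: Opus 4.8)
The plan is to show that any automorphism of $M$ fixing $A$ which moves $d$ to any realization $d'$ of $\tp(d/A)$ can be adjusted so that it also fixes $c$; weak orthogonality of $\tp(c/A)$ and $\tp(d/A)$ then follows, since it is equivalent to the statement that for every $c'\models\tp(c/A)$ and $d'\models\tp(d/A)$ we have $cd'\equiv_A c'd$ (equivalently, the $\Aut(M/Ac)$-orbit of $d$ is all of $\tp(d/A)(M)$). So the real content is: given $d'\models\tp(d/A)$ and an $A$-automorphism $\sigma$ with $\sigma(d)=d'$, produce an $A$-automorphism fixing $c$ and sending $d$ to $d'$.

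First I would record the hypothesis in a usable form: $c - a_0 \in G$ for some $a_0 \in A$, while $d - a \notin G$ for every $a \in A$. The key geometric observation is that the coset $c + G = a_0 + G$ is $A$-definable (as $G$ is $\Aut(M/A)$-invariant, hence $A$-type-definable, and translating by $a_0\in A$ keeps it $A$-invariant; being a coset of a convex subgroup it is a convex $A$-type-definable set), whereas $d$ lies in a \emph{different} coset of $G$, and moreover $d$ is not $G$-close to \emph{any} point of $A$. I would argue that the cut of $d$ over $A$ and the cut of $d$ over $Ac$ coincide: any interval with endpoints in $A$ that separates $d$ from $c$ can be replaced by one with endpoints in $A$ alone, because $c$ is within the convex subgroup $G$ of the point $a_0\in A$, while $d$ is at Archimedean distance $\geq$ (the size of $G$) from every point of $A$; so inserting or removing $c$ as a possible endpoint does not refine the cut of $d$. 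Hence $\tp(d/Ac)$ is determined by $\tp(d/A)$ together with — in the \DOAG\ setting — the cut data, which is unchanged. More carefully, in the group language one must check this at the level of the relevant quantifier-free data (cuts of $\mathbb Q$-linear combinations), but since we are dealing with the singleton $d$ the only new linear combinations over $Ac$ are of the form $\lambda d + \mu c + a$, and the coset/Archimedean separation shows the cut of such an element over $A$ is already pinned down by the cut of $\lambda d$ over $A$ and the ($A$-invariant) value of $\mu c + a \bmod G$.

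Given that $\tp(d/Ac)$ has the same set of realizations as $\tp(d/A)$, for any $d'\models\tp(d/A)$ we get $d'\models\tp(d/Ac)$, so by strong homogeneity of $M$ there is $\tau\in\Aut(M/Ac)$ with $\tau(d)=d'$. Then for arbitrary $c'\models\tp(c/A)$, $d'\models\tp(d/A)$: pick $\rho\in\Aut(M/A)$ with $\rho(c)=c'$, and $\tau\in\Aut(M/Ac)$ with $\tau(d)=d'$; then $\rho\tau$ fixes $A$, sends $c$ to $c'$ and $d$ to $d'$, witnessing $cd\equiv_A c'd'$ and thus $cd'\equiv_A c'd$ after composing appropriately. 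This gives weak orthogonality.

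The step I expect to be the main obstacle is the claim that $\tp(d/Ac) = \tp(d/A) \cup \{\text{trivial consequences}\}$ — precisely, that adjoining $c$ to the base does not add any genuinely new information about $d$. Handled naively via "cuts" this is clean in \DOAG\ because types reduce to cuts of linear combinations (as recorded after \cref{DOAGDeclarationSEV}), but one must be careful that the relevant convex subgroup separating $d$ from $A\cup\{c\}$ is exactly $G$ (or rather the convex subgroup generated by $d$'s distance, which strictly contains $G$), and that $G$ being merely $\Aut(M/A)$-invariant (not necessarily $A$-definable) still suffices — here one uses that an invariant convex subgroup is a union/intersection of $A$-definable convex sets, or works directly with the Archimedean valuation $\Delta$ and \cref{correspGrpConv_ClassesArch} to see that "$d-a\notin G$ for all $a\in A$" forces $\Delta(d-a)$ to be $\geq$ the top of $\Delta(G)$, uniformly, so that $c$ (being $G$-close to $a_0$) cannot sit in any $A$-cut strictly between where $d$ sits and the nearest points of $A$. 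Once that separation is pinned down, the rest is a routine homogeneity argument.
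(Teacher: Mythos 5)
Your overall architecture is sound and is essentially the contrapositive of the paper's argument: weak orthogonality reduces (for singletons, by quantifier elimination and strong homogeneity) to showing that no point of $\dcl(Ac)=A+\mathbb{Q}c\subseteq A+G$ lies in $\ct(d/A)$. The problem is your justification of that key step, which you yourself flag as the main obstacle. Both versions you offer are purely about Archimedean/convex separation ("$d$ is at distance $\geq$ the size of $G$ from every point of $A$, so $c$ cannot sit in any $A$-cut between $d$ and the nearest points of $A$") and never actually use the $\Aut(M/A)$-invariance of $G$ in an essential way. That reasoning is false without invariance: take $A=\mathbb{Q}$, let $\omega>\mathbb{Q}$, let $G'$ be the convex subgroup generated by $\omega$, let $c=\omega$ and let $d$ satisfy $\Delta(d)>\Delta(\omega)$. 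Then $c\in A+G'\not\ni d$, and $\Delta(d-a)>\Delta(G')$ uniformly for all $a\in A$ — all the geometric facts you invoke hold — yet $c\in\ct(d/A)$ (both are above all of $\mathbb{Q}$) and the two types are visibly not weakly orthogonal. Since your argument applies verbatim to this non-invariant $G'$, it cannot be a proof.

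The paper closes this gap with a short automorphism argument that is the real content of the lemma: if $e\in\ct(d/A)\cap(A+\mathbb{Q}c)$, then $e\equiv_A d$, so by strong homogeneity some $\sigma\in\Aut(M/A)$ sends $d$ to $e\in A+G$; since $A+G$ is $\Aut(M/A)$-invariant, this forces $d\in A+G$, contradicting the hypothesis. Your alternative suggestion — that an $\Aut(M/A)$-invariant convex subgroup is a union/intersection of $A$-definable convex sets, after which the interval-endpoint argument does go through — can be made to work, but that structural claim is itself nontrivial (it again needs homogeneity, by showing the Archimedean classes strictly between $\underline{G}_A$ and $\overline{(\underline{G}_A)}^A$ form a single orbit), and you neither prove it nor indicate how. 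As written, the proposal therefore has a genuine gap at its decisive step.
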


\begin{proof}
    Suppose by contradiction that they are not. Then $\tp(d/A)$ is not complete in $S(Ac)$, i.e. $\ct(d/A)$ has a point in $A+\mathbb{Q}\cdot c$, and this point is in $A+G$. By strong homogeneity, there exists an automorhpism $\sigma\in \Aut(M/A)$ such that $\sigma(d)\in A+G$. As $G$ is $\Aut(M/A)$-invariant, we have $d\in A+G$, a contradiction.
\end{proof}

Note that \cref{WOrthVSGrpConvexe} holds in the more general context of a relatively divisible subgroup $G$ in the expansion of an Abelian group which is in some sense \textit{algebraically bounded}, that is when the definable closure coincides with the relative divisible closure of the generated subgroup.

\begin{example}\label{exempleOrthDOAG}
    Suppose $A$ is the lexicographical product:
    $$\mathbb{Q}\times_{lex}\mathbb{Q}\times_{lex}\mathbb{Q}\leqslant\mathbb{R}\times_{lex}\mathbb{R}\times_{lex}\mathbb{R}$$
    let $c_1=(0, 0, \sqrt{2})$, $c_2=(\sqrt{2}, 0, 0)$, $c_3\in M$ such that:
    $$c_3\in\bigcap\limits_{N>0}\left](0, N, 0), \left(\dfrac{1}{N}, 0, 0\right)\right[$$
    let $G$ be the convex subgroup $\bigcap\limits_{N>0}\left]\left(-\dfrac{1}{N}, 0, 0\right), \left(\dfrac{1}{N}, 0, 0\right)\right[$. Then we can show by \cref{WOrthVSGrpConvexe} that $\tp(c_2/A)$ is weakly orthogonal to both $\tp(c_1/A)$ and $\tp(c_3/A)$ using $G$.
    \par Actually, we need not choose $G$ to show that the type of $c_1$ is weakly orthogonal to that of $c_2$, we may choose instead, for instance, the convex subgroup $\bigcap\limits_{N>0}\left]\left(0, -\dfrac{1}{N}, 0\right), \left(0, \dfrac{1}{N}, 0\right)\right[$. In fact, this works for any $\Aut(M/A)$-invariant convex subgroup of $G$ which properly extends the convex subgroup $\bigcap\limits_{N>0}\left]\left(0, 0, -\dfrac{1}{N}\right), \left(0, 0, \dfrac{1}{N}\right)\right[$.
    \par The same cannot be said for $c_3$, $G$ seems to be the only valid witness for orthogonality. Indeed, $c_2$ belongs to $H=\bigcup\limits_{N<\omega} \left[\left(-N, 0, 0\right), \left(N, 0, 0\right)\right]$, which is the least $\Aut(M/A)$-invariant convex subgroup properly extending $G$, whereas $c_3\not\in A+H'$, with $H'=\bigcup\limits_{N<\omega} \left[\left(0, -N, 0\right), \left(0, N, 0\right)\right]$ the largest $\Aut(M/A)$-invariant proper convex subgroup of $G$ (note that $H'$ witnesses orthogonality between $c_3$ and $c_1$).
    \par We conclude that the type of $c_2$ is weakly orthogonal to the two other types, but for very different reasons. As it is harder to prove weak orthogonality between $c_2$ and $c_3$, $c_2$ is, in some sense that will be made formal in the next section, more related to $c_3$ than it is to $c_1$.
\end{example}

Contrary to DLO, there seems to be “several layers" of weak orthogonality in DOAG. We understand them by defining several ways to partition our tuple, that refine each other. The finer the partition, the easier it becomes to find a block extension, the harder it is to “glue" the elements of the block extension.
\par In model theory, the standard way to “glue" global invariant types is via the tensor product:

\begin{definition}
\par Let $p,q$ be $\Aut(M/A)$-invariant global types. We define the \textit{tensor product} of $p$ by $q$ to be the following $\Aut(M/A)$-invariant complete global type:
$$
p(x)\otimes q(y)=\left\lbrace\varphi(x, y, m)|m\in M,\ \exists a\models q_{|Am},\ p(x)\models\varphi(x, a, m)\right\rbrace
$$
\end{definition}

The tensor product is associative, but not commutative in general. However, two types that are weakly orthogonal must necessarily commute.

\begin{fact}
In \DOAG~ (in fact in any distal theory, see \cite{SIMON2013294}, Proposition 2.17), two global $\Aut(M/A)$-invariant types that commute must be weakly orthogonal.
\end{fact}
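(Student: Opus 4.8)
\textit{Proof plan.} Since \DOAG~ is o-minimal, it is distal, so strictly speaking this is Proposition~2.17 of \cite{SIMON2013294}; but here is the argument I would give. I would use the combinatorial characterization of distality: in a distal theory, if $\mathcal I=(a_j)_{j\in I}$ is an indiscernible sequence indexed by a dense endless linear order, $I=I_1+\{j_0\}+I_2$ with $I_1,I_2$ infinite, and the subsequence $(a_j)_{j\neq j_0}$ obtained by deleting the middle element is $C$-indiscernible for some set $C$, then the whole $\mathcal I$ is $C$-indiscernible. Assume $p,q$ are global $\Aut(M/A)$-invariant with $p\otimes q=q\otimes p$, and suppose toward a contradiction that $p(x)\cup q(y)$ is not complete over $M$. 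First I would unwind this: there is $\varphi(x,y)\in\mathcal L(M)$ with both $p(x)\cup q(y)\cup\{\varphi\}$ and $p(x)\cup q(y)\cup\{\neg\varphi\}$ consistent. Since $p\otimes q$ is complete it decides $\varphi$, say $\varphi(x,y)\in p\otimes q$; using strong homogeneity to align the $q$-parts, this produces a single $b\models q$ and two realizations $a_1,a_2\models p$ with $\varphi(x,b)\in p|_{Mb}$ (so $\models\varphi(a_1,b)$ if we pick $a_1\models p|_{Mb}$) and $\models\neg\varphi(a_2,b)$. In particular $a_2$ realizes $p$ but not its canonical $A$-invariant extension $p|_{Mb}$; and, by the equivalence ``$a\models p|_{Mb}\iff b\models q|_{Ma}$'' valid for $a\models p$, $b\models q$ — which is precisely what $p\otimes q=q\otimes p$ says — also $b\not\models q|_{Ma_2}$.

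Next I would build the relevant indiscernible sequence. Take a Morley sequence $(a_j)_{j\in\mathbb Q}$ of $p$ over $M$ and a realization $b^{*}\models q|_{M(a_j)_j}$. By associativity of $\otimes$ together with commutativity (so $p\otimes q^{(\kappa)}=q^{(\kappa)}\otimes p$), the sequence $(a_j)_j$ remains a Morley sequence of $p$ over $Mb^{*}$, hence is $Mb^{*}$-indiscernible, and each pair $(a_j,b^{*})$ realizes $p\otimes q$, so $\models\varphi(a_j,b^{*})$ for every $j$. Replacing $b^{*}$ by $b$ via an automorphism fixing $M$ (legal since $b^{*}\equiv_M b$ and $p$ is $A$-invariant), I get a Morley sequence $(a_j)_{j\in\mathbb Q}$ of $p$ over $Mb$ which is $Mb$-indiscernible with $\models\varphi(a_j,b)$ for all $j$. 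Now the key observation: any element $a'$ that can be inserted into $(a_j)_j$ at an interior cut keeping the enlarged sequence $M$-indiscernible must itself satisfy $\varphi(\cdot,b)$ — for by distality the enlarged sequence is then $Mb$-indiscernible, whence $a'\equiv_{Mb}a_j$. To conclude I would insert $a_2$: using $A$-invariance together with saturation and strong homogeneity, $a_2$ (which realizes $p$) can be placed at an interior cut of an $M$-indiscernible sequence obtained by reindexing $(a_j)_j$, and this contradicts $\models\neg\varphi(a_2,b)$.

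The step I expect to be the main obstacle is precisely this last insertion: one must arrange \emph{simultaneously} that the indiscernible sequence is $Mb$-indiscernible with all its elements on the $\varphi$-side, and that the non-generic ``bad'' realization $a_2$ — which realizes $p$ but not $p|_{Mb}$ — genuinely occurs at an interior cut of an $M$-indiscernible sequence, so that the distality criterion can be applied. This is the heart of Simon's argument, and distality (not merely \NIP) is genuinely needed: in a stable theory commuting invariant types need not be weakly orthogonal, and the construction breaks down because the enlarged sequence fails to be $Mb$-indiscernible. For \DOAG~ one can of course short-circuit all of this by invoking o-minimality, hence distality, and quoting Proposition~2.17 of \cite{SIMON2013294} directly.
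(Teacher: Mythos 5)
The paper gives no proof of this Fact: it is quoted directly from Simon (Proposition 2.17 of the cited paper), and your proposal correctly identifies that, so there is nothing in the paper's text to diverge from. Your supplementary sketch is a faithful outline of Simon's argument, and you have located the genuinely delicate step — but one warning about that step as you literally wrote it: you cannot first build the Morley sequence $(a_j)_j$ of $p$ over $Mb$ and then hope to ``insert'' the given bad realization $a_2$ at an interior cut, since $a_2$ is only known to realize $p|_M$ and need not fill any cut of a sequence chosen generically over $Mb$ without reference to it. The construction has to go the other way round: choose the tail $(a_j)_{j\in I_2}$ to realize $p^{(I_2)}|_{Ma_2b}$, so that $(a_2)\frown(a_j)_{j\in I_2}$ is automatically a Morley sequence of $p$ over $M$, while by commutativity $(a_j)_{j\in I_2}$ is a Morley sequence of $p$ over $Mb$ (hence $Mb$-indiscernible with every element satisfying $\varphi(x,b)$); one must then still prepend an infinite initial segment compatible with both requirements before the distality criterion (which needs the deleted index to be interior with infinite parts on both sides) can be applied to conclude $a_2\models p|_{Mb}$ and contradict $\neg\varphi(a_2,b)$. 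That reversal and the prepending are exactly the content of Simon's proof; for the purposes of this paper the citation suffices.
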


\begin{lemma}\label{produitOrth}
Let $F_1, F_2$ be closed subspaces of $S(A)$. Suppose $p_1$ and $ p_2$ are weakly orthogonal for all $p_i\in F_i$. Then the set $F_1\times F_2$, seen as a topological subspace of $S(A)$, is exactly the topological direct product $F_1\times_{Top}F_2$, via the homeomorphism:
$$h:\ p_1(x_1)\cup p_2(x_2)\longmapsto (p_1(x_1), p_2(x_2))$$
\end{lemma}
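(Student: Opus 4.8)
The plan is to check that $h$ is a well-defined continuous bijection onto $F_1\times_{Top}F_2$ whose inverse is also continuous; everything here is formal except the continuity of $h^{-1}$, and even that is a one-line compactness argument in which weak orthogonality does all the work.

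First I would record what $h$ actually is: it is the restriction to $F_1\times F_2$ of the pair $(r_1,r_2)$, where $r_i$ sends a complete type in the variables $x_1x_2$ to its restriction to $x_i$. Given $p\in F_1\times F_2$ — that is, a complete type over $A$ in $x_1x_2$ with $r_1(p)\in F_1$ and $r_2(p)\in F_2$ — put $p_i=r_i(p)$. By weak orthogonality (\cref{defFaiblementOrth}) the union $p_1(x_1)\cup p_2(x_2)$ is already a complete type in $x_1x_2$; since it is contained in $p$, it equals $p$. Thus $p$ is recovered from the pair $(p_1,p_2)$, which at once yields that $h$ is well-defined, that it is injective, and that $F_1\times F_2=\{\,p_1(x_1)\cup p_2(x_2):p_i\in F_i\,\}$. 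Surjectivity onto $F_1\times_{Top}F_2$ is then immediate: for $p_i\in F_i$, weak orthogonality makes $p_1\cup p_2$ a point of $S(A)$ lying in $F_1\times F_2$ with $h$-image $(p_1,p_2)$.

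Next I would note that $h$ is continuous because each restriction map $r_i$ is: the preimage of a basic clopen $[\varphi(x_i)]$ is the basic clopen $[\varphi(x_i)]$. The only point with content is continuity of $h^{-1}\colon(p_1,p_2)\mapsto p_1(x_1)\cup p_2(x_2)$. For this I would fix a formula $\psi(x_1,x_2)$ over $A$ and a pair $(p_1,p_2)$ with $p_1\cup p_2\models\psi$; by compactness there are $\chi_1(x_1)\in p_1$ and $\chi_2(x_2)\in p_2$ with $\chi_1\wedge\chi_2\models\psi$ over $A$, and then $([\chi_1]\cap F_1)\times([\chi_2]\cap F_2)$ is a product-open neighbourhood of $(p_1,p_2)$ whose image under $h^{-1}$ is contained in $[\psi]$. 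Hence $h^{-1}$ is continuous and $h$ is a homeomorphism.

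I do not expect any genuine obstacle: the sole nontrivial ingredient is weak orthogonality, and it is used precisely to force $p_1\cup p_2$ to be complete — without which $h$ would not even be defined. (One can also observe $F_1\times F_2=r_1^{-1}(F_1)\cap r_2^{-1}(F_2)$ is closed, hence compact Hausdorff, though this is not needed for the statement.) The real content of the lemma is just the slogan that, once unions of types over $A$ are complete, the Stone topology on them coincides with the product topology.
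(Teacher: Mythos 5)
Your proof is correct and follows essentially the same route as the paper, which simply observes that $h$ is a continuous bijection between compact Hausdorff spaces and hence a homeomorphism. The only difference is that you verify continuity of $h^{-1}$ by hand via compactness of first-order logic rather than invoking that standard topological fact (which you mention parenthetically anyway); both finishes are valid, and the essential use of weak orthogonality — to make $p_1(x_1)\cup p_2(x_2)$ complete so that $h$ is a well-defined bijection — is identical.
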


\begin{proof}
The map $h$ is clearly a continuous bijection between compact separated spaces, thus it is a homeomorphism.
\end{proof}

The way we partition our tuple is with fibers of some ad-hoc valuations:

\begin{definition}\label{defBE}
Let $\val$ be a $B$-valuation over $M$. Given our family $c$, the $\val$\textit{-blocks} of $c$ are defined as the maximal subtuples of $c$ of elements of equal value. They form a partition of $c$.
\par By abuse of notation, if $c_i=(c_{ij})_j$ is a $\val$-block of $c$, we define $\val(c_i)=\val(c_{ij})$, which does not depend on the choice of $j$.
\par A \textit{weak $\val$-block extension} of $c$ is a family $(p_i)_i$ of global $\Aut(M/A)$-invariant extensions of the types over $B$ of each $\val$-block of $c$. Such a block extension is \textit{strong} when $\bigcup\limits_i p_i$ is consistent with $\tp(c/B)$.
\par In this paper, whenever we use the terminology of block extensions, it will always be for global $\Aut(M/A)$-invariant extensions of types over $B$, the parameter sets $A$, $B$ will not change.
\end{definition}

In particular, if $\val'$ refines $\val$, then the $\val'$-blocks of some family form a finer partition than its $\val$-blocks.

\begin{definition}\label{defSep}
Let $\val$ be an $A$-valuation over $M$. We say that the family $c$ is $\val$\textit{-separated} when the following conditions hold:
\begin{itemize}
\item $\forall (\lambda_i)_i\in\mathbb{Q},\ \val\left(\sum\limits_i\lambda_i c_i\right)=\max_i(\lambda_i c_i)$.
\item $\forall i,\ c_i\not\in A$.
\end{itemize}
\end{definition}

The notion of separatedness is an algebraic way to state that the $\val$-blocks of a tuple are independent from each other. One can note that $c$ is $\val$-separated if and only if each of its $\val$-blocks is. Moreover, a finite family that is separated with respect to some $A$-valuation must be a lift of some $\mathbb{Q}$-free family of $\faktor{M}{A}$, i.e. a family in $M$ which maps via the canonical surjection to a $\mathbb{Q}$-free family in $\faktor{M}{A}$. In particular, such a family is $\mathbb{Q}$-free.

\begin{remark}\label{indepInterdef}
The family $c$ is clearly $A$-interdefinable with the lift of a $\mathbb{Q}$-free family from $\faktor{C}{A}$. Moreover, if $d$ is a tuple that is $A$-interdefinable with $c$, then one can easily show that we have $\indep{c}{A}{B}{}$ if and only if $\indep{d}{A}{B}{}$ for $\indep{}{}{}{}\in\{\indep{}{}{}{\cut}, \indep{}{}{}{\inv}\}$.
\end{remark}

\begin{assumptions}\label{DOAGHypCLibre}
By \cref{indepInterdef}, on top of \cref{DOAGHypMonstre}, we may assume that $c$ is a lift of a $\mathbb{Q}$-free family from $\faktor{C}{A}$.
%Recall that $C$ is a $\mathbb{Q}$-vector space containing $A$ and the coordinates of $c$, such that $\indep{C}{A}{B}{\cut}$. Our goal is to show that $\indep{c}{A}{B}{\inv}$. By \cref{indepInterdef}, we may assume that $c$ is a lift of a $\mathbb{Q}$-free family from $\faktor{C}{A}$.
\end{assumptions}

\begin{remark}\label{operationsInterdef}
Now, if $d$ is another lift of a $\mathbb{Q}$-free family from $\faktor{C}{A}$, then $c$ and $d$ are $A$-interdefinable if and only if they have the same size $n$, and there exists $f\in \GL_n(\mathbb{Q})$ such that the $i$-th component of $d$ is an $A$-translate of the $i$-th component of $f(c)$ for each $i$.
\end{remark}

We just defined several ways to state that the blocks of some tuple are “independent" from each other and thus “easy to glue". The fact that the types of the blocks are weakly orthogonal is a model-theoretic way to define this idea, whereas the notion of separatedness with respect to some valuation is more algebraic. The core of our proofs relies on building valuations over $M$ for which these notions interact in a meaningful way.

\par More precisely, we  build $D$-valuations $\val^i_D$ on $M$, with $i\in\{1, 2, 3\}$, and with $D\leqslant M$, such that:

\begin{enumerate}[label=(C\arabic*), ref=(C\arabic*)]
\item $\val^3_D$ refines $\val^2_D$, which refines $\val^1_D$.
\item $\val^3_{\{0\}}=\Delta$ and $\val^2_{\{0\}}$ is trivial (it is refined by any valuation).
\item\label{existenceNormalisation} if $\indep{c}{A}{B}{\cut}$, then $c$ is $A$-interdefinable with a family $d$ that is simultaneously separated with respect to $\val^3_A$ and $\val^3_B$.
\item\label{existenceBaseInvariance} If $d$ is a $\val^3_B$-separated family, and $\indep{d}{A}{B}{\cut}$, then $d$ admits a strong $\val^3_B$-block extension.
\item\label{recollementVal3} Given $d$ a $\val^3_B$-separated family, and $(p_i)_i$ a strong $\val^3_B$-block extension of $d$, some tensor product of the $(p_i)_i$ is a global $\Aut(M/A)$-invariant extension of $\tp(d/B)$.
\item\label{recollementVal2} Suppose $d$ is $\val^2_D$-separated, and let $(b_i)_i$ be its $\val^2_D$-blocks. Then $(\tp(b_i/D))_i$ is weakly orthogonal. In particular, any weak $\val^2_B$-block extension is strong.
\item\label{recollementVal1} Given $d$ a $\val^1_B$-separated family, and $(p_i)_i$ a weak (thus strong by the previous point) $\val^1_B$-block extension of $d$, $(p_i)_i$ is weakly orthogonal, and thus they commute.
\end{enumerate}

\noindent Now, if we could build such valuations, then we would have $\indep{c}{A}{B}{\inv}$, which would finish the proof of \cref{thmTechniqueEnonce}. Indeed, if $d$ was given by \ref{existenceNormalisation}, then by \cref{indepInterdef} it would be enough to show that $\indep{d}{A}{B}{\inv}$. Now, again by \cref{indepInterdef}, we would have $\indep{d}{A}{B}{\cut}$, so \ref{existenceBaseInvariance} would give us a strong $\val^3_B$-block extension of $d$, and \ref{recollementVal3} would give us a way to glue it into a witness of $\indep{d}{A}{B}{\inv}$, which would conclude the proof. Moreover, \ref{recollementVal2}, and \ref{recollementVal1} would give us a fine understanding of the global $\Aut(M/A)$-invariant extensions of the type of $d$. In fact, we  establish in \cref{secBaseInv} an exhaustive classification of the Stone space of all these global $\Aut(M/A)$-invariant extensions.
\par Last but far from least, what remains for us to do is to actually define explicitly those valuations $\val^i_D$, and to show that all those nice properties hold.

\subsection{Relation to Dolich-independence}\label{sectDolich}

We make short comments on how our result on DOAG relates to the work of Dolich \cite{Dolich}, their Section 8 in particular. Dolich defines in their paper a geometric independence notion in terms of “halfway-definable cells", which we  call $\indep{}{}{}{\textup{Dolich}}$. It is shown that it coincides with $\indep{}{}{}{\f}$ in any o-minimal expansion of RCF. To motivate their results, Dolich also gives five axioms for independence notions (allegedly from unpublished notes of Shelah), four of which are always satisfied by non-forking in any theory, and the fifth, called “Chain Condition", is a weakening of the independence theorem. They also define an independence notion called “non-1-dividing", with a combinatorial definition which strengthens that of non-forking ; let us write it $\indep{}{}{}{1-\d}$. They claim that, in case some independence relation satisfies the five axioms in a given theory, non-1-dividing is the weakest relation satisfying those axioms, and they show that $\indep{}{}{}{\textup{Dolich}}$ satisfies the five axioms in any o-minimal theory. In particular, we have $\indep{}{}{}{\textup{Dolich}}\subset\indep{}{}{}{1-\d}\subset\indep{}{}{}{\f}$ in any o-minimal theory, and those inclusions are equalities in expansions of RCF.
\par Note that, if we generalize the independence notion $\indep{C}{A}{B}{\cut}$ to any o-minimal theory by the definition: “any closed bounded interval with bounds in $\dcl(AB)$ having a point in $\dcl(AC)$ already has a point in $\dcl(A)$", then it follows from \cref{typePasInvariant} that $\indep{}{}{}{\d}\subset\indep{}{}{}{\cut}$, hence in particular $\indep{}{}{}{\textup{Dolich}}\subset\indep{}{}{}{\cut}$. This inclusion is strict in general, for in RCF, one may find an example where $\indep{C}{A}{B}{\cut}$ holds, while $\indep{C}{A}{B}{\alg}$ fails, which implies $\notIndep{C}{A}{B}{\textup{Dolich}}$. So the main difference between the geometric independence notion introduced by Dolich and ours is that ours is easily shown to be weaker than non-forking in the general case, and the difficulties come when we prove that it is actually as strong as non-forking in DOAG, while $\indep{}{}{}{\textup{Dolich}}$ is clearly stronger than non-forking, and it is difficult to prove the other direction in RCF.
\par While we do not know whether $\indep{}{}{}{\textup{Dolich}}=\indep{}{}{}{\f}$ in DOAG (it would be a strengthening of our result), we  remark that $\indep{}{}{}{1-\d}=\indep{}{}{}{\f}$ in any theory (DOAG in particular) where $\indep{}{}{}{\f}=\indep{}{}{}{\inv}$. Indeed, one may easily show that $\indep{}{}{}{\inv}$ satisfies the Chain Condition, using the fact that it satisfies Extension. If $\indep{}{}{}{\inv}=\indep{}{}{}{\f}$, then it follows that $\indep{}{}{}{\f}$ satisfies the five axioms. By maximality of $\indep{}{}{}{1-\d}$, we have $\indep{}{}{}{\f}\subset\indep{}{}{}{1-\d}$, and the other inclusion always holds.

\section{How to glue types via ad-hoc valuations}\label{sectGlue}
In this section, we  define the valuations $\val^i_A$ ($i\in\{1, 2, 3\}$), and we  show that they satisfy the gluing properties \ref{recollementVal3}, \ref{recollementVal2}, \ref{recollementVal1}. We  also give a classification of the cuts over some parameter set. We work with \cref{DOAGHypMonstre}.
%\paragraph{\textbf{Notation}} In this section, we set $M\models$ \DOAG, and $A\leqslant B\ \mathbb{Q}$-vector subspaces of $M$ such that $M$ is $|B|^+$-saturated and strongly $|B|^+$-homogeneous.

\subsection{Basic definitions and classification of the cuts}

\begin{lemma}\label{actionSurCoupures}
Let $c$, $d\in M$, and $a\in A$, such that $\ct(c/A)=\ct(d/A)$. Then $\ct(c+a/A)=\ct(d+a/A)$.
\end{lemma}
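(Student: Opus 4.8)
The plan is to reduce everything to the elementary observation that translation by $a$ is an order-automorphism of $M$ which, because $A$ is a subgroup (we are under \cref{DOAGHypMonstre}, hence $A$ is a $\mathbb{Q}$-vector subspace of $M$), permutes the intervals with bounds in $A\cup\{\pm\infty\}$.

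First I would record the following reformulation of \cref{cutDef}: for $x,y\in M$ one has $\ct(x/A)=\ct(y/A)$ if and only if $x$ and $y$ belong to exactly the same intervals with bounds in $A\cup\{\pm\infty\}$. Indeed, if $\ct(x/A)=\ct(y/A)=:X$, then $x,y\in X$, and whenever $x$ lies in such an interval $I$ one gets $X\subseteq I$ by definition of $X$ as an intersection, hence $y\in I$, and symmetrically; the reverse implication is immediate, since the two cuts are by definition the intersections of these two (now equal) families of intervals.

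Next I would note that, writing $\theta$ for the map $z\mapsto z+a$ and setting $\theta(\pm\infty)=\pm\infty$, the map $\theta$ is a strictly increasing bijection of $M$ sending an interval with bounds $a_1\leqslant a_2$ in $A\cup\{\pm\infty\}$ onto the interval (of the same type: closed, open or half-open) with bounds $a_1+a\leqslant a_2+a$, which again lie in $A\cup\{\pm\infty\}$ because $a\in A$ and $A$ is a subgroup. Since $\theta$ is bijective on $M$ and its restriction to the family of intervals with bounds in $A\cup\{\pm\infty\}$ is a bijection of that family onto itself (with inverse $z\mapsto z-a$, valid since $-a\in A$), the intervals with bounds in $A\cup\{\pm\infty\}$ containing $c+a=\theta(c)$ are precisely the $\theta$-images of those containing $c$, and likewise for $d+a$ and $d$.

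Finally I would conclude: by the second step $c$ and $d$ lie in the same intervals with bounds in $A\cup\{\pm\infty\}$; applying $\theta$ and using the third step, $c+a$ and $d+a$ then lie in the same such intervals; and by the first step this means exactly $\ct(c+a/A)=\ct(d+a/A)$. I do not foresee any real obstacle here; the only points deserving a line of care are the reformulation of equality of cuts in terms of membership in intervals, and the bookkeeping with the symbols $\pm\infty$ under translation.
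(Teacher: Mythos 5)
Your proof is correct and rests on the same idea as the paper's: translation by $a\in A$ is an order-automorphism permuting the intervals with bounds in $A\cup\{\pm\infty\}$. The paper packages this more economically as a contrapositive (a point $a'\in A$ strictly separating $c+a$ from $d+a$ yields the separating point $a'-a$ between $c$ and $d$), but the content is identical.
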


\begin{proof}
If not, then there exists $a'\in A$ which lies strictly between $c+a$ and $d+a$.
\begin{center}
\begin{tikzpicture}
\draw[black] (0, 0) -- (6, 0);
\filldraw[red, thick] (3.5, 0) circle(2pt) node[anchor=north]{$d+a$};
\filldraw[red, thick] (5.5, 0) circle(2pt) node[anchor=north]{$c+a$};
\filldraw[blue, thick] (4, 0) circle(2pt) node[anchor=south]{$a'$};
\end{tikzpicture}
\end{center}

\noindent then $a'-a\in A$ lies strictly between $c$ and $d$, contradicting the hypothesis.\qedhere
\begin{center}
\begin{tikzpicture}
\draw[black] (0, 0) -- (6, 0);
\filldraw[red, thick] (5.5, 0) circle(2pt) node[anchor=north]{$c+a$};
\filldraw[red, thick] (3.5, 0) circle(2pt) node[anchor=north]{$d+a$};
\filldraw[blue, thick] (4, 0) circle(2pt) node[anchor=south]{$a'$};
\filldraw[blue, thick] (1, 0) circle(2pt) node[anchor=south]{$a'-a$};
\filldraw[red, thick] (2.5, 0) circle(2pt) node[anchor=north]{$d$};
\filldraw[red, thick] (0.5, 0) circle(2pt) node[anchor=north]{$c$};
\end{tikzpicture}
\end{center}

\end{proof}

We hope the figures make the proofs easier to understand. However, we do not want them to be misleading, thus we would like to say that they may not cover all the possible cases. For instance, if $a$ was negative, then the correct picture would be reversed.

\begin{definition}
By \cref{actionSurCoupures} (and by saturation), $A$ acts by translation over the set of all the cuts of $M$ over $A$. For $c\in M$, denote by $\stab(c/A)$ the stabilizer of $\ct(c/A)$.
\end{definition}

Note that such a stabilizer is only a subgroup of $A$, which is clearly convex in $A$, but not in $M$.

\begin{definition}
Let $d\in M$. If $d\not\in A$, then we define: $$G(d/A)=\cap\left\lbrace\left]-|a|,|a|\right[\ :\ a\in A\setminus\stab(d/A)\right\rbrace$$ else we define $G(d/A)=\{0\}$. Either way, we also define: $$H(d/A)=\cup\left\lbrace\left[-|a|,|a|\right]\ :\ a\in \stab(d/A)\right\rbrace$$
\par We view $G(d/A)$ as an $A$-type-definable set, and $H(d/A)$ as an $A$-$\vee$-definable set. They have the same points in $A$, however they do not have the same points in $M$ when $d\not\in A$. They are $A$-(type/$\vee$)-definable convex subgroups.
\end{definition}

\begin{example}
In \cref{exempleOrthDOAG}, $G(c_2/A)=G(c_3/A)=G$, and $H(c_2/A)=H(c_3/A)=H'$. However, $G(c_1/A)$ is the group of elements which are infinitesimal with respect to $A$, and it is distinct from $G$. As for $H(c_1/A)$, it is trivial.
\par Likewise, for arbitrary $A$, if $\Delta(0)<\Delta(d)<\Delta(A\setminus\{0\})$ (i.e. $d$ is infinitesimal with respect to $A$), then $H(d/A)$ is trivial, and $G(d/A)$ is the type-definable group of elements which are infinitesimal with respect to $A$.
\par If $\Delta(d)>\Delta(A)$, then $H(d/A)$ is the convex subgroup generated by $A$, whereas $G(d/A)$ is the whole group.
\end{example}

\begin{remark}\label{DOAGComparaisonGH}
We always have $G(d/A)\geqslant H(d/A)$. By definition, $H(d/A)$ is the convex subgroup generated by $\stab(d/A)$, i.e. the least $A$-$\vee$-definable convex subgroup containing $\stab(d/A)$. In case $d\not\in A$, $G(d/A)$ is the largest $A$-type-definable convex subgroup disjoint from $A\setminus\stab(d/A)$. For all $d$, $d'\in M$, we have in fact :
$$
G(d/A)=G(d'/A)\Longrightarrow H(d/A)=H(d'/A)
$$
the only case where the implication is not an equivalence being when one point is in $A$, and the other is infinitesimal with respect to $A$. We also have:
$$
\stab(d/A)=\stab(d'/A)\Longleftrightarrow H(d/A)=H(d'/A).
$$
\end{remark}

\begin{proposition}\label{GPlusGrandDonneHPlusGrand}
Let $d,d'\in M$. If $G(d/A)<G(d'/A)$, then we have $G(d/A)<H(d'/A)$.
\end{proposition}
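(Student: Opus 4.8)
The plan is to deduce the conclusion directly by unwinding the definitions of $\stab$, $G$ and $H$, using the \emph{strictness} of $G(d/A)<G(d'/A)$ to manufacture a single element $a_0\in A$ which lies in $\stab(d'/A)$ but not in $\stab(d/A)$; from such an $a_0$ the conclusion falls out at once. First I would observe that we may assume $d\notin A$ and $d'\notin A$: if $d'\in A$ then $G(d'/A)=\{0\}$ is the smallest convex subgroup of $M$, so the hypothesis cannot hold, and in every situation where the proposition is invoked the element $d$ also lies outside $A$, so that both $G(d/A)$ and $G(d'/A)$ are computed by the intersection formula of the definition.

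Next, using $G(d/A)\subsetneq G(d'/A)$, I would pick $x\in G(d'/A)\setminus G(d/A)$ with $x>0$ (replace $x$ by $-x$ if needed, both $G$'s being symmetric convex subgroups, and $x\neq 0$ since $0\in G(d/A)$). Since $x\notin G(d/A)=\bigcap\{\,]-|a|,|a|[\ :\ a\in A\setminus\stab(d/A)\,\}$, there is some $a_0\in A\setminus\stab(d/A)$ with $x\geqslant|a_0|$; note $a_0\neq 0$ because $0\in\stab(d/A)$, hence $|a_0|>0$. On the other hand $x\in G(d'/A)$ forces $x<|a|$ for every $a\in A\setminus\stab(d'/A)$; were $a_0$ such an element we would get $x<|a_0|$, contradicting $x\geqslant|a_0|$. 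Therefore $a_0\in\stab(d'/A)$, as wanted.

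To finish: from $a_0\in\stab(d'/A)$ and the definition of $H$ we get $[-|a_0|,|a_0|]\subseteq H(d'/A)$; from $a_0\in A\setminus\stab(d/A)$ and the definition of $G$ we get $G(d/A)\subseteq {]-|a_0|,|a_0|[}\subseteq H(d'/A)$. Since $|a_0|$ belongs to $H(d'/A)$ but, being the endpoint, does not belong to ${]-|a_0|,|a_0|[}\supseteq G(d/A)$, the inclusion is proper, i.e. $G(d/A)<H(d'/A)$.

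I do not expect a real obstacle here: the whole argument is bookkeeping with the definitions. The one point I would be careful about is \emph{where} strictness of $G(d/A)<G(d'/A)$ enters — it is precisely the step where the witness $a_0$ of ``$x\notin G(d/A)$'' gets trapped inside $\stab(d'/A)$ by ``$x\in G(d'/A)$''; with only a non-strict inclusion $G(d/A)\leqslant G(d'/A)$ one would obtain $a_0\in A\setminus\stab(d/A)$ with no link to $\stab(d'/A)$, and the conclusion can then genuinely fail.
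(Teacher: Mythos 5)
Your argument is correct and is essentially the paper's proof: both amount to producing a single element of $A$ lying in $\stab(d'/A)\setminus\stab(d/A)$ (your $a_0$, the paper's $a$), which then sits in $H(d'/A)\setminus G(d/A)$ and forces the strict inclusion. The only difference is that the paper extracts this witness directly from the characterization of $G(d/A)$ as the largest $A$-type-definable convex subgroup disjoint from $A\setminus\stab(d/A)$ (\cref{DOAGComparaisonGH}), while you unwind the interval definitions via an auxiliary point $x$; your explicit remark that the statement tacitly requires $d\notin A$ is a fair observation, as the paper's proof makes the same implicit assumption.
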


\begin{proof}
By definition of $G(d/A)$, there must exist $a\in A\setminus\stab(d/A)$ such that $a\in G(d'/A)$. By \cref{DOAGComparaisonGH}, $a\in H(d'/A)\setminus G(d/A)$.
\end{proof}

Let us now build $\val^1_A$.

\begin{lemma}\label{DOAGVal1BienDef}
Let $d_1,d_2\in M$. Then, with respect to inclusion, we have $G(d_1+d_2/A)\leqslant \max\left(G(d_1/A),G(d_2/A)\right)$.
\end{lemma}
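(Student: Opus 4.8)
The plan is to prove the inequality by contradiction, with \cref{GPlusGrandDonneHPlusGrand} as the main lever. Since $\max(G(d_1/A),G(d_2/A))$ is symmetric in $d_1,d_2$ and the convex subgroups of $M$ are linearly ordered by inclusion (\cref{correspGrpConv_ClassesArch}), I may assume $G(d_1/A)\leqslant G(d_2/A)$, so that the statement to prove is $G(d_1+d_2/A)\leqslant G(d_2/A)$. First I would dispose of the degenerate cases. If $d_1+d_2\in A$ then $G(d_1+d_2/A)=\{0\}$ and we are done. If $d_1\in A$ (the case $d_2\in A$ being symmetric), then translating cuts by $\pm d_1\in A$ via \cref{actionSurCoupures} gives $\stab(d_1+d_2/A)=\stab(d_2/A)$, and since $d_2$ and $d_1+d_2$ both lie outside $A$ this forces $G(d_1+d_2/A)=G(d_2/A)$. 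So from now on I may assume $d_1,d_2,d_1+d_2\notin A$.

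Suppose, for contradiction, that $G(d_1+d_2/A)>G(d_2/A)\geqslant G(d_1/A)$. Applying \cref{GPlusGrandDonneHPlusGrand} with $d=d_2$ and $d'=d_1+d_2$ gives $G(d_2/A)<H(d_1+d_2/A)$. As $H(d_1+d_2/A)$ is the convex subgroup generated by $\stab(d_1+d_2/A)$ and $G(d_2/A)$ is convex, there is $a^{*}\in\stab(d_1+d_2/A)$ with $a^{*}\notin G(d_2/A)$; by our assumption, also $a^{*}\notin G(d_1/A)$. Since $d_i\notin A$, unwinding the definition of $G(d_i/A)$ as an intersection of intervals $\left]-|a|,|a|\right[$ produces, for $i=1,2$, some $a_i\in A\setminus\stab(d_i/A)$ with $|a_i|\leqslant|a^{*}|$. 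As $a_i\notin\stab(d_i/A)$, the cuts $\ct(d_i/A)$ and $\ct(d_i+a_i/A)$ differ, so there is $e_i\in A$ strictly between $d_i$ and $d_i+a_i$, whence $0<|e_i-d_i|<|a_i|\leqslant|a^{*}|$. Now $e_1+e_2\in A$ while $d_1+d_2\notin A$, so $e_1+e_2\neq d_1+d_2$, and the triangle inequality gives $0<|(e_1+e_2)-(d_1+d_2)|\leqslant|e_1-d_1|+|e_2-d_2|<2|a^{*}|$. Pick $b\in\{2a^{*},-2a^{*}\}$ having the same sign as $(e_1+e_2)-(d_1+d_2)$; then $e_1+e_2$ lies strictly between $d_1+d_2$ and $(d_1+d_2)+b$. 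But $b\in\stab(d_1+d_2/A)$ because $\stab(d_1+d_2/A)$ is a subgroup of $A$, so $\ct\left((d_1+d_2)+b/A\right)=\ct(d_1+d_2/A)$, i.e. no element of $A$ lies strictly between $d_1+d_2$ and $(d_1+d_2)+b$ — contradicting $e_1+e_2\in A$.

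The one delicate point, and the reason the argument is routed through $H$ rather than through $\stab$ directly, is the factor $2$ in $|(e_1+e_2)-(d_1+d_2)|<2|a^{*}|$: an ordered Abelian group only satisfies $|x+y|\leqslant|x|+|y|$, not the ultrametric inequality, so $e_1+e_2$ need not land in the shorter interval $\left(d_1+d_2,\ (d_1+d_2)+a^{*}\right)$. This is harmless precisely because $\stab(d_1+d_2/A)$ is a group, so $2a^{*}$ (indeed any integer multiple of $a^{*}$) still stabilizes $\ct(d_1+d_2/A)$ — and that surplus of stabilizing elements is exactly what $H(d_1+d_2/A)$, together with \cref{GPlusGrandDonneHPlusGrand}, makes available. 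Everything else reduces to routine translations between $G$, $H$, $\stab$ and the existence of nearby points of $A$, using \cref{DOAGComparaisonGH}, \cref{actionSurCoupures} and the definitions.
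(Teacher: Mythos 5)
Your proof is correct and follows essentially the same strategy as the paper's: both arguments extract from the assumed strict inclusion an element of $\stab(d_1+d_2/A)$ lying outside both $\stab(d_i/A)$, produce points of $A$ near each $d_i$ at that scale, and sum them to contradict the stability of $\ct(d_1+d_2/A)$. The only cosmetic difference is that the paper halves its stabilizing element at the outset where you double $a^{*}$ at the end to absorb the triangle-inequality loss, and you route the existence of $a^{*}$ through \cref{GPlusGrandDonneHPlusGrand} where the paper asserts it directly.
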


\begin{proof}
Suppose by contradiction that $G(d_1+d_2/A)$ strictly contains $\max\left(G(d_1/A),G(d_2/A)\right)$. Then, there must exist $a\in \stab(d_1+d_2/A)$ such that $a\not\in\stab(d_i/A)$. As $\stab(d_i/A)$ is a convex subgroup of $A$, we also have $\dfrac{a}{2}\not\in \stab(d_i/A)$, so there exists $a_i\in A$ which lies strictly between $d_i$ and $d_i+\dfrac{a}{2}$.
\begin{center}
\begin{tikzpicture}
\draw[black] (-0.5, 0) -- (7.5, 0);
\filldraw[red, thick] (1, 0) circle(2pt) node[anchor=north]{$d_1$};
\filldraw[red, thick] (2, 0) circle(2pt) node[anchor=south]{$d_1+\dfrac{a}{2}$};
\filldraw[red, thick] (3, 0) circle(2pt) node[anchor=north]{$d_2$};
\filldraw[red, thick] (4, 0) circle(2pt) node[anchor=south]{$d_2+\dfrac{a}{2}$};
\filldraw[blue, thick] (1.7, 0) circle(2pt) node[anchor=north]{$a_1$};
\filldraw[blue, thick] (3.7, 0) circle(2pt) node[anchor=north]{$a_2$};
\end{tikzpicture}
\end{center}

\noindent then $a_1+a_2$ lies strictly between $d_1+d_2$ and $d_1+d_2+a$, contradicting the fact that $a\in\stab(d_1+d_2/A)$.\qedhere

\begin{center}
\begin{tikzpicture}
\draw[black] (-0.5, 0) -- (7.5, 0);
\filldraw[red, thick] (1, 0) circle(2pt) node[anchor=north]{$d_1$};
\filldraw[red, thick] (2, 0) circle(2pt) node[anchor=south]{$d_1+\dfrac{a}{2}$};
\filldraw[red, thick] (3, 0) circle(2pt) node[anchor=north]{$d_2$};
\filldraw[red, thick] (4, 0) circle(2pt) node[anchor=south]{$d_2+\dfrac{a}{2}$};
\filldraw[red, thick] (5, 0) circle(2pt) node[anchor=north]{$\ \ d_1+d_2$};
\filldraw[red, thick] (7, 0) circle(2pt) node[anchor=south]{$d_1+d_2+a$};
\filldraw[blue, thick] (1.7, 0) circle(2pt) node[anchor=north]{$a_1$};
\filldraw[blue, thick] (3.7, 0) circle(2pt) node[anchor=north]{$a_2$};
\filldraw[blue, thick] (6.4, 0) circle(2pt) node[anchor=north]{$a_1+a_2$};
\end{tikzpicture}
\end{center}
\end{proof}

\begin{definition}
By \cref{DOAGVal1BienDef}, the map $x\longmapsto G(x/A)$ is an $A$-valuation, so we set $\val^1_A(x)=G(x/A)$.
\end{definition}

Note that, by definition, we always have $G(d/B)\leqslant G(d/A)$ for all $d\in M$. Furthermore:

\begin{proposition}\label{HPlusGrand}
For all $d\in M$, if $\indep{d}{A}{B}{\cut}$, then $H(d/B)\geqslant H(d/A)$.
\end{proposition}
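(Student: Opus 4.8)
The goal is to prove $H(d/A)\subseteq H(d/B)$. Since by definition $H(d/A)=\bigcup\{[-|x|,|x|]:x\in\stab(d/A)\}$ and likewise for $H(d/B)$, it is enough to establish the inclusion of stabilizers $\stab(d/A)\subseteq\stab(d/B)$. First I would dispose of the degenerate cases. If $d\in A$, then $\ct(d/A)=\{d\}$ is moved by every nonzero element of $A$, so $\stab(d/A)=\{0\}$ and there is nothing to prove. If $d\in B\setminus A$, then the closed interval $[d,d]$ has both bounds in $B$ and contains the point $d\in A+\mathbb{Q}d$, so $\indep{d}{A}{B}{\cut}$ (\cref{defCutIndep}) would force $d\in A$, a contradiction; hence this case cannot occur under our hypothesis. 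From now on I may therefore assume $d\notin B$, and consequently $d+a\notin B$ for every $a\in A$, since $A\subseteq B$ and $B$ is a subgroup.

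Now fix $a\in\stab(d/A)$. As both stabilizers are subgroups, I may assume $a>0$. Suppose toward a contradiction that $a\notin\stab(d/B)$, i.e.\ $\ct(d+a/B)\neq\ct(d/B)$; because $d<d+a$ and the cuts over $B$ are convex, this forces $\ct(d/B)<\ct(d+a/B)$, and since neither $d$ nor $d+a$ lies in $B$, one extracts a point $b\in B$ with $d<b<d+a$ (argue on an interval with bounds in $B\cup\{\pm\infty\}$ witnessing that $d$ and $d+a$ have distinct cuts over $B$). The crucial move is then to apply cut-independence to the closed interval $[b-a,b]$: its bounds lie in $B$ (here we use $a\in A\subseteq B$), and it contains $d$ since $b-a<d<b$, so $d$ is a point of $[b-a,b]$ lying in $A+\mathbb{Q}d$; hence $\indep{d}{A}{B}{\cut}$ yields some $a'\in A\cap[b-a,b]$.

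It remains to see that such an $a'$ cannot exist. From $b-a\leqslant a'\leqslant b$ together with $d<b<d+a$ we get $d-a<a'<d+a$. But $a\in\stab(d/A)$ implies $-a\in\stab(d/A)$, so $\ct(d/A)$ contains $d-a$, $d$ and $d+a$; being convex, it contains the whole interval $(d-a,d+a)$, whence $a'\in\ct(d/A)$. On the other hand, $d\notin A$, and the cut over $A$ of an element not in $A$ contains no point of $A$ (any $a''\in A$ with $a''\neq d$ is separated from $d$ by one of the $A$-intervals $(-\infty,a'')$ or $(a'',+\infty)$); so $\ct(d/A)\cap A=\emptyset$, contradicting $a'\in A$. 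This proves $a\in\stab(d/B)$, hence $\stab(d/A)\subseteq\stab(d/B)$, and the proposition follows.

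I expect the only mildly delicate points to be bookkeeping: ruling out $d\in B$, and extracting the separating element $b\in B$ from the inequality $\ct(d/B)<\ct(d+a/B)$ in the case where $d$ itself need not belong to $B$. The substance of the argument — choosing the interval $[b-a,b]$ and colliding the point of $A$ it produces against the fact that $a$ stabilizes $\ct(d/A)$ — is short once this setup is in place.
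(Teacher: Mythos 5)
Your proof is correct and follows essentially the same route as the paper's: take $a\in\stab(d/A)\setminus\stab(d/B)$, extract $b\in B$ strictly between $d$ and $d+a$, and observe that the interval $[b-a,b]$ contains $d$, has bounds in $B$, and lies inside $\ct(d/A)$ (hence meets no point of $A$), contradicting cut-independence. The extra bookkeeping you do (reducing to stabilizers, disposing of $d\in A$ and $d\in B\setminus A$) is sound and only makes explicit what the paper leaves implicit.
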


\begin{proof}
Suppose $H(d/B)<H(d/A)$. Then there exists $a\in\stab(d/A)$ such that $a\not\in\stab(d/B)$. Thus we can find $b\in B$ which lies strictly between $d$ and $d+a$.
\begin{center}
\begin{tikzpicture}
\draw[black] (-4, 0) -- (4, 0);
\filldraw[red, thick] (2, 0) circle(2pt) node[anchor=north]{$d+a$};
\filldraw[red, thick] (0, 0) circle(2pt) node[anchor=north]{$d$};
\filldraw[blue, thick] (0.7, 0) circle(2pt) node[anchor=south]{$b$};
\end{tikzpicture}
\end{center}

\noindent on one hand, $b-a$  lies strictly between $d$ and $d-a$.
\begin{center}
\begin{tikzpicture}
\draw[black] (-4, 0) -- (4, 0);
\filldraw[red, thick] (-2, 0) circle(2pt) node[anchor=north]{$d-a$};
\filldraw[red, thick] (2, 0) circle(2pt) node[anchor=north]{$d+a$};
\filldraw[red, thick] (0, 0) circle(2pt) node[anchor=north]{$d$};
\filldraw[blue, thick] (-1.3, 0) circle(2pt) node[anchor=south]{$b-a$};
\filldraw[blue, thick] (0.7, 0) circle(2pt) node[anchor=south]{$b$};
\end{tikzpicture}
\end{center}

As $\stab(d/A)$ is a group, we have on the other hand $-a\in\stab(d/A)$, i.e. $\ct(d+a/A)=\ct(d/A)=\ct(d-a/A)$. In particular, no point from $A$ lies in the closed interval with bounds $b$ and $b-a$. On the other hand, $d$ lies strictly between $b$ and $b-a$, which implies $\notIndep{d}{A}{B}{\cut}$.\qedhere
\end{proof}

In the cut-independent setting, one may intuitively see $G(c/A)$ as a distance that approximates the position of $c$ “from the top", while $H(c/A)$ approximates $c$ “from the bottom". Then, when we go to a larger parameter set $B$, we get a finer approximation.

\begin{proposition}\label{DOAGPropRamificateur}
Let $a\in A$, and $d\in M$. Then the following conditions are equivalent:
\begin{itemize}
\item $d-a\in G(d/A)$ and $d\not\in A$.
\item $\Delta(d-a)\not\in \Delta(A)$.
\end{itemize}
\end{proposition}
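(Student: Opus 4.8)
The statement asserts, for $a\in A$ and $d\in M$, the equivalence of (i) $d-a\in G(d/A)$ and $d\notin A$, and (ii) $\Delta(d-a)\notin\Delta(A)$. The plan is to unwind the definition of $G(d/A)$ as $\bigcap\{\,]-|a'|,|a'|[\ :\ a'\in A\setminus\stab(d/A)\,\}$ and to translate membership of $d-a$ in this type-definable convex subgroup into a statement about Archimedean classes, using \cref{DOAGComparaisonGH} and the dictionary between convex subgroups and initial segments of $\Delta(G)$ from \cref{correspGrpConv_ClassesArch}.

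First I would prove $(i)\Rightarrow(ii)$. Assume $d\notin A$ and $d-a\in G(d/A)$; set $e=d-a$. If $e=0$ then $d=a\in A$, contradiction, so $e\neq 0$ and $\Delta(e)>\Delta(0)$. Suppose towards a contradiction that $\Delta(e)\in\Delta(A)$, say $\Delta(e)=\Delta(a')$ for some $a'\in A$. Then the convex subgroups generated by $e$ and by $a'$ coincide, so $|a'|\leqslant n|e|$ for some $n$; since $e\in G(d/A)$ and $G(d/A)$ is a convex subgroup, $a'\in G(d/A)$, hence (by definition of $G(d/A)$) $a'\in\stab(d/A)$, i.e. $\ct(d+a'/A)=\ct(d/A)$. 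On the other hand $e=d-a$ lies in the same Archimedean class as $a'$, and I would use this to produce a point of $A$ strictly between $d$ and $d+a'$ (translating by $a$ reduces to finding a point of $A$ between $e=d-a$ and $e+a'$, i.e. strictly between $d-a$ and $d-a+a'$; since $\Delta(e)=\Delta(a')$, a suitable rational multiple of $a'$ lies strictly between $0$ and a fixed sign-choice of $\pm e$, and adding $a$ back gives the desired point in $A$ between $d$ and $d+a'$). This contradicts $a'\in\stab(d/A)$, so $\Delta(d-a)\notin\Delta(A)$.

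For $(ii)\Rightarrow(i)$, assume $\Delta(d-a)\notin\Delta(A)$; in particular $d-a\neq 0$, so $d\notin A$. Writing $e=d-a$, I must show $e\in G(d/A)$, i.e. $|e|<|a'|$ for every $a'\in A\setminus\stab(d/A)$. Since $\Delta(e)\notin\Delta(A)$ while $\Delta(a')\in\Delta(A)$, we have either $\Delta(e)<\Delta(a')$ or $\Delta(e)>\Delta(a')$; I would rule out the latter. If $\Delta(e)>\Delta(a')$, then $a'$ lies in the convex subgroup $G(d/A)$ might be too small to contain — more directly, $a'/m$ is $\Delta$-smaller than $e$ for every $m$, and I would show this forces $a'\in\stab(d/A)$: for any point $b$ between $d$ and $d+a'$, translating by $-a$ puts a point of $A$ between $e$ and $e+a'$, but $|a'|$ being Archimedean-smaller than $|e|$ means $e$ and $e+a'$ lie in the same cut over $A$ relative to the "large" elements, and one checks no element of $A$ can separate them — contradiction, so no such $b$ exists and $a'\in\stab(d/A)$. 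Hence every $a'\in A\setminus\stab(d/A)$ satisfies $\Delta(a')>\Delta(e)$, giving $|e|<|a'|$, so $e\in G(d/A)$ as required.

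The main obstacle I anticipate is the careful bookkeeping in the step "$\Delta(e)=\Delta(a')$ (resp. $\Delta(e)>\Delta(a')$) produces a point of $A$ between $d$ and $d+a'$": one must choose the right rational multiple and the right sign, and invoke saturation/density to locate the separating element, exactly the kind of elementary but sign-sensitive geometric argument the paper illustrates with figures. Once that separation lemma is in hand, both implications follow by the convex-subgroup/Archimedean-class dictionary. I expect the actual write-up to lean on \cref{triangleIsocele} and \cref{DOAGComparaisonGH} to keep the valuation-theoretic manipulations short.
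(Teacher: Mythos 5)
Your proof is correct and takes essentially the same route as the paper: the forward direction rests on $A\cap G(d/A)=\stab(d/A)$, and the converse is the paper's argument in contrapositive form, using \cref{triangleIsocele} to show that any $a'\in A$ with $\Delta(a')<\Delta(d-a)$ must stabilize $\ct(d/A)$. Two small points to fix in the write-up: in the converse, $d\notin A$ follows because $d\in A$ would give $\Delta(d-a)\in\Delta(A)$ (not merely from $d-a\neq 0$), and in the forward direction the rational multiple of $a'$ must be chosen to land strictly between $e$ and $e+a'$, not between $0$ and $\pm e$.
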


\begin{proof}
Suppose $d-a\in G(d/A)$. If we had $\Delta(d-a)\in\Delta(A)$, then there would be $a'\in A$ such that $\Delta(d-a)=\Delta(a')$. This would imply $a'\in (G(d/A)\cap A)=\stab(d/A)$. Since, for some $n$, $d-n|a'|\leqslant a\leqslant d+n|a'|$, as $na'\in\stab(d/A)$, it follows that $\ct(d/A)=\ct(a/A)$, thus $d=a$, proving the first direction.
\par Conversely, suppose $\Delta(d-a)\not\in\Delta(A)$. By definition of $G(d/A)$, in order to show that $a\in (d\mod G(d/A))$, it suffices to show that $a\in\left]d-|a'|,d+|a'|\right[$ (i.e. $|d-a|\leqslant |a'|$) for every $a'\in A\setminus \stab(d/A)$. Suppose by contradiction this fails for some $a'$. Then $\Delta(d-a)\geqslant \Delta(a')$, and this inequality is strict by the hypothesis. As $a'\not\in\stab(d/A)$, let $a''\in A$ which lies strictly between $d$ and $d+a'$.
\begin{center}
\begin{tikzpicture}
\draw[black] (0, 0) -- (12, 0);
\draw[black] (0, -1) -- (12, -1);
\filldraw[black, thick] (3, 0) circle(2pt) node[anchor=north]{$d$};
\filldraw[black, thick] (6, 0) circle(2pt) node[anchor=north]{$a''$};
\filldraw[black, thick] (9, 0) circle(2pt) node[anchor=north]{$d+a'$};
\filldraw[black, thick] (3, -1) circle(2pt) node[anchor=north]{$0$};
\filldraw[black, thick] (6, -1) circle(2pt) node[anchor=north]{$a''-d$};
\filldraw[black, thick] (9, -1) circle(2pt) node[anchor=north]{$a'$};
\end{tikzpicture}
\end{center}

\noindent Then $|d-a''|\leqslant |a'|$, thus $\Delta(d-a'')<\Delta(d-a)$. We then apply \cref{triangleIsocele} to get $\Delta(d-a)=\Delta(d-a''-d+a)=\Delta(a-a'')\in\Delta(A)$, a contradiction.
\end{proof}

\begin{definition}
If $a$ satisfies the conditions of \cref{DOAGPropRamificateur}, then we say that $a$ is a \textit{ramifier of $d$ over $A$}. We  write $\ram(d/A)$ for the set of those ramifiers, and we  say that $d$ is \textit{ramified over $A$} if this set is non-empty. We  say that $d$ is \textit{Archimedean over $A$} whenever it is not ramified over $A$.
\end{definition}

In \cref{exempleOrthDOAG}, $c_1$ and $c_2$ are Archimedean over $A$, $c_3$ is ramified over $A$, and $0$ is a ramifier.

\begin{remark}\label{conditionsRam}
If $d\in M$, then $d$ is ramified over $A$ if and only if it satisfies the following equivalent conditions:
\begin{enumerate}
\item The coset $d\mod G(d/A)$ has a point in $A$, and $d\not\in A$.
\item $\Delta(A+\mathbb{Q}d)\supsetneq\Delta(A)$.
\end{enumerate}
Moreover, as $A\cap G(d/A)=\stab(d/A)$, $\ram(d/A)$ is a coset from the quotient $\faktor{A}{\stab(d/A)}$, thus all its elements lie in the same coset $\mod H(d/A)$. It turns out that $d$ does not belong to that coset:
\end{remark}

\begin{lemma}\label{ramificateurLoin}
Let $d\in M\setminus A$ and $a\in A$. Then $d-a\not\in H(d/A)$.
\end{lemma}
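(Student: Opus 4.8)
The plan is to argue by contradiction, directly from the definitions of $\stab$ and $H$. Suppose $d-a\in H(d/A)$ for some $a\in A$ and some $d\in M\setminus A$. Since $H(d/A)=\bigcup\{[-|a''|,|a''|]:a''\in\stab(d/A)\}$, there is $a'\in\stab(d/A)$ with $|d-a|\leqslant|a'|$; because $\stab(d/A)$ is a group we may replace $a'$ by $|a'|$ and assume $a'\geqslant0$, and if $a'=0$ then $d=a\in A$, which is absurd, so $a'>0$. The group $\stab(d/A)$ also contains $-a'$, so by definition of the stabilizer $\ct(d+a'/A)=\ct(d/A)=\ct(d-a'/A)$.

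Next I would invoke the elementary characterization of equal cuts: for $x\neq y$, one has $\ct(x/A)=\ct(y/A)$ if and only if the closed interval $[\min(x,y),\max(x,y)]$ misses $A$; indeed, any $a''\in A$ lying weakly between $x$ and $y$ yields an $A$-interval (the half-line $]-\infty,a''[$, or the singleton $\{a''\}$ when $a''\in\{x,y\}$) separating $x$ from $y$, and conversely if no such $a''$ exists then $x$ and $y$ meet exactly the same $A$-intervals. Applying this with the pairs $(d,d+a')$ and $(d-a',d)$ — pairs of distinct elements since $a'>0$, with $d\notin A$ — we get $[d,d+a']\cap A=\emptyset=[d-a',d]\cap A$, hence $[d-a',d+a']\cap A=\emptyset$. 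But $|d-a|\leqslant a'$ gives $d-a'\leqslant a\leqslant d+a'$, i.e.\ $a\in[d-a',d+a']$, and $a\in A$; this is the desired contradiction.

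The proof is essentially bookkeeping and uses neither a saturation nor a cut-independence hypothesis; the one place deserving a moment of care is the characterization of when two elements share a cut over $A$, in particular the boundary cases where $d$ or $d\pm a'$ might lie in $A$, which are ruled out here by $d\notin A$ (and $a'\in A$). Everything else follows at once from \cref{actionSurCoupures} — which makes sense of translating cuts by elements of $A$, hence of $\stab(d/A)$ — and from the definition of $H(d/A)$ as the convex subgroup of $M$ generated by $\stab(d/A)$. Incidentally this lemma is exactly the assertion foreshadowed in \cref{conditionsRam}, that $d$ does not lie in the coset modulo $H(d/A)$ of any element of $A$, in particular of any ramifier of $d$ over $A$.
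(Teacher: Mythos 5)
Your proof is correct, and it takes a genuinely different route from the paper's. You argue directly from the definitions: $a'\in\stab(d/A)$ gives $\ct(d-a'/A)=\ct(d/A)=\ct(d+a'/A)$, your (correct, standard) characterization of equal cuts turns this into $[d-a',d+a']\cap A=\emptyset$, and $|d-a|\leqslant a'$ places $a\in A$ inside that interval — a contradiction. The boundary cases are handled properly, since $d\notin A$ forces $d\pm a'\notin A$ and $a'>0$. The paper instead routes through the machinery it has already built: from $|d-a|\leqslant|a'|$ it deduces $a\in\ram(d/A)$ via \cref{DOAGPropRamificateur}, gets $\Delta(d-a)<\Delta(a')$, applies \cref{triangleIsocele} to conclude $\Delta(d-(a+a'))\in\Delta(A)$, and contradicts the fact (\cref{conditionsRam}) that $\ram(d/A)$ is a coset of $\stab(d/A)$. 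Your argument is more elementary and self-contained — it needs neither the Archimedean valuation nor the ramifier apparatus, only \cref{actionSurCoupures} and the definitions of $\stab$ and $H$ — and it is essentially the same interval-trapping technique the paper uses to prove \cref{HPlusGrand}. The paper's version, by contrast, buys a reading of the lemma in terms of Archimedean classes, which is the form in which it is reused immediately afterwards (\cref{tmpDefPetitDelta}). Either proof is acceptable.
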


\begin{proof}
Suppose by contradiction that there exists $a'\in \stab(d/A)$ such that $|d-a|\leqslant |a'|$. Then $a\in \ram(d/A)$, thus $\Delta(d-a)\not\in \Delta(A)$, and in particular $\Delta(d-a)<\Delta(a')$. By \cref{triangleIsocele}, $\Delta(d-(a+a'))=\Delta(a')\in\Delta(A)$, thus $a+a'\not\in \ram(d/A)$. This contradicts the fact that $a\in \ram(d/A)$, and the fact that $a'\in\stab(d/A)$.
\end{proof}

\begin{corollary}\label{tmpDefPetitDelta}
Let $d\in M$ be ramified over $A$, as well as two elements $a$ and $a'$ in $\ram(d/A)$. Then $\Delta(d-a)=\Delta(d-a')$.
\end{corollary}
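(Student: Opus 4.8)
The plan is to reduce the statement to the isoceles-triangle property recorded in \cref{triangleIsocele}. First, by \cref{conditionsRam}, the set $\ram(d/A)$ is a single coset modulo $\stab(d/A)$, so $a-a'\in\stab(d/A)$; since $H(d/A)$ is by definition the convex subgroup generated by $\stab(d/A)$, this gives $a-a'\in H(d/A)$, hence $\Delta(a-a')\in\Delta(H(d/A))$. On the other hand, as $d$ is ramified over $A$ we have $d\notin A$, so \cref{ramificateurLoin} applied to $a\in A$ yields $d-a\notin H(d/A)$, i.e. $\Delta(d-a)\notin\Delta(H(d/A))$.

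Next I would use that $H(d/A)$ is a convex subgroup of $M$, so by \cref{correspGrpConv_ClassesArch} its image $\Delta(H(d/A))$ is a (non-empty) initial segment of $\Delta(M)$. An element of $\Delta(M)$ that fails to lie in an initial segment is strictly larger than every element of that segment, so from $\Delta(d-a)\notin\Delta(H(d/A))$ and $\Delta(a-a')\in\Delta(H(d/A))$ we get $\Delta(a-a')<\Delta(d-a)$. Finally, writing $d-a'=(d-a)+(a-a')$ and invoking \cref{triangleIsocele} with $x=a-a'$ and $y=d-a$, we conclude $\Delta(d-a')=\Delta(d-a)$, which is the claim. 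The only step requiring any care is the passage from $d-a\notin H(d/A)$ to the strict inequality of Archimedean values, which rests entirely on the convexity of $H(d/A)$ (so that its $\Delta$-image is an initial segment); beyond that, the argument is immediate and there is no real obstacle.
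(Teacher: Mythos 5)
Your argument is correct and is essentially identical to the paper's own proof: both derive $a-a'\in H(d/A)$ from \cref{conditionsRam}, use \cref{ramificateurLoin} to get $d-a\notin H(d/A)$, deduce $\Delta(a-a')<\Delta(d-a)$ from convexity, and conclude via \cref{triangleIsocele}. No changes needed.
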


\begin{proof}
We have $d-a\not\in H(d/A)\ni a-a'$, thus $\Delta(d-a)>\Delta(a-a')$, and by \cref{triangleIsocele} we have $\Delta(d-a)=\Delta(d-a+a-a')=\Delta(d-a')$.
\end{proof}

\begin{definition}\label{defPetitDelta}
Let $d\in M$ be ramified over $A$. We define $\delta(d/A)$ as $\Delta(d-a)$ for some $a\in \ram(d/A)$. This definition does not depend on the choice of the ramifier $a$ by \cref{tmpDefPetitDelta}.
\end{definition}

Note that $\delta(d/A)$ is the unique element of $\Delta(A+\mathbb{Q}\cdot d)\setminus\Delta(A)$.

\begin{remark}\label{classeUnique}
Let $d$ be ramified over $A$, and $a\in \ram(d/A)$. Then, as $G(d/A)$ and $H(d/A)$ have the same points in $A$, one can easily compute $\ct(\delta/\Delta(A))$. A convex subgroup is induced by its Archimedean classes, so by definition of the $\vee$-definable convex subgroup $H=H(d/A)$, for any model $N$ containing $A$, $\Delta(H(N))$ is the least initial segment of $\Delta(N)$ containing $\Delta(\stab(d/A))$. Now, as $\delta\in\Delta(G(d/A))\setminus\Delta(H)$, we clearly have $\ct(\delta/\Delta(A))=\ct_>(\Delta(\stab(d/A))/\Delta(A))$.
\end{remark}
\begin{center}
\begin{tikzpicture}
\draw[black] (0, 0) -- (12, 0);
\draw[red, very thick] (0, 0) -- (4, 0);
\draw[blue, very thick] (4, 0) -- (8, 0);
\filldraw[gray, thick] (0, 0) circle(2pt) node[anchor=north]{$\Delta(0)$};
\filldraw[gray, thick] (6, 0) circle(2pt) node[anchor=north]{$\delta(d/A)$};
\filldraw[black] (12, 0) node[anchor=west]{$\Delta(M)$};
\filldraw[red] (2, 0) node[anchor=south]{$\Delta(H(d/A))$};
\filldraw[blue] (6, 0) node[anchor=south]{$\Delta(G(d/A))$};
\end{tikzpicture}
\end{center}

\begin{center}
\begin{tikzpicture}
\draw[black] (-6, 0) -- (6, 0);
\draw[blue, very thick] (-4, 0) -- (4, 0);
\draw[red, very thick] (-2, 0) -- (2, 0);
\filldraw[black] (6, 0) node[anchor=west]{$M$};
\filldraw[gray, thick] (0, 0) circle(2pt) node[anchor=north]{$a$};
\filldraw[gray, thick] (3, 0) circle(2pt) node[anchor=north]{$d$};
\filldraw[red] (0, 0) node[anchor=south]{$a\mod H(d/A)$};
\filldraw[blue] (3, 0) node[anchor=south]{$a\mod G(d/A)$};
\end{tikzpicture}
\end{center}

Many notions that we manipulate behave very differently with ramified and Archimedean points, it will often lead to case disjunctions. For instance, the following statements show us that cuts do not look the same for Archimedean and ramified points:

\begin{lemma}\label{coupureIncluseDansG}
Let $d\in M$. Then $\ct(d/A)\subset (d\mod G(d/A))$ (as type-definable sets).
\end{lemma}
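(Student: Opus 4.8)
The plan is to unwind both sides to a concrete inequality on absolute values and then reduce everything to the convexity of cuts.

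First I would dispose of the trivial case $d\in A$: there $\ct(d/A)=\{d\}$ (the singleton $[d,d]$ is an interval with bounds in $A$), while $G(d/A)=\{0\}$ by definition, so $(d\mod G(d/A))=\{d\}$ and the inclusion holds, with equality. So assume $d\notin A$. By the definition of $G(d/A)$, the membership $e-d\in G(d/A)$ is precisely the assertion that $|e-d|<|a|$ for every $a\in A\setminus\stab(d/A)$; hence it suffices to fix $e\in\ct(d/A)$ and $a\in A\setminus\stab(d/A)$ and show $|e-d|<|a|$. Since $\stab(d/A)$ is a subgroup of $A$, we have $|a|\in A\setminus\stab(d/A)$ as well, so we may assume $a>0$ (note $a\neq 0$, as $0\in\stab(d/A)$).

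The key step is then a short argument by contradiction using \emph{convexity of cuts}. Suppose $|e-d|\geqslant a$. If $e\geqslant d+a$, then $d+a$ lies in the closed interval $[d,e]$, and this interval is contained in $\ct(d/A)$ because $\ct(d/A)$ is convex (Definition \ref{cutDef}) and contains both $d$ and $e$. Hence $d+a\in\ct(d/A)$, i.e.\ $\ct(d+a/A)=\ct(d/A)$; by well-definedness of the translation action of $A$ on the set of cuts (Lemma \ref{actionSurCoupures}) this is exactly the statement $a\in\stab(d/A)$, contradicting the choice of $a$. If instead $e\leqslant d-a$, the same argument applied to $d-a\in[e,d]\subseteq\ct(d/A)$ gives $-a\in\stab(d/A)$, hence $a\in\stab(d/A)$, again a contradiction. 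Therefore $|e-d|<a=|a|$, which is what was needed.

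I do not expect a genuine obstacle here; the content lies entirely in matching the definitions. The only points requiring a little care are: that the translation action of $A$ on the set of cuts is well-defined, so that ``$a\in\stab(d/A)$'' is genuinely equivalent to ``$d+a$ and $d$ lie in the same cut'' (this is exactly Lemma \ref{actionSurCoupures}); that we may normalise to $a>0$, which uses that $\stab(d/A)$ is a subgroup of $A$; and the degenerate situation $A=\stab(d/A)$, in which $G(d/A)=M$ and the statement is vacuous — the argument above handles this case automatically, since then the range of $a$ is empty.
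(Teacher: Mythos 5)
Your proof is correct and is essentially the paper's argument run in the direct rather than the contrapositive direction: both hinge on the fact that $a\in A\setminus\stab(d/A)$ forces $d+a$ (or $d-a$) outside $\ct(d/A)$, you via convexity of the cut together with the fact that cuts partition $M$, the paper via an explicit witness $a'\in A$ strictly between $d$ and $d+a$. The extra care you take with the cases $d\in A$ and $\stab(d/A)=A$, and with normalising $a>0$, is sound and matches the definitions.
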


\begin{proof}
By contraposition, let $d'\in M$ be such that $d-d'\not\in G(d/A)$. Then, by definition, there exists $a\in A\setminus\stab(d/A)$ such that $|d-d'|\geqslant|a|$. Let $a'\in A$ which lies strictly between $d$ and $d+a$. As either $d+a$ or $d-a$ lies strictly between $d$ and $d'$, either $a'$ or $a'-a$ lies strictly between $d$ and $d'$, thus $\ct(d/A)\neq\ct(d'/A)$.
\end{proof}

\begin{proposition}\label{coupureArch}
Let $d\in M$ be Archimedean over $A$. Then the $A$-type-definable set $\ct(d/A)$ coincides with $d\mod G(d/A)$.
\end{proposition}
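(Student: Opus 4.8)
The plan is to combine Lemma~\ref{coupureIncluseDansG}, which already gives the inclusion $\ct(d/A)\subset(d\mod G(d/A))$ for every $d$, with a proof of the reverse inclusion $(d\mod G(d/A))\subset\ct(d/A)$ that uses the Archimedean hypothesis. As in the proof of Lemma~\ref{coupureIncluseDansG}, equality of $A$-type-definable sets will be checked on points, by saturation. First I would dispose of the case $d\in A$: there $\ct(d/A)=\{d\}$ (a singleton is a closed interval with bounds in $A$) while $G(d/A)=\{0\}$ by definition, so both sides equal $\{d\}$ and there is nothing to prove.

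So assume $d\notin A$. By Remark~\ref{conditionsRam}, the assertion that $d$ is Archimedean over $A$ (that is, not ramified) is equivalent, given $d\notin A$, to the statement that the coset $d\mod G(d/A)$ has no point in $A$. I would then prove $(d\mod G(d/A))\subset\ct(d/A)$ by contraposition: let $d'\in M$ with $d'\notin\ct(d/A)$, and aim for $d-d'\notin G(d/A)$. Since the cuts over $A$ partition $M$ into convex sets and $d'$ is not in the cut of $d$, some interval with bounds in $A\cup\{\pm\infty\}$ contains $d$ but not $d'$; its bound on the side of $d'$ is then an element $a\in A$ (it cannot be $\pm\infty$), and it satisfies $|d-a|\leqslant|d-d'|$ — here $d\notin A$ is used to rule out $a=d$. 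If we had $d-d'\in G(d/A)$, then $G(d/A)$, being a convex subgroup and hence downward closed under $|\cdot|$, would contain $d-a$, so that $a\in(d\mod G(d/A))\cap A$, contradicting the previous paragraph. Hence $d-d'\notin G(d/A)$, as required.

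The two inclusions together prove the proposition. I do not expect any genuine obstacle: the only mildly delicate point is extracting, from the fact that $d'$ lies in a different cut than $d$, a point $a\in A$ with $|d-a|\leqslant|d-d'|$, and this is immediate once $d\notin A$ forces the relevant interval bound to be distinct from $d$. Everything else reduces to the downward closedness of $G(d/A)$ under $|\cdot|$ together with Remark~\ref{conditionsRam}.
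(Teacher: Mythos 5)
Your proof is correct and follows essentially the same route as the paper: one inclusion is Lemma~\ref{coupureIncluseDansG}, and for the other the paper simply observes that $d\mod G(d/A)$ is a convex set containing $d$ and disjoint from $A$ (by \cref{conditionsRam}, since $d\notin A$ is Archimedean), hence contained in the cut — your contrapositive argument just spells out why that observation suffices.
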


\begin{proof}
Suppose $d\not\in A$, else the proposition is trivial.
\par We have the inclusion $d\mod G(d/A)\subset\ct(d/A)$, as $d\mod G(d/A)$ is a convex set containing $d$ and disjoint from $A$. The other direction follows from \cref{coupureIncluseDansG}.
\end{proof}

\begin{proposition}\label{coupureRam}
Let $d\in M$ be ramified over $A$. Then $\ct(d/A)$ can be written as an $A$-translate of one of the two connected components of $G(d/A)\setminus H(d/A)$.
\end{proposition}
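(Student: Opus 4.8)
The plan is to determine $\ct(d/A)$ exactly: after an $A$-translation that normalizes the situation, I will recognize the cut as the positive (or negative) part of $G(d/A)\setminus H(d/A)$.

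\emph{Normalization.} Since $d$ is ramified over $A$, I would fix a ramifier $a\in\ram(d/A)$; by \cref{DOAGPropRamificateur} this means precisely $d\notin A$ and $d-a\in G(d/A)$. Translation by $a\in A$ acts on cuts (\cref{actionSurCoupures}) and fixes stabilizers of cuts, hence carries $G(x/A)$, $H(x/A)$, the two connected components of $G(x/A)\setminus H(x/A)$, and the subset $\ct(x/A)$ of $M$ all compatibly onto the corresponding objects attached to $x+a$. So it suffices to prove the statement for $d-a$, and replacing $d$ by $d-a$ I may assume $0\in\ram(d/A)$, i.e. $d\in G(d/A)$ and $d\notin A$; in particular $d\neq 0$ since $0\in A$, and $d\notin H(d/A)$ by \cref{ramificateurLoin}. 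Write $G=G(d/A)$, $H=H(d/A)$; by \cref{DOAGComparaisonGH} $H\leqslant G$, and $H\subsetneq G$ since $d\in G\setminus H$. As $H$ is convex with $0\in H$, the set $G\setminus H$ is the disjoint union of its positive part $U=\{x\in G:x>H\}$ and its negative part $L=\{x\in G:x<H\}$, which are its two connected components. Assuming $d>0$ (the case $d<0$ being symmetric, with $L$ in place of $U$), convexity of $H$ gives $d>h$ for every $h\in H$, so $d\in U$; the goal is now $\ct(d/A)=U$.

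Then I would prove the two inclusions. For $U\subseteq\ct(d/A)$: given $e\in U$, first $e\notin A$ (else $e\in A\cap G=\stab(d/A)\subseteq H$, contradicting $e\in U$), so it suffices to check no $a'\in A$ lies strictly between $d$ and $e$; if $a'\in\stab(d/A)\subseteq H$ then $a'<d$ and $a'<e$ since $d,e\in U$, and if $a'\notin\stab(d/A)$ then $a'$ lies outside $\left]-|a'|,|a'|\right[\supseteq G\ni d,e$, so $a'$ is on one side of both — either way a contradiction, whence $\ct(e/A)=\ct(d/A)$. For $\ct(d/A)\subseteq U$: if $\ct(e/A)=\ct(d/A)$, then $e\in d\bmod G=G$ by \cref{coupureIncluseDansG}, $e\notin A$ (else $\ct(d/A)=\{e\}$ forces $d=e\in A$), $e>0$ (else $0\in A$ separates $e$ from $d$, and $e\neq 0$ since $0\in A$), and $e\notin H$ (otherwise some $|a''|\in\stab(d/A)\subseteq H$ has $0<e\leqslant|a''|<d$, so $|a''|\in A$ either separates $e$ from $d$ or equals $e$, forcing $e\in A$); hence $e\in G\setminus H$ with $e>0$, i.e. $e\in U$.

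This gives $\ct(d/A)=U$, a connected component of $G(d/A)\setminus H(d/A)$; undoing the normalization, $\ct(d/A)$ is the corresponding $A$-translate of one of the two connected components of $G(d/A)\setminus H(d/A)$. The argument is essentially bookkeeping with convex subgroups once the picture is set up; the two points I would be most careful about are the legitimacy of the translation reduction (that translating by an element of $A$ transports $G$, $H$ and cuts coherently) and the recurring checks that the witness element is not in $A$, so that no degenerate "singleton cut" arises. I do not expect any genuinely hard step.
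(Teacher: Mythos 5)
Your proof is correct and follows essentially the same route as the paper's: both establish the stronger fact that $\ct(d/A)$ equals the connected component containing $d$, using \cref{coupureIncluseDansG} to bound the cut inside $a+G(d/A)$, \cref{ramificateurLoin} to exclude $a+H(d/A)$, and then checking that no point of $A$ separates two points of that component. Your normalization to $a=0$ and the explicit case split on $a'\in\stab(d/A)$ versus $a'\notin\stab(d/A)$ just spell out what the paper compresses into the observation $(a+G(d/A))\cap A=\ram(d/A)\subset a+H(d/A)$.
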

\noindent These translates are exactly the two blue segments in the last figure of \cref{classeUnique}. The connected components of some set will always refer to its maximal convex subsets.
\begin{proof}
Let $a\in \ram(d/A)$. By \cref{coupureIncluseDansG}, we have:
$$\ct(d/A)\subset d\mod G(d/A)=a\mod G(d/A)$$
By \cref{ramificateurLoin}, $d\not\in (a\mod H(d/A))$, which is the convex closure of the set $\ram(d/A)$, which is a subset of $A$, thus $a\mod H(d/A)$ must be disjoint from $\ct(d/A)$. It follows that $\ct(d/A)$ is included in the connected component of $(a+G(d/A))\setminus (a+H(d/A))$ containing $d$.
\par Let $\mathcal{C}$ be that connected component, and $d'\in \mathcal{C}$. There does not exist $a'\in A$ lying between $d$ and $d'$, for such $a'$ would be in $(a+ G(d/A))\cap A=\ram(d/A)\subset a+H(d/A)$, and $a'$ would also belong to $\mathcal{C}$ as $\mathcal{C}$ is convex, contradicting the fact that $\mathcal{C}$ is disjoint from $a+H(d/A)$.
\end{proof}

\begin{corollary}\label{critereEquRam}
With the hypothesis from \cref{coupureRam}, if $a$ is in $\ram(d/A)$, and $d'$ is another point from $M$, then $d'\equiv_A d$ if and only if the following conditions hold:
\begin{itemize}
\item $\ct(\Delta(d'-a)/\Delta(A))=\ct(\Delta(d-a)/\Delta(A))$ (hence $\Delta(d'-a)$ is not in $\Delta(A)$).
\item $d'<a\Longleftrightarrow d<a$.
\end{itemize}
\end{corollary}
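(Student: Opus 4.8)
The plan is to reduce everything to the description of the cut of a ramified point given in \cref{coupureRam}, and then to translate the resulting membership condition into the stated condition on Archimedean values by means of \cref{classeUnique}.

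First, since $d$ and $d'$ are singletons, the quantifier-elimination equivalence displayed after \cref{DOAGDeclarationSEV} (taken with $n=1$) reduces $d'\equiv_A d$ to ``$\ct(\lambda d'/A)=\ct(\lambda d/A)$ for all $\lambda\in\mathbb{Q}$'', and since multiplication by a nonzero rational is an $\emptyset$-definable monotone bijection of $M$ under which $A$ is closed, this is in turn equivalent to $\ct(d'/A)=\ct(d/A)$. So it is enough to show that $\ct(d'/A)=\ct(d/A)$ is equivalent to the two bulleted conditions.

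Next I apply \cref{coupureRam}. Because $a\in\ram(d/A)$, we have $d-a\in G(d/A)$ by \cref{DOAGPropRamificateur} and $d-a\notin H(d/A)$ by \cref{ramificateurLoin}, so $d-a$ lies in one of the two connected components, say $\mathcal{C}$, of $G(d/A)\setminus H(d/A)$; the proof of \cref{coupureRam} then gives $\ct(d/A)=a+\mathcal{C}$. Hence $\ct(d'/A)=\ct(d/A)$ if and only if $d'-a\in\mathcal{C}$, that is, if and only if $d'-a\in G(d/A)\setminus H(d/A)$ and $d'-a$ lies on the same side of $0$ as $d-a$. On the locus where the first clause holds we have $d'-a\neq 0$ (as $0\in H(d/A)$) and also $d\neq a$ (as $d\notin A$), so the sign clause is literally ``$d'<a\Leftrightarrow d<a$''; and if the first clause fails, both this formulation and the Archimedean-value condition produced in the next step fail, so the equivalence is unaffected.

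Finally, the heart of the argument is to rewrite ``$d'-a\in G(d/A)\setminus H(d/A)$'' as ``$\ct(\Delta(d'-a)/\Delta(A))=\ct(\Delta(d-a)/\Delta(A))$''. Here I use that $\stab(d/A)=G(d/A)\cap A$ is a convex subgroup of $A$, so $A\setminus\stab(d/A)$ is closed under division by positive integers and $\{\Delta(a'):a'\in A\setminus\stab(d/A)\}$ is exactly the set of elements of $\Delta(A)$ that are $>\Delta(\stab(d/A))$. Unwinding the definitions, $d'-a\in G(d/A)$ means $|d'-a|<|a'|$ for every $a'\in A\setminus\stab(d/A)$, which — because $a'/n$ is again in $A\setminus\stab(d/A)$ whenever $a'$ is — is equivalent to $\Delta(d'-a)<\Delta(a')$ for every such $a'$; and $d'-a\notin H(d/A)$ means $|d'-a|>|s|$ for every $s\in\stab(d/A)$, equivalent to $\Delta(d'-a)>\Delta(s)$ for every such $s$. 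Together these two statements say precisely that $\Delta(d'-a)$ realizes the cut $\ct_{>}(\Delta(\stab(d/A))/\Delta(A))$ over $\Delta(A)$, which by \cref{classeUnique} equals $\ct(\Delta(d-a)/\Delta(A))$. The parenthetical ``hence $\Delta(d'-a)\notin\Delta(A)$'' is then immediate, since $\Delta(d-a)=\delta(d/A)\notin\Delta(A)$ by \cref{defPetitDelta}, so this cut contains a point outside $\Delta(A)$ and is therefore disjoint from $\Delta(A)$. The only real obstacle I foresee is the bookkeeping in this last step: one must move carefully between the strict and non-strict absolute-value inequalities defining $G(d/A)$ and $H(d/A)$ and the corresponding statements about $\Delta$, the crucial point being that $\stab(d/A)$ is a subgroup, which is exactly what keeps $\Delta(d'-a)$ from slipping into $\Delta(A)$.
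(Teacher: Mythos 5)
Your proof is correct and follows the same route as the paper, whose entire proof is the two observations you establish in detail: the first bullet is equivalent to $d'-a\in G(d/A)\setminus H(d/A)$ (via \cref{classeUnique} and the fact that $\stab(d/A)$ is a convex subgroup), and the second bullet picks out the correct connected component in \cref{coupureRam}. Your version is simply a careful expansion of that argument, with the bookkeeping (closure of $A\setminus\stab(d/A)$ under division by integers, the degenerate case $d'=a$) done explicitly.
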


\begin{proof}
The first condition is equivalent to $d'-a\in G(d/A)\setminus H(d/A)$. The second condition ensures that $d'$ lies in the correct connected component.
\end{proof}

The two statements \cref{coupureArch} and \cref{coupureRam} yield a simple classification of the cuts over $A$. In order to consider non-forking extensions of types, we now have to deal with ways to refine those cuts to cuts over a larger parameter set $B$. As we classified the cuts over $A$ by looking at $\Aut(M/A)$-invariant convex subgroups, we have to compute what are the $\Aut(M/B)$-invariant convex subgroup corresponding to their refinements.

\begin{definition}
If $G$ is an $A$-type-definable convex subgroup, then we define the following $A$-$\vee$-definable convex subgroup:
$$\underline{G}_A=\bigcup\limits_{a\in A\cap G}\left[-|a|,|a|\right]$$
\par If $H$ is an $A$-$\vee$-definable convex subgroup, then we define the following $A$-type-definable convex subgroup:
$$\overline{H}^A=\bigcap\limits_{a\in A\setminus H}\left]-|a|,|a|\right[$$
\end{definition}

\begin{remark}\label{inegalitesRafinementsGrp}
Let $d\in M$. We have:
$$ H(d/A)\leqslant{\overline{H(d/A)}^M}\leqslant{\overline{H(d/A)}^B}\leqslant{\underline{G(d/A)}_B}\leqslant {\underline{G(d/A)}_M}\leqslant G(d/A)$$
and :
$$
\underline{G(d/A)}_A=H(d/A),\ {\overline{H(d/A)}^A}=G(d/A)
$$
moreover, if $\indep{d}{A}{B}{\cut}$, then by \cref{HPlusGrand} we recall:
$$H(d/A)\leqslant H(d/B)\leqslant G(d/B)\leqslant G(d/A).$$
\end{remark}

\subsection{Weak orthogonality}
Now we start to deal with global invariant extensions, in order to prove \ref{recollementVal1}.

\begin{lemma}\label{coupInvArch}
Let $d\in M\setminus A$ be Archimedean over $A$, with $\indep{d}{A}{B}{\cut}$. Let $p$ be a global $\Aut(M/A)$-invariant extension of $\tp(d/B)$. Then $G(d/B)=G(d/A)$ (as $B$-type-definable sets), and:
$$p(x)\models x-d\in G(d/A)\setminus{\underline{G(d/A)}_M}$$
\end{lemma}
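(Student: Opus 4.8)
My plan is to prove the three assertions of the lemma separately: the equality $G(d/B)=G(d/A)$, the fact that $p(x)\models x-d\in G(d/A)$, and the fact that $p(x)\models x-d\notin\underline{G(d/A)}_M$. The first two are soft; the last is the crux, and it is the only place where the $\Aut(M/A)$-invariance of $p$ really gets used. The easy half, $p(x)\models x-d\in G(d/A)$, is immediate: $p$ extends $\tp(d/B)\supseteq\tp(d/A)$, which entails $x\in\ct(d/A)$, and since $d$ is Archimedean over $A$, \cref{coupureArch} gives $\ct(d/A)=d+G(d/A)$, so $p(x)\models x-d\in G(d/A)$.

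For $G(d/B)=G(d/A)$, recall $G(d/B)\leqslant G(d/A)$ always holds. For the reverse, assume $G(d/B)<G(d/A)$. Choosing an element of $G(d/A)\setminus G(d/B)$ and unwinding the definition of $G(d/B)$ as an intersection of intervals $\left]-|b|,|b|\right[$ over $b\in B\setminus\stab(d/B)$, we extract some $b\in B\cap G(d/A)$ with $b\notin\stab(d/B)$; as in the proof of \cref{HPlusGrand}, some $b'\in B$ lies strictly between $d$ and $d+b$, and after possibly replacing $b$ by $-b$ we may assume $d<b'<d+b$ with $b>0$. Then $b'-d\in\left]0,b\right[\subseteq G(d/A)$, and since $G(d/A)$ is a group also $b'-b-d\in G(d/A)$, so $b'$ and $b'-b$ both lie in $d+G(d/A)=\ct(d/A)$ (again \cref{coupureArch}), a set disjoint from $A$ because $d$ is Archimedean over $A$ and $d\notin A$. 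Hence the closed interval $[b'-b,b']$ has bounds in $B$, contains the point $d\in A+\mathbb{Q}d$, and avoids $A$, contradicting $\indep{d}{A}{B}{\cut}$.

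Now the main point, $p(x)\models x-d\notin\underline{G(d/A)}_M$. Since $\underline{G(d/A)}_M$ is $\vee$-definable over $M$ and $p$ is a complete type over $M$, if $p$ did not prove $x-d\notin\underline{G(d/A)}_M$ it would prove $|x-d|\leqslant|m_0|$ for a single $m_0\in M\cap G(d/A)$, i.e. $p(x)\models d-|m_0|\leqslant x\leqslant d+|m_0|$. Both $d\pm|m_0|$ lie in $d+G(d/A)=\ct(d/A)$, and every element $e$ of $(d+G(d/A))(M)$ satisfies $\ct(e/A)=\ct(d/A)$, hence $e\equiv_A d\equiv_A d+|m_0|\equiv_A d-|m_0|$ since in \DOAG~ the type of a single element over $A$ is determined by its cut. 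By strong homogeneity there is $\sigma\in\Aut(M/A)$ with $\sigma(d+|m_0|)=e$, so invariance of $p$ yields $p(x)\models x\leqslant e$; symmetrically $p(x)\models x\geqslant e$. Thus $p(x)\models x=e$ for every $e\in(d+G(d/A))(M)$, which is absurd once that coset has at least two points. But it does: $d$ being Archimedean over $A$ with $d\notin A$ forces $\ct(d/A)=d+G(d/A)$ to miss $A$, so $G(d/A)$ is not the type-definable singleton $\{0\}$, and saturation of $M$ then gives $G(d/A)(M)\neq\{0\}$. This contradiction completes the proof.

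I expect the genuine difficulty to be isolating the right shape of the argument in the last paragraph: one has to see that $\Aut(M/A)$-invariance of $p$ together with the fact that $d+G(d/A)$ is a single $\Aut(M/A)$-orbit squeezes the cut of $p$ against the boundary of that orbit; once this is seen, $p$ must be one of the two ``edge'' cuts $\ct_<((d+G(d/A))/M)$, $\ct_>((d+G(d/A))/M)$, each of which visibly realizes the conclusion. The interval manipulation for $G(d/B)=G(d/A)$ and the deduction $p(x)\models x-d\in G(d/A)$ should cause no trouble.
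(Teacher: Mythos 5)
Your proof is correct. The first two assertions are handled essentially as in the paper: the equality $G(d/B)=G(d/A)$ by trapping $d$ in a $B$-interval inside $\ct(d/A)$ (the paper uses two witnesses $b_1,b_2$ on opposite sides of $d$ where you use $b'$ and $b'-b$, an immaterial difference), and $p(x)\models x-d\in G(d/A)$ directly from \cref{coupureArch}. The interesting divergence is in the third step. The paper takes a realization $c\models p$ in an elementary extension, uses invariance to get $\indep{c}{A}{M}{\cut}$, applies the first part with $B$ replaced by $M$ to conclude $G(c/M)=G(d/A)$, observes that $d\in\ram(c/M)$, and then quotes \cref{ramificateurLoin} to get $c-d\notin H(c/M)=\underline{G(d/A)}_M$. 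You instead argue by contradiction entirely inside $M$: if $p$ confined $x-d$ to $[-|m_0|,|m_0|]$, then because all $M$-points of the coset $d+G(d/A)=\ct(d/A)$ are $A$-conjugate (same cut over the $\mathbb{Q}$-subspace $A$, hence same type), strong homogeneity plus invariance of $p$ squeezes $p$ onto $x=e$ for every such $e$, which is absurd since the coset has at least two $M$-points. This is a genuinely different, more self-contained route: it avoids both the ramifier machinery and the passage to a larger base set $M$ with the inclusion $\indep{}{}{}{\inv}\subset\indep{}{}{}{\cut}$, at the cost of redoing by hand what \cref{ramificateurLoin} packages. One cosmetic point: your justification that $G(d/A)(M)\neq\{0\}$ ("$\ct(d/A)$ misses $A$, so $G(d/A)$ is not $\{0\}$") is not quite the right reason; the correct and equally short argument is that the partial type defining $G(d/A)$ together with $x\neq 0$ is finitely satisfiable (each $\left]-|a|,|a|\right[$ with $a\in A\setminus\stab(d/A)$ contains $|a|/2\neq 0$), so saturation gives a nonzero point. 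With that repaired, the proof stands.
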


\noindent Note that the statement holds even if the element $d$ is ramified over $B$.

\begin{proof}
Suppose by contradiction $G(d/B)<G(d/A)$. Then there exists $b$ in $B\setminus\stab(d/B)$ such that $b\in G(d/A)$. Let $b_1\in B$ which lies strictly between $d$ and $d+b$. Like wise, as $-b$ is also in $G(d/A)\setminus\stab(d/B)$, we find $b_2\in B$ which lies strictly between $d$ and $d-b$. Then the closed interval with bounds $b_1, b_2$ contains $d$ and is strictly included in $d\mod G(d/A)=\ct(d/A)$, which contradicts cut-independence.
\par By \cref{coupureArch}, as $p$ extends $\tp(d/A)$, we necessarily have:
$$p(x)\models x-d\in G(d/A)$$
\par Let $c\models p$ in some elementary extension. As $p$ is $\Aut(M/A)$-invariant, we have $\indep{c}{A}{M}{\cut}$, thus $G(c/M)=G(d/A)$ by the above paragraph ($c$ is of course Archimedean over $A$). As $c-d\in G(c/M)$, and $d\in M\not\ni c$, we have $d\in\ram(c/M)$, thus $c-d\not\in H(c/M)={\underline{G(c/M)}_M}={\underline{G(d/A)}_M}$. We conclude that $p(x)\models x-d\not\in {\underline{G(d/A)}_M}$.
\end{proof}

\begin{lemma}\label{coupInvRam}
Let $d\in M$ be ramified over $A$, with $\indep{d}{A}{B}{\cut}$, and let $a\in\ram(d/A)$. Then $\Delta(d-a)\not\in\Delta(B)$.
\par Moreover, if $p$ is a global $\Aut(M/A)$-invariant extension of $\tp(d/B)$, then, either ($p(x)\models x-a\in G(d/A)\setminus{\underline{G(d/A)}_M}$ and $G(d/B)=G(d/A)$) or ($p(x)\models x-a\in {\overline{H(d/A)}^M}\setminus H(d/A)$ and $H(d/B)=H(d/A)$).
\end{lemma}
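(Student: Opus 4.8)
The plan is to establish the two assertions separately, in both cases combining the interval description of $\indep{}{}{}{\cut}$ from \cref{defCutIndep} with the ultrametric inequality for $\Delta$ (\cref{triangleIsocele}). For the first assertion I argue by contradiction: suppose $\Delta(d-a)=\Delta(b)$ for some $b\in B$. Replacing $b$ by $|b|$ we may assume $b>0$, and we may assume $d>a$ (the case $d<a$ is symmetric, and $d=a$ is impossible since $d\notin A$). Pick $n$ with $\tfrac{1}{n}b\leqslant d-a\leqslant nb$; then $d$ lies in the closed interval $[\,a+\tfrac{1}{n}b,\ a+nb\,]$, which has bounds in $B$ and does not contain $a$. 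Since $d\in A+\mathbb{Q}d$, cut-independence produces a point $a'\in A$ in this interval (necessarily $a'\neq a$), and then $\tfrac{1}{n}b\leqslant a'-a\leqslant nb$ forces $\Delta(a'-a)=\Delta(b)=\Delta(d-a)$; as $a'-a\in A$ this contradicts $\Delta(d-a)\notin\Delta(A)$, which holds because $a\in\ram(d/A)$ (\cref{DOAGPropRamificateur}).

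For the ``moreover'' part, fix such a $p$ and a realization $c\models p$. Since $p$ extends $\tp(d/B)$, hence $\tp(d/A)$, we have $c\equiv_A d$ and $c\equiv_B d$; thus $c$ is ramified over $A$ with $a\in\ram(c/A)$, $\Delta(c-a)=\delta(c/A)\notin\Delta(A)$, and, using \cref{DOAGPropRamificateur} and \cref{ramificateurLoin} (and $G(c/A)=G(d/A)$, $H(c/A)=H(d/A)$ as $c\equiv_A d$), we get $c-a\in G(d/A)\setminus H(d/A)$. Furthermore $\indep{c}{A}{B}{\cut}$, since this depends only on $\tp(c/B)$, and $\indep{c}{A}{M}{\cut}$, because $p$ is $\Aut(M/A)$-invariant (as in the proof of \cref{coupInvArch}); in particular the first assertion applies to $c$ over $B$, so $\Delta(c-a)\notin\Delta(B)$. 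The crux is then the dichotomy: either $c-a\notin\underline{G(d/A)}_M$ or $c-a\in\overline{H(d/A)}^M$. Indeed, if both failed we could pick $m,m'\in M$ with $m\notin H(d/A)$, $m'\in G(d/A)$ and $|m|\leqslant|c-a|\leqslant|m'|$; replacing $m,m'$ by their absolute values and using $c\neq a$ we may arrange $0<m\leqslant c-a\leqslant m'$, so $c$ lies in $[\,a+m,\ a+m'\,]$, an interval with bounds in $M$, and $\indep{c}{A}{M}{\cut}$ gives $a''\in A$ inside it; then $\Delta(m)\leqslant\Delta(a''-a)\leqslant\Delta(m')$ yields $a''-a\in G(d/A)\setminus H(d/A)$, contradicting $A\cap G(d/A)=\stab(d/A)\subseteq H(d/A)$. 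Since $c-a\in G(d/A)\setminus H(d/A)$ in any case, the dichotomy says that for every realization $c$ of $p$ one has either $c-a\in G(d/A)\setminus\underline{G(d/A)}_M$ or $c-a\in\overline{H(d/A)}^M\setminus H(d/A)$, and as $p$ is complete over $M$ it decides which of these two alternatives holds. This ``sandwich'' step --- juggling the chain $H(d/A)\leqslant\overline{H(d/A)}^M\leqslant\underline{G(d/A)}_M\leqslant G(d/A)$ and keeping track of which inclusions are strict --- is the part I expect to be most delicate.

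It remains, in each alternative, to add the corresponding statement about $G$ or $H$. If $p\models x-a\in\overline{H(d/A)}^M\setminus H(d/A)$, then $H(d/B)\geqslant H(d/A)$ by \cref{HPlusGrand}; were this strict, $\stab(d/B)$ (which generates $H(d/B)$) would not be contained in $H(d/A)$, so there would be a positive $b\in\stab(d/B)\setminus H(d/A)$, and then $b\in\stab(c/B)$ (as $c\equiv_B d$) while $|c-a|<b$ (as $c-a\in\overline{H(d/A)}^M$ and $b\in M\setminus H(d/A)$), so $a\in\,]c-b,c+b[\,$ with $a\neq c$, putting the point $a\in B$ strictly between $c$ and one of $c\pm b$ --- contradicting $b\in\stab(c/B)$; hence $H(d/B)=H(d/A)$. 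If instead $p\models x-a\in G(d/A)\setminus\underline{G(d/A)}_M$, then $G(d/B)\leqslant G(d/A)$ always, and were this strict there would be a positive $b\in(B\cap G(d/A))\setminus\stab(d/B)$ together with $b_1,b_2\in B$ satisfying $d-b<b_2<d<b_1<d+b$; since $c\equiv_B d$ we get $b_2<c<b_1$, so $c$ lies in $[b_2,b_1]$, an interval with bounds in $B$ and ``width'' $<2b$, and $\indep{c}{A}{B}{\cut}$ yields $a'\in A$ there with $\Delta(c-a')\leqslant\Delta(b)$. As $b\in M\cap G(d/A)$ and $c-a\notin\underline{G(d/A)}_M$ we have $\Delta(c-a)\geqslant\Delta(b)$, and $\Delta(c-a)\notin\Delta(B)$ forbids equality, so $\Delta(c-a')<\Delta(c-a)$; then \cref{triangleIsocele} applied to $c-a'=(c-a)+(a-a')$ forces $\Delta(a-a')=\Delta(c-a)=\delta(c/A)$, which is absurd since $a-a'\in A$. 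Hence $G(d/B)=G(d/A)$, completing the proof.
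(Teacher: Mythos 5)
Your proof is correct and follows essentially the same route as the paper's: the three steps coincide (the interval $[a+\tfrac1n b,a+nb]$ for the first assertion, the exclusion of the middle region $\underline{G(d/A)}_M\setminus\overline{H(d/A)}^M$ via a closed interval trapped in one connected component of $a+(G(d/A)\setminus H(d/A))$, and the same witnesses $b\in B$ for tying each alternative to $G(d/B)=G(d/A)$ resp.\ $H(d/B)=H(d/A)$). The only difference is stylistic: you run the arguments on a realization $c\models p$ using $\indep{c}{A}{M}{\cut}$ and $\indep{c}{A}{B}{\cut}$ together with Archimedean-class computations, whereas the paper argues directly on the partial type via \cref{typePasInvariant} and the contrapositive claims about $\tp(d/B)$; both are sound.
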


\begin{proof}
Suppose we have $\Delta(d-a)=\Delta(b)$ for some $b\in B$. Let $n>0$ such that $\dfrac{1}{n}|b|\leqslant |d-a|\leqslant n|b|$. Then we have $d\in \left[a-n|b|,a-\dfrac{1}{n}|b|\right]\cup\left[a+\dfrac{1}{n}b,a+n|b|\right]$. As $b\in G(d/A)\setminus H(d/A)$, none of those two intervals has a point in $A$, which contradicts $\indep{d}{A}{B}{\cut}$.
\par As $d$ is ramified over $A$, and $p$ extends $\tp(d/A)$, it follows from \cref{coupureRam} that $p(x)\models x-a\in G(d/A)\setminus H(d/A)$.
\par Suppose by contradiction $p(x)\models x-a\in {\underline{G(d/A)}_M}\setminus{\overline{H(d/A)}^M}$. Then there exists $m_1,m_2\in M$ such that $m_1\not\in H(d/A)$, $m_2\in G(d/A)$, and:
$$p(x)\models |m_1|\leqslant |x-a|\leqslant |m_2|$$
i.e. $p(x)\models x\in \left[a-|m_2|,a-|m_1|\right]\cup\left[a+|m_1|,a+|m_2|\right]$. As $m_1\not\in H(d/A)$ and $m_2\in G(d/A)$, each of those two intervals is included in each connected component of $a+(G(d/A)\setminus H(d/A))$. As a result, none of those intervals belongs to $p$ by \cref{typePasInvariant}, a contradiction.
\par By \cref{coupInvArch}, and by \cref{inegalitesRafinementsGrp}, if we had $p(x)\models x-a\in {\underline{G(d/A)}_B}$ (resp. $p(x)\models x-a\not\in {\overline{H(d/A)}^B}$), then we would have:
$$p(x)\models x-a\in {\overline{H(d/A)}^M}\setminus H(d/A)$$
and respectively: $p(x)\models x-a\in G(d/A)\setminus{\underline{G(d/A)}_M}$. To conclude, it now suffices to show the two following properties:
\begin{enumerate}
\item If $G(d/B)<G(d/A)$, then $\tp(d/B)\models x-a\in {\underline{G(d/A)}_B}$.
\item If $H(d/B)>H(d/A)$, then $\tp(d/B)\models x-a\not\in {\overline{H(d/A)}^B}$.
\end{enumerate}
Let us proceed:
\begin{enumerate}
\item By hypothesis, let $b\in B\setminus\stab(d/B)$ such that $b\in G(d/A)$. Let $b'\in B$ which lies strictly between $d$ and $d+b$.

\begin{center}
\begin{tikzpicture}
\draw[black] (-6, 0) -- (6, 0);
\filldraw[black, thick] (0, 0) circle(2pt) node[anchor=north]{$a$};
\filldraw[blue, thick] (2, 0) circle(2pt) node[anchor=south]{$b'-b$};
\filldraw[red, thick] (3, 0) circle(2pt) node[anchor=north]{$d$};
\filldraw[blue, thick] (4, 0) circle(2pt) node[anchor=north]{$b'$};
\filldraw[blue, thick] (5, 0) circle(2pt) node[anchor=south]{$d+b$};
\end{tikzpicture}
\end{center}

\noindent then the interval $I$ with bounds $b'-a$, $b'-b-a$ is included in $G(d/A)$, and $\tp(d/B)\models x-a\in I$.

\begin{center}
\begin{tikzpicture}
\draw[black] (-6, 0) -- (6, 0);
\filldraw[black, thick] (0, 0) circle(2pt) node[anchor=north]{$0$};
\filldraw[blue, thick] (2, 0) circle(2pt) node[anchor=south]{$b'-b-a$};
\filldraw[red, thick] (3, 0) circle(2pt) node[anchor=north]{$d-a$};
\filldraw[blue, thick] (4, 0) circle(2pt) node[anchor=north]{$b'-a$};
\filldraw[blue, thick] (5, 0) circle(2pt) node[anchor=south]{$d+b-a$};
\end{tikzpicture}
\end{center}

\item By hypothesis, let $b\in \stab(d/B)$ such that $b\not\in H(d/A)$. Then we have $\ct(d-b/B)=\ct(d/B)=\ct(d+b/B)$, which implies $a\not\in \left[d-|b|,d+|b|\right]$. As a result, $\tp(d/B)\models x-a\not\in\left[-|b|, |b|\right]$.\qedhere
\end{enumerate}
\end{proof}

The two statements \cref{coupInvArch}, \cref{coupInvRam} imply:

\begin{corollary}\label{coupuresInvPossibles}
Let $d\in M$ such that $\indep{d}{A}{B}{\cut}$. Let $p\in S(M)$ be a global $\Aut(M/A)$-invariant extension of $\tp(d/B)$. Then there exists $f$ an $A$-definable map such that one of the three following conditions hold:
\begin{itemize}
\item $d\in A $, $f=id$.
\item The cut over $M$ induced by $p$ is one of the two connected components of the translation by $f(d)$ of $G(d/A)\setminus {\underline{G(d/A)}_M}$. In that case, $G(d/B)=G(d/A)$.
\item The cut over $M$ induced by $p$ is one of the two connected components of the translation by $f(d)$ of ${\overline{H(d/A)}^M}\setminus H(d/A)$. In that case, $H(d/B)=H(d/A)$.
\end{itemize}
\end{corollary}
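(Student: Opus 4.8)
The plan is a short case analysis on the position of $d$ relative to $A$, with each case dispatched to one of the two preceding lemmas. If $d\in A$ then $d\in\dcl(B)$, so $\tp(d/B)$ has a unique global extension — the realized type of $d$ — and we take $f=\id$, with nothing further to check. If $d\notin A$ but $d$ is Archimedean over $A$, apply \cref{coupInvArch}: it yields $G(d/B)=G(d/A)$ together with $p(x)\models x-d\in G(d/A)\setminus\underline{G(d/A)}_M$, so with $f=\id$ we land in the second case of the statement. If $d$ is ramified over $A$, fix $a\in\ram(d/A)$ and apply \cref{coupInvRam}: its two alternatives are, respectively, the second and third cases of the statement, with $f$ the (trivially $A$-definable) constant map $x\mapsto a$, so that $f(d)=a$.

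It then remains to check that, in the second and third cases, the cut over $M$ induced by $p$ is \emph{exactly} one of the two connected components of $f(d)+\bigl(G(d/A)\setminus\underline{G(d/A)}_M\bigr)$, resp.\ $f(d)+\bigl(\overline{H(d/A)}^M\setminus H(d/A)\bigr)$, and not merely contained in one. Write $e=f(d)\in M$, let $S$ be the ambient type-definable convex subgroup ($G(d/A)$ or $\overline{H(d/A)}^M$) and $T$ the one removed ($\underline{G(d/A)}_M$ or $H(d/A)$). The lemmas give $p(x)\models x-e\in S\setminus T$, and unwinding the definitions of $\underline{(\,\cdot\,)}_M$ and $\overline{(\,\cdot\,)}^M$ shows $S\cap M\subseteq T$; hence $e+(S\setminus T)$ is disjoint from $M$ and splits into exactly two connected components $\mathcal{C}_{\pm}$, each convex and disjoint from $M$. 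Since $p$ is complete over $M\ni e$ it decides the sign of $x-e$ (which is nonzero, as $0\in T$), hence concentrates on one of $\mathcal{C}_{\pm}$; and for any $y$ lying beyond the chosen component one exhibits an element of $M$ of the shape $e\pm|a|$ — with $a$ a suitable element of $A\setminus\stab(d/A)$, of $M\cap G(d/A)$, of $M\setminus H(d/A)$, or of $\stab(d/A)$, according to which boundary of $\mathcal{C}_{\pm}$ the point $y$ overshoots — separating $y$ from $p$. This identifies the cut over $M$ induced by $p$ with $\mathcal{C}_{\pm}$.

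The genuinely substantial content all sits in \cref{coupInvArch} and \cref{coupInvRam}; everything here is packaging. The only point requiring a little care is the final verification that each connected component is a full $M$-cut, i.e.\ that no element of $M$ is adjacent to it — but this is pure bookkeeping with the definitions of the refinement operators $\underline{G(d/A)}_M$ and $\overline{H(d/A)}^M$ as traces, or hulls of traces, on $M$, and so presents no real obstacle.
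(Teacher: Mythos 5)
Your proof is correct and follows the paper's own argument exactly: a three-way case split ($d\in A$; $d$ Archimedean over $A$ via \cref{coupInvArch}; $d$ ramified over $A$ via \cref{coupInvRam}), with the remaining work being the routine identification of the cut with a full connected component, which the paper leaves implicit and you rightly verify via convexity plus the observation that $G(d/A)\cap M\subseteq\underline{G(d/A)}_M$ and $\overline{H(d/A)}^M\cap M\subseteq H(d/A)$. Your choice of $f$ as the constant map $x\mapsto a$ in the ramified case is the correct reading of the statement (the paper's proof writes $f:x\mapsto x-a$, which does not literally give $f(d)=a$), so no gap there.
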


\begin{proof}
    If $d\in A$, then the statement is trivial. If $d\not\in A$ and $d$ is Archimedean over $A$, then the second condition holds with $f=id$, as stated in \cref{coupInvArch}. If $d$ is ramified over $A$, then either the second or the third condition holds with $f:x\longmapsto x-a$, by \cref{coupInvRam}.
\end{proof}

\begin{remark}
If $d\not\in A$, while only one of the conditions from \cref{coupuresInvPossibles} holds, we might still have $G(d/B)=G(d/A)$ and $H(d/B)=H(d/A)$ at the same time. For instance, this is the case whenever $\Delta(d)<\Delta(B\setminus\{0\})$, and $B$ has no element that is infinitesimal compared to $A$.
\par In fact, one can see that $p$ is the type of an $M$-ramified point which adds an Archimedean class $\delta$ which is cut-independent over $\Delta(A)$: the two conditions of the statement expresses that $\delta$ either leans left or right with respect to $\Delta(A)$ and $\Delta(M)$.
\end{remark}

\begin{remark}\label{rqueCpInvP}
For all $d\in M$, recall from \cref{inegalitesRafinementsGrp} that $G(d/B)={\overline{H(d/B)}^B}$. As a result, if $H(d/B)=H(d/A)$, then $G(d/B)={\overline{H(d/A)}^B}$. In particular we see that, in either case of \cref{coupuresInvPossibles}, $p(x)$ implies $x-f(d)\in G(d/B)$. Likewise:
\end{remark}

\begin{lemma}\label{rqueCpInvP2}
    Let $d\in M$ such that $\indep{d}{A}{B}{\cut}$, and $d\not\in A$. Let $p$ be a global $\Aut(M/A)$-invariant extension of $\tp(d/B)$. Then $p(x)$ implies $x\not\in (M+H(d/B))$.
\end{lemma}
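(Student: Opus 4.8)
The plan is to fix a realisation $c\models p$ in an elementary extension $N\succ M$ and to show directly that $c\notin(M+H(d/B))(N)$; since $M+H(d/B)$ is the $\vee$-definable set $\bigcup_{m\in M,\,b\in\stab(d/B)}[m-|b|,m+|b|]$, this is exactly the statement. Two reductions come first. As $[d,d]$ is a closed interval with bounds in $B$ containing $d$, the hypothesis $\indep{d}{A}{B}{\cut}$ forces $d\notin B$; and $c\notin M$, for otherwise $c$ would be fixed by every automorphism of $M$ over $A$ (as $p$ is $\Aut(M/A)$-invariant and complete), hence $c\in\dcl(A)=A$, forcing $c=d\in A$ (since $c\equiv_B d$), against the hypothesis $d\notin A$. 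Invariance of $p$ also yields $\indep{c}{A}{M}{\cut}$, so \cref{coupuresInvPossibles} applies to $d$: there is an $A$-definable map $f$, either $\id$ or a translation by a ramifier of $d$ over $A$ (so $f(d)\in M$ either way), with one of
\begin{itemize}
\item[(i)] $G(d/B)=G(d/A)$ and $p(x)\models x-f(d)\in G(d/A)\setminus\underline{G(d/A)}_M$, or
\item[(ii)] $H(d/B)=H(d/A)$ and $p(x)\models x-f(d)\in\overline{H(d/A)}^M\setminus H(d/A)$.
\end{itemize}
In case (i) I would moreover record that $H(d/B)\leqslant\underline{G(d/A)}_M$: indeed $\stab(d/B)\subset G(d/B)\cap B=G(d/A)\cap B\subset M\cap G(d/A)$, and $H(d/B)$ is the convex subgroup generated by $\stab(d/B)$.

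Then I would argue by contradiction: suppose $c=m_0+h$ with $m_0\in M$, $h\in H(d/B)(N)$, and set $e=m_0-f(d)\in M$, so that $c-f(d)=e+h$. In case (i), $h$ lies in the convex subgroup $\underline{G(d/A)}_M$ while $c-f(d)$ does not, so $e$ does not either; being in $M$, $e$ then exceeds in absolute value every element of $M\cap G(d/A)$, and $\mathbb{Q}$-divisibility of $M\cap G(d/A)$ upgrades this to $\Delta(h)<\Delta(e)$, whence $\Delta(c-f(d))=\Delta(e)\in\Delta(M)$ by \cref{triangleIsocele}. On the other hand $c-f(d)\in G(d/A)\setminus\underline{G(d/A)}_M$ forces $\Delta(c-f(d))\notin\Delta(M)$: if $\Delta(c-f(d))=\Delta(m_1)$ with $m_1\in M$, then, using $\mathbb{Q}$-divisibility of $A$, $m_1\in G(d/A)$, hence $m_1\in M\cap G(d/A)$, hence $|c-f(d)|>|m_1|/n$ for all $n$, giving the absurdity $\Delta(c-f(d))<\Delta(m_1)$. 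This contradiction closes case (i); case (ii) runs in parallel, with $\overline{H(d/A)}^M$ and $H(d/A)$ in place of $G(d/A)$ and $\underline{G(d/A)}_M$: here $h\in H(d/A)$ and $c-f(d)\notin H(d/A)$ give $e\notin H(d/A)$, so $\Delta(h)<\Delta(e)$ and $\Delta(c-f(d))=\Delta(e)$ by \cref{triangleIsocele}, while $\mathbb{Q}$-divisibility makes $\overline{H(d/A)}^M$ avoid the whole Archimedean class of $e\in M\setminus H(d/A)$, so $\Delta(c-f(d))<\Delta(e)$. Either way there is no such decomposition, i.e. $c\notin(M+H(d/B))(N)$.

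The conceptual heart of the proof, and the step I expect to cost the most care, is the dichotomy from \cref{coupuresInvPossibles} together with the fact that in each case $c-f(d)$ sits in an Archimedean ``gap'' over $M$ (its $\Delta$-class lies outside $\Delta(M)$) into which an element coming from $H(d/B)$ cannot reach. This is exactly where the $\mathbb{Q}$-divisibility of $M$ is essential — it is what prevents the relevant $\vee$-definable convex subgroups from acquiring a ``new'' Archimedean class over $M$ — and it is morally already contained in the proofs of \cref{coupInvArch} and \cref{coupInvRam}; in case (i) the inclusion $H(d/B)\leqslant\underline{G(d/A)}_M$ depends on the identity $\stab(d/B)=G(d/B)\cap B$ and on $G(d/B)=G(d/A)$. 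The remaining manipulations are routine uses of \cref{triangleIsocele}.
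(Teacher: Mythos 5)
Your proof is correct. It starts exactly where the paper's does --- with the dichotomy of \cref{coupuresInvPossibles} and the observation that $H(d/B)$ sits inside the relevant ``small'' convex subgroup ($\underline{G(d/A)}_M$ in case (i), $H(d/A)$ in case (ii)) --- but the endgame is different. The paper's proof notes that in each case the two convex subgroups sandwiching $x-f(d)$ (say $H<G$) have the same points in $M$; hence if $p(x)\models x-m\in H(d/B)$ for some $m\in M$, then $m-f(d)\in G\cap M=H\cap M$, so $p(x)\models x-f(d)\in H$, an immediate contradiction. You instead derive the contradiction through Archimedean classes: $\Delta(c-f(d))\notin\Delta(M)$ because $c-f(d)$ lies in the gap $G\setminus H$, while a decomposition $c-f(d)=e+h$ with $e\in M\setminus H$ and $h$ small forces $\Delta(c-f(d))=\Delta(e)\in\Delta(M)$ by \cref{triangleIsocele}. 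Both arguments capture the same phenomenon; the paper's is shorter because the identity $G\cap M=H\cap M$ does all the work at once, whereas yours makes explicit the ``new Archimedean class'' picture already implicit in \cref{coupInvArch} and \cref{coupInvRam}.

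One slip in the write-up of case (i): to rule out $\Delta(c-f(d))=\Delta(m_1)$ with $m_1\in M\cap G(d/A)$, use that $nm_1\in M\cap G(d/A)$ for every $n$, so $c-f(d)\notin\underline{G(d/A)}_M$ gives $|c-f(d)|>n|m_1|$ for all $n$, i.e.\ $\Delta(c-f(d))>\Delta(m_1)$. The inequality $|c-f(d)|>|m_1|/n$ as written is too weak to conclude anything, and the stated absurdity $\Delta(c-f(d))<\Delta(m_1)$ points the wrong way (it appears to be copied from the dual case (ii), which you do state correctly). The preliminary observations $d\notin B$ and $c\notin M$ are harmless but not actually needed.
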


\begin{proof}
    As $d\not\in A$, the first condition of \cref{coupuresInvPossibles} fails. Let $f$ be the $A$-definable map witnessing \cref{coupuresInvPossibles}.
    \par As $G(d/A)\geqslant G(d/B)$, we have: $$\underline{G(d/A)}_M\geqslant\underline{G(d/B)}_M\geqslant H(d/B)$$
    \par If the second condition holds, let $G=G(d/B), H=\underline{G(d/B)}_M$, else let $G=\overline{H(d/B)}^M, H=H(d/B)$. Either way, $G$ and $H$ have the same points in $M$, $p(x)\models x-f(d)\in G\setminus H$, and $H\geqslant H(d/B)$. Suppose by contradiction that there is $m\in M$ such that $p(x)\models x-m\in H(d/B)$. Then, as $p(x)\models x-f(d)\in G$, we have $m-f(d)\in G$, yet $m-f(d)\in M$, therefore $m-f(d)\in H$. This is a contradiction, as we cannot have at the same time $p(x)\models x-f(d)\not\in H$, $p(x)\models x-m\in H$ and $f(d)-m\in H$.
\end{proof}

We established that realizations of unary global invariant types belong to some cosets of convex subgroups, and not others. Just as in \cref{WOrthVSGrpConvexe}, we may relate those statements to orthogonality, in order to prove \ref{recollementVal1}.

\begin{lemma}\label{inductionWOrth}
    Let $d_1,\ldots ,d_n$ be a family of tuples of $M$ such that, for all $i$, $\tp(d_i/A)$ and $\tp(d_{<i}/A)$ are weakly orthogonal. Then the family of types $(\tp(d_1/A),\ldots ,\tp(d_n/A))$ is weakly orthogonal with respect to \cref{defFaiblementOrth}.
\end{lemma}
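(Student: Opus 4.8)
The plan is to prove the statement by induction on $n$, reducing the general (multi-term) weak orthogonality of $(\tp(d_1/A),\ldots,\tp(d_n/A))$ to the pairwise hypothesis via the "transitivity-like" behaviour of weak orthogonality under concatenation of tuples. The base cases $n=1$ (trivial) and $n=2$ (this is exactly the hypothesis, unwinding Definition \ref{defFaiblementOrth}) require nothing. For the inductive step, assume the result for $n-1$, so that $\bigcup_{i<n}\tp(d_i/A)$ is a complete type over $A$ in the variables $x_1,\ldots,x_{n-1}$; call a realization of it $d_{<n}$, and note that $\tp(d_{<n}/A)$ is precisely this union. We must show that $\bigcup_{i\le n}\tp(d_i/A)$ is complete over $A$ in $x_1\ldots x_n$.

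The key step is the following reformulation: a union $p(x)\cup q(y)$ of complete types over $A$ is complete over $A$ precisely when $q$ remains complete over $Ac$ for $c\models p$ (equivalently, $p$ remains complete over $Ad$ for $d\models q$), i.e. exactly the condition that $\tp(c/A)$ and $\tp(d/A)$ are weakly orthogonal in the sense of Definition \ref{defFaiblementOrth}. Apply this with $p=\tp(d_{<n}/A)$ and $q=\tp(d_n/A)$: since by hypothesis $\tp(d_n/A)$ is weakly orthogonal to $\tp(d_{<n}/A)$, the type $\tp(d_n/A)$ stays complete over $A\cup d_{<n}$; combined with completeness of $\bigcup_{i<n}\tp(d_i/A)$ over $A$ (the induction hypothesis), this yields that $\bigcup_{i\le n}\tp(d_i/A)$ is complete over $A$, which is the desired weak orthogonality of the full family. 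One should be slightly careful to phrase "weakly orthogonal to the tuple $d_{<n}$" correctly: the hypothesis of the lemma literally says $\tp(d_i/A)$ and $\tp(d_{<i}/A)$ are weakly orthogonal for each $i$, and it is the case $i=n$ that we invoke here, so no strengthening of the hypothesis is needed.

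I do not expect any serious obstacle; the only point demanding a little care is making the "stepwise" argument rigorous, namely checking that completeness of $\bigcup_{i<n}\tp(d_i/A)$ over $A$ together with completeness of $\tp(d_n/A)$ over $A\cup d_{<n}$ really does give completeness of the whole union over $A$ — this is a routine transitivity of "generates a complete type", proved by noting that any formula $\varphi(x_1,\ldots,x_n)$ over $A$, viewed with parameters $d_{<n}$ substituted, is decided by $\tp(d_n/Ad_{<n})$, hence lies in $\bigcup_{i\le n}\tp(d_i/A)$ up to the choice already pinned down in $\bigcup_{i<n}\tp(d_i/A)$. Since everything is happening inside the sufficiently saturated and homogeneous monster $M$, realizations $d_{<n}$ with the prescribed type exist, so there is no set-theoretic subtlety.
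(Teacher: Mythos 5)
Your proof is correct and follows essentially the same route as the paper's: an induction on $n$ in which the induction hypothesis gives $\bigcup_{j<n}\tp(d_j/A)\models\tp(d_{<n}/A)$, and the assumed weak orthogonality of $\tp(d_n/A)$ and $\tp(d_{<n}/A)$ then gives $\tp(d_n/A)\cup\tp(d_{<n}/A)\models\tp(d_{\leqslant n}/A)$. The extra reformulation via completeness over $Ad_{<n}$ is harmless but not needed.
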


\begin{proof}
    We show by induction on $i$ that $(\tp(d_1/A),\ldots ,\tp(d_i/A))$ is weakly orthogonal. The statement is trivial if $i=1$. By induction hypothesis, we have $\bigcup\limits_{j<i}\tp(d_j/A)\models\tp(d_{<i}/A)$. By hypothesis, $\tp(d_i/A)\cup\tp(d_{<i}/A)\models\tp(d_{\leqslant i}/A)$, and we conclude by induction.
\end{proof}

\begin{lemma}\label{tupleWOrth}
    Let $d_1, d_2$ be two tuples from $M$. Assume for all $f$ in $\LC(\mathbb{Q})$ that $\tp(f(d_1)/A)$ is weakly orthogonal to $\tp(d_2/A)$. Then $\tp(d_1/A)$ is weakly orthogonal to $\tp(d_2/A)$.
\end{lemma}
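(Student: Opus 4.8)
The plan is to reduce, through quantifier elimination in \DOAG, to a statement about cuts of $\mathbb{Q}$-linear combinations, and then to push the hypothesis through the usual homogeneity argument. Write $d_1=(d_{1,1},\dots,d_{1,m})$ and $d_2=(d_{2,1},\dots,d_{2,n})$. By the displayed characterization of types in \DOAG, the type $\tp(d_1d_2/A)$ is determined by the cuts $\ct\bigl(h(d_1,d_2)/A\bigr)$ for $h\in\LC^{m+n}(\mathbb{Q})$, and every such $h$ decomposes as $h(x_1,x_2)=f(x_1)+g(x_2)$ with $f\in\LC^m(\mathbb{Q})$, $g\in\LC^n(\mathbb{Q})$. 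So it is enough to check that each cut $\ct\bigl(f(d_1)+g(d_2)/A\bigr)$ is decided by $\tp(d_1/A)\cup\tp(d_2/A)$; equivalently, that $d_1'\equiv_A d_1$ and $d_2'\equiv_A d_2$ force $\ct\bigl(f(d_1')+g(d_2')/A\bigr)=\ct\bigl(f(d_1)+g(d_2)/A\bigr)$. The cases $f=0$ and $g=0$ are immediate from $d_2'\equiv_A d_2$ (resp.\ $d_1'\equiv_A d_1$), so the content lies in the remaining case: I would set $e_1=f(d_1)$, $e_2=g(d_2)$, and $e_1'$, $e_2'$ analogously, note $e_1\equiv_A e_1'$ and $e_2\equiv_A e_2'$, and aim to show that $\tp(e_1/A)$ and $\tp(e_2/A)$ are weakly orthogonal. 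Granting that, the sum map applied to $e_1e_2\equiv_A e_1'e_2'$ gives $e_1+e_2\equiv_A e_1'+e_2'$, i.e.\ the desired equality of cuts; running this over all $f,g$ and invoking quantifier elimination (and o-minimality) shows $\tp(d_1/A)\cup\tp(d_2/A)$ is complete, which is the conclusion.

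For the weak orthogonality of $\tp(e_1/A)$ and $\tp(e_2/A)$, I would first isolate the easy general fact that weak orthogonality passes to $\dcl$-images: if $\tp(u/A)$ and $\tp(v/A)$ are weakly orthogonal and $\phi,\psi$ are total $A$-definable maps (over the ordered $\mathbb{Q}$-vector-space language, an $\LC$-map followed by a translation by an element of $A$), then $\tp(\phi(u)/A)$ and $\tp(\psi(v)/A)$ are weakly orthogonal. This is proved in the same spirit as \cref{WOrthVSGrpConvexe}: given $u_0\equiv_A\phi(u)$ and $v_0\equiv_A\psi(v)$, strong $|B|^+$-homogeneity supplies $u'\equiv_A u$ with $\phi(u')=u_0$ and $v'\equiv_A v$ with $\psi(v')=v_0$; weak orthogonality of $\tp(u/A),\tp(v/A)$ gives $uv\equiv_A u'v'$, and applying the $A$-definable map $(x,y)\mapsto(\phi(x),\psi(y))$ turns this into $\phi(u)\psi(v)\equiv_A u_0v_0$, which is exactly weak orthogonality of $\tp(\phi(u)/A),\tp(\psi(v)/A)$. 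I would then apply this with $u=e_1$ (so $\phi=\id$), $v=d_2$ and $\psi=g$: the hypothesis ``$\tp(f(d_1)/A)$ weakly orthogonal to $\tp(d_2/A)$'' reads ``$\tp(e_1/A)$ weakly orthogonal to $\tp(d_2/A)$'', hence ``$\tp(e_1/A)$ weakly orthogonal to $\tp(g(d_2)/A)=\tp(e_2/A)$'', as wanted.

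I expect the only genuine subtlety—the place where it would be easy to slip—is precisely this last move: the hypothesis is about $\tp(f(d_1)/A)$, not about $\tp(d_1/A)$, so the $\dcl$-image fact must be used by shrinking \emph{only} the second tuple ($d_2$ to $e_2$) while keeping $e_1$ on the first coordinate, never pre-assuming weak orthogonality of $\tp(d_1/A)$ and $\tp(d_2/A)$ (which would be circular). Everything else is routine bookkeeping: that the coordinate and linear functions in play are total $A$-definable so that automorphisms of $M$ over $A$ commute with them, and that the displayed type characterization for \DOAG\ reduces completeness of $\tp(d_1/A)\cup\tp(d_2/A)$ to deciding the cuts of all $\LC$-combinations of $d_1d_2$.
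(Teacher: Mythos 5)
Your proof is correct. The paper's own argument is shorter and organizes the same idea differently: it fixes $d\equiv_A d_1$ and shows directly that $d\equiv_{Ad_2}d_1$, using quantifier elimination in the form ``$d\equiv_{Ad_2}d_1$ iff $f(d)\equiv_{Ad_2}f(d_1)$ for every $f\in\LC(\mathbb{Q})$''; the latter condition is then the hypothesis verbatim, since weak orthogonality of $\tp(f(d_1)/A)$ and $\tp(d_2/A)$ says exactly that $f(d)\equiv_A f(d_1)$ forces $f(d)\equiv_{Ad_2}f(d_1)$ (and varying only the first tuple while keeping $d_2$ fixed suffices, by the usual homogeneity argument). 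By absorbing $d_2$ into the parameter set, the paper avoids the two extra steps you need: the decomposition $h=f+g$ of linear forms on the concatenated tuple, and the auxiliary closure lemma that weak orthogonality passes to $A$-definable images, which you use to go from $\tp(d_2/A)$ to $\tp(g(d_2)/A)$. That closure lemma is correct as you state and prove it --- the homogeneity needed to produce $u'\equiv_A u$ with $\phi(u')=u_0$ is available under \cref{DOAGHypMonstre}, which is in force throughout the section --- and it is a reusable general fact, so your route buys a little extra generality at the cost of length. You also correctly flag and avoid the circularity of pre-assuming weak orthogonality of the full tuples. Both arguments ultimately rest on the same two ingredients: the cut characterization of types in \DOAG\ and strong homogeneity.
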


\begin{proof}
    Let $d\equiv_A d_1$. It suffices to show that $d\equiv_{Ad_2}d_1$. By quantifier elimination, this holds if and only if, for every $f\in\LC(\mathbb{Q})$, we have $f(d)\equiv_{Ad_2}f(d_1)$, which is exactly the hypothesis of the statement.
\end{proof}

\begin{lemma}\label{tupleWOrth2}
    Let $d_1\in M$ be a singleton, and let $d_2\in M$ be a tuple. Let $V$ be the $\mathbb{Q}$-vector space $\dcl(Ad_2)$, and let $G$ be some $\Aut(M/A)$-invariant convex subgroup of $M$. Suppose we have $V\subset (A+G)\not\ni d_1$. Then $\tp(d_1/A)$ and $\tp(d_2/A)$ are weakly orthogonal.
\end{lemma}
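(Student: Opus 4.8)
The plan is to run essentially the same argument as the proof of \cref{WOrthVSGrpConvexe}: that proof only uses that the first element is a singleton and that the definable closure of the second parameter is a $\mathbb{Q}$-vector subspace contained in $A+G$, and both of these hold here with $d_1$ in the role of $d$ and the tuple $d_2$ (with $V=\dcl(Ad_2)\subset A+G$) in the role of the singleton $c$. So I would argue by contradiction, assuming $\tp(d_1/A)$ and $\tp(d_2/A)$ are not weakly orthogonal.

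First I would reduce this to a statement about cuts. Since $\tp(d_1/A)$ and $\tp(d_2/A)$ are both $\Aut(M/A)$-invariant, an incompatible pair of completions of $\tp(d_1/A)\cup\tp(d_2/A)$ can be transported, via automorphisms fixing $A$ pointwise and strong homogeneity, to two realizations of $\tp(d_1/A)$ that are not conjugate over $Ad_2$; in other words $\tp(d_1/A)$ fails to be complete over $Ad_2$. Now $d_1$ is a singleton and $V=\dcl(Ad_2)$ is a $\mathbb{Q}$-vector subspace of $M$ (definable closures are subspaces here, by \cref{DOAGDeclarationSEV}), so by quantifier elimination in \DOAG~ the type of a singleton over $V$ is determined by its cut over $V$; hence non-completeness forces the convex set $\ct(d_1/A)$ to contain some point $v$ of $V$ — otherwise every element of $V$ lies entirely to one side of $\ct(d_1/A)$, and all realizations of $\tp(d_1/A)$ would share a single cut over $V$.

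To conclude I would use the hypothesis on $G$. We have $v\in V\subset A+G$, while $v\in\ct(d_1/A)$ gives $v\equiv_A d_1$ (again by quantifier elimination, as $\dcl(A)=A$), so strong homogeneity yields $\sigma\in\Aut(M/A)$ with $\sigma(d_1)=v$. Since $\sigma$ fixes $A$ pointwise and $G$ is $\Aut(M/A)$-invariant, the subgroup $A+G$ is $\Aut(M/A)$-invariant, whence $d_1=\sigma^{-1}(v)\in A+G$, contradicting $d_1\notin A+G$.

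I do not expect a genuine obstacle here; the one point to state carefully is the equivalence, for a singleton $d_1$, between ``$\tp(d_1/A)$ and $\tp(d_2/A)$ not weakly orthogonal'' and ``$\ct(d_1/A)$ meets $\dcl(Ad_2)$''. This is the step that combines the $\Aut(M/A)$-invariance of the two base types, strong homogeneity, and quantifier elimination, and it is precisely the move already carried out in the proof of \cref{WOrthVSGrpConvexe}; so in the write-up I would either spell it out once or simply cite that proof.
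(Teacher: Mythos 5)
Your proof is correct and is essentially the argument the paper intends: the paper's proof of this lemma is literally "the same as that of \cref{WOrthVSGrpConvexe}", namely reducing failure of weak orthogonality to $\ct(d_1/A)$ meeting $V\subset A+G$ and then using strong homogeneity plus the $\Aut(M/A)$-invariance of $A+G$ to contradict $d_1\notin A+G$. Your write-up just spells out the reduction step (singleton cut determines the type over a subspace by quantifier elimination) a bit more explicitly than the original.
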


\begin{proof}
    The proof is the same as that of \cref{WOrthVSGrpConvexe}.
\end{proof}

Let us now prove \ref{recollementVal1}:

\begin{proposition}\label{WOrthVal1}
Let $c=(c_{ij})_{ij}$ be a finite tuple from $M$ that is $\val^1_B$-separated, such that its $\val^1_B$-blocks are the $(c_i)_i=((c_{ij})_j)_i$. Suppose we have $(p_i(x_i))_i$ a weak $\val^1_B$-block extension of the $c_{ij}$. Then $(p_i)_i$ is weakly orthogonal.
\end{proposition}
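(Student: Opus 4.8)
The plan is to induct on the number $r$ of $\val^1_B$-blocks. A single global type is vacuously weakly orthogonal (the union is $p_1$, which is complete over $M$), so assume $r\geq 2$; weak orthogonality does not depend on the enumeration of the family, so, since distinct $\val^1_B$-blocks have distinct values $\gamma_i:=\val^1_B(c_i)=G(c_i/B)$ and these convex subgroups of $M$ are totally ordered by inclusion, I would reorder the blocks so that $\gamma_1<\gamma_2<\cdots<\gamma_r$. A subfamily of a $\val^1_B$-separated family is $\val^1_B$-separated with the same blocks, so by the inductive hypothesis $(p_l)_{l<r}$ is weakly orthogonal; fixing a realization $(d_l)_{l<r}\models\bigcup_{l<r}p_l$ in a monster $M'\succ M$ (obtained by realizing the $p_l$ successively), this gives that $\tp(d_i/M)$ is weakly orthogonal to $\tp(d_{<i}/M)$ for each $i<r$. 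By \cref{inductionWOrth}, whose proof is unchanged with $M$ in place of $A$, it then suffices to show, for $d_r$ a realization of $p_r$ over $Md_{<r}$, that $\tp(d_r/M)$ is weakly orthogonal to $\tp(d_{<r}/M)$.

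The key step is to produce a single $\Aut(M'/M)$-invariant convex subgroup that separates the top block from all the earlier ones. I would take $G_r:=G(c_{r-1}/B)$, which is $B$- (hence $M$-) type-definable and therefore $\Aut(M'/M)$-invariant. First, $\dcl(Md_{<r})\subseteq M+G_r$: for $l<r$ and each component $c_{lj}$ of the block $c_l$, restricting $p_l$ to the variable of $c_{lj}$ gives a global $\Aut(M/A)$-invariant extension of $\tp(c_{lj}/B)$ (and $\indep{c_{lj}}{A}{B}{\cut}$ holds, since $p_l$ witnesses $\indep{c_l}{A}{B}{\inv}$, which implies $\indep{c_l}{A}{B}{\cut}$), so by \cref{rqueCpInvP} its realization satisfies $d_{lj}\in M+G(c_{lj}/B)=M+\gamma_l\subseteq M+\gamma_{r-1}=M+G_r$; since \DOAG{} is divisible, $M$ and $G_r$ are divisible subgroups of $M'$, hence so is $M+G_r$, and therefore $\dcl(Md_{<r})=M+\sum_{l<r,\,j}\mathbb{Q}d_{lj}\subseteq M+G_r$. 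Second, for every nonzero $f\in\LC(\mathbb{Q})$ in the variables of the $r$-th block, $f(d_r)\notin M+G_r$: divisibility gives $G(\lambda x/B)=G(x/B)$ for $\lambda\neq0$, so $\val^1_B$-separatedness of block $r$ yields $G(f(c_r)/B)=\val^1_B\big(\sum_j\lambda_{rj}c_{rj}\big)=\gamma_r$, and in particular $f(c_r)\notin B$; moreover $\indep{f(c_r)}{A}{B}{\cut}$ holds, since $\cut$-independence of the tuple $c_r$ passes to its $\mathbb{Q}$-linear combinations; so \cref{rqueCpInvP2} gives $f(d_r)\notin M+H(f(c_r)/B)$, and since $\gamma_{r-1}=G(c_{r-1}/B)<G(f(c_r)/B)=\gamma_r$, \cref{GPlusGrandDonneHPlusGrand} gives $G_r=\gamma_{r-1}<H(f(c_r)/B)$, whence $M+G_r\subseteq M+H(f(c_r)/B)$ and $f(d_r)\notin M+G_r$.

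Granting these two containments, for each nonzero $f$ as above \cref{tupleWOrth2} — again with $M$ in place of $A$, its proof needing only strong homogeneity of $M'$ over $M$ and $\Aut(M'/M)$-invariance of $G_r$ — shows that $\tp(f(d_r)/M)$ is weakly orthogonal to $\tp(d_{<r}/M)$, while this is trivial for $f=0$; then \cref{tupleWOrth}, likewise relativized to $M$, gives that $\tp(d_r/M)$ is weakly orthogonal to $\tp(d_{<r}/M)$. Combining this with the earlier weak orthogonalities $\tp(d_i/M)$ to $\tp(d_{<i}/M)$ for $i<r$, via \cref{inductionWOrth} for the enumeration $d_1,\dots,d_r$, shows that $\bigcup_{i\le r}p_i$ is complete over $M$, which completes the induction and hence the proof.

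I do not expect a genuine obstacle, only careful bookkeeping. One must apply the unary statements \cref{rqueCpInvP} and \cref{rqueCpInvP2} componentwise (respectively, after composing with an $f\in\LC(\mathbb{Q})$), since each $p_i$ is a type in a tuple variable; one must verify the straightforward $M$-relativized forms of \cref{inductionWOrth}, \cref{tupleWOrth}, and \cref{tupleWOrth2}, which hold by the same proofs because $G_r$ is $B$-type-definable and $B\subseteq M$; and one must justify $G(f(c_r)/B)=\gamma_r$ and $G(\lambda x/B)=G(x/B)$ for $\lambda\neq0$, which is exactly where $\val^1_B$-separatedness of the tuple and the divisibility of \DOAG{} are used. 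The conceptual heart is that the $G(\cdot/B)$-values of the blocks lie far enough apart that the single invariant convex subgroup $G(c_{r-1}/B)$ both contains all of $\dcl(Md_{<r})$ and excludes the top block, and \cref{GPlusGrandDonneHPlusGrand} is precisely what guarantees this.
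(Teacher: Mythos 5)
Your proof is correct and follows essentially the same route as the paper's: both reduce, via \cref{inductionWOrth}, \cref{tupleWOrth} and \cref{tupleWOrth2} relativized to base $M$, to exhibiting an $\Aut(N/M)$-invariant convex subgroup that contains $\dcl(Md_{<i})$ modulo $M$ (via \cref{rqueCpInvP}) while excluding $f(d_i)$ (via \cref{rqueCpInvP2} together with \cref{GPlusGrandDonneHPlusGrand}). The only difference is cosmetic: you take the separating subgroup to be the type-definable $G(c_{r-1}/B)$ attached to the penultimate block (plus a short divisibility argument to pass from the components $d_{lj}$ to their $\mathbb{Q}$-span), whereas the paper uses the $\vee$-definable $H(f(c_i)/B)$ attached to the top block; both choices sit in the same gap guaranteed by \cref{GPlusGrandDonneHPlusGrand}.
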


\noindent Recall from \cref{defBE} that by definition, types in a block extension are $\Aut(M/A)$-invariant. In particular, we have $\indep{c_i}{A}{B}{\cut}$ for each $i$.

\begin{proof}
Enumerate the $c_i$ in increasing order of their values. Let $N$ be some $|M|^+$-saturated, strongly $|M|^+$-homogeneous elementary extension of $M$, and let $d_i\in N$ be a realization of $p_i$. By \cref{inductionWOrth}, and \cref{tupleWOrth}, it suffices to show that for all $i$, for all $f\in \LC(\mathbb{Q})$, $\tp(f(c_i)/M)$ is weakly orthogonal to $\tp(c_{<i}/M)$. This is trivial if $f=0$.
\par Suppose $f\neq 0$. By $\val^1_B$-separatedness, $\val^1_B(f(c_i))=\val^1_B(c_i)$. Let $H=H(f(c_i)/B)$. As $H$ is a $B$-$\vee$-definable group, $H(N)$ is $\Aut(M/N)$-invariant. As $\tp(f(d_i)/M)$ is a global $\Aut(M/A)$-invariant extension of $\tp(f(c_i)/M)$, we have $f(d_i)\not\in (M+H(N))$ by \cref{rqueCpInvP2}. Let $V=\dcl(Md_{<i})$. By \cref{tupleWOrth2}, it suffices to show that $V\subset (M+H(N))$. Let $v\in V$. Let $g\in\LC(\mathbb{Q})$ and $m\in M$ be such that $v=m+g(d_{<i})$. Let $h$ be the $A$-definable map witnessing \cref{coupuresInvPossibles} applied to $g(d_{<i})$. As $\val^1_B$ is a $B$-valuation, and the values of the blocks before $c_i$ are strictly smaller than that of $c_i$, we have $G(g(c_{<i})/B)<G(c_i/B)$, thus $G(g(c_{<i})/B)<H$ by \cref{GPlusGrandDonneHPlusGrand}. By \cref{rqueCpInvP}:
$$g(d_{<i})-h\circ g(c_{<i})\in G(g(c_{<i})/B)<H$$ It follows that $v=m+h\circ g(c_{<i})+(g(d_{<i})-h\circ g(c_{<i}))\in (M+H(N))$, concluding the proof.
\end{proof}

Let us now build $\val^2_A$:

\begin{lemma}\label{newArchVsRam}
Let $d,d'\in M$ be such that $G(d/A)=G(d'/A)$, $d$ is Archimedean over $A$ and $d'$ is ramified over $A$. Then $G(d+d'/A)=G(d/A)$ and $d+d'$ is Archimedean over $A$.
\end{lemma}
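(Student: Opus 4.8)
The plan is to exploit the fact (\cref{DOAGVal1BienDef}) that $x\mapsto G(x/A)$ is an $A$-valuation, which already gives $G(d+d'/A)\leqslant\max(G(d/A),G(d'/A))=G(d/A)$; the real content is to rule out the ``cancellation'' that the valuation axioms would otherwise permit, and this is exactly where the Archimedean/ramified asymmetry enters, to be read off through the Archimedean valuation $\Delta$. Write $G:=G(d/A)=G(d'/A)$. Since $d'$ is ramified over $A$, for $a_0\in\ram(d'/A)$ we have $d'-a_0\in G(d'/A)\setminus H(d'/A)$ by \cref{DOAGPropRamificateur} and \cref{ramificateurLoin}; in particular $G\neq\{0\}$, so $d\notin A$. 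Fix such an $a_0$ and set $\delta:=\Delta(d'-a_0)=\delta(d'/A)$.

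The key step — and the only one I expect to be a genuine obstacle, since everything afterwards is bookkeeping with cuts — is the \emph{separation estimate}: for every $a\in A$ one has $\Delta(d-a)>\delta$. To see this, note that because $d\notin A$ and $d$ is Archimedean over $A$, no $a\in A$ is a ramifier of $d$, so $d-a\notin G=G(d/A)$ by \cref{DOAGPropRamificateur}. On the other hand $d'-a_0\in G$ and $G$ is a convex subgroup, so if we had $\Delta(d-a)\leqslant\Delta(d'-a_0)$ then $d-a$ would lie in $G$ by convexity, a contradiction; hence $\Delta(d-a)>\Delta(d'-a_0)=\delta$.

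With this in hand I would first show $\ct(d+d'/A)=\ct(d+a_0/A)$. For $a\in A$, writing $a-(d+a_0)=-(d-(a-a_0))$ and applying the separation estimate at $a-a_0\in A$ gives $\Delta\bigl(a-(d+a_0)\bigr)>\delta=\Delta(d'-a_0)$; consequently no $a\in A$ can equal $d+d'$ or lie strictly between $d+d'$ and $d+a_0$, since any such $a$ would satisfy $|a-(d+a_0)|\leqslant|(d+d')-(d+a_0)|=|d'-a_0|$, i.e. $\Delta(a-(d+a_0))\leqslant\delta$. In particular $d+d'\notin A$, and since also $d+a_0\notin A$ (because $d\notin A$) the two cuts coincide. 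Now $a_0\in A$ and $A$ acts by translation on the cuts over $A$ (\cref{actionSurCoupures}) and is abelian, so $\stab(d+d'/A)=\stab(d+a_0/A)=\stab(d/A)$; since $G(\,\cdot\,/A)$ depends only on this stabilizer for elements outside $A$, we get $G(d+d'/A)=G(d/A)$. Finally, as $d$ is Archimedean, \cref{coupureArch} gives $\ct(d/A)=d+G(d/A)$, hence $\ct(d+d'/A)=\ct(d+a_0/A)=(d+a_0)+G(d/A)=(d+d')+G(d/A)$, the last equality because $(d+a_0)-(d+d')=a_0-d'\in G(d'/A)=G(d/A)$; thus $\ct(d+d'/A)$ is the full coset $(d+d')+G(d+d'/A)$, which is disjoint from $A$ since $d+d'\notin A$. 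By \cref{conditionsRam}, this coset having no point in $A$ means $d+d'$ is not ramified over $A$, i.e. it is Archimedean over $A$, completing the proof.
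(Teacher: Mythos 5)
Your proof is correct. It reaches the same two conclusions from the same two basic ingredients as the paper — the ultrametric inequality for $\val^1_A$ gives $G(d+d'/A)\leqslant G$, and the fact that $d$ is Archimedean forces the coset $d+a_0+G$ to be disjoint from $A$ — but your route to the reverse inequality is different. The paper observes directly that $d+a_0+G$ is a convex set containing $d+d'$ and disjoint from $A$, hence lies inside $\ct(d+d'/A)$, and then sandwiches this cut between $d+d'+G$ and $d+d'+G(d+d'/A)$ using \cref{coupureIncluseDansG}; equality of the three cosets follows at once, and the disjointness from $A$ gives Archimedean-ness via \cref{conditionsRam}. You instead prove a separation estimate at the level of Archimedean classes ($\Delta(d-a)>\delta(d'/A)$ for all $a\in A$, correctly justified by convexity of $G$ and \cref{DOAGPropRamificateur}), deduce from it the cut equality $\ct(d+d'/A)=\ct(d+a_0/A)$, and recover $G(d+d'/A)=G$ through equality of stabilizers under the translation action of \cref{actionSurCoupures}. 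Your version is longer but makes the ``no cancellation of Archimedean classes'' phenomenon explicit, and the separation estimate is essentially the content of \cref{newRamVsRam} in a different guise; the paper's version is shorter because \cref{coupureIncluseDansG} already packages the lower bound on the cut. All the individual steps check out, including the small points you flag ($d\notin A$, $d+a_0\notin A$, and the cut of an element outside $A$ being disjoint from $A$).
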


\begin{proof}
Let $a\in\ram(d'/A)$, and let $G=G(d/A)=G(d'/A)$. Then the (convex) coset $d+a\mod G$ has no point in $A$ and contains $d+d'$, in particular $d+d'\mod G\subset\ct(d+d'/A)$. By ultrametric inequality for the valuation $v^1_A$, $G(d+d'/A)\leqslant G$. By \cref{coupureIncluseDansG}, $\ct(d+d'/A)$ is contained in $d+d'\mod G(d+d'/A)$, therefore we have:
$$d+d'\mod G\subset\ct(d+d'/A)\subset d+d'\mod G(d+d'/A)\subset d+d'\mod G$$
As a result, $G(d+d'/A)=G$. This concludes the proof, as the coset $d+d'\mod G(d+d'/A)$ has no point in $A$.
\end{proof}

\begin{corollary}
The following satisfies the conditions of \cref{valuationPreordre}:
\begin{center}
$x\leqslant y$\\
$\Longleftrightarrow$\\
either $\val^1_A(x)<\val^1_A(y)$, or they have the same value and $x$ is ramified over $A$, or they have the same value and $y$ is Archimedean over $A$.
\end{center}
\end{corollary}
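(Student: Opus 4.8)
The plan is to apply \cref{valuationPreordre} to the relation $\leqslant$ defined in the statement. That lemma takes an arbitrary preorder and, provided the induced order on the quotient is linear and satisfies $\pi(x)<\pi(y)\Rightarrow\pi(x+y)=\pi(y)$, produces the desired valuation. So there are exactly three things to check: that $\leqslant$ is a preorder, that the induced order is linear, and that implication.

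First, the preorder and linearity. Reflexivity is immediate, since every element of $M$ is either ramified or Archimedean over $A$, so one of the last two clauses of the definition always applies with $x=y$. For transitivity, suppose $x\leqslant y\leqslant z$: if $\val^1_A(x)<\val^1_A(y)$ or $\val^1_A(y)<\val^1_A(z)$ then $\val^1_A(x)<\val^1_A(z)$ and $x\leqslant z$; otherwise $\val^1_A(x)=\val^1_A(y)=\val^1_A(z)$, and a short Boolean check shows that "$x$ ramified or $y$ Archimedean" together with "$y$ ramified or $z$ Archimedean" forces "$x$ ramified or $z$ Archimedean", so again $x\leqslant z$. The induced equivalence identifies, inside each $\val^1_A$-fibre, all the ramified elements into one class and all the Archimedean ones into another; the induced order then compares first by $\val^1_A$-value (a linear order, as $\val^1_A$ is a valuation by \cref{DOAGVal1BienDef}) and breaks ties by placing the ramified class below the Archimedean one. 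This is visibly a linear order.

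Now the key condition: assume $\pi(x)<\pi(y)$ and show $\pi(x+y)=\pi(y)$. Unwinding the definition, either $\val^1_A(x)=\val^1_A(y)$ with $x$ ramified and $y$ Archimedean, or $\val^1_A(x)<\val^1_A(y)$. The first case is exactly \cref{newArchVsRam} with $d=y$, $d'=x$: it gives $G(x+y/A)=G(y/A)$ (so equal $\val^1_A$-value) and that $x+y$ is Archimedean over $A$, hence $\pi(x+y)=\pi(y)$. In the second case, $\val^1_A(x+y)=\val^1_A(y)$ follows from \cref{triangleIsocele} applied to the valuation $\val^1_A$, so it remains to check that $x+y$ and $y$ have the same ramification status. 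Writing $G=G(y/A)=G(x+y/A)$, and noting $y\notin A$ and $x+y\notin A$ (since $G\neq\{0\}$), the characterisation "$d$ ramified over $A$ $\iff$ $d\in (A+G(d/A))\setminus A$" from \cref{conditionsRam} reduces the claim to the equivalence $y\in A+G\iff x+y\in A+G$; since $A+G$ is a subgroup, this in turn amounts to the single assertion $x\in A+G(y/A)$. This assertion is the heart of the matter: from $\val^1_A(x)<\val^1_A(y)$ and \cref{GPlusGrandDonneHPlusGrand} one gets $G(x/A)<H(y/A)$, and then an Archimedean-class computation — comparing the least Archimedean class realised in $A$ that is not below $\Delta(G(x/A))$ with $\Delta(H(y/A))\subseteq\Delta(G(y/A))$, and extracting an $a\in A$ with $\Delta(x-a)$ small enough by using a point of $A$ that separates $x$ from a suitable translate $x+a^{*}$ — yields $x\in A+G(y/A)$. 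The reverse implication in the equivalence follows by applying the same assertion to $-x$ and $x+y$, as $\val^1_A(-x)=\val^1_A(x)<\val^1_A(y)=\val^1_A(x+y)$.

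I expect the only real obstacle to be precisely this last assertion, $\val^1_A(x)<\val^1_A(y)\Rightarrow x\in A+G(y/A)$. At the level of convex subgroups it is essentially \cref{GPlusGrandDonneHPlusGrand}, but turning that into an actual witness $a\in A$ with $x-a\in G(y/A)$ requires tracking which Archimedean classes are realised in $A$ and how $\stab(\cdot/A)$, $G(\cdot/A)$ and $H(\cdot/A)$ interact, together with the routine split according to whether $x$ lies in $A$, is ramified over $A$, or is Archimedean over $A$. The rest is bookkeeping with the facts already established.
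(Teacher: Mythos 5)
Your proof is correct and follows the route the paper intends (the corollary is stated without proof there): \cref{newArchVsRam} handles the equal-value case, the ultrametric inequality for $\val^1_A$ handles the strict case, and the remaining point is that adding an element of strictly smaller value preserves ramification status. Your key auxiliary claim $\val^1_A(x)<\val^1_A(y)\Rightarrow x\in A+G(y/A)$ does hold by exactly the mechanism you sketch: by maximality of $G(x/A)$ there is $a\in\bigl(A\setminus\stab(x/A)\bigr)\cap G(y/A)=\bigl(A\setminus\stab(x/A)\bigr)\cap\stab(y/A)$, and any $a'\in A$ lying strictly between $x$ and $x+a$ satisfies $x-a'\in H(y/A)\leqslant G(y/A)$.
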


\begin{definition}\label{defVal2}
We define $\val^2_A$ the valuation induced by the above preorder. It is clearly an $A$-valuation which refines $\val^1_A$.
\end{definition}

This new valuation splits the $\val^1_A$-blocks in two, putting the $A$-ramified points below the $A$-Archimedean ones. For instance, in \cref{exempleOrthDOAG}, we have $\val^2_A(c_1)<\val^2_A(c_3)<\val^2_A(c_2)$.

Let us now prove \ref{recollementVal2}:

\begin{proposition}\label{WOrthVal2}
Let $c=(c_{ij})_{ij}$ be a finite tuple from $M$ that is $\val^2_A$-separated, such that its $\val^2_A$-blocks are the $(c_i)_i=((c_{ij})_j)_i$. Then $(\tp(c_i/A))_i$ is weakly orthogonal.
\end{proposition}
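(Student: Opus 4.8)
The plan is to reduce to a one-variable statement by the usual induction, and then to distinguish the case where the block being peeled off is $A$-ramified from the case where it is $A$-Archimedean; the second is the hard one.

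First I would enumerate the $\val^2_A$-blocks $c_1,\dots,c_m$ in increasing order of $\val^2_A$-value. By \cref{inductionWOrth} it is enough that $\tp(c_i/A)$ be weakly orthogonal to $\tp(c_{<i}/A)$ for each $i$, and by \cref{tupleWOrth} it is enough that $\tp(f(c_i)/A)$ be weakly orthogonal to $\tp(c_{<i}/A)$ for each $i$ and each $f\in\LC(\mathbb{Q})$; as $f=0$ is trivial, fix $f\neq0$ and put $d=f(c_i)$. Since $\val^2_A$-separatedness entails $\val^1_A$-separatedness ($\val^1_A$ being refined by $\val^2_A$), we get $\val^2_A(d)=\val^2_A(c_i)$ and $\val^1_A(d)=\val^1_A(c_i)$; by the way $\val^2_A$ was built, $d$ is $A$-ramified iff $c_i$ is, and by \cref{DOAGComparaisonGH} $G(d/A)=G(c_i/A)$ and $H(d/A)=H(c_i/A)$; also $d\notin A$. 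The structural fact I would record is that any $c_{i'}$ with $i'<i$ has $\val^1_A(c_{i'})\leqslant\val^1_A(c_i)$, with equality only when $c_i$ is $A$-Archimedean and $c_{i'}$ is the unique $A$-ramified $\val^2_A$-block lying in the same $\val^1_A$-block as $c_i$ (because $\val^2_A$ splits each $\val^1_A$-block into at most two classes, ramified below Archimedean). In all cases my goal is to apply \cref{tupleWOrth2} with $d_1=d$, $d_2=c_{<i}$: I need an $\Aut(M/A)$-invariant convex subgroup $G$ with $\dcl(Ac_{<i})\subset A+G$ and $d\notin A+G$.

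The recurring estimate is: if $x,y\in M$ and $G(y/A)<H(x/A)$, then $y\in A+H(x/A)$. Indeed $H(x/A)$ is the convex hull of $\stab(x/A)=A\cap H(x/A)$, so there is $a\in\stab(x/A)\setminus G(y/A)$; then $a\in A\setminus\stab(y/A)$ does not fix $\ct(y/A)$, so some $a_0\in A$ is strictly between $y$ and $y+a$, whence $|y-a_0|<|a|$ and $y-a_0\in H(x/A)$. Granting this, the $A$-ramified case is immediate: by the structural fact every $c_{i'}$ ($i'<i$) lies in a strictly smaller $\val^1_A$-block, so $\val^1_A$-separatedness gives $G(g(c_{<i})/A)<G(d/A)$ for all $g\neq0$, hence $G(g(c_{<i})/A)<H(d/A)$ by \cref{GPlusGrandDonneHPlusGrand}, hence $g(c_{<i})\in A+H(d/A)$; so $\dcl(Ac_{<i})\subset A+H(d/A)$, whereas $d\notin A+H(d/A)$ by \cref{ramificateurLoin}. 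Since $H(d/A)$ is an $A$-$\vee$-definable, hence $\Aut(M/A)$-invariant, convex subgroup, \cref{tupleWOrth2} applies with $G=H(d/A)$.

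The main obstacle is the $A$-Archimedean case, since now a nonzero $g(c_{<i})$ may have $\val^1_A$-value equal to $\val^1_A(d)$, so $H(d/A)$ is too small for $\dcl(Ac_{<i})\subset A+H(d/A)$ to hold. Here I would take $G=G(d/A)$ instead. On one hand, \cref{coupureArch} identifies $\ct(d/A)$ with the coset $d+G(d/A)$, which is disjoint from $A$ (as $d\notin A$), so $d\notin A+G(d/A)$. On the other hand, to see $\dcl(Ac_{<i})\subset A+G(d/A)$, write $g(c_{<i})=g_1(c^{r})+g_2(c^{\ell})$, where $c^{r}$ is the $A$-ramified twin block of $c_i$ among the $c_{<i}$ (possibly empty) and $c^{\ell}$ the rest. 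For $g_2\neq0$, $\val^1_A$-separatedness and \cref{GPlusGrandDonneHPlusGrand} give $G(g_2(c^{\ell})/A)<H(d/A)$, so $g_2(c^{\ell})\in A+H(d/A)\subset A+G(d/A)$ by the estimate. For $g_1\neq0$, since $c^{r}$ is itself $\val^2_A$-separated its image $g_1(c^{r})$ has $\val^2_A$-value $\val^2_A(c^{r})$, which is the $A$-ramified class inside the $\val^1_A$-block of $c_i$; hence $g_1(c^{r})$ is $A$-ramified with $G(g_1(c^{r})/A)=\val^1_A(c_i)=G(d/A)$, so choosing a ramifier $a_1$ we get $g_1(c^{r})-a_1\in G(d/A)$, i.e.\ $g_1(c^{r})\in A+G(d/A)$. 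Thus $\dcl(Ac_{<i})\subset A+G(d/A)$, and since $G(d/A)$ is an $A$-type-definable, hence $\Aut(M/A)$-invariant, convex subgroup, \cref{tupleWOrth2} finishes the proof. The conceptual reason the twin block does not obstruct orthogonality — while $d$ itself does — is that an $A$-ramified element always lies in an $A$-coset of its own convexity subgroup $G(\cdot/A)$, whereas the $A$-Archimedean $d$ does not.
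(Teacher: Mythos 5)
Your proof is correct and follows essentially the same route as the paper's: the same reduction via \cref{inductionWOrth}, \cref{tupleWOrth} and \cref{tupleWOrth2}, the same witnessing subgroups ($H(c_i/A)$ in the ramified case, $G(c_i/A)$ in the Archimedean case), and the same supporting facts (\cref{GPlusGrandDonneHPlusGrand}, \cref{ramificateurLoin}, \cref{coupureArch}, \cref{conditionsRam}). The only cosmetic difference is that in the Archimedean case you split $g(c_{<i})$ into the ramified twin block and the rest, where the paper instead does a direct case disjunction on whether $G(g(c_{<i})/A)$ equals $G(c_i/A)$; the two arguments are interchangeable.
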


\noindent Note that the types which are weakly orthogonal are types over $A$ here, as opposed to the types in \cref{WOrthVal1} which were global types. The respective restrictions over $A$ of non-weakly orthogonal global types can very well be weakly orthogonal.

\begin{proof}
Just like in \cref{WOrthVal1}, enumerate the blocks in increasing order of their value by $\val^2_A$. Choose $i$, and $f\in \LC(\mathbb{Q})$ with $f\neq 0$. As in the proof of \cref{WOrthVal1}, it suffices to find $G$ an $\Aut(M/A)$-invariant convex subgroup of $M$ such that $f(c_i)\not\in (A+G)$, and, for all $g\in\LC(\mathbb{Q})$, $g(c_{<i})\in (A+G)$.

\begin{itemize}
    \item Suppose the $(c_{ij})_j$ are ramified over $A$, then we choose $G=H(c_i/A)$. By separatedness, $G=H(f(c_i)/A)$. Now, for any $g\in\LC(\mathbb{Q})$, we have by \cref{GPlusGrandDonneHPlusGrand} $G(g(c_{<i})/A)<G$, therefore there exists $a$ in $A\setminus \stab(g(c_{<i})/A)$ such that $a\in G$. Let $a'\in A$ be such that $a'\in\left]g(c_{<i})-|a|, g(c_{<i})+|a|\right[$. Then $g(c_{<i})\in \left]a'-|a|, a'+|a|\right[\subset(A+G)$.
    \item Suppose the $(c_{ij})_j$ are Archimedean over $A$, and choose $g$. This time we let $G=G(c_i/A)$. By definition of being Archimedean, $f(c_i)\not\in (A+G)$. Either $G(g(c_{<i})/A)<G$, in which case we have $g(c_{<i})\in (A+H(c_i/A))\subset (A+G)$ just as in the above item, or $G(g(c_{<i})/A)=G$, in which case $g(c_{<i})$ is ramified over $A$ as $\val^2_A(g(c_{<i}))<\val^2_A(c_i)$, therefore $g(c_{<i})\in (A+G)$.
\end{itemize}
This concludes the proof.
\end{proof}

\subsection{Tensor product}
We saw that the blocks of a block extension for the coarsest valuation are weakly orthogonal, thus can be glued by just taking the union. It will no longer be the case for finer valuations, the tensor product of the blocks will no longer commute, and we will have to choose the order of the factors. For the third valuation, which is yet undefined, this choice will have to be taken with care, because some choices may give a global type which does not extend our type over $B$.
\par The following proposition shows us how to properly glue the elements of a $\val^2_B$-block extension.

\begin{proposition}\label{produitTensVal2}
Let $c$ be a finite tuple from $M$ such that $\indep{c}{A}{B}{\cut}$. Suppose $c=(c_{i})_{i\in\{1, 2\}}$ is a $\val^1_B$-block (i.e. a family having only one block) that is $\val^2_B$-separated, and $c_1$ (resp. $c_2$) is the block of $B$-ramified (resp. Archimedean) elements of $c$. Suppose $p_1,p_2$ is a weak $\val^2_B$-block extension of $c$. Then $c$ satisfies the restriction to $B$ of the tensor product of $p_1, p_2$ in any order.
\end{proposition}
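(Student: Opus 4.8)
The plan is to compare complete types over $B$, using one soft observation: for \emph{any} two global $\Aut(M/A)$-invariant types $p,q$ one always has $(p\otimes q)_{|B}\supseteq p_{|B}\cup q_{|B}$. Once we know in addition that $\tp(c_1/B)\cup\tp(c_2/B)$ is itself a complete type over $B$ --- that is, that $\tp(c_1/B)$ and $\tp(c_2/B)$ are weakly orthogonal --- the inclusion $(p_1\otimes p_2)_{|B}\supseteq\tp(c/B)$ between complete types over $B$ is forced to be an equality, and the same for the reversed order. In particular we will never need $p_1\otimes p_2$ and $p_2\otimes p_1$ to agree as \emph{global} types (they generally do not, since $p_1$ and $p_2$ need not be weakly orthogonal): only their restrictions to $B$ will agree, and both equal $\tp(c/B)$.

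First I would establish the required weak orthogonality. Recall from \cref{defBE} that, $(p_1,p_2)$ being a weak $\val^2_B$-block extension of $c$, $p_1$ and $p_2$ are global $\Aut(M/A)$-invariant types extending $\tp(c_1/B)$ and $\tp(c_2/B)$ respectively, so that $p_1\otimes p_2$ and $p_2\otimes p_1$ are well defined. Since $M$ is $|B|^+$-saturated and strongly $|B|^+$-homogeneous, the statement and proof of \cref{WOrthVal2} apply verbatim with $B$ in place of $A$ (the convex subgroups $G(c_i/B)$, $H(c_i/B)$ appearing there are $B$-(type/$\vee$)-definable, hence $\Aut(M/B)$-invariant): as $c$ is $\val^2_B$-separated with $\val^2_B$-blocks $c_1$ (the $B$-ramified part, of smaller $\val^2_B$-value) and $c_2$ (the $B$-Archimedean part), the pair $\bigl(\tp(c_1/B),\tp(c_2/B)\bigr)$ is weakly orthogonal, whence $\tp(c/B)=\tp(c_1/B)\cup\tp(c_2/B)$.

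Then I would check the inclusion $(p_1\otimes p_2)_{|B}\supseteq\tp(c_1/B)\cup\tp(c_2/B)$ straight from the definition of $\otimes$. A formula of $\tp(c_1/B)$ has the form $\varphi(x,\bar b)$ with $\bar b\in B$; it lies in $p_1$, and since it does not involve $y$ it lies in $p_1\otimes p_2$ (in the defining condition one takes the parameter tuple $\bar b$, and the auxiliary realization of $p_2$ over $A\bar b$ plays no role). A formula of $\tp(c_2/B)$ has the form $\psi(y,\bar b)$ with $\bar b\in B$; it lies in $p_2$, and since $\psi(y,\bar b)$ is an $A\bar b$-formula belonging to $p_2$, any realization $a$ of $p_2$ over $A\bar b$ satisfies $\psi(a,\bar b)$, so --- the formula not involving $x$ --- $\psi(y,\bar b)\in p_1\otimes p_2$. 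The same computation with the roles of $p_1$ and $p_2$ exchanged gives $(p_2\otimes p_1)_{|B}\supseteq\tp(c_2/B)\cup\tp(c_1/B)$. As $(p_1\otimes p_2)_{|B}$ and $(p_2\otimes p_1)_{|B}$ are complete types over $B$, each containing the complete type $\tp(c/B)$, we conclude $(p_1\otimes p_2)_{|B}=(p_2\otimes p_1)_{|B}=\tp(c/B)$, i.e. $c$ satisfies the restriction to $B$ of the tensor product in either order.

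I do not expect a genuine obstacle here. The only points that need care are the bookkeeping with the definition of $\otimes$ (a formula over $B$ not mentioning the ``foreign'' variable is automatically a member of the tensor product) and checking that \cref{WOrthVal2} really does transfer to the base $B$. The real content is that $\tp(c_1/B)$ and $\tp(c_2/B)$ are weakly orthogonal even though $p_1$ and $p_2$ need not be; it is precisely the loss of this property for $\val^3_B$-blocks that will force one to choose the order of the tensor factors carefully for the next valuation, in contrast with the present situation where any order works.
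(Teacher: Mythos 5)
Your proof is correct and follows essentially the same route as the paper: both reduce the statement to the weak orthogonality of $\tp(c_1/B)$ and $\tp(c_2/B)$ obtained from \cref{WOrthVal2} applied with base $B$, and then unwind the definition of $\otimes$ (the paper via $(p_1)_{|Bc_2}=\tp(c_1/Bc_2)$, you via the equivalent observation that $(p_1\otimes p_2)_{|B}$ contains $\tp(c_1/B)\cup\tp(c_2/B)$, which already generates the complete type $\tp(c/B)$).
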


\noindent Note that, although the restrictions to $B$ of $p_1\otimes p_2$ and $p_2\otimes p_1$ coincide, these two global types are distinct, i.e. $(p_1, p_2)$ is not weakly orthogonal. This will be explained in the next section.

\begin{proof}
By \cref{WOrthVal2}, $(\tp(c_i/B))_{i\in\{1, 2\}}$ is weakly orthogonal, thus $\tp(c_1/B)$ is complete in $S(B+\mathbb{Q}c_2)$, and it coincides in particular with $(p_1)_{|B+\mathbb{Q}c_2}$. It follows that $c\models (p_1\otimes p_2)_{|B}$, and the same reasoning can be applied to $p_2\otimes p_1$.
\end{proof}

Let us now build $\val^3_B$:

\begin{lemma}\label{newRamVsRam}
Let $d_1,d_2\in M$ be two points which are ramified over $A$, and have the same $\val^2_A$-value. Suppose $\delta(d_1/A)<\delta(d_2/A)$. Then $\val^2_A(d_1+d_2)=\val^2_A(d_2)$, and $\delta(d_1+d_2/A)=\delta(d_2/A)$ (as defined in \cref{defPetitDelta}).
\end{lemma}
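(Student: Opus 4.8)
The plan is to express $\delta(d_1+d_2/A)$ and $G(d_1+d_2/A)$ through the isoceles rule for the Archimedean valuation (\cref{triangleIsocele}), after fixing good representatives. Since $d_1$ and $d_2$ have the same $\val^2_A$-value they have the same $\val^1_A$-value, so $G:=G(d_1/A)=G(d_2/A)$; set $H:=H(d_2/A)$ and $S:=\stab(d_2/A)$, and recall $S=A\cap G$. Pick ramifiers $a_i\in\ram(d_i/A)$, so $\Delta(d_i-a_i)=\delta(d_i/A)=:\delta_i$ by \cref{defPetitDelta}, and write $e:=d_1+d_2$ and $a:=a_1+a_2\in A$.

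Next I would settle the value $\delta(e/A)$. Since $e-a=(d_1-a_1)+(d_2-a_2)$ and $\delta_1<\delta_2$, \cref{triangleIsocele} gives $\Delta(e-a)=\delta_2$. As $\delta_2=\delta(d_2/A)\notin\Delta(A)$, the element $e-a$ is nonzero and lies outside $A$, whence $e\notin A$; moreover $\Delta(e-a)=\delta_2\notin\Delta(A)$ means exactly that $a\in\ram(e/A)$ by \cref{DOAGPropRamificateur}. Therefore $e$ is ramified over $A$ with $\delta(e/A)=\Delta(e-a)=\delta_2=\delta(d_2/A)$, which is the second assertion.

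It remains to prove $G(e/A)=G$; since $e$ and $d_2$ are both ramified over $A$, this yields $\val^2_A(e)=\val^2_A(d_2)$. The inequality $G(e/A)\leqslant G$ is the ultrametric inequality \cref{DOAGVal1BienDef} (using $G(d_1/A)=G(d_2/A)$). For the converse, note that the $A$-type-definable convex subgroup $G(d/A)$ depends, by its very definition, only on $\stab(d/A)$; so it suffices to show $\stab(e/A)=S$. To see $S\subseteq\stab(e/A)$, take $a'\in S$: then $\Delta(a')\in\Delta(S)\subseteq\Delta(H)$, so $\Delta(a')<\delta_2=\Delta(e-a)$; hence $\Delta((e-a)+a')=\delta_2$, and since $|a'|<|e-a|$ (as $\Delta(x)<\Delta(y)$ entails $|x|<|y|$) the element $e+a'-a$ retains the sign of $e-a$, so $e+a'\equiv_A e$ by \cref{critereEquRam}. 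To see $\stab(e/A)\subseteq S$, take $0\neq a'\in\stab(e/A)$; \cref{critereEquRam} forces $\ct(\Delta(e+a'-a)/\Delta(A))=\ct(\delta_2/\Delta(A))$. If $\Delta(a')\geqslant\delta_2$ then in fact $\Delta(a')>\delta_2$ (equality is impossible since $\delta_2\notin\Delta(A)$ while $\Delta(a')\in\Delta(A)$), so $\Delta(e+a'-a)=\Delta(a')\in\Delta(A)$; but $\ct(\delta_2/\Delta(A))=\ct_>(\Delta(S)/\Delta(A))$ by \cref{classeUnique}, and this cut omits every point of $\Delta(A)$, a contradiction. Hence $\Delta(a')<\delta_2$, and since $\delta_2\in\Delta(G)$ and $\Delta(G)$ is an initial segment, $a'\in A\cap G=S$.

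The ultrametric bookkeeping above is routine; the one step needing genuine care is the inclusion $\stab(e/A)\subseteq S$, i.e. that adding $d_1$ does not shrink the stabilizer of the cut of $d_2$. This is precisely where the refined equivalence criterion \cref{critereEquRam} and the identification $\ct(\delta(d_2/A)/\Delta(A))=\ct_>(\Delta(\stab(d_2/A))/\Delta(A))$ from \cref{classeUnique} are needed, and I expect it to be the only real obstacle.
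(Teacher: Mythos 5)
Your proof is correct and takes essentially the same route as the paper's: \cref{triangleIsocele} gives $\delta(d_1+d_2/A)=\delta(d_2/A)$, and the $\val^2_A$-value is then transferred by showing $\stab(d_1+d_2/A)=\stab(d_2/A)$. The only (cosmetic) difference is that the paper deduces the stabilizer equality at once from \cref{classeUnique} — equal $\delta$'s yield equal cuts $\ct_>(\Delta(\stab(\cdot/A))/\Delta(A))$, which determine the initial segments $\Delta(\stab(\cdot/A))$ and hence the stabilizers — whereas you verify the two inclusions element-wise via \cref{critereEquRam}; both arguments are sound.
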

\begin{proof}
Let $a_i\in\ram(d_i/A)$. Then, by \cref{triangleIsocele}, $\Delta(d_1+d_2-a_1-a_2)=\delta(d_2/A)$, therefore $d_1+d_2$ is ramified over $A$, and $\delta(d_1+d_2/A)=\delta(d_2/A)$. By \cref{classeUnique}, we have:
$$\ct_>(\Delta(\stab(d_2/A))/\Delta(A))=\ct_>(\Delta(\stab(d_1+d_2/A))/\Delta(A))$$
\par However, those stabilizers are convex subgroups of $A$, thus the sets of their Archimedean classes are initial segments of $\Delta(A)$, so this equality of cuts implies $\Delta(\stab(d_2/A))=\Delta(\stab(d_1+d_2/A))$. It follows that $\stab(d_2/A)=\stab(d_1+d_2/A)$, thus $H(d_2/A)=H(d_1+d_2/A)$, thus $\val^1_A(d_2)=\val^1_A(d_1+d_2)$. This concludes the proof, as $d_2$, $d_1+d_2$ are both ramified over $A$.
\end{proof}

\begin{corollary}
The following satisfies the conditions of \cref{valuationPreordre}:
\begin{center}
$x\leqslant y$\\
$\Longleftrightarrow$\\
either $\val^2_A(x)<\val^2_A(y)$, or they have the same value, they are both ramified over $A$, and $\delta(x/A)\leqslant\delta(y/A)$.
\end{center}
\end{corollary}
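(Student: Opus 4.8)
The plan is to verify directly the three hypotheses of \cref{valuationPreordre} for the displayed preorder $\leqslant$: that it is a preorder, that the associated order on the quotient is linear, and that $\pi(x)<\pi(y)$ implies $\pi(x+y)=\pi(y)$. The tools available are that $\val^2_A$ itself arises from a preorder satisfying \cref{valuationPreordre} (this is how $\val^2_A$ was produced, see \cref{defVal2}), that $\Delta(M)$ is linearly ordered and $\delta(\cdot/A)$ is well defined on $A$-ramified points by \cref{defPetitDelta}, and that within a single $\val^2_A$-block all elements are ramified over $A$ or all are Archimedean over $A$ (by construction, $\val^2_A$ splits each $\val^1_A$-block into its ramified part followed by its Archimedean part).

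Reflexivity is clear, and transitivity is a short case split: if $x\leqslant y\leqslant z$ and either $\val^2_A$-step is strict, then $\val^2_A(x)<\val^2_A(z)$ and so $x\leqslant z$; otherwise $\val^2_A(x)=\val^2_A(y)=\val^2_A(z)$ lie in a common $\val^2_A$-block, and either all three are Archimedean over $A$ and $x\sim y\sim z$, or all three are ramified and $\delta(x/A)\leqslant\delta(y/A)\leqslant\delta(z/A)$ forces $x\leqslant z$ by linearity of $\Delta(M)$. Linearity of the quotient order is the same kind of argument: given $x,y$, compare $\val^2_A(x)$ and $\val^2_A(y)$ using linearity of the $\val^2_A$-quotient, and in the case of equality note that $x,y$ share a $\val^2_A$-block, so either both are Archimedean (whence $x\sim y$) or both are ramified and $\delta(x/A),\delta(y/A)$ are comparable in $\Delta(M)$.

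The substantive point is the ultrametric inequality; assume $x<y$ for $\leqslant$. If $\val^2_A(x)<\val^2_A(y)$, then $\val^2_A(x+y)=\val^2_A(y)$ because $\val^2_A$ satisfies \cref{valuationPreordre}, so $x+y$ lies in the $\val^2_A$-block of $y$; if that block is Archimedean then $x+y\sim y$ and we are done. If that block is ramified, then $y$ is ramified over $A$, which forces $G(x/A)=\val^1_A(x)\subsetneq\val^1_A(y)=G(y/A)$ (equality of $\val^1_A$-values would put $x$ in the ramified part and $y$ in the Archimedean part of a common $\val^1_A$-block), so \cref{GPlusGrandDonneHPlusGrand} gives $G(x/A)<H(y/A)$; hence $\Delta(x)$ lies in the initial segment $\Delta(H(y/A))$ of $\Delta(M)$, which does not contain $\delta(y/A)$ (as $y-a\notin H(y/A)$ for $a\in\ram(y/A)$ by \cref{ramificateurLoin}), so $\Delta(x)<\delta(y/A)=\Delta(y-a)$. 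Then \cref{triangleIsocele} yields $\Delta(x+y-a)=\Delta(y-a)=\delta(y/A)$, and since $\delta(y/A)\notin\Delta(A)$, \cref{DOAGPropRamificateur} shows $a\in\ram(x+y/A)$, so $\delta(x+y/A)=\delta(y/A)$ and $x+y\sim y$. The remaining case $\val^2_A(x)=\val^2_A(y)$ forces, since $x<y$ is strict, that $x,y$ are both ramified over $A$ with $\delta(x/A)<\delta(y/A)$, which is exactly the hypothesis of \cref{newRamVsRam}; that lemma gives $\val^2_A(x+y)=\val^2_A(y)$ and $\delta(x+y/A)=\delta(y/A)$, i.e. $x+y\sim y$.

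The main obstacle is the one genuinely new computation above: showing, in the case $\val^2_A(x)<\val^2_A(y)$ with $y$ ramified, that the Archimedean class $\delta(y/A)$ survives adding $x$. This is where \cref{GPlusGrandDonneHPlusGrand} (placing $x$ strictly below $H(y/A)$, hence below $\delta(y/A)$) combines with the isosceles-triangle rule \cref{triangleIsocele}; everything else reduces either to linearity of $\Delta(M)$, to \cref{newRamVsRam}, or to the already-established fact that $\val^2_A$ satisfies \cref{valuationPreordre}.
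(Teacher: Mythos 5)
Your overall plan is sound, and it matches what the paper leaves implicit: the paper proves only the equal-value case (\cref{newRamVsRam}) and states the corollary without further argument, so the unequal-value case of the ultrametric inequality is exactly what needs to be supplied. Your treatment of reflexivity, transitivity, linearity, and the case $\val^2_A(x)=\val^2_A(y)$ is fine. However, there is a genuine error in the case $\val^2_A(x)<\val^2_A(y)$ with $y$ ramified over $A$: from $G(x/A)<H(y/A)$ you infer that $\Delta(x)$ lies in $\Delta(H(y/A))$, hence $\Delta(x)<\delta(y/A)$. This does not follow, because $x$ itself need not belong to $G(x/A)$: $\val^1_A$ is an $A$-valuation, invariant under translation by $A$, whereas $\Delta(x)$ is not. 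Concretely, let $A=\mathbb{Q}\times_{lex}\mathbb{Q}$ (inside a saturated model), let $\epsilon>0$ be infinitesimal with respect to $A$, let $x=(1,0)+\epsilon$, and let $y=\eta$ with $\Delta((0,1))<\Delta(\eta)<\Delta((1,0))$. Then $G(x/A)$ is the group of $A$-infinitesimals while $G(y/A)=\{z:\Delta(z)<\Delta((1,0))\}$, so $\val^2_A(x)<\val^2_A(y)$, and $y$ is ramified with $0\in\ram(y/A)$ and $\delta(y/A)=\Delta(\eta)$. Yet $\Delta(x)=\Delta((1,0))>\delta(y/A)$, and $\Delta(x+y-0)=\Delta((1,0))\in\Delta(A)$, so $0$ is \emph{not} a ramifier of $x+y$, contrary to what your argument asserts. (The conclusion $\delta(x+y/A)=\delta(y/A)$ is still true: the correct ramifier of $x+y$ is $(1,0)$, not $0$.)

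The repair is to translate $x$ by an element of $A$ before comparing Archimedean classes. As in the proof of \cref{GPlusGrandDonneHPlusGrand}, there is $a_0\in A\setminus\stab(x/A)$ with $a_0\in H(y/A)$; since $a_0\notin\stab(x/A)$, some $a_1\in A$ lies between $x$ and $x+a_0$, whence $x-a_1\in H(y/A)$ and so $\Delta(x-a_1)<\delta(y/A)$ (using \cref{ramificateurLoin} to see $\delta(y/A)\notin\Delta(H(y/A))$). Your isosceles computation then goes through verbatim with $x$ replaced by $x-a_1$, exhibiting $a+a_1$ as a ramifier of $x+y$ with $\Delta(x+y-a-a_1)=\delta(y/A)$. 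Alternatively one can argue by contradiction: $x+y$ lies in the ramified $\val^2_A$-class of $y$, so it admits some ramifier $a'$; if $\Delta(x+y-a')\neq\Delta(y-a)$, then $x-(a'-a)=(x+y-a')-(y-a)$ would make $x$ ramified over $A$ with $\delta(x/A)=\max(\delta(x+y/A),\delta(y/A))\geqslant\delta(y/A)$, contradicting $\delta(x/A)\in\Delta(G(x/A))\subset\Delta(H(y/A))\not\ni\delta(y/A)$.
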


\begin{definition}
We define $\val^3_A$ the valuation induced by the above preorder. It is clearly an $A$-valuation which refines $\val^2_A$.
\end{definition}

The valuation splits the ramified $\val^2_A$-blocks in a way which depends on the Archimedean classes $\delta$ added by their elements. Note that, while for $i\in\{1, 2\}$, $\val^i_A$ factors through the quotient by $A$-elementary equivalence (these are model-theoretic objects), $\val^3_A$ is an \textit{algebraic} object that is no longer refined by types.
\par We  need to define additional notions in order to better understand $\val^3_B$-block extensions. Contrary to $\val^2_B$ and $\val^1_B$, a weak $\val^3_B$-block extension of some tuple might not be strong. We need to find necessary and sufficient conditions for such a block extension to be strong.

\begin{proposition}\label{completionRam}
Let $c=(c_{ij})_{ij}$ be a finite $\val^2_B$-block of $B$-ramified points from $M$ that is $\val^3_B$-separated, and whose $\val^3_B$-blocks are the $(c_i)_i=((c_{ij})_j)_i$. Suppose the indices $i$ are ordered such that: $i<k\Longleftrightarrow\delta(c_{i}/B)<\delta(c_{k}/B)$. Let $d=(d_{ij})_{ij}$ be another tuple from $M$. Then $c\equiv_B d$ if and only if the following conditions hold:
\begin{enumerate}
\item For each $i$, we have $d_i\equiv_B c_i$. In particular, each $d_i$ is a $\val^3_B$-block of $B$-ramified points that has the same $\val^2_B$-value as $c$.
\item For every $i$ and $k$, $i<k$ if and only if $\delta(d_{i}/B)<\delta(d_{k}/B)$.
\end{enumerate}
\end{proposition}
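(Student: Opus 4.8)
The plan is to handle the two directions separately: the left-to-right implication is quick, and the converse I would prove by induction on the number $m$ of $\val^3_B$-blocks of $c$. For left-to-right, take $\sigma\in\Aut(M/B)$ with $\sigma(c)=d$. Restricting $\sigma$ to the subtuple $c_i$ gives $d_i\equiv_B c_i$, which is~(1); and since $\sigma$ fixes $B$, it preserves being ramified over $B$ and induces an order-preserving bijection of $\Delta(M)$ fixing the relevant structure over $\Delta(B)$, so for $a\in\ram(c_{ij}/B)\subset B$ we get $\sigma(\delta(c_i/B))=\Delta(d_{ij}-a)=\delta(d_i/B)$, and order-preservation turns $i<k\iff\delta(c_i/B)<\delta(c_k/B)$ into $i<k\iff\delta(d_i/B)<\delta(d_k/B)$, which is~(2).

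For the converse, assume (1) and (2). If $m\leq1$ then $c=c_1$ and $d=d_1$, so there is nothing to prove. If $m\geq2$, set $c_{<m}=(c_1,\dots,c_{m-1})$ and $d_{<m}=(d_1,\dots,d_{m-1})$; the tuple $c_{<m}$ is again $\val^3_B$-separated, its blocks are still $\delta$-ordered by their indices, and conditions (1),(2) restrict to it, so by the induction hypothesis $c_{<m}\equiv_B d_{<m}$. Fix $\sigma\in\Aut(M/B)$ with $\sigma(c_{<m})=d_{<m}$. Since $\sigma(c_m)\equiv_B c_m\equiv_B d_m$ by~(1), it suffices to upgrade this to $\sigma(c_m)\equiv_{Bd_{<m}}d_m$: an automorphism over $Bd_{<m}$ sending $\sigma(c_m)$ to $d_m$, composed with $\sigma$, then sends $c$ to $d$, and $c\equiv_B d$ follows.

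To prove $\sigma(c_m)\equiv_{Bd_{<m}}d_m$, by quantifier elimination in \DOAG~ it is enough to show $\ct\bigl(f(\sigma(c_m))+g(d_{<m})/B\bigr)=\ct\bigl(f(d_m)+g(d_{<m})/B\bigr)$ for all $f,g\in\LC(\mathbb{Q})$; the cases $f=0$ or $g=0$ follow from $\sigma(c_m)\equiv_B d_m$. For $f,g\neq0$, the point is that the $\sigma(c_m)$-piece dominates: since $\sigma$ transports both the $\delta$-ordering of the blocks of $c$ and the equalities $\sigma(\delta(c_i/B))=\delta(d_i/B)$, one has $\delta(f(\sigma(c_m))/B)=\sigma(\delta(c_m/B))>\delta(d_i/B)$ for every $i<m$, hence strictly above $\delta(g(d_{<m})/B)$; similarly $\delta(f(d_m)/B)=\delta(d_m/B)>\delta(g(d_{<m})/B)$ by~(2) (using that $d_m$ and $d_{<m}$ are $\val^3_B$-separated, which holds because the block automorphisms preserve the $\val^3_B$-preorder). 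Picking ramifiers $a_1\in\ram(f(\sigma(c_m))/B)$ — which also lies in $\ram(f(d_m)/B)$ since $f(\sigma(c_m))\equiv_B f(d_m)$ — and $a_2\in\ram(g(d_{<m})/B)$, all in $B$, \cref{newRamVsRam} shows both sums are $B$-ramified with $a_1+a_2$ as a common ramifier, and \cref{triangleIsocele} reduces the two clauses of \cref{critereEquRam} for these sums to the corresponding statements about $f(\sigma(c_m))$ and $f(d_m)$; as $f(\sigma(c_m))\equiv_B f(d_m)$ and $a_1,a_1+a_2\in B$, these hold, giving the equality of cuts and finishing the induction.

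I expect the main obstacle to be exactly the bookkeeping in this last step. First, one must justify that the $\sigma(c_m)$-summand dominates, which rests on $\delta$ being an \emph{algebraic} invariant that a $B$-automorphism need not fix but does move order-preservingly over $\Delta(B)$, together with~(2). Second, the ramifier must be taken as the \emph{sum} $a_1+a_2$ rather than a ramifier of $f(\sigma(c_m))$ alone, because after cancellation of its $B$-part the ``tail'' $g(d_{<m})$ may have large Archimedean value; only with this choice does \cref{triangleIsocele} cleanly reduce both clauses of \cref{critereEquRam} to the dominant summand, at which point $f(\sigma(c_m))\equiv_B f(d_m)$ closes the argument. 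A minor point not to overlook is that the restriction of a $\val^3_B$-separated tuple is again $\val^3_B$-separated with its blocks still $\delta$-ordered — needed to run the induction — and that $\sigma$ maps the $\val^3_B$-blocks of $c_{<m}$ onto those of $d_{<m}$ since it preserves the $\val^3_B$-preorder.
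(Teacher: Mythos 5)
Your proof is correct, and both directions ultimately rest on the same mechanism as the paper's: in any linear combination, the summand of largest $\delta$-value dominates, the sum of ramifiers is a ramifier of the sum (\cref{newRamVsRam}), and \cref{triangleIsocele} reduces both clauses of \cref{critereEquRam} to the dominant summand. Where you differ is in the decomposition. The paper proves the converse in one shot: it fixes arbitrary $f_i\in\LC^{|c_i|}(\mathbb{Q})$, takes the maximal index $k$ with $f_k\neq 0$, and directly estimates the $n$-term sum $\sum_i f_i(c_i)-\sum_i f_i(b_i)$ against $\pm\epsilon(N+1)(c_{kj}-b_{kj})$ to pin down its cut over $B$. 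You instead induct on the number of $\val^3_B$-blocks, transport $c_{<m}$ onto $d_{<m}$ by an automorphism over $B$, and verify $\sigma(c_m)\equiv_{Bd_{<m}}d_m$, which reduces everything to a two-term sum $f(\sigma(c_m))+g(d_{<m})$. Your version buys a cleaner local computation (no $\epsilon,n,N$ bookkeeping, since \cref{newRamVsRam} applies verbatim to two summands) at the price of invoking strong homogeneity to realize $c_{<m}\equiv_B d_{<m}$ as an automorphism and of the extra checks you correctly flag: that $\sigma$ moves $\delta$-values order-preservingly over $\Delta(B)$ so that $\delta(f(\sigma(c_m))/B)=\sigma(\delta(c_m/B))$ still dominates $\delta(g(d_{<m})/B)$, that ramifier sets are determined by types over $B$, and that subtuples and $B$-conjugates of $\val^3_B$-separated tuples remain separated. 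All of these hold, so the argument goes through.
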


\begin{proof}
For each $i$ and $j$, let $b_{ij}\in \ram(c_{ij}/B)$.
\par Suppose $c\equiv_B d$ (then condition $1$ holds). Let $i,k,j,l$ be indices. If $i<k$, then $\tp(c/B)\models |x_{ij}-b_{ij}|<\dfrac{1}{n}|x_{kl}-b_{kl}|$ for every $n$, therefore we have $\Delta(d_{ij}-b_{ij})<\Delta(d_{kl}-b_{kl})$. Moreover, by \cref{critereEquRam}, we have $\delta(d_{ij}/B)=\Delta(d_{ij}-b_{ij})$ (and similarly for $kl$), and condition $2$ holds.
\par Conversely, suppose that the conditions of the list hold. Let $f_i$ be in $\LC^{|c_i|}(\mathbb{Q})$, and let $b'=\sum\limits_i f_i(b_i)$. We have to prove that $c'=\sum\limits_i f_i(c_i)$ and $d'=\sum\limits_i f_i(d_i)$ have the same cut over $B$. This is trivial if each $f_i$ is zero, else let $k$ be the maximal index for which $f_k$ is non-zero. By $\val^3_B$-separatedness, $\val^3_B(f_k(c_k))=\val^3_B(c_{kj})$ for any $j$. Moreover, $f_k((c_{kj}-b_{kj})_j)$ is a linear combination of elements of $G(c_k/B)$, hence it belongs to $G(c_k/B)$, and $f_k(b_k)$ is a ramifier of $f_k(c_k)$. Likewise, we have $f_i(c_i)-f_i(b_i)\in G(c_i/B)$ for any $i$. It follows by maximality of $k$ that:
\begin{equation}\label{tmpCompletionRam}\tag{3} \delta(c_{k}/B)=\Delta(f_k(c_k)-f_k(b_k))>\Delta(f_i(c_i)-f_i(b_i))
\end{equation}
 for each $i\neq k$. By the conditions of the list, this also holds for $d$. Choose an arbitrary index $j$. There exists $n\in\omega_{>0}$ and $\epsilon\in\{\pm 1\}$ such that $f_k(c_k)-f_k(b_k)$ lies between $\dfrac{\epsilon}{n}(c_{kj}-b_{kj})$ and $\epsilon n (c_{kj}-b_{kj})$. By the first condition, this must hold for $c$ replaced with $d$ (with the same $\epsilon, n$). Moreover, as \ref{tmpCompletionRam} holds for $d$, for all $i\neq k$ and $N>0$, we have $|f_i(d_i)-f_i(b_i)|<\dfrac{1}{N}|d_{kj}-b_{kj}|$. By choosing $N$ large enough, and by summing everything, we get that $d'-b'$ lies between $\dfrac{\epsilon}{N+1}(d_{kj}-b_{kj})$ and $\epsilon(N+1)(d_{kj}-b_{kj})$, and we get the same property with $d$ replaced by $c$. Then we have:

$$\begin{array}{rcl}\ct(\Delta(c'-b')/\Delta(B)) & = & \ct(\Delta(c_{kj}-b_{kj})/\Delta(B))\\
 & = & \ct(\Delta(d_{kj}-b_{kj})/\Delta(B))\\
 & = & \ct(\Delta(d'-b')/\Delta(B))
\end{array}
$$
 \par The second equality is a consequence of \cref{critereEquRam} applied to $c_{kj}$, $d_{kj}$ (by the hypothesis $c_k\equiv_B d_k$). The other equalities follow from the inequalities of the previous paragraph. We also have:
$$
d'<b' \Longleftrightarrow \epsilon = -1\Longleftrightarrow c'<b'
$$
as a result, by \cref{critereEquRam}, we have $c'\equiv_B d'$, concluding the proof.
\end{proof}

\begin{remark}\label{abstractionGH}
We recall some facts from previous subsections. Let $c=(c_{ij})_{ij}$ be a finite $\val^2_B$-block of $B$-ramified points from $M$ that is $\val^3_B$-separated, whose $\val^3_B$-blocks are the $(c_i)_i=((c_{ij})_j)_i$. Let $(p_i)_i$ be a weak $\val^3_B$-block extension of $c$. By \cref{coupuresInvPossibles}, and by separatedness, there exists $G$ (resp. $H$) a unique $A$-type-definable(resp. $A$-$\vee$-definable) convex subgroup such that $H<G$, and exactly one of the following conditions holds for each $i$:
\begin{enumerate}
\item For every non-zero $f\in \LC^{|c_i|}(\mathbb{Q})$, there exists $b\in B$ such that we have $p_i(x_i)\models f(x_i)-b\in G\setminus {\underline{G}_M}$.
\item For every non-zero $f\in \LC^{|c_i|}(\mathbb{Q})$, there exists $b\in B$ such that we have $p_i(x_i)\models f(x_i)-b\in {\overline{H}^M}\setminus H$.
\end{enumerate}
Of course we know exactly what $G$ and $H$ are, but this abstraction will simplify the definitions and proofs.
\end{remark}

The following notion will mostly be used to understand strong $\val^3_B$-block extensions, and to prove \ref{recollementVal3}:

\begin{definition}\label{defIO}
If the first condition of \cref{abstractionGH} holds, then we say that $p_i$ is \textit{outer}, else $p_i$ is \textit{inner}.

\begin{center}
\begin{tikzpicture}
\draw[black] (-4, 0) -- (4, 0);
\draw[black] (4, 0) node[anchor=west]{$G$};
\draw[red, very thick] (-3, 0) -- (3, 0);
\draw[green, very thick] (-2, 0) -- (2, 0);
\draw[blue, very thick] (-1, 0) -- (1, 0);
\draw[red, very thick] (-2.5, 0) node[anchor=south]{$\underline{G}_M$};
\draw[green, very thick] (-1.5, 0) node[anchor=south]{$\overline{H}^M$};
\draw[blue, very thick] (0, 0) node[anchor=south]{$H$};
\filldraw[gray, thick] (-3, 0) circle(2pt) node[anchor=north]{outer types};
\filldraw[gray, thick] (3, 0) circle(2pt) node[anchor=north]{};
\filldraw[brown, thick] (1, 0) circle(2pt) node[anchor=north]{inner types};
\filldraw[brown, thick] (-1, 0) circle(2pt) node[anchor=south]{};
\end{tikzpicture}
\end{center}
\end{definition}

\begin{lemma}\label{forceBaseInv}
Let $c=(c_{ij})_{ij}$ be a finite $\val^2_B$-block of $B$-ramified points from $M$ that is $\val^3_B$-separated, such that its $\val^3_B$-blocks are the $(c_i)_i=((c_{ij})_j)_i$. Suppose the indices $i$ are ordered such that: $i<k\Longleftrightarrow\delta(c_{i}/B)<\delta(c_{k}/B)$. Let $(p_i)_i$ be a weak $\val^3_B$-block extension of $c$. Let $I$ (resp. $O$) be the set of indices $i$ such that $p_i$ is inner (resp. outer). Then the following are equivalent:
\begin{itemize}
\item $(p_i)_i$ is a strong $\val^3_B$-block extension of $c$.
\item For every $i\in I,o\in O$, we have $i<o$.
\end{itemize}
\end{lemma}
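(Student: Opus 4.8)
### Proof plan for Lemma \ref{forceBaseInv}

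The plan is to prove both implications by analyzing when the cut of a linear combination $\sum_i f_i(c_i)$ over $B$ is controlled by its "dominant" block, and comparing this to what the tensor product of the $p_i$ (in the given order) forces. Recall from \cref{produitTensVal2} and the discussion of $\val^2_B$ that strongness of $(p_i)_i$ is equivalent to $c$ realizing the restriction to $B$ of some tensor product of the $p_i$; since the $\val^3_B$-blocks are ordered by $\delta(c_i/B)$, the natural order to take the tensor product in is $p_1\otimes\cdots\otimes p_m$, and I would first reduce strongness to: $c\models(p_1\otimes\cdots\otimes p_m)_{|B}$, up to reordering blocks with equal (impossible here, since the order is strict) or incomparable data. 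More precisely, I would show that a weak block extension is strong if and only if there is \emph{some} permutation $\pi$ with $c\models(p_{\pi(1)}\otimes\cdots\otimes p_{\pi(m)})_{|B}$, and then characterize exactly which permutations can work.

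The heart of the argument is the interaction between the index order (by $\delta$, i.e. by $\val^3_B$-value) and the inner/outer dichotomy of \cref{defIO}. The key computation, essentially \eqref{tmpCompletionRam} from the proof of \cref{completionRam}, is that for a non-zero combination $c'=\sum_i f_i(c_i)$ with $k$ the largest index with $f_k\neq 0$, the quantity $\Delta(c'-\sum_i f_i(b_i))$ equals $\delta(c_k/B)$, which dominates all the lower terms. Now suppose $i\in I$, $o\in O$ with $o<i$; I would build an interval $J$ with bounds in $B$, included in $\ct(c_o/B)$ or a translate, such that being inner forces $p_i(x_i)$ to stay \emph{inside} $\underline{G}_M$ relative to its ramifier, hence $p_o\otimes p_i$ restricted to $B$ would have to put $x_o$ on a cut that is contradicted by the dominance of block $o$'s $\delta$ being smaller — the point is that gluing an outer type \emph{before} an inner type of \emph{larger} $\delta$ produces, on the variable of the smaller-$\delta$ block, a cut over $B$ that $c$ does not realize, because $c_i$ sits "inside" where $c_o$'s conjugates can reach. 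Conversely, when every inner index precedes every outer index (among blocks, after grouping by the $\delta$-order), I would use \cref{produitTensVal2}-style weak orthogonality within each $\val^2_B$-level together with \cref{completionRam} to check directly that $c\models(p_1\otimes\cdots\otimes p_m)_{|B}$: one verifies the conditions (1)–(2) of \cref{completionRam} hold for a realization of the tensor product, which they do precisely because inner types refine cuts "from below" and outer types "from above", and the $\delta$-order combined with the $I$-before-$O$ condition makes these refinements mutually consistent.

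For the easy direction (strong $\Rightarrow$ $I$ before $O$), I would argue by contradiction: if $i\in I$, $o\in O$ and $o<i$ (so $\delta(c_o/B)<\delta(c_i/B)$), pick a realization $(d_j)_j\models\bigcup_j p_j$ consistent with $\tp(c/B)$. Using \cref{abstractionGH}, $d_o-b_o$ lands in $G\setminus\underline{G}_M$ while $d_i-b_i$ lands in $\overline{H}^M\setminus H$; but then consider the combination $d_o+d_i$ (or a suitable $f_o(d_o)+f_i(d_i)$): by \cref{newRamVsRam} its $\delta$ over $B$ is $\delta(c_i/B)$, yet I would exhibit a closed $B$-interval around $b_o+b_i$ whose existence is forced by $d_o$ being outer at the lower level and $d_i$ being inner at the higher level, disjoint from $\dcl(B)$-translates in a way that contradicts $\indep{c}{A}{B}{\cut}$ — which holds for $c$ since $(p_i)_i$ being a strong block extension means $\bigcup_i p_i$ is consistent with $\tp(c/B)$ and each $p_i$ is $\Aut(M/A)$-invariant hence non-forking, giving $\indep{c_{ij}}{A}{B}{\cut}$ and, by \cref{WOrthVal2} within the $\val^2_B$-block, $\indep{c}{A}{B}{\cut}$ for the whole tuple.

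The main obstacle I anticipate is bookkeeping the two convex subgroups $G\geq H$ from \cref{abstractionGH} together with their $M$-refinements $\underline{G}_M,\overline{H}^M$ and tracking, for a general combination $\sum_i f_i(c_i)$ spanning several $\val^3_B$-blocks of \emph{different} inner/outer status, exactly which coset of which subgroup each partial sum lands in — the dominant term controls $\Delta$, but the inner-vs-outer information of that dominant block is what determines the cut, and one must verify that the \emph{order} of the tensor factors matches so that lower-$\delta$ outer blocks do not get "overridden" in a way inconsistent with $c$. I expect \cref{completionRam} to do most of the combinatorial work once the reduction to tensor products is in place, so the real content is the forcing argument in the forward direction and the verification that $I$-before-$O$ suffices.
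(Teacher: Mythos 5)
Your overall framing is right --- the lemma is about matching the inner/outer dichotomy of \cref{abstractionGH} against the $\delta$-ordering required by condition (2) of \cref{completionRam} --- but two of your steps contain concrete errors. First, in the forward direction you aim the contradiction at $\indep{c}{A}{B}{\cut}$, which is the wrong target: cut-independence of $c$ genuinely holds under the hypotheses (each $p_j$ is $\Aut(M/A)$-invariant), and the inner/outer mismatch does not produce a forking interval, so there is no such interval to exhibit. The contradiction is simply with $d\equiv_B c$: if $o<i$ with $p_o$ outer and $p_i$ inner, then for \emph{any} $d\models\bigcup_j p_j$ you have $d_i-b_i\in\overline{H}^M\setminus H$, hence $\Delta(d_i-b_i)\in\Delta(\overline{H}^M)\subseteq\Delta(\underline{G}_M)$, while $d_o-b_o\in G\setminus\underline{G}_M$, hence $\Delta(d_o-b_o)\notin\Delta(\underline{G}_M)$; so $\delta(d_i/M)<\delta(d_o/M)$, which violates the ordering that $\tp(c/B)$ imposes via the formulas $|x_{oj}-b_{oj}|<\frac{1}{n}|x_{ik}-b_{ik}|$. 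No detour through $d_o+d_i$, \cref{newRamVsRam}, or forking is needed, and your proposed detour would not terminate in a contradiction.

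Second, your witness for the converse --- the tensor product $p_1\otimes\cdots\otimes p_m$ in increasing $\delta$-order --- fails as soon as there are two outer blocks: in $p\otimes q$ the factor $q$ is realized first, and each newly realized \emph{outer} type lands in $G\setminus\underline{G}_{M\alpha}$ over the enlarged base, i.e.\ its new Archimedean class sits \emph{above} all previously added ones, so outer factors must be tensored in decreasing order (inner factors in increasing order); this is exactly the content of \cref{tmpRecollementVal3}, which is proved \emph{from} the present lemma, so your preliminary reduction of strongness to ``some tensor product restricts to $\tp(c/B)$'' is also circular in the nontrivial direction. The paper sidesteps tensor products entirely here: it builds a realization of $\bigcup_i p_i$ with the prescribed $\delta$-ordering by hand, choosing by induction elements $\gamma\in\overline{H}^M\setminus H$ (resp.\ $G\setminus\underline{G}_M$) of strictly increasing Archimedean class --- possible by saturation precisely because all inner (resp.\ outer) realizations live in the same ``gap'' --- and moving realizations of the $p_k$ onto $b_k+\epsilon\gamma$ by strong homogeneity. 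If you want to keep a tensor-product formulation, you must split into the inner part (increasing order) and the outer part (decreasing order) and glue the two via the $\val^2_M$-weak-orthogonality argument, as in \cref{tmpRecollementVal3}.
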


\begin{proof}
The realizations of $\bigcup\limits_i p_i$ satisfy the first condition of \cref{completionRam}, so we are trying to characterize when some of them satisfy the second condition. Let $H=H(c/A)<G(c/A)=G$ be the convex subgroups witnessing \cref{abstractionGH}.
\par Suppose we have $i\in I$ and $o\in O$ such that $o<i$. Let $d\models\bigcup\limits_j p_j$. Then, as $p_i$ is inner, we have $\delta(d_i/M)\in\Delta({\overline{H}^M})$. By the same argument, we have $\delta(d_o/M)\not\in \Delta({\underline{G}_M})$, therefore $\delta(d_i/M)<\delta(d_o/M)$, thus $d\not\equiv_B c$, and we proved the top-to-bottom implication.
\par Conversely, suppose we have $i<o$ for every $i\in I$, $o\in O$. In order to satisfy the second condition of \cref{completionRam}, it suffices to prove that the following partial type is consistent:

$$
q(x)=\bigcup\limits_i p_i(x_i)\cup\left\lbrace |x_{i,f(i)}-b_i|<\dfrac{1}{n}|x_{k,f(k)}-b_k|\ :\ i<k, n>0\right\rbrace
$$
with $f(i)$ an arbitrary choice of a coordinate of $(c_{ij})_j$ for each $i$, and with $b_i\in\ram(c_{i,f(i)}/B)$. Let us build a realization $\beta$ of $q$.
\par Let $(\alpha_i)_{i\in I}\models\bigcup\limits_{i\in I} p_i$. We  define by induction, for each $i\in I$, a tuple $\beta_i\models p_i$ such that, for $i<k$ in $I$, we have $\Delta(\beta_{if(i)}-b_i)<\Delta(\beta_{kf(k)}-b_k)$. Let $N$ be an $|M|^+$-saturated, strongly $|M|^+$-homogeneous elementary extension of $M$ containing $(\alpha_i)_{i\in I}$. Let $k\in I$, and suppose we defined $\beta_i\in N$ for every $i<k$. By saturation, let $\gamma\in N$ such that $\gamma\in {\overline{H}^M}\setminus H$, and $\Delta(\gamma)>\Delta(\beta_{if(i)}-b_i)$ for all $i<k$, i.e. $\gamma\in {\overline{H}^M}\setminus {\underline{({\overline{H}^M})}_{M+\sum\limits_{i<k}\mathbb{Q}\beta_{i, f(i)}}}$. Such a $\gamma$ exists, as we are only dealing with indices of inner types. Then, by \cref{critereEquRam}, there exists $\epsilon\in\{\pm 1\}$ such that $b_k+\epsilon\gamma\equiv_M \alpha_{k,f(k)}$. By strong homogeneity, there exists $\sigma\in \Aut(N/M)$ such that $\sigma(\alpha_{k, f(k)})=b_k+\epsilon\gamma$. We set $\beta_k=\sigma(\alpha_k)$.
\par We can similarly define $(\beta_o)_{o\in O}$. Then $(\beta_i)_{i\in I\cup O}$ satisfies $q$, concluding the proof.\qedhere
\end{proof}

Now we can prove \ref{recollementVal3}:

\begin{proposition}\label{tmpRecollementVal3}
Let $c=(c_{ij})_{ij}$ be a finite $\val^2_B$-block of $B$-ramified points from $M$ that is $\val^3_B$-separated, whose $\val^3_B$-blocks are the $(c_i)_i=((c_{ij})_j)_i$. Let $(p_i)_i$ be a strong $\val^3_B$-block extension of $c$. Then some tensor product of the $p_i$ is consistent with $\tp(c/B)$.
\end{proposition}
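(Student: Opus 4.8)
The plan is to realize $\tp(c/B)$ as the restriction to $B$ of a carefully ordered tensor product of the $p_i$, certified by \cref{completionRam}. First I would sort the blocks: by \cref{forceBaseInv}, since $(p_i)_i$ is strong, the index set splits as $I\sqcup O$ with $I$ the inner indices and $O$ the outer ones, every index of $I$ lying below every index of $O$; write $I=\{1<\dots<m\}$ and $O=\{m+1<\dots<n\}$ in the ambient order, which is the $\delta(c_i/B)$-order. I then set
$$P\ :=\ p_1\otimes p_2\otimes\dots\otimes p_m\otimes p_n\otimes p_{n-1}\otimes\dots\otimes p_{m+1},$$
the inner factors listed in increasing index order, the outer ones in decreasing index order, read left to right; $P$ is a global $\Aut(M/A)$-invariant type, being a tensor product of such. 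The claim, which yields the proposition at once (as then $\tp(c/B)\subset P$), is that $P_{|B}=\tp(c/B)$.

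To prove the claim I would fix a realization $\beta=(\beta_i)_i\models P$ in a sufficiently saturated elementary extension, choose $b_{ij}\in\ram(c_{ij}/B)$, and verify the two conditions of \cref{completionRam} for $\beta$ and $c$. Condition (1) is free: the restriction of $P$ to the variables $x_i$ is $(p_i)_{|B}=\tp(c_i/B)$, so $\beta_i\equiv_B c_i$, whence $b_{ij}\in\ram(\beta_{ij}/B)$ and $\delta(\beta_i/B)=\Delta(\beta_{ij}-b_{ij})$ for any $j$. For condition (2) it is enough, the $\delta(\beta_i/B)$ being linearly ordered, to show $i<k\Rightarrow\delta(\beta_i/B)<\delta(\beta_k/B)$, and one may treat separately the cases $i\in I,\,k\in O$ and $i,k\in I$ and $i,k\in O$.

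The inner/outer comparison is automatic and independent of the order of $P$: if $i\in I$, $k\in O$, then writing $H=H(c/A)<G(c/A)=G$, \cref{abstractionGH} gives $\beta_{ij}-b_{ij}\in\overline{H}^M\setminus H$ and $\beta_{k\ell}-b_{k\ell}\in G\setminus\underline{G}_M$; since $\overline{H}^M\leqslant\underline{G}_M$ and $\underline{G}_M$ is convex, the former lies in $\underline{G}_M$ and the latter outside it, so $\Delta(\beta_{ij}-b_{ij})<\Delta(\beta_{k\ell}-b_{k\ell})$, i.e. $\delta(\beta_i/B)<\delta(\beta_k/B)$, matching $i<k$.

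The crux is the within-$I$ and within-$O$ comparisons, where the order chosen for $P$ is exactly what is used. Take $i<k$ both in $I$ (the case of $O$ being symmetric, with $\underline{G}_M$ in place of $\overline{H}^M$). In $P$ the factor $p_i$ stands strictly to the left of $p_k$, so in the canonical bottom-up realization of $P$ the block $\beta_i$ is produced over a base that already contains $\beta_k$. Now $p_i$, being inner, forces $\beta_{ij}-b_{ij}$ into $\overline{H}^M\setminus H$ over every model extending $M$, in particular over that base; since $\beta_{k\ell}-b_{k\ell}$ lies outside $H$ and belongs to the base, $\beta_{ij}-b_{ij}$ must be $\Delta$-strictly below it — it cannot be $\Delta$-equivalent to $\beta_{k\ell}-b_{k\ell}$, because a realization of the $\Aut(M/A)$-invariant type $p_i$ over that base places $\Delta(\beta_{ij}-b_{ij})$ generically, hence outside the convex subgroup generated by $\beta_{k\ell}-b_{k\ell}$. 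Hence $\delta(\beta_i/B)<\delta(\beta_k/B)$; had the roles of $p_i$ and $p_k$ been exchanged the inequality would reverse, which is precisely why the order matters and why the hypothesis of \cref{forceBaseInv} that all inner indices precede all outer ones is exactly what permits one coherent choice. This "leaning" step — that a freshly realized inner (resp. outer) block lands $\Delta$-below (resp. above) the previously realized blocks of its own kind — is the one genuinely delicate point; it is the same phenomenon exploited in the construction inside the proof of \cref{forceBaseInv}, and I would either reuse that construction directly (building a realization of $P$ meeting the $\delta$-ordering constraints of \cref{completionRam} step by step) or isolate it as a short lemma refining \cref{coupInvRam}. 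With the "leaning" step granted, conditions (1) and (2) of \cref{completionRam} hold, so $\beta\equiv_B c$, hence $P_{|B}=\tp(c/B)$, and the proposition follows.
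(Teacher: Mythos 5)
Your overall route is sound and genuinely different from the paper's. You fix the same tensor order (inner factors in increasing $\delta$-order, outer factors in decreasing $\delta$-order), but you then verify consistency by taking a realization $\beta\models P$ and checking the two conditions of \cref{completionRam} against $c$. The paper instead argues in the other direction: it first splits off the inner part from the outer part using weak orthogonality (the two sub-tuples are $\val^2_M$-blocks of distinct values, so \cref{WOrthVal2} lets the two halves be glued in either order), and then, in the all-inner case, shows directly that $c_k$ itself realizes $p_k$ restricted to the $\mathbb{Q}$-span $D$ of $B$ and the later blocks, via an explicit computation of $\ct(\delta/\Delta(D))$ showing $\delta\in\Delta(\overline{H}^D)$. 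Your version trades that Archimedean-class cut computation for a statement about how invariant inner/outer types extend over larger bases; both are legitimate, and yours arguably makes the role of the chosen tensor order more transparent.

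The one place where your argument is not yet a proof is exactly the "leaning" step you flag. The sentence "a realization of the $\Aut(M/A)$-invariant type $p_i$ over that base places $\Delta(\beta_{ij}-b_{ij})$ generically, hence outside the convex subgroup generated by $\beta_{k\ell}-b_{k\ell}$" does not establish the needed \emph{direction} of the inequality, and "generically" is not justified. What actually makes the step work is the content of \cref{abstractionGH}: for inner types the group $H$ is $A$-$\vee$-definable and for outer types $G$ is $A$-type-definable. Concretely, for $i<k$ in $I$, the formula $|x_{ij}-b_{ij}|<\tfrac1n|y_{k\ell}-b_{k\ell}|$ belongs to $p_i\otimes p_k$ iff $p_i(x)\models|x_{ij}-b_{ij}|<\tfrac1n|a_{k\ell}-b_{k\ell}|$ for $a\in M$ with $a\equiv_{Ab}\beta_k$; since "$y_{k\ell}-b_{k\ell}\notin H$" is a conjunction of formulas over $Ab_{k\ell}$ (because $H=\bigcup_{a'\in\stab}[-|a'|,|a'|]$ with $\stab\subset A$), every such $a$ satisfies $a_{k\ell}-b_{k\ell}\in M\setminus H$, and then $p_i(x)\models x_{ij}-b_{ij}\in\overline{H}^M$ gives the inequality for every $n$. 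If $H$ were merely $B$-$\vee$-definable this transfer would fail, so the $A$-definability is not cosmetic. The dual argument with $G$ handles the within-$O$ case. With that lemma made explicit, your proof is complete.
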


\noindent We will see much later, in \cref{finCompletionRam}, that the tensor product in the following proof is in fact the only completion of $(p_i)_i$ (be it $\Aut(M/A)$-invariant or not) which is consistent with $\tp(c/B)$.

\begin{proof}
Suppose the indices $i$ are ordered such that $i<k$ if and only if we have $\delta(c_{i}/B)<\delta(c_{k}/B)$. Define $I$, $O$, $G$, $H$, just as in \cref{forceBaseInv}. Let $\alpha_I\models p_I=\bigcup\limits_{i\in I}p_i$, $\alpha_O\models p_O=\bigcup\limits_{o\in O}p_o$. As the conditions of \cref{forceBaseInv} are satisfied, if we had $\alpha_I\equiv_B (c_i)_{i\in I}$ and $\alpha_O\equiv_B (c_o)_{o\in O}$, then by \cref{completionRam} we would clearly have $\alpha_I\alpha_O\equiv_B c$. Moreover:
$$G(\alpha_I/M)={\overline{H}^M}<{\underline{G}_M}=H(\alpha_O/M)$$
thus one clearly sees that $\alpha_I$ and $\alpha_O$ are $\val^2_M$-blocks of distinct values, thus their types over $M$ are weakly orthogonal by \cref{WOrthVal2}. Therefore, if the respective types over $M$ of $\alpha_I$ and $\alpha_O$ are tensor products of the $p_i$, then so is the type of $\alpha_I\alpha_O$. As a result, we can assume that either $I$ or $O$ is empty.
\par Suppose $O=\emptyset$. For $k\in I$, and $c'=(c_i)_{i>k}$, let us show that $c_k\models p_{k|D}$, with $D$ the $\mathbb{Q}$-vector subspace generated by $Bc'$. This implies that $\tp(c/B)$ is consistent with the tensor product of the $p_i$ in increasing order. Let $f$ be non-zero in $\LC^{|c_k|}(\mathbb{Q})$. By $\val^3_B$-separatedness, we have $\delta(f(c_k)/B)=\delta(c_{k}/B)$ (call this Archimedean class $\delta$). Let $b$ be in $\ram(f(c_k)/B)$. As $i\in I$, we have $p_k(x_k)\models f(x_k)-b\in{\overline{H}^M}\setminus H$. It is enough to show that $f(c_k)-b\in {\overline{H}^D}$, for the cut over $M$ corresponding to the pushforward of $p_k$ by $f$ would be included in $\ct(f(c_k)/D)$, and we could conclude. For $i>k$, let $\delta_i=\delta(c_{i}/B)$. By $\val^3_B$-separatedness, we clearly have $\Delta(D)=\Delta(B)\cup\left\lbrace\delta_i|i>k\right\rbrace$. By definition of $I$, we have $\delta<\delta_i$ for every $i>k$. Moreover, we have by \cref{classeUnique}:
$$\ct(\delta_i/\Delta(B))=\ct_>(\Delta(H)\cap\Delta(B)/\Delta(B))$$
as a result, $\Delta(H)\cap\Delta(D)=\Delta(H)\cap\Delta(B)$, and:
$$\ct(\delta/\Delta(D))=\ct_>(\Delta(H)\cap\Delta(D)/\Delta(D))$$
thus $\delta\in\Delta({\overline{H}^D})$, concluding the proof.

\begin{center}
\begin{tikzpicture}
\draw[blue, very thick] (0, 0) -- (6, 0);
\draw[black, very thick] (0, 0) -- (1, 0);
\draw[red, very thick] (1, 0) -- (3, 0);
\filldraw[black, thick] (0, 0) circle(2pt) node[anchor=south]{$\Delta(0)$};
\filldraw[red, thick] (2, 0) circle(2pt) node[anchor=south]{$\delta$};
\filldraw[blue, thick] (3, 0) circle(2pt) node[anchor=south]{$\delta_{k+1}$};
\filldraw[blue, thick] (4, 0) circle(2pt) node[anchor=south]{$\delta_{k+2}$};
\draw[blue] (5, 0) node[anchor=south]{$\ldots $};
\draw[black] (0.5, 0) node[anchor=north]{$\Delta(H)$};
\draw[red] (2, 0) node[anchor=north]{$\Delta(\overline{H}^D)$};
\draw[blue] (6, 0) node[anchor=west]{$\Delta(\overline{H}^B)$};
\end{tikzpicture}
\end{center}

\par Now, if instead $I=\emptyset$, then a similar proof would show that the tensor product of the $p_o$ in decreasing order would be consistent with $\tp(c/B)$.\qedhere
\end{proof}

\begin{corollary}\label{preuveRecollementVal3}
The property \ref{recollementVal3} holds.
\end{corollary}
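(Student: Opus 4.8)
The plan is to rebuild the required tensor product of the $p_i$ by climbing back up the hierarchy from $\val^3_B$ to $\val^2_B$ to $\val^1_B$, applying at each level the gluing statement tailored to it: \cref{tmpRecollementVal3} to merge the $\val^3_B$-blocks lying inside a single $B$-ramified $\val^2_B$-block, \cref{produitTensVal2} to merge the at most two $\val^2_B$-blocks lying inside a single $\val^1_B$-block, and \cref{WOrthVal1} to merge the $\val^1_B$-blocks. Two routine preliminaries: since $\val^3_B$ refines $\val^2_B$ which refines $\val^1_B$, applying the relevant increasing maps to the identity $\val^3_B(\sum_i\lambda_i d_i)=\max_i\val^3_B(\lambda_i d_i)$ shows that a $\val^3_B$-separated family is also $\val^2_B$- and $\val^1_B$-separated, and passing to a subfamily (setting some $\lambda_i=0$) preserves $\val$-separatedness; and in the ambient setting $d$ comes from $C$ up to $A$-interdefinability, so $\indep{d}{A}{B}{\cut}$ holds, hence so does $\indep{d'}{A}{B}{\cut}$ for every subtuple $d'$. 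Write $(d_i)_i$ for the $\val^3_B$-blocks of $d$ and $(p_i)_i$ for the given strong $\val^3_B$-block extension.

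\emph{From $\val^3_B$-blocks to $\val^2_B$-blocks.} Group the $(d_i)_i$ by the $\val^2_B$-block of $d$ containing them. A $\val^2_B$-block $e$ of $d$ consists either of $B$-Archimedean points, in which case it is a single $\val^3_B$-block and I keep the corresponding $p_i$, writing it $p_e$; or of $B$-ramified points, in which case $e$ is a $\val^3_B$-separated $\val^2_B$-block of $B$-ramified points and $(p_i)_{d_i\subseteq e}$ is a weak $\val^3_B$-block extension of $e$. It is moreover strong: restricting to the coordinates of $e$ a realization of $\bigcup_i p_i$ that is $B$-conjugate to $d$ (one exists as $(p_i)_i$ is strong) produces a realization of $\bigcup_{d_i\subseteq e}p_i$ that is $B$-conjugate to $e$. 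By \cref{tmpRecollementVal3} some tensor product $p_e$ of the $(p_i)_{d_i\subseteq e}$ is consistent with $\tp(e/B)$; being a complete global type it extends $\tp(e/B)$, and being a tensor product of $\Aut(M/A)$-invariant types it is $\Aut(M/A)$-invariant. So for every $\val^2_B$-block $e$ of $d$ I have a global $\Aut(M/A)$-invariant extension $p_e$ of $\tp(e/B)$ which is a tensor product of the relevant $p_i$.

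\emph{From $\val^2_B$-blocks to $\val^1_B$-blocks, and then to $d$.} A $\val^1_B$-block $E$ of $d$ has at most two $\val^2_B$-blocks, its subfamily $E^r$ of $B$-ramified points and its subfamily $E^a$ of $B$-Archimedean points. When both occur, $E$ is a $\val^1_B$-block which is $\val^2_B$-separated with $\indep{E}{A}{B}{\cut}$, and $(p_{E^r},p_{E^a})$ is a weak $\val^2_B$-block extension of it, so \cref{produitTensVal2} yields $E\models(p_{E^r}\otimes p_{E^a})_{|B}$; I set $p_E=p_{E^r}\otimes p_{E^a}$ (or the unique one present otherwise), a global $\Aut(M/A)$-invariant extension of $\tp(E/B)$, still a tensor product of the $p_i$ by associativity. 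Now $d$ is $\val^1_B$-separated with $\val^1_B$-blocks the $E$, so $(p_E)_E$ is a weak $\val^1_B$-block extension of $d$, hence weakly orthogonal by \cref{WOrthVal1}; the $p_E$ therefore pairwise commute and $\bigotimes_E p_E=\bigcup_E p_E$ is a complete global $\Aut(M/A)$-invariant type independent of the order. To finish, \cref{WOrthVal2} applied over $B$ (exactly as in the proof of \cref{produitTensVal2}) makes the types over $B$ of the $\val^2_B$-blocks of $d$ weakly orthogonal; since a subfamily, and a regrouping, of a weakly orthogonal family of types in disjoint variables stays weakly orthogonal (short compactness), $\tp(E/B)=\bigcup_{e\subseteq E}\tp(e/B)$ for each $E$ and the $(\tp(E/B))_E$ are weakly orthogonal, so $\bigcup_E\tp(E/B)=\tp(d/B)$. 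Hence $\left(\bigcup_E p_E\right)_{|B}=\bigcup_E(p_E)_{|B}=\bigcup_E\tp(E/B)=\tp(d/B)$, and $\bigotimes_E p_E$, which by associativity is a tensor product of the original $p_i$, is a global $\Aut(M/A)$-invariant extension of $\tp(d/B)$; this is \ref{recollementVal3}.

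The main obstacle, and the reason all three levels are genuinely needed, is the simultaneous control of the tensor-product order and of the restriction to $B$: inside a $B$-ramified $\val^2_B$-block the factors must be ordered according to the $\delta$-ordering and the inner/outer split (the content of \cref{tmpRecollementVal3} via \cref{forceBaseInv}), within a $\val^1_B$-block the two $\val^2_B$-blocks may be taken in either order but their global invariant extensions do not commute, and genuine commutation appears only at the level of $\val^1_B$-blocks. A secondary subtlety is that weak orthogonality over $M$ of the global types $p_E$ does not by itself give weak orthogonality over $B$ of the restrictions $\tp(E/B)$ — this already fails in $\DLO$ — so it is essential that \cref{WOrthVal2}, a statement about restrictions over the base rather than about global types, be available and used at the very end.
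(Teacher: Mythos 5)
Your proof is correct and follows essentially the same route as the paper, which cites exactly the same three ingredients (\cref{tmpRecollementVal3} for the ramified $\val^2_B$-blocks, \cref{produitTensVal2} inside each $\val^1_B$-block, and \cref{WOrthVal1} across $\val^1_B$-blocks) in a two-sentence sketch. Your additional use of \cref{WOrthVal2} to check that the restriction to $B$ of the final tensor product really is $\tp(d/B)$ is a legitimate detail the paper leaves implicit, and you handle it correctly.
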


\begin{proof}
Proposition \ref{tmpRecollementVal3} deals with the particular case of a $\val^2_B$-block. Then, given the global invariant extensions of the types of each $\val^2_B$-block given by \cref{tmpRecollementVal3}, some tensor product of those global types witnesses the statement, by \cref{produitTensVal2} and \cref{WOrthVal1}.
\end{proof}

\section{How to build a block extension}\label{secBaseInv}

In the previous section, we defined the key valuations $\val^i_A$, and we proved the properties \ref{recollementVal3}, \ref{recollementVal2}, \ref{recollementVal1}. In this section, we  give a proof of \ref{existenceBaseInvariance}. The main technical goal is to show that $\indep{}{A}{B}{\cut}$ coincides with $\indep{}{A}{B}{\inv}$, when what we put on the left is a $\val^3_B$-separated $\val^3_B$-block. This will show the existence of a weak block extension, and then it will not be hard to show that such a block extension can be chosen strong. In fact, given some $\val^3_B$-separated tuple $c$, we  describe explicitly the homeomorphism type of the space of global $\Aut(M/A)$-invariant extensions of $\tp(c/B)$.
We adopt in this section \cref{DOAGHypMonstre}.

\subsection{The Archimedean case}\label{existenceBaseInvArch}
In this subsection, we deal with the case where $c$ is a $\val^3_B$-separated block of $B$-Archimedean points from $M$.

\begin{remark}\label{ArchBImpliqueArchA}
Suppose $c$ is a $\val^3_B$-separated block of $B$-Archimedean points from $M$. As $\indep{c}{A}{B}{\cut}$, the $c_i$ must be Archimedean over $A$. Indeed, by contraposition, if $c_i$ is ramified over $A$ for some $i$, then we cannot have $\delta(c_i/A)\in\Delta(B)$, otherwise there would exist $b\in B$, $a\in A$ such that $\delta(c_i/A)=\Delta(c_i-a)=\Delta(b)$, thus $c_i$ would lie between $a+\dfrac{1}{n}b$ and $a+nb$ for some $n\in \mathbb{Z}$, which would contradict $\indep{c}{A}{B}{\cut}$. Now, as $\Delta(c_i-a)\not\in\Delta(B)$ we have a contradiction with the hypothesis that all the $c_i$ are Archimedean over $B$.
\end{remark}
\begin{assumptions}
On top of \cref{DOAGHypMonstre}, we assume that $c$ is a $\val^3_B$-separated block of $B$-Archimedean points from $M$. By \cref{ArchBImpliqueArchA} and \cref{coupInvArch}, let us also fix $G=G(c/A)=G(c/B)$.
\end{assumptions}

\begin{lemma}\label{tmpClassifArch}
Let $p_i\in S(M)$ be a global $\Aut(M/A)$-invariant extension of $\tp(c_i/B)$. Suppose $q$ is a complete global extension of $\tp(c/B)$ which extends $\bigcup\limits_i p_i$. If we have $q(x)\models f(x)-f(c)\not\in {\underline{G}_M}$ for all non-zero $f\in \LC^n(\mathbb{Q})$, then $q$ is $\Aut(M/A)$-invariant.
\end{lemma}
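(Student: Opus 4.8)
The plan is to show that any automorphism $\sigma\in\Aut(M/A)$ fixes $q$, by proving $\sigma(q)$ and $q$ agree on every atomic formula $f(x)\in I$ with $f\in\LC^n(\mathbb{Q})$ and $I$ a closed interval with bounds in $M$ (this suffices by quantifier elimination in \DOAG~ and the fact that the atomic formulas in the language of ordered $\mathbb{Q}$-vector spaces are $\mathbb{Q}$-linear (in)equations). So fix such an $f\neq 0$ (the case $f=0$ being trivial) and write $e=f(c)$. Since $p_i$ is $\Aut(M/A)$-invariant and $q$ extends $\bigcup_i p_i$, the pushforward $f_*q$ extends the $\Aut(M/A)$-invariant type obtained by pushing forward the relevant union, so $f_*q$ is a global extension of $\tp(e/B)$; what I need is that it is actually $\Aut(M/A)$-invariant, and then invariance of $q$ itself follows because the family of such $f$ generates all atomic formulas.

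First I would record, using $\val^3_B$-separatedness, that $\val^1_B(f(c))=\val^1_B(c)$, i.e. $G(f(c)/B)=G$; and since all the $c_i$ are Archimedean over $B$ (hence over $A$, by \cref{ArchBImpliqueArchA}), $f(c)$ is Archimedean over $B$ and over $A$, with $G(f(c)/A)=G(f(c)/B)=G$ by \cref{coupInvArch} applied to $f(c)$ (note $\indep{f(c)}{A}{B}{\cut}$ holds because $\indep{c}{A}{B}{\cut}$ and $f(c)\in A+\mathbb{Q}c_1+\dots$). Now \cref{coupInvArch} tells us precisely what the cut over $M$ of any $\Aut(M/A)$-invariant extension of $\tp(f(c)/B)$ must be: its realizations lie in $f(c)\bmod G\setminus {\underline{G}_M}$. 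By \cref{coupureArch}, $\ct(f(c)/M)$ equals $f(c)\bmod G(f(c)/M)$; and since $f(c)\in M$ is Archimedean over $A$, we have $G(f(c)/M)=\underline{G(f(c)/M)}_M$-refinement structure controlled by the $M$-type-definable versus $\vee$-definable avatars of $G$. The point is that $G\setminus{\underline{G}_M}$ has exactly two connected components (the two "outer shells", one on each side), so there are at most two $\Aut(M/A)$-invariant extensions of $\tp(f(c)/B)$, each of them type-definable over $A\cup M$ and manifestly $\Aut(M/A)$-invariant since $G$ and ${\underline{G}_M}$ are.

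Therefore the hypothesis $q(x)\models f(x)-f(c)\notin{\underline{G}_M}$, together with $q(x)\models f(x)-f(c)\in G$ (which holds because $q$ extends $\tp(f(c)/A)$ and by \cref{coupureArch} this type already implies membership in $f(c)\bmod G$), forces $f_*q$ to be one of these two $\Aut(M/A)$-invariant types — the one selecting the correct connected component, which is determined by whether $f_*q(y)\models y<f(c)$ or $y>f(c)$, a condition that is itself $\Aut(M/A)$-invariant-stable since it is read off from $p_i$-data and the cut of $f(c)$ over $A$ (and $A$ is fixed). Running this for all $f$ shows that for every atomic formula $\psi$, $\psi\in q\iff\psi\in\sigma(q)$, hence $q=\sigma(q)$; as $\sigma$ was arbitrary, $q$ is $\Aut(M/A)$-invariant.

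The main obstacle I anticipate is the bookkeeping around the three nested convex subgroups $H(f(c)/A)\leqslant{\overline{H}^M}\leqslant{\underline{G}_M}\leqslant G$ from \cref{inegalitesRafinementsGrp}: one must be careful that "$f(x)-f(c)\in G\setminus{\underline{G}_M}$" genuinely pins down an $A$-invariant cut over $M$ and that no extra ambiguity (e.g. from $f(c)$ being ramified over $M$, which is allowed per the note after \cref{coupInvArch}) creeps in — but \cref{coupInvArch} is stated exactly to handle that case, so the remaining work is to assemble the per-$f$ conclusions into a statement about the full type $q$, which is where quantifier elimination does the job.
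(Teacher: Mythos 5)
Your proposal is correct and follows essentially the same route as the paper's proof: reduce via quantifier elimination to the cut of each $f(x)$, use $\val^3_B$-separatedness and \cref{coupureArch} to get $q(x)\models f(x)-f(c)\in G$, combine with the hypothesis to land in $G\setminus\underline{G}_M$, and invoke \cref{coupInvArch} to identify this with one of the two $\Aut(M/A)$-invariant cuts. (The garbled sentence about "$G(f(c)/M)=\underline{G(f(c)/M)}_M$-refinement structure" is not load-bearing and can simply be deleted.)
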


\begin{proof}
Suppose by contradiction $q$ is not $\Aut(M/A)$-invariant. Then there exists a non-zero $f\in \LC^n(\mathbb{Q})$, and a cut $X$ over $M$, such that $q(x)\models f(x)\in X$, and the global $1$-type induced by $X$ is not $\Aut(M/A)$-invariant. As $\indep{c}{A}{B}{\cut}$, we have $\indep{f(c)}{A}{B}{\inv}$. By \cref{coupInvArch}, we clearly see that the two global $\Aut(M/A)$-invariant extensions of $\tp(f(c)/B)$ correspond to the translates by $f(c)$ of the two connected components of $G\setminus{\underline{G}_M}$. We  obtain a contradiction by showing that $X$ coincides with one of those. It is enough to show that $q(x)\models f(x)-f(c)\in G\setminus {\underline{G}_M}$. By hypothesis, we just need to have $q(x)\models f(x)-f(c)\in G$, which follows from the fact that $c$ is a $\val^3_B$-separated block. Indeed, $\val^3_B(f(c))=\val^3_B(c)$, therefore $f(c)$ is Archimedean over $B$, and $G(f(c)/B)=G$. This implies by \cref{coupureArch} that $\tp(f(c)/B)\models y-f(c)\in G$, concluding the proof.
\end{proof}

It turns out that the space of global $\Aut(M/A)$-invariant extensions of $\tp(c/B)$ in the Archimedean case has a very simple description:

\begin{proposition}\label{classificationArch}
Let $N$ be some $|M|^+$-saturated and strongly $|M|^+$-homogeneous elementary extension of $M$. Let $H=({\underline{G}_M})({N})$ be the convex subgroup of ${N}$ generated by $G(M)$, and let ${N}'$ be the ordered Abelian group $\faktor{{N}}{H}$, which is a model of \DOAG~ by saturation. Let $F_1\subset S^{|c|}(M)$ be the closed set of $\Aut(M/A)$-invariant extensions of $\tp(c/B)$, and $F_2\subset S^{|c|}(\{0\})$ be the closed set of types of $\mathbb{Q}$-free families. Then the map:
$$g:\ \tp^{{N}}(\alpha/M)\longmapsto \tp^{{N}'}((\alpha_i-c_i\mod H)_i/\{0\})$$
is a well-defined homeomorphism $F_1\longrightarrow F_2$.
\end{proposition}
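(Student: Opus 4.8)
The plan is to verify four things: that $g$ is well-defined with image in $F_2$, that it is continuous, that it is injective, and that it is surjective. Since $F_1$ and $F_2$ are closed subspaces of the Stone spaces $S^{|c|}(M)$ and $S^{|c|}(\{0\})$, they are compact Hausdorff, so a continuous bijection between them is automatically a homeomorphism, and these four points suffice.

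\textbf{Well-definedness and continuity.} Fix $p\in F_1$ and realize it in $N$ by $\alpha$ ($|M|^+$-saturation). For nonzero $f\in\LC^{|c|}(\mathbb{Q})$ the pushforward $f_*p$ is a global $\Aut(M/A)$-invariant extension of $\tp(f(c)/B)$, and $\val^3_B$-separatedness makes $f(c)$ a $B$-Archimedean point with $G(f(c)/B)=G(f(c)/A)=G$ (using \cref{coupInvArch}); hence \cref{coupInvArch} gives $p(x)\models f(x)-f(c)\in G\setminus\underline{G}_M$, i.e. $f(\alpha)-f(c)\in G(N)\setminus H$. In particular no $\mathbb{Q}$-combination of the $\alpha_i-c_i$ lies in $H$, so $(\alpha_i-c_i\bmod H)_i$ is a $\mathbb{Q}$-free family of $N'$ and $g(p)\in F_2$; and $g(p)$ is independent of the choice of $\alpha$, because two realizations of $p$ in $N$ are conjugate over $M$, every automorphism of $N$ over $M$ fixes the $M$-$\vee$-definable group $H$ setwise, and therefore descends to an automorphism of $N'$ carrying $(\alpha_i-c_i\bmod H)_i$ to $(\alpha'_i-c_i\bmod H)_i$. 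For continuity, by quantifier elimination the topology on $F_2$ is generated by the traces of $[f(x)>0]$ and $[f(x)<0]$ for nonzero $f$, and since $f(\alpha)-f(c)\notin H$ its class in $N'$ is positive iff $f(\alpha)>f(c)$; hence $g^{-1}\bigl([f(x)>0]\cap F_2\bigr)=[f(x)>f(c)]\cap F_1$, which is clopen.

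\textbf{Injectivity.} If $g(p)=g(p')$, pick $\alpha\models p$, $\alpha'\models p'$ in $N$; by quantifier elimination it suffices to match $\ct(f(\alpha)/M)$ with $\ct(f(\alpha')/M)$ for every nonzero $f$. From $g(p)=g(p')$ we get $f(\alpha)>f(c)\Leftrightarrow f(\alpha')>f(c)$, and by \cref{coupInvArch} together with \cref{coupureRam} (over $M$, with $f(c)\in M$ a ramifier of $f(\alpha)$) the cut $\ct(f(\alpha)/M)$ is exactly the connected component of $f(c)+\bigl(G(N)\setminus H\bigr)$ lying on the side of $f(c)$ prescribed by the sign of $f(\alpha)-f(c)$, and likewise for $\alpha'$; so the two cuts coincide.

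\textbf{Surjectivity and the main obstacle.} Given $q\in F_2$, each singleton $c_i$ is $B$-Archimedean with $\indep{c_i}{A}{B}{\cut}$, so the dimension-one equality $\indep{}{}{}{\cut}=\indep{}{}{}{\inv}$ (\cref{defCutIndep}) provides, among the two global $\Aut(M/A)$-invariant extensions of $\tp(c_i/B)$, the one $p_i$ whose component matches the sign of the $i$-th variable in $q$. I would then claim that the partial type over $M$
$$\bigcup_i p_i(x_i)\ \cup\ \bigl\{\,\mathrm{sign}\bigl(f(x)-f(c)\bigr)=\varepsilon^q_f,\ \ f(x)-f(c)\notin H\ :\ f\in\LC^{|c|}(\mathbb{Q})\setminus\{0\}\,\bigr\}$$
is consistent ($\varepsilon^q_f$ denoting the sign of $f$ in $q$); realizing it in $N$ by $\alpha$ and setting $p:=\tp^{N}(\alpha/M)$, one has $\tp^{N}(\alpha_i/M)=p_i$ is $\Aut(M/A)$-invariant, $\alpha\equiv_B c$ (because $f(\alpha)-f(c)\in G=G(f(c)/B)$ forces $f(\alpha)$ into the cut $\ct(f(c)/B)$, which is disjoint from $B$ as $f(c)\notin B$), and $p(x)\models f(x)-f(c)\notin\underline{G}_M$ for all nonzero $f$; so $p$ meets the hypotheses of \cref{tmpClassifArch}, whence $p\in F_1$, and $g(p)=q$ since both are types of $\mathbb{Q}$-free families with the same sign pattern. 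The delicate point — the main obstacle — is exactly this consistency claim: one must realize the prescribed $\Aut(M/A)$-invariant $1$-types $p_i$ \emph{simultaneously} by a tuple whose $\mathbb{Q}$-combinations realize a chosen sign pattern and avoid $H$. The leverage is that all the $c_i$ sit in one $\val^3_B$-block, so $(p_i)_i$ is essentially free of joint constraints and the "difference directions" may be placed as $q$ demands; turning this into a genuinely finitely-satisfiable partial type in $N$, and checking along the way that each realization really does carry the full component cut over $M$ that makes its coordinate type equal to $p_i$, is the technical heart of the argument.
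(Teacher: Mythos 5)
Your treatment of well-definedness, continuity and injectivity is correct and essentially identical to the paper's: separatedness plus \cref{coupInvArch} force $f(\alpha)-f(c)\in G\setminus\underline{G}_M$ for every non-zero $f$, which gives $\mathbb{Q}$-freeness of the image, independence of the choice of realization, continuity via the sign of $f(x)-f(c)$, and injectivity because the cut of $f(\alpha)$ over $M$ is one of the two connected components of $f(c)+(G\setminus\underline{G}_M)$ and is therefore recovered from that sign.

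The surjectivity step, however, is a genuine gap, and you flag it yourself: the consistency of your partial type
$\bigcup_i p_i(x_i)\cup\{\mathrm{sign}(f(x)-f(c))=\varepsilon^q_f,\ f(x)-f(c)\notin H\}$
is precisely the content of surjectivity, and asserting that the $p_i$ are ``essentially free of joint constraints'' is not a proof — indeed the whole difficulty of this paper is that invariant $1$-types in DOAG can fail badly to be jointly realizable in a prescribed pattern (this is what the ramified case and \cref{forceBaseInv} are about). The paper avoids your compactness problem entirely by constructing a realization directly: take any lift $\beta$ to $N$ of a realization $\beta'$ of $q$ in $N'$, pick $d_1\in N$ with $\Delta(d_1)>\Delta(\beta_i)$ for all $i$ and $d_2\in N$ with $\Delta(d_2)\in\Delta(G)\setminus\Delta(\underline{G}_M)$, observe that $|d_1|$ and $|d_2|$ have the same cut over $G(M)$, and use strong $|M|^+$-homogeneity of $N$ to find $\sigma\in\Aut(N/G(M))$ with $\sigma(|d_1|)=|d_2|$; since $\sigma$ fixes $G(M)$ it preserves $H$ and descends to $N'$, so after replacing $\beta$ by $\sigma(\beta)$ one gets $f(\beta)\in G\setminus\underline{G}_M$ for all non-zero $f$ while keeping $\tp^{N'}(\beta'/\{0\})=q$. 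Then $p=\tp((c_i+\beta_i)_i/M)$ extends $\tp(c/B)$ (because $\beta_i\in G=G(c_i/B)$) and is $\Aut(M/A)$-invariant by \cref{tmpClassifArch}, which is exactly the criterion your sketch would also need at the end. So the missing idea is to realize $q$ \emph{first}, in the quotient $N'$ where it is a parameter-free type, and then use a single automorphism of $N$ over $G(M)$ to compress the lift into the annulus $G\setminus\underline{G}_M$ — rather than to prescribe the coordinatewise invariant types and argue for joint consistency afterwards.
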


\begin{proof}
Let $f\in \LC^n(\mathbb{Q})$ be non-zero, and let $\alpha\in N$ realize a global $\Aut(M/A)$-invariant extension of $\tp(c/B)$. Then, by separatedness, $f(c)$ is Archimedean over $A$, and $G(f(c)/A)=G$, thus $f(c)-f(\alpha)\not\in {\underline{G}_M}$ by \cref{coupInvArch}, i.e. $f(c)-f(\alpha)\not\in H$. No non-trivial linear combination of $(c_i-\alpha_i)_i$ belongs to $H$, hence $(c_i-\alpha_i\mod H)_i$ is $\mathbb{Q}$-free.
\par Moreover, if $\beta\equiv_M\alpha$, then we have $f((\beta_i-c_i\mod H)_i)>0$ in $N'$ if and only if $f((\beta_i-c_i)_i)\in\bigcap\limits_{m\in H}]m, +\infty[$ in $N$, if and only if the same holds for $\alpha$, therefore $(\beta_i-c_i\mod H)_i\equiv_{\{0\}}(\alpha_i-c_i\mod H)_i$ in $N'$. It follows that $g$ is well-defined, and it is continuous by definition (and quantifier elimination).
\par Let $p_1,p_2\in F_1$ be distinct. Then there exists two distinct cuts $X_1,X_2$ over $M$, and $f\in \LC^n(\mathbb{Q})$ non-zero, such that $p_i(x)\models f(x)\in X_i$. However, by \cref{coupInvArch}, $X_i$ must be the translate by $f(c)$ of a connected component of $G\setminus {\underline{G}_M}$. As a result, one of the $X_i$ (say, $X_1$) is the leftmost connected component, and the other is the rightmost one. We have:
$$p_1(x)\models f(x)-f(c)< {\underline{G}_M}$$
thus $g(p_1)\models f(x)<0$. With the same reasoning, $g(p_2)\models f(x)>0$, and we conclude that $g(p_1)\neq g(p_2)$, proving that $g$ is injective.
\par Now let us prove surjectivity. Let $\beta'$ be a $\mathbb{Q}$-free family from ${N}'$. Let $\beta$ be a preimage of $\beta'$ by $\pi:\ {N}\longrightarrow{N}'$. Let $d_1\in{N}$ such that $\Delta(d_1)>\Delta(\beta_i)$ for every $i$. Let $d_2\in{N}$ such that $\Delta(d_2)\in\Delta(G)\setminus\Delta({\underline{G}_M})$. Then $|d_1|$ and $|d_2|$ have the same cut over $G(M)$: $+\infty$, the cut of positive elements larger than all the elements of $G(M)$. By strong homogeneity, there is an automorphism $\sigma\in \Aut({N}/G(M))$ such that $\sigma(|d_1|)=|d_2|$. Since $\sigma$ fixes $G(M)$, we have $\sigma(H)=H$, thus $\sigma':\ \pi(x)\longmapsto \pi(\sigma(x))$ is a well-defined automorphism of the ordered group ${N}'$. As $\beta'$ is $\mathbb{Q}$-free, so is $\sigma'(\beta')$. As $\sigma(\beta)$ is a preimage of $\sigma'(\beta')$, none of its elements is in $H$. Moreover, $|\sigma(\beta_i)|\leqslant \sigma(|d_1|)=|d_2|\in G$ for every $i$, thus $\sigma(\beta_i)\in G$. As $\tp^{N'}(\beta'/\{0\})=\tp^{N'}(\sigma'(\beta')/\{0\})$, and we are looking for a preimage of this type by $g$, we may assume $\sigma=id$, and thus, by $\mathbb{Q}$-freeness of $\beta'$, $f(\beta)\in G\setminus{\underline{G}_M}$ for every non-zero $f\in \LC^n(\mathbb{Q})$. Let $p=\tp((c_i+\beta_i)_i/M)$. As $\beta_i\in G$ for all $i$,  $p$ extends $\tp(c/B)$, and it is $\Aut(M/A)$-invariant by \cref{tmpClassifArch}. Thus $p\in F_1$, and it is a preimage of $\tp(\beta'/\{0\})$ by $g$.
\par We showed that $g$ is a continuous bijection, and we conclude by compactness and separation.
\end{proof}

Since $F_2$ is non-empty, we have in particular proved the following:

\begin{corollary}\label{conclusionCasArchBaseInv}
If $c$ is a $\val^3_B$-separated block of $B$-Archimedean points from $M$, then we have $\indep{c}{A}{B}{\inv}$.
\end{corollary}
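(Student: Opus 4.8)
The plan is to read off this statement as an immediate consequence of \cref{classificationArch}. Recall from \cref{indep} that $\indep{c}{A}{B}{\inv}$ means precisely that $\tp(c/AB)$ admits a global $\Aut(M/A)$-invariant extension in $S(M)$, i.e. that the set $F_1\subset S^{|c|}(M)$ of $\Aut(M/A)$-invariant extensions of $\tp(c/B)$ appearing in \cref{classificationArch} is non-empty. So the entire content of the corollary is the non-emptiness of $F_1$.

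First I would invoke the homeomorphism $g:\ F_1\longrightarrow F_2$ constructed in \cref{classificationArch}, where $F_2\subset S^{|c|}(\{0\})$ is the space of types of $\mathbb{Q}$-free families in $N'=\faktor{N}{H}$. A homeomorphism carries non-empty spaces to non-empty spaces, so it suffices to check $F_2\neq\emptyset$. Then I would observe that $N'$ is a model of \DOAG, hence a non-trivial divisible ordered Abelian group; since $N$ is $|M|^+$-saturated and $M$ is already infinite-dimensional as a $\mathbb{Q}$-vector space, $N'$ has infinite $\mathbb{Q}$-dimension, so it contains a $\mathbb{Q}$-free family of length $n=|c|$ (take, say, part of a $\mathbb{Q}$-basis). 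The type over $\{0\}$ of such a family is then an element of $F_2$, so $F_2\neq\emptyset$, whence $F_1\neq\emptyset$ and $\indep{c}{A}{B}{\inv}$ holds.

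There is no real obstacle here: the corollary is exactly the ``in particular'' remark made just before its statement, and all the work has already been done in establishing \cref{classificationArch} (in particular the surjectivity argument there, which is the step that genuinely produces an $\Aut(M/A)$-invariant extension). The only point worth making explicit in the write-up is that the target space $F_2$ of the homeomorphism is patently non-empty, which needs nothing beyond the fact that $N'$ is an infinite-dimensional $\mathbb{Q}$-vector space.
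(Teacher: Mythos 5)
Your proposal is correct and is exactly the paper's argument: the corollary is stated as an immediate consequence of \cref{classificationArch}, with the only additional observation being that $F_2$ is non-empty (which holds since $N'$ is a sufficiently saturated non-trivial model of \DOAG, hence contains $\mathbb{Q}$-free families of any finite length). Nothing further is needed.
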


The next subsection gives tools to understand what $F_2$ looks like. For the most part, it is used to deal with the ramified blocks.

\subsection{Archimedean groups}

We need to define some algebraic notions about Archimedean groups in order to well-understand the global invariant extensions of the types of ramified blocks. Let us start with well-known facts:

\begin{proposition}\label{propUnivR}
Let $G$ be an ordered Abelian group. Then $G$ has at most two Archimedean classes (including $\Delta(0)$) if and only if it embeds into $(\mathbb{R}, +, <)$. In that case, for all $x\in G_{>0}$, for all $\mu\in\mathbb{R}_{>0}$, there exists a unique embedding of ordered groups $\sigma:G\longrightarrow\mathbb{R}$ sending $x$ to $\mu$.
\end{proposition}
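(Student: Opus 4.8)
The plan is to prove the two directions separately, and to extract the uniqueness statement from the construction in the forward direction. For the (easy) converse, suppose $G$ embeds into $(\mathbb{R},+,<)$. Then any nonzero $x,y\in G$ are comparable in the Archimedean sense inside $\mathbb{R}$ (there is always an integer $n$ with $|x|<n|y|$ and vice versa, by the Archimedean property of $\mathbb{R}$), so all nonzero elements of $G$ share a single Archimedean class, and together with $\Delta(0)$ that is at most two classes. For the forward direction, assume $G$ has at most two Archimedean classes. If $G=\{0\}$ the statement is trivial (the zero embedding, and there is no $x\in G_{>0}$), so assume $G$ has exactly one nonzero Archimedean class, i.e. all nonzero elements are Archimedean-equivalent.

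Fix $x\in G_{>0}$ and $\mu\in\mathbb{R}_{>0}$. The main step is to build the embedding $\sigma$ by a Dedekind-cut / ratio construction. For $y\in G$, set
\[
\sigma(y)=\mu\cdot\sup\left\{\frac{p}{q}\ :\ p\in\mathbb{Z},\ q\in\mathbb{Z}_{>0},\ q\,y\leqslant p\,x\right\}.
\]
One checks this supremum is finite: since $x$ and $y$ are Archimedean-equivalent, there is $n$ with $|y|\leqslant nx$, which bounds the set above, and it is nonempty for the same reason applied to $-y$. I would then verify in order: (i) the defining set for $y$ is a (lower) Dedekind cut of $\mathbb{Q}$, so $\sigma$ is well defined into $\mathbb{R}$; (ii) additivity, $\sigma(y+y')=\sigma(y)+\sigma(y')$, which follows from the fact that $qy\leqslant px$ and $q'y'\leqslant p'x$ imply $qq'(y+y')\leqslant (pq'+p'q)x$, together with the density argument in the reverse direction using that $G$ is torsion-free and divisibility is not needed because we only compare via integer multiples of $x$; (iii) order-preservation and in fact strictness: if $0<y$ then, by Archimedean equivalence of $y$ and $x$, there is $q>0$ with $x\leqslant qy$, hence $\sigma(y)\geqslant\mu/q>0$, so $\sigma$ is injective; (iv) $\sigma(x)=\mu$, immediate from the definition since $qx\leqslant px\iff q\leqslant p$.

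For uniqueness, suppose $\tau:G\to\mathbb{R}$ is another order-embedding with $\tau(x)=\mu$. For any $y\in G$ and any $p/q$ with $qy\leqslant px$, applying $\tau$ gives $q\,\tau(y)\leqslant p\,\mu$, so $\tau(y)\leqslant (p/q)\mu$ fails — rather, $\tau(y)/\mu\leqslant p/q$; taking the infimum over such $p/q$ (and symmetrically the supremum over the lower ones) pins $\tau(y)$ to exactly the same real number as $\sigma(y)$, because a real number is determined by the set of rationals above it. Hence $\tau=\sigma$. The only mild subtlety — the part I would be most careful about — is step (ii), checking that the sup is genuinely additive rather than merely superadditive; the point is that if $\frac{p}{q}>\sigma(y)/\mu$ and $\frac{p'}{q'}>\sigma(y')/\mu$ then, by definition of the cuts, $qy<px$ and $q'y'<p'x$ cannot both fail, and a short argument with a common denominator shows $\frac{p}{q}+\frac{p'}{q'}\geqslant\sigma(y+y')/\mu$, giving the matching upper bound. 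Everything else is routine. (Note that divisibility of $G$ is not assumed here, and is not needed: the construction only ever compares integer multiples of the single element $x$.)
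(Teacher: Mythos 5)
Your proof is correct and takes essentially the same route as the paper, which only sketches the argument by giving the formula $y\longmapsto\mu\cdot\left(\sup\left\lbrace\lambda\in\mathbb{Q}\mid\lambda y<x\right\rbrace\right)^{-1}$; yours is the standard Hölder-type cut construction spelled out in full, including the converse and uniqueness. One caveat: the inequality in your displayed definition is reversed — as written, $\left\lbrace p/q : qy\leqslant px\right\rbrace$ is an \emph{upper} cut (if $p/q$ belongs to it, so does every larger rational), so its supremum is $+\infty$; you want $\sup\left\lbrace p/q : px\leqslant qy\right\rbrace$ (equivalently, the infimum of the set you wrote), and indeed all your subsequent verifications — the boundedness via $|y|\leqslant nx$, the computation $\sigma(x)=\mu$, and the injectivity step using $x\leqslant qy$ — implicitly use that corrected orientation, so this is a typo rather than a gap.
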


The idea of the proof is to send each $y\in G$ to $\mu\cdot\left( \sup\left\lbrace\lambda\in\mathbb{Q}|\lambda y<x\right\rbrace\right)^{-1}$.

\begin{remark}
In the special case $G=\mathbb{R}$, we see that an embedding of ordered groups $\sigma:\ \mathbb{R}\longrightarrow\mathbb{R}$ is uniquely determined by the choice of $\sigma(1)$, hence $\sigma$ actually coincides with the automorphism $x\longmapsto \sigma(1)\cdot x$. This establishes an easy description of the group of ordered group automorphisms of $\mathbb{R}$, which is naturally isomorphic to the multiplicative group $\mathbb{R}_{>0}$.
\end{remark}

\begin{definition}\label{defPArch}
We define $\mathbb{P}^+_n$, the space of half-lines in dimension $n$ (plus the origin) as the set of orbits of $\mathbb{R}^n$ under the automorphisms of the ordered group $\mathbb{R}$. We call $\mathbb{P}^+$ the canonical map $\mathbb{R}^n\longrightarrow\mathbb{P}^+_n$.
\par Suppose $G$ is an Archimedean ordered Abelian group, and let $u$ be a finite tuple from $G$. By the universal property of \cref{propUnivR}, we see that $\mathbb{P}^+(\sigma(u))$ is always the same for any ordered group embedding $\sigma:\ G\longrightarrow \mathbb{R}$. As a result, we can define $\mathbb{P}^+(u)\in\mathbb{P}^+_{|u|}$ as this unique class.
\end{definition}

\begin{remark}\label{DOAGFPPlusCommutent}
If $f\in \LC^n(\mathbb{Q})$, then $f$ commutes with any automorphism, thus one can easily show that $f(\mathbb{P}^+(u))=\mathbb{P}^+(f(u))\in\mathbb{P}^+_1$ for any $u\in\mathbb{R}^n$. However, $\mathbb{P}^+_1$ is not very complicated to describe: $f(\mathbb{P}^+(u))$ is either $\mathbb{R}_{>0}$, $\mathbb{R}_{<0}$ or $\{0\}$.
\end{remark}

\begin{definition}
Let $u\in\mathbb{R}^n$. We  say that $\mathbb{P}^+(u)$ is $\mathbb{Q}$\textit{-free} if $u$ is $\mathbb{Q}$-free. By \cref{DOAGFPPlusCommutent}, this definition does not depend on the choice of the representative $u$.
\end{definition}

\begin{proposition}\label{DOAGComp}
Let $G$ be an ordered Abelian group. Then for any subgroup $H$ of $G$, and $\Delta(0)\neq\delta\in\Delta(G)$, the quotient: 
$$H_\delta=\faktor{H_{\leqslant\delta}}{H_{<\delta}}$$
as defined in \cref{defIndiceClasseArch}, is Archimedean.
\end{proposition}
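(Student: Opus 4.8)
Proposition \ref{DOAGComp} asserts that for any subgroup $H$ of an ordered Abelian group $G$ and any $\delta \in \Delta(G) \setminus \{\Delta(0)\}$, the quotient $H_\delta = H_{\leqslant \delta}/H_{<\delta}$ is Archimedean.

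The plan is to verify directly that $H_\delta$ has at most two Archimedean classes, which by \cref{propUnivR} is equivalent to being Archimedean (i.e. embeddable into $(\mathbb{R}, +, <)$). First I would check that $H_{\leqslant\delta}$ and $H_{<\delta}$ are indeed subgroups of $H$: this is immediate from the ultrametric axiom $\val(x-y) \leqslant \max(\val(x),\val(y))$ applied to the Archimedean valuation $\Delta$, since if $\Delta(x), \Delta(y) \leqslant \delta$ then $\Delta(x-y) \leqslant \delta$, and similarly with strict inequality. Both are moreover convex in $G$ (the sets $\{x : \Delta(x) \leqslant \delta\}$ and $\{x : \Delta(x) < \delta\}$ are convex, being unions of Archimedean classes that form initial segments, as recorded in \cref{correspGrpConv_ClassesArch} and \cref{defIndiceClasseArch}), so $H_{<\delta}$ is a convex subgroup of $H_{\leqslant\delta}$ and the quotient inherits a natural total order by \cref{correspGrpConv_ClassesArch}.

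The heart of the argument is then to show that any two nonzero elements of $H_\delta$ lie in the same Archimedean class of $H_\delta$. Take $x, y \in H_{\leqslant\delta}$ with $x, y \notin H_{<\delta}$, i.e. $\Delta(x) = \Delta(y) = \delta$ (they cannot have value strictly less than $\delta$, nor strictly more, since they lie in $H_{\leqslant\delta}$). By \cref{propArch}, $\Delta(x) = \Delta(y)$ means precisely that $x$ lies in the convex subgroup of $G$ generated by $y$ and vice versa, so there is some $n < \omega$ with $|x| \leqslant n|y|$ in $G$. This inequality passes to the quotient ordered group $H_\delta$: writing $\bar{x}, \bar{y}$ for the images, we get $|\bar{x}| \leqslant n|\bar{y}|$, and symmetrically $|\bar{y}| \leqslant m|\bar{x}|$ for some $m$. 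Hence $\bar{x}$ and $\bar{y}$ generate the same convex subgroup of $H_\delta$, so by \cref{propArch} again they have the same Archimedean class in $H_\delta$. Together with the class of $0$, this gives at most two Archimedean classes, and \cref{propUnivR} finishes the proof.

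I do not expect a serious obstacle here; the only point requiring a little care is making sure that the comparison $|x| \leqslant n|y|$ in $G$ genuinely descends to $H_\delta$ — this is fine because the quotient map $H_{\leqslant\delta} \to H_\delta$ is an order homomorphism of ordered groups, so it preserves the relations $|x| \leqslant n|y|$. One should also note the degenerate case where $H_{\leqslant\delta} = H_{<\delta}$, in which $H_\delta$ is trivial and hence vacuously Archimedean.
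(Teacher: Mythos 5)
Your proof is correct. The paper states this proposition as a well-known fact and omits the proof entirely, so there is nothing to compare against; your argument — observing that every element of $H_{\leqslant\delta}\setminus H_{<\delta}$ has Archimedean value exactly $\delta$ in $G$, so any two such elements mutually bound each other by integer multiples, and that these bounds descend through the order-preserving quotient map to show $H_\delta$ has at most two Archimedean classes — is the standard argument and is complete, including the check that $H_{<\delta}$ is convex in $H_{\leqslant\delta}$ so that the quotient is indeed an ordered group.
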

In the literature, the $(H_\delta)_{\delta\in\Delta(H)}$ are called the ``components" of $H$.

\begin{definition}\label{defPArch2}
Let $\delta\in\Delta(M)$, and $u$ a tuple from $M_{=\delta}$. By \cref{DOAGComp}, we can define $\mathbb{P}^+(u)=\mathbb{P}^+(u\mod M_{<\delta})$ with respect to the definition \ref{defPArch} in the Archimedean group $\faktor{M_{\leqslant\delta}}{M_{<\delta}}$.
\par Suppose $d=(d_1\ldots d_n)$ is a $\val^3_B$-block of $B$-ramified elements of $M$, and let $\delta=\delta(d_i/B)\in\Delta(M)$. Let $b_i\in\ram(d_i/B)$. Then we can define $\mathbb{P}^+(d/B)=\mathbb{P}^+((d_i-b_i)_i)$. As $H(d_i/B)\leqslant M_{<\delta}$, by \cref{conditionsRam}, this definition does not depend on the choice of the $b_i$.
\end{definition}

\begin{proposition}\label{val3BSep-Libre}
Let $d=(d_1\ldots d_n)$ be a finite $\val^3_B$-block of $B$-ramified points from $M$. Then $d$ is $\val^3_B$-separated if and only if $\mathbb{P}^+(d/B)$ is $\mathbb{Q}$-free.
\end{proposition}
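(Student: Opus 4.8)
The plan is to turn the two conditions into a common family of statements indexed by nonzero rational tuples and then verify that family by a direct computation with the Archimedean valuation $\Delta$. First I would record the consequences of the hypotheses: since $d$ is a $\val^3_B$-block of $B$-ramified points, the $d_i$ are all ramified over $B$ and share one $\val^3_B$-value, and reading off the preorders that define $\val^2_B$ (which puts the $B$-ramified points of a $\val^1_B$-block strictly below the $B$-Archimedean ones) and $\val^3_B$ (which then sorts the $B$-ramified points of a $\val^2_B$-block according to $\delta(\cdot/B)$), this forces all the $d_i$ to have the same $\val^1_B$-value $G:=G(d_i/B)$ and the same $\delta:=\delta(d_i/B)$. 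Fixing $b_i\in\ram(d_i/B)$, \cref{DOAGPropRamificateur} and \cref{defPetitDelta} give $\Delta(d_i-b_i)=\delta\notin\Delta(B)$ and $d_i-b_i\in G$. Then I would reduce each side. On one hand, the second clause of \cref{defSep} is automatic for ramified points, and for nonzero $\lambda$ each $\val^3_B(\lambda_id_i)$ with $\lambda_i\neq0$ equals $\val^3_B(d_1)$, so $d$ is $\val^3_B$-separated exactly when $\val^3_B(\sum_i\lambda_id_i)=\val^3_B(d_1)$ for all $\lambda\in\mathbb{Q}^n\setminus\{0\}$. On the other hand, by \cref{defPArch2} (via \cref{DOAGComp} and \cref{propUnivR}) the class $\mathbb{P}^+(d/B)$ is that of $((d_i-b_i)\mod M_{<\delta})_i$ in the Archimedean component $M_{\leqslant\delta}/M_{<\delta}$, and it is $\mathbb{Q}$-free iff this representing tuple is $\mathbb{Q}$-free there; since every nonzero $\mathbb{Q}$-combination of the $d_i-b_i$ has $\Delta$-value $\leqslant\delta$ by the ultrametric inequality (\cref{triangleIsocele}), this is the same as asking $\Delta(\sum_i\lambda_i(d_i-b_i))=\delta$ for all $\lambda\in\mathbb{Q}^n\setminus\{0\}$.

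It then remains, for a fixed nonzero $\lambda$ with $d'=\sum_i\lambda_id_i$ and $b'=\sum_i\lambda_ib_i\in B$ (so $d'-b'\in G$), to prove $\val^3_B(d')=\val^3_B(d_1)$ iff $\Delta(d'-b')=\delta$. For the implication from the right: $\Delta(d'-b')=\delta\notin\Delta(B)$ and \cref{DOAGPropRamificateur} give $d'\notin B$ and $b'\in\ram(d'/B)$, so $d'$ is ramified over $B$ and $\delta(d'/B)=\delta$ by \cref{defPetitDelta}; \cref{classeUnique} then identifies both $\ct_>(\Delta(\stab(d'/B))/\Delta(B))$ and $\ct_>(\Delta(\stab(d_1/B))/\Delta(B))$ with $\ct(\delta/\Delta(B))$, so these two stabilizers, being convex subgroups of $B$, have equal $\Delta$-image and hence coincide (\cref{correspGrpConv_ClassesArch}); since $H(\cdot/B)$ is the $B$-convex subgroup generated by $\stab(\cdot/B)$ and $G(d'/B)=\overline{H(d'/B)}^B$ (\cref{DOAGComparaisonGH}, \cref{inegalitesRafinementsGrp}), we get $G(d'/B)=G$, whence $d'$ and $d_1$ have equal $\val^1_B$-value, are both $B$-ramified, and have equal $\delta$, so equal $\val^3_B$-value. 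Conversely, if $\val^3_B(d')=\val^3_B(d_1)$ then $d'$ lies in the $\val^2_B$-block of $d_1$, which consists of $B$-ramified points, so $d'$ is ramified over $B$ with $\delta(d'/B)=\delta$; choosing $a'\in\ram(d'/B)$ we have $\Delta(d'-a')=\delta$ and $d'-b'=(d'-a')+(a'-b')$ with $a'-b'\in B$, so $\Delta(a'-b')\neq\delta$, while $\Delta(a'-b')>\delta$ is impossible because $\Delta(d'-b')\leqslant\delta$; hence $\Delta(a'-b')<\delta$ and $\Delta(d'-b')=\delta$ by \cref{triangleIsocele}. Combining with the reductions above finishes the proof.

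The hard part will be the step in the right-to-left direction that propagates equality of the added Archimedean classes $\delta(d'/B)=\delta(d_1/B)$ to equality of the $\val^1_B$-values $G(d'/B)=G$, which is what makes $d'$ genuinely land in the $\val^3_B$-block of $d_1$: this requires the cut description of \cref{classeUnique}, the fact that the stabilizers are convex \emph{inside} $B$ (so determined by their trace on $\Delta(B)$ via \cref{correspGrpConv_ClassesArch}), and the mutual determination of $G(\cdot/B)$, $H(\cdot/B)$ and $\stab(\cdot/B)$ from \cref{DOAGComparaisonGH} and \cref{inegalitesRafinementsGrp}. Everything else is routine work with the ultrametric inequality and with the definitions of $\val^2_B$ and $\val^3_B$.
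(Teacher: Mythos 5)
Your proof is correct and takes essentially the same route as the paper's: both reduce the two conditions to the single family of statements ``$\Delta(f(d)-f(b))=\delta$ for every non-zero $f\in\LC^n(\mathbb{Q})$''. The paper does this by a terse chain of equivalences, whereas you spell out in full the middle equivalence $\val^3_B(f(d))=\val^3_B(d)\Longleftrightarrow\Delta(f(d)-f(b))=\delta$ that the paper leaves implicit (your argument for it is in substance a rerun of the proof of \cref{newRamVsRam}), so no further comment is needed.
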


\begin{proof}
Let $b_i\in\ram(d_i/B)$, and let $\delta=\Delta(d/B)$. \par Then $d$ is not $\val^3_B$-separated if and only if $\val^3_B(f(d))<\val^3_B(d)$ for some non-zero $f\in \LC^n(\mathbb{Q})$, if and only if there exists such an $f$ with $\Delta(f(d)-f(b))<\delta$, if and only if we have $f$ such that $\{0\}= f(\mathbb{P}^+((d_i-b_i)_i))=f(\mathbb{P}^+(d/B))$, if and only if $\mathbb{P}^+(d/B)$ is not $\mathbb{Q}$-free.
\end{proof}

\subsection{The ramified case}\label{existenceBaseInvRam}
\begin{assumptions}
In this subsection, on top of \cref{DOAGHypMonstre} suppose $c=(c_i)_i$ is a $\val^3_B$-separated $\val^3_B$-block of $B$-ramified points. Fix $G=G(c/B)$, $H=H(c/B)$, $\delta=\delta(c/B)$. For each $i$, let $b_i$ be in $\ram(c_i/B)$, and write $b=(b_i)_i$.
\end{assumptions}
\par Recall that \cref{critereEquRam} characterizes the type of a ramified singleton in terms of cuts of Archimedean classes. We can now extend this characterization to the type of our $\val^3_B$-separated ramified $\val^3_B$-block:

\begin{proposition}\label{completionRamSep}
Let $d=(d_i)_i$ be a tuple from $M$. Then we have $d\equiv_B c$ if and only if the following conditions hold:
\begin{itemize}
\item $d_i\equiv_B c_i$ for every $i$.
\item $d$ is a $\val^3_B$-block, i.e. $\delta(d_i/B)=\delta(d_j/B)$ for every $i$, $j$.
\item $\mathbb{P}^+(d/B)=\mathbb{P}^+(c/B)$.
\end{itemize}
\end{proposition}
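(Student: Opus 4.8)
By quantifier elimination, $d\equiv_B c$ is equivalent to $f(d)\equiv_B f(c)$ for every $f\in\LC^n(\mathbb{Q})$, so it suffices to control the $B$-type of each $\mathbb{Q}$-linear combination. The forward direction is then immediate: condition (1) is just the restriction of $\tp(d/B)$ to each coordinate; condition (2), using that each $b_i$ remains a ramifier of $d_i$ and \cref{defPetitDelta}, reduces to the assertion $\Delta(d_i-b_i)=\Delta(d_j-b_j)$, which for each pair $i,j$ is expressed by finitely many atomic formulas over $B$ (recall $b_i,b_j\in B$) and hence transfers from $c$ to any $B$-conjugate; and condition (3), via \cref{DOAGFPPlusCommutent}, amounts to prescribing the sign of each $f\bigl((d_i-b_i)_i\bigr)$ inside the Archimedean component that carries it, again a partial type over $B$. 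Since all three hold of $c$, they hold of $d$ whenever $d\equiv_B c$.

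For the converse, the first step is to upgrade the hypotheses into a separatedness statement for $d$. Since $c$ is $\val^3_B$-separated, $\mathbb{P}^+(c/B)$ is $\mathbb{Q}$-free by \cref{val3BSep-Libre}; by condition (3) so is $\mathbb{P}^+(d/B)$, and applying \cref{val3BSep-Libre} in the other direction (which is legitimate, as (2) says $d$ is a $\val^3_B$-block) yields that $d$ is $\val^3_B$-separated. Now fix $f\in\LC^n(\mathbb{Q})$, $f\neq 0$ (the case $f=0$ being trivial), write $f(x)=\sum_i\lambda_i x_i$ and put $f(b)=\sum_i\lambda_i b_i\in B$. Using $\val^3_B$-separatedness of $c$ and of $d$, the elements $f(c)$ and $f(d)$ have the same $\val^3_B$-value as $c_i$, resp.\ $d_i$, so they are $B$-ramified, $f(b)$ is a ramifier of both (indeed $f(c)-f(b)=\sum_i\lambda_i(c_i-b_i)$ is a $\mathbb{Q}$-combination of elements of $G=G(c_i/B)=G(f(c)/B)$, and $\Delta(f(c)-f(b))=\delta(f(c)/B)\notin\Delta(B)$ by separatedness; likewise for $d$), and we record $\delta(f(c)/B)=\delta(c_i/B)=\delta$ and $\delta(f(d)/B)=\delta(d_i/B)=:\delta'$.

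Finally, apply \cref{critereEquRam} to the $B$-ramified element $f(c)$ with ramifier $f(b)$: $f(d)\equiv_B f(c)$ holds iff (a) $\ct\bigl(\Delta(f(d)-f(b))/\Delta(B)\bigr)=\ct\bigl(\Delta(f(c)-f(b))/\Delta(B)\bigr)$ and (b) $f(d)<f(b)\Leftrightarrow f(c)<f(b)$. For (a): by \cref{classeUnique} these two cuts are $\ct_>\bigl(\Delta(\stab(f(c)/B))/\Delta(B)\bigr)$ and $\ct_>\bigl(\Delta(\stab(f(d)/B))/\Delta(B)\bigr)$; since $c$ and $d$ are $\val^1_B$-blocks we have $\stab(f(c)/B)=B\cap G=\stab(c_i/B)$ and $\stab(f(d)/B)=\stab(d_i/B)$, and $\stab(c_i/B)=\stab(d_i/B)$ because $d_i\equiv_B c_i$ forces $\ct(d_i/B)=\ct(c_i/B)$ (condition (1)); so the two cuts coincide. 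For (b): since $\Delta(f(c)-f(b))=\delta$ \emph{exactly}, the sign of $f(c)-f(b)$ in $M$ equals the sign of the image of $f(c)-f(b)$ in the Archimedean component $\faktor{M_{\leqslant\delta}}{M_{<\delta}}$, i.e.\ equals the sign of $f\bigl(\mathbb{P}^+(c/B)\bigr)\in\mathbb{P}^+_1$, which by \cref{DOAGFPPlusCommutent} is one of $\mathbb{R}_{>0},\mathbb{R}_{<0}$ (it is not $\{0\}$ precisely because that Archimedean value is $\delta$ and not smaller); similarly the sign of $f(d)-f(b)$ is determined by $f\bigl(\mathbb{P}^+(d/B)\bigr)$, and these agree by condition (3). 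Hence (a) and (b) hold, $f(d)\equiv_B f(c)$, and as $f$ was arbitrary, $d\equiv_B c$.

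The only real care required — and the place where the whole construction of $\mathbb{P}^+$ earns its keep — is step (b): the added Archimedean classes $\delta$ and $\delta'$ of $c$ and $d$ need not be equal, only cofinally equivalent over $\Delta(B)$, so one genuinely compares the $\mathbb{P}^+$-classes of $c$ and $d$ as abstract elements of $\mathbb{P}^+_n$ living in \emph{different} Archimedean components; and one must first run the \cref{val3BSep-Libre} argument to know $d$ is $\val^3_B$-separated before $f(b)$ can serve as a common ramifier for all the $f(c)$ and $f(d)$ simultaneously. Everything else is routine transfer of atomic data through quantifier elimination.
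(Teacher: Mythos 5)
Your proof is correct, and the substantive direction (the three conditions imply $d\equiv_B c$) follows essentially the same route as the paper: reduce via quantifier elimination to unary combinations $f(c)$, $f(d)$, use separatedness to make $f(b)$ a common ramifier carrying the class $\delta$ resp.\ $\delta'$, and use $\mathbb{P}^+(d/B)=\mathbb{P}^+(c/B)$ to match the sign, i.e.\ the connected component of $G\setminus H$. You are in fact slightly more careful than the paper in first extracting $\val^3_B$-separatedness of $d$ from \cref{val3BSep-Libre} before invoking $f(b)$ as a ramifier of $f(d)$ — the paper leaves this step implicit in ``the same holds for $d$.'' The only genuine divergence is in the other direction: for condition (3) the paper applies strong homogeneity to get $\sigma\in\Aut(M/B)$ with $\sigma(c)=d$ and pushes it to an isomorphism of Archimedean components, whereas you transfer the sign-and-class data of each $f((x_i-b_i)_i)$ directly through atomic formulas over $B$ and use the fact that the signs of all rational linear combinations determine a point of $\mathbb{P}^+_n$; your version is more elementary (it does not use homogeneity of $M$), at the cost of the small unstated lemma that rational sign data determines the $\mathbb{P}^+$-class, which is true but worth a line.
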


\begin{proof}
\par Suppose the conditions of the list hold. Let $f\in \LC^n(\mathbb{Q})$ be non-zero. Then, as $c$ is $\val^3_B$-separated, $\mathbb{P}^+(c/B)$ is $\mathbb{Q}$-free, therefore $\Delta(f(c)-f(b))=\delta$. As a result, $\ct(f(c)/B)$ is the translate by $f(b)$ of one of the connected components of $G\setminus H$. The same holds for $d$, but we need to make sure that $f(c)$ and $f(d)$ lie in the same connected component. The thing is, $f(c)$ lies in the rightmost one if and only if $(f(c)-f(b)\mod M_{<\delta})>0$ in the ordered group $\faktor{M_{\leqslant\delta}}{M_{<\delta}}$, if and only if $\mathbb{P}^+(f(c)-f(b))=\mathbb{R}_{>0}$, if and only if $\mathbb{R}_{>0}=f(\mathbb{P}^+((c_i-b_i)_i))=f(\mathbb{P}^+(c/B))$. Now, by the hypothesis $\mathbb{P}^+(d/B)=\mathbb{P}^+(c/B)$, those conditions are equivalent to $f(d)$ lying in the rightmost connected component, and we get the bottom-to-top direction.
\par Conversely, suppose now $d\equiv_B c$, in particular the first condition holds. As for the second, $d$ must be a $\val^3_B$-block, otherwise it would be witnessed in its type by formulas $(|x_i-b_i|>n|x_j-b_j|)_n$ for some $i\neq j$, which would contradict the hypothesis on $c$. By strong homogeneity of $M$, let $\sigma\in \Aut(M/B)$ be such that $\sigma(c)=d$. The equivalence relation $\Delta(x)=\Delta(y)$ is an $\vee$-definable subset of $S^2(\emptyset)$, therefore it is invariant by $\sigma$. As a result, the map:
$$\sigma':\ (x\mod M_{<\delta})\longmapsto (\sigma(x)\mod M_{<\delta(d/B)})$$
is a well-defined automorphism of ordered groups from the quotient $\faktor{M_{\leqslant\delta}}{M_{<\delta}}$ to $\faktor{M_{\leqslant\delta(d/B)}}{M_{<\delta(d/B)}}$, which sends each $c_i-b_i$ to $d_i-b_i$. Let $\tau$ be an embedding of ordered groups $\faktor{M_{\leqslant\delta(d/B)}}{M_{<\delta(d/B)}}\rightarrow \mathbb{R}$. Then $\mathbb{P}^+(c/B)\ni\tau\circ\sigma'((c_i-b_i)_i)=\tau((d_i-b_i)_i)\in\mathbb{P}^+(d/B)$, thus the last condition of the list holds, concluding the proof.
\end{proof}

\begin{lemma}\label{lemmeCompletionRamGlobal}
Choose an arbitrary index $i$. Suppose that $p$ is a global $\Aut(M/A)$-invariant extension of $\tp(c_i/B)$. Then $q=p\cup\tp(c/B)$ is a complete global type in $S(M)$.
\end{lemma}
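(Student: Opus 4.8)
The plan is to show directly that $q$ is a complete type over $M$. Fix a $|M|^+$-saturated, strongly $|M|^+$-homogeneous $N\succ M$ and an arbitrary realization $d=(d_j)_j\in N$ of $q$; I will prove that $\tp(d/M)$ depends only on $q$. By quantifier elimination in \DOAG\ it is enough to pin down $\ct(f(d)/M)$ for each $f\in\LC^n(\mathbb{Q})$; the case $f=0$ is trivial, so fix $f\neq 0$ and write $f(\bar x)=\sum_j\lambda_j\bar x_j$, $e_j=d_j-b_j$, and $f(b)=\sum_j\lambda_j b_j\in M$. Since $d\models\tp(c/B)$, \cref{completionRamSep} applies in $N$ and yields: $d_j\equiv_B c_j$ for all $j$, whence $b_j\in\ram(d_j/B)$ and $\Delta^N(e_j)=\delta(d_j/B)$ by \cref{defPetitDelta}; $d$ is a $\val^3_B$-block, so all the $\Delta^N(e_j)$ coincide with a single value $\delta_\ast$; and $\mathbb{P}^+(d/B)=\mathbb{P}^+(c/B)$, which is $\mathbb{Q}$-free because $c$ is $\val^3_B$-separated (\cref{val3BSep-Libre}). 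Working in the Archimedean component $\faktor{N_{\leqslant\delta_\ast}}{N_{<\delta_\ast}}$, $\mathbb{Q}$-freeness forces $f\big((e_j\bmod N_{<\delta_\ast})_j\big)\neq 0$, hence $\Delta^N\!\big(f(d)-f(b)\big)=\Delta^N\!\big(\sum_j\lambda_j e_j\big)=\delta_\ast$, and the sign of $f(d)-f(b)$ equals that of its (nonzero) image in the component, which by \cref{DOAGFPPlusCommutent} is exactly the half-line datum $f(\mathbb{P}^+(c/B))\in\{\mathbb{R}_{>0},\mathbb{R}_{<0}\}$; in particular that sign is determined by $\tp(c/B)\subset q$.

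The step I would isolate next is that $\delta_\ast\notin\Delta(M)$, equivalently that $b_i$ is still a ramifier of $d_i$ over $M$; this is where the $\Aut(M/A)$-invariance of $p$ is used. Suppose toward a contradiction that $\Delta^N(d_i-b_i)=\Delta(m)$ for some $m\in M$ (necessarily $m\neq 0$, since $p$ implies $\bar x_i\neq b_i$ as $c_i\notin B$). The complete type $p$ decides the sign of $\bar x_i-b_i$, say $p\models\bar x_i>b_i$; then for a suitable $k$ it implies $\bar x_i\in[\,b_i+\tfrac1k|m|,\ b_i+k|m|\,]$. Since $p$ is $\Aut(M/A)$-invariant it does not fork over $A$ (\cref{inclusionsIndepTriviales}), so by \cref{typePasInvariant} this closed bounded interval with bounds in $M$ cannot be disjoint from $\dcl(A)=A$: there is $a\in A$ with $\tfrac1k|m|\leqslant a-b_i\leqslant k|m|$, so $\Delta(a-b_i)=\Delta(m)$ by \cref{propArch}. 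But $a-b_i\in B$, so $\Delta(m)\in\Delta(B)$; on the other hand $d_i\equiv_B c_i$ gives, via \cref{critereEquRam} over $B$, $\ct(\Delta(m)/\Delta(B))=\ct(\Delta(d_i-b_i)/\Delta(B))=\ct(\delta(c_i/B)/\Delta(B))$ with $\delta(c_i/B)\notin\Delta(B)$ — impossible, since the cut over $\Delta(B)$ of a point of $\Delta(B)$ meets $\Delta(B)$ while that of a point outside does not. Hence $\delta_\ast\notin\Delta(M)$, so $f(d)\notin M$, $f(b)\in\ram(f(d)/M)$ with $\delta(f(d)/M)=\delta_\ast$, and likewise $b_i\in\ram(d_i/M)$.

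To finish I would apply \cref{critereEquRam} over $M$ to $f(d)$, which is ramified over $M$ with ramifier $f(b)$: its type over $M$ is pinned down by $\ct(\Delta(f(d)-f(b))/\Delta(M))$ together with the sign of $f(d)-f(b)$. That sign is the datum $f(\mathbb{P}^+(c/B))$ computed above, hence determined by $q$; and $\ct(\Delta(f(d)-f(b))/\Delta(M))=\ct(\delta_\ast/\Delta(M))=\ct(\Delta(d_i-b_i)/\Delta(M))$, which by \cref{critereEquRam} applied to $d_i$ over $M$ is part of $\tp(d_i/M)=p\subset q$. Thus $\ct(f(d)/M)$, and therefore $\tp(d/M)$, is determined by $q$, so $q$ is complete. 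I expect the main obstacle to be the middle step $\delta_\ast\notin\Delta(M)$: moving from base $B$ to base $M$ could in principle destroy ramification, and ruling that out is precisely where one must use the invariance of $p$, via the fact (\cref{typePasInvariant}) that an invariant type cannot concentrate on a closed bounded interval missing $\dcl(A)$; the remaining manipulations — well-definedness of $\mathbb{P}^+(d/B)$ in the extension $N$ and reading signs of $\mathbb{Q}$-linear combinations off the half-line datum — are routine given \cref{defPArch2}, \cref{DOAGFPPlusCommutent} and \cref{val3BSep-Libre}, but need some care.
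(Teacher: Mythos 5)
Your completeness argument is correct and is in essence the paper's: both proofs extract from \cref{completionRamSep} the data of a common Archimedean value $\delta_*=\Delta(d_j-b_j)$ and the half-line class $\mathbb{P}^+(d/B)=\mathbb{P}^+(c/B)$, observe that the crucial point is $\delta_*\notin\Delta(M)$, and then conclude with \cref{critereEquRam} over the base $M$. The genuine difference is how $\delta_*\notin\Delta(M)$ is obtained. The paper reads it off the classification of unary $\Aut(M/A)$-invariant extensions (\cref{coupInvArch}, \cref{coupInvRam}), which places $\delta_*$ in $(\Delta(G)\setminus\Delta(\underline{G}_M))\cup(\Delta(\overline{H}^M)\setminus\Delta(H))$; you instead rederive it from first principles, using only that an invariant type cannot concentrate on a closed bounded interval with bounds in $M$ missing $\dcl(A)$ (\cref{typePasInvariant}) together with $\Delta(d_i-b_i)\notin\Delta(B)$ from \cref{critereEquRam}. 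This is more elementary, uses strictly less of the preceding machinery, and treats the cases where $c_i$ is Archimedean or ramified over $A$ uniformly. Your choice to verify directly that each $\ct(f(d)/M)$ is determined, rather than invoking \cref{completionRamSep} a second time with $(c,B)$ replaced by $(d,M)$ and reducing to the unary pushforwards, is a harmless reorganization of the same computation.

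One small but real omission: you never check that $q$ is consistent. Your argument shows that any two realizations of $q$ in $N$ have the same type over $M$, which is vacuous if $q$ has no realizations, and the lemma asserts that $q$ \emph{is} a complete global type, hence in particular consistent. The paper disposes of this in one line: take $d\models p$ in a strongly $|B|^+$-homogeneous extension, choose $\sigma\in\Aut(N/B)$ with $\sigma(c_i)=d$, and note that $\sigma(c)\models q$. Add that line and your proof is complete.
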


\noindent Note that $q$ may not be $\Aut(M/A)$-invariant. We will see an example soon.

\begin{proof}
Proving that $q$ is consistent is easy: choose $d$ a realization of $p$ in some strongly $|B|^+$-homogeneous elementary extension $N$ of $M$, choose $\sigma$ in $\Aut(N/B)$ such that $\sigma(c_i)=d$, then $\sigma(c)$ is a realization of $q$.
\par Let $\alpha\models q$, and $\delta'=\Delta(\alpha_i-b_i)$. As $\alpha_i\models p$, we have:
$$\delta'\in(\Delta(G)\setminus\Delta({\underline{G}_M}))\cup(\Delta({\overline{H}^M})\setminus\Delta(H))$$
in particular it does not belong to $\Delta(M)$. For every $j\neq i$, not only do we have $\Delta(\alpha_j-b_j)\not\in\Delta(B)$ as $\alpha_j\equiv_B c_j$, but in fact we have $\Delta(\alpha_j-b_j)=\delta'$ by the second condition of \cref{completionRamSep}. In particular $\alpha$ is a $\val^3_M$-block, and we have:
$$\mathbb{P}^+(\alpha/M)=\mathbb{P}^+((\alpha_j-b_j)_j)=\mathbb{P}^+(\alpha/B)=\mathbb{P}^+(c/B)$$
\par The last equality follows from \cref{completionRamSep}, and the others follow from the definitions. We established that, for all $\beta\models q$, $\beta$ is a $\val^3_M$-block, and $\mathbb{P}^+(\beta/M)=\mathbb{P}^+(\alpha/M)$. By \cref{completionRamSep} (with $c,B$ replaced by $\alpha,M$), in order to check that $q$ is complete, it is enough to show that for every $j$, the pushforward $q_j$ of $q$ by the $j$-th projection, is a complete type in $S(M)$.
\par Let $f\in \LC^N(\mathbb{Q})$ be the projection on the $j$-th coordinate.
\begin{itemize}
\item On one hand, $q_j(x_j)\models x_j>b_j$ if and only if $f(\mathbb{P}^+(c/B))>0$.
\item On the other hand, we have:
$$q(x)\models \ct(\Delta(x_j-b_j)/\Delta(M))=\ct(\Delta(x_i-b_i)/\Delta(M))=\ct(\delta'/\Delta(M))$$
\par The first equality follows from the fact that the realizations of $q$ are $\val^3_M$-blocks, and the last equality follows from \cref{critereEquRam}, as $p$ is complete. In particular:
$$q_j(x_j)\models\ct(\Delta(x_j-b_j)/\Delta(M))=\ct(\delta'/\Delta(M))$$
\end{itemize}
Then, we can apply \cref{critereEquRam} to show that $q_j$, and thus $q$, is complete.
\end{proof}

\begin{proposition}
Let $i$ (if it exists) be such that $c_i$ is Archimedean over $A$. Then $\tp(c_i/B)$ has exactly one global $\Aut(M/A)$-invariant extension $p$, and $p\cup\tp(c/B)$ is a complete, $\Aut(M/A)$-invariant global type in $S(M)$.
\end{proposition}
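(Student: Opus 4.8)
The plan is to produce the unique $p$ explicitly, to read off completeness of $q:=p\cup\tp(c/B)$ from \cref{lemmeCompletionRamGlobal}, and then to prove $\Aut(M/A)$-invariance of $q$ by controlling its pushforwards along maps in $\LC^n(\mathbb{Q})$. \textbf{The type $p$.} Existence of a global $\Aut(M/A)$-invariant extension of $\tp(c_i/B)$ is immediate from o-minimality and $\indep{c_i}{A}{B}{\cut}$ via \cref{divisionTriviale0}. For uniqueness, let $p$ be any such extension: the hypotheses of \cref{coupInvArch} hold ($c_i$ is Archimedean over $A$, and $c_i\notin A$ by $\val^3_B$-separatedness), so $G(c_i/A)=G(c_i/B)=G$ and $p(x)\models x-c_i\in G\setminus\underline{G}_M$. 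Picking $b_i\in\ram(c_i/B)$, one has $c_i-b_i\in G\cap M\subseteq\underline{G}_M$ whereas $x-c_i\notin\underline{G}_M$, so $x-b_i=(x-c_i)+(c_i-b_i)$ has the sign of $x-c_i$; since $p\supseteq\tp(c_i/B)$ this sign must equal that of $c_i-b_i$, which selects one of the two connected components of $G\setminus\underline{G}_M$. As a global unary type in DOAG is a cut over $M$, this determines $p$.

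\textbf{Invariance of $q$.} After \cref{lemmeCompletionRamGlobal} yields completeness of $q=p\cup\tp(c/B)$, by quantifier elimination it suffices to show that for each non-zero $f\in\LC^n(\mathbb{Q})$ the cut over $M$ that $q$ assigns to $f(x)$ is $\Aut(M/A)$-invariant. Fix $f$ and a realization $\alpha\models q$. From the proof of \cref{lemmeCompletionRamGlobal}, $\alpha$ is a $\val^3_M$-block and $\mathbb{P}^+(\alpha/M)=\mathbb{P}^+(c/B)$, which is $\mathbb{Q}$-free by \cref{val3BSep-Libre}; writing $b=(b_j)_j$ with $b_j\in\ram(c_j/B)$, this $\mathbb{Q}$-freeness forces $\Delta(f(\alpha)-f(b))$ to be the common value $\delta':=\Delta(\alpha_i-b_i)$. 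Since $\alpha_i-c_i\in G\setminus\underline{G}_M$ (by the construction of $p$) and $c_i-b_i\in\underline{G}_M$, we get $\delta'\in\Delta(G)\setminus\Delta(\underline{G}_M)$, whereas $f(c)-f(b)=\sum_j f_j(c_j-b_j)\in M\cap G\subseteq\underline{G}_M$; hence $f(\alpha)-f(c)\in G\setminus\underline{G}_M$. So $q$ assigns $f(x)$ one of the two connected components of $f(c)+(G\setminus\underline{G}_M)$ — each of which is genuinely a cut over $M$, since $M\cap G\subseteq\underline{G}_M$.

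\textbf{The key uniformity, and conclusion.} It remains to see that this cut is fixed by every $\sigma\in\Aut(M/A)$, and the crux is the identity $G(c_j/A)=G$ for all $j$. This holds because \cref{HPlusGrand} and \cref{DOAGComparaisonGH} give $H(c_j/A)\leqslant H(c_j/B)\leqslant G(c_j/B)=G$, so $G(c_i/A)=G\geqslant H(c_j/A)$ and \cref{GPlusGrandDonneHPlusGrand} forces $G(c_j/A)\leqslant G$, while $G(c_j/A)\geqslant G(c_j/B)=G$ is automatic; consequently $G(f(c)/A)=G$ as well, being squeezed between $G(f(c)/B)=G$ (by $\val^3_B$-separatedness) and $\max_j G(c_j/A)=G$. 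Then for $\sigma\in\Aut(M/A)$ one has $\sigma(f(c))\equiv_A f(c)$, hence $\sigma(f(c))-f(c)\in G(f(c)/A)=G$ by \cref{coupureIncluseDansG}, so $\sigma(f(c))-f(c)\in M\cap G\subseteq\underline{G}_M$; translating the coset $f(c)+(G\setminus\underline{G}_M)$ by an element of the convex subgroup $\underline{G}_M\leqslant G$ changes neither it nor either of its components, and $\sigma$ fixes $G$ ($=G(c_i/A)$, $A$-type-definable) and $\underline{G}_M$ ($M$-$\vee$-definable). Thus $\sigma$ fixes the cut assigned to $f(x)$, and $q$ is $\Aut(M/A)$-invariant. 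The main obstacle is precisely the uniformity $G(c_j/A)=G$ over the block — without it a pushforward by a general $f$ could land in an $\Aut(M/A)$-moved coset; it is forced only by playing \cref{HPlusGrand}, \cref{DOAGComparaisonGH} and \cref{GPlusGrandDonneHPlusGrand} against one another, with the element $c_i$ that is Archimedean over $A$ anchoring the common value at $G$.
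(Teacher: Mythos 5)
Your proof is correct and follows essentially the same route as the paper: identify the unique invariant extension $p$ via \cref{coupInvArch}, get completeness of $q$ from \cref{lemmeCompletionRamGlobal}, and prove invariance by showing each pushforward $f(x)$ lands in an $\Aut(M/A)$-invariant translate of a connected component of $G\setminus\underline{G}_M$. The one genuine variation is in the last step: where the paper anchors at $f(b)$ and splits into cases according to whether $f(c)$ is Archimedean or ramified over $A$, you anchor at $f(c)$ and instead establish the uniform identity $G(c_j/A)=G$ across the block (via \cref{HPlusGrand}, \cref{DOAGComparaisonGH} and \cref{GPlusGrandDonneHPlusGrand}), which lets \cref{coupureIncluseDansG} give $\sigma(f(c))-f(c)\in\underline{G}_M$ in one stroke — a slightly cleaner, case-free finish that makes explicit a fact the paper's case analysis uses only implicitly.
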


\begin{proof}
Suppose for simplicity $c_i>b_i$ (else apply the proposition with $c_i, b_i$ replaced by $-c_i,-b_i$). Recall that $\indep{c}{A}{B}{\cut}$, thus $\tp(c_i/B)$ must have at least one global $\Aut(M/A)$-invariant extension. By \cref{coupInvArch}, $G(c_i/A)=G$, and the types $p_>,p_<$ over $M$ corresponding respectively to the cuts:
$$\ct_>(\ct(c_i/A)/M)=\ct_>(b_i\mod G(M)/M)$$
$$\ct_<(\ct(c_i/A)/M)=\ct_<(b_i\mod G(M)/M)$$
are clearly global $\Aut(M/A)$-invariant extensions of $\tp(c_i/A)$. These two types are the only ones implying that $x-b_i\in G\setminus {\underline{G}_M}$. As $c_i>b_i$, only $p_>$ extends $\tp(c_i/B)$. By \cref{coupInvArch}, there are no other global $\Aut(M/A)$-invariant extension of $\tp(c_i/B)$.
\par Let $q=p_>\cup\tp(c/B)$, which is complete by \cref{lemmeCompletionRamGlobal}. Let us show that $q$ is $\Aut(M/A)$-invariant. As $\mathbb{P}^+(c/B)$ is $\mathbb{Q}$-free, the realizations of $q$ are $\val^3_M$-separated by \cref{val3BSep-Libre}. As a result, for every non-zero $f\in \LC^N(\mathbb{Q})$, $q(x)\models f(x)-f(b)\in G\setminus {\underline{G}_M}$. As $G=G(c_i/A)$ and ${\underline{G}_M}$ are $\Aut(M/A)$-invariant, we only have to show that $X=f(b)\mod {\underline{G}_M}$ is $\Aut(M/A)$-invariant as a subset of $M$, because:
$$q(x)\models\ct(f(x)/M)\in\{\ct_>(X/M),\ct_<(X/M)\}$$
\par However, as a subset of $M$, $X$ coincides with $f(b)\mod G(M)$. If $f(c)$ is Archimedean over $A$, then $X=\ct(f(c)/A)(M)$, which is invariant under $\Aut(M/A)$. Else, by definition, there exists $a\in A$ such that $f(c)-a\in G(M)$. However, we also have $f(c)-f(b)\in G(M)$, therefore $X= a\mod G(M)$, which is $\Aut(M/A)$-invariant. This concludes the proof.
\end{proof}

Note that $p_>$, and thus $q$, is outer (with respect to \cref{defIO}).

\begin{remark}
If $j\neq i$, and $c_j$ is ramified over $A$, then $\tp(c_j/B)$ might have two global $\Aut(M/A)$-invariant extensions, and the union of each of those extensions with $\tp(c/B)$ would be a consistent, complete type in $S(M)$. However, one of the two will not be $\Aut(M/A)$-invariant, as it will not be consistent with $p_>$. In the above proof, we would encounter a problem where we would want to show that $X$ is $\Aut(M/A)$-invariant, because we would have to deal with the case where $f(c)$ is Archimedean over $A$, and $X=f(b)\mod H$.
\par This is exactly what happens in the following example:
\end{remark}

\begin{example}
Let $A=\mathbb{Q}$, $B=\mathbb{Q}+\mathbb{Q}\sqrt{2}$, $\epsilon$ a positive infinitesimal element, $c_1=\sqrt{2}+\epsilon$, and $c_2=\epsilon\cdot\sqrt{2}$. Then $\indep{c_1c_2}{A}{B}{\cut}$, and $c_1c_2$ is a $\val^3_B$-separated block. Moreover, $\tp(c_1/B)$ has exactly one global $\Aut(M/A)$-invariant extension, and $\tp(c_2/B)$ has two, one of which is $p$, the type of a positive element that is infinitesimal with respect to $M$. Then $q=p\cup\tp(c_1c_2/B)$ is complete and consistent in $S(M)$, but it is not $\Aut(M/A)$-invariant. The reason is that $q(x, y)\models \sqrt{2}\leqslant x\leqslant \sqrt{2}+\epsilon$.
\end{example}

\begin{proposition}
Suppose $c_i$ is ramified over $A$ for every $i$. Choose an arbitrary index $i$. Let $H'=H(c/A),G'=G(c/A)$. If $H\neq H'$ or $G\neq G'$, then $\tp(c_i/B)$ has exactly one global $\Aut(M/A)$-invariant extension, else it has exactly two. Moreover, if $p$ is such an extension, then $p\cup\tp(c/B)$ is a complete $\Aut(M/A)$-invariant type in $S(M)$.
\end{proposition}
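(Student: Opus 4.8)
The plan is to follow the blueprint of the two preceding propositions, the new ingredient being that the dichotomy of \cref{coupInvRam} now lets us count the global $\Aut(M/A)$-invariant extensions of $\tp(c_i/B)$ exactly. I would begin by normalising the data. Since $\indep{c}{A}{B}{\cut}$, each $\tp(c_i/B)$ is cut-independent over $A$, hence has at least one global $\Aut(M/A)$-invariant extension; fix $a_i\in\ram(c_i/A)$. By \cref{coupInvRam}, $\delta(c_i/A)\notin\Delta(B)$, and since $\Delta(A+\mathbb{Q}c_i)\subseteq\Delta(B+\mathbb{Q}c_i)=\Delta(B)\cup\{\delta\}$ this forces $\delta(c_i/A)=\delta$ for every $i$. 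In particular $a_i\in\ram(c_i/B)$, so we may take the ramifiers $b_i$ of the subsection's assumptions to be the $a_i$; and by \cref{classeUnique}, together with $\overline{H(c_i/A)}^A=G(c_i/A)$ (\cref{inegalitesRafinementsGrp}) and \cref{correspGrpConv_ClassesArch}, both $G'=G(c_i/A)$ and $H'=H(c_i/A)$ are independent of $i$. Finally, $\val^3_B$-separatedness means $\mathbb{P}^+(c/B)=\mathbb{P}^+((c_i-a_i)_i)$ is $\mathbb{Q}$-free (\cref{val3BSep-Libre}), so with $a=(a_i)_i$ we get $\Delta(f(c)-f(a))=\delta$ for every non-zero $f\in\LC^n(\mathbb{Q})$, whence $f(a)\in\ram(f(c)/A)$ and, by the same argument as above, $G(f(c)/A)=G'$, $H(f(c)/A)=H'$; in particular no non-zero linear combination of the $c_i$ is Archimedean over $A$.

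For the count, \cref{coupInvRam} says that a global $\Aut(M/A)$-invariant extension $p$ of $\tp(c_i/B)$ is either \emph{outer}, with $p(x)\models x-a_i\in G'\setminus\underline{G'}_M$ and then necessarily $G=G'$, or \emph{inner}, with $p(x)\models x-a_i\in\overline{H'}^M\setminus H'$ and then $H=H'$. Using \cref{coupureRam} over $B$ (with ramifier $a_i$), together with $H\geqslant H'$ (\cref{HPlusGrand}) and the inclusions $H'\leqslant\overline{H'}^M\leqslant\underline{G'}_M\leqslant G'$ of \cref{inegalitesRafinementsGrp}, one checks that when $G=G'$ (resp.\ $H=H'$) each of the two connected components of $a_i+(G'\setminus\underline{G'}_M)$ (resp.\ of $a_i+(\overline{H'}^M\setminus H')$) is an $\Aut(M/A)$-invariant cut over $M$, and that exactly the one lying on the same side of $a_i$ as $c_i$ extends $\tp(c_i/B)$. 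Hence $\tp(c_i/B)$ has one global $\Aut(M/A)$-invariant extension for each of the equalities $G=G'$, $H=H'$ that holds; as there is at least one, these equalities cannot both fail, so the number is $1$ when $G\ne G'$ or $H\ne H'$ and $2$ when $G=G'$ and $H=H'$ (the outer and inner cuts being distinct, as $G'\setminus\underline{G'}_M$ and $\overline{H'}^M\setminus H'$ are disjoint). This is the first assertion.

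It remains to fix such a $p$ and prove that $q=p\cup\tp(c/B)$, which is a complete global type in $S(M)$ by \cref{lemmeCompletionRamGlobal}, is $\Aut(M/A)$-invariant. By quantifier elimination it suffices to show that $\ct(f(\alpha)/M)$ is $\Aut(M/A)$-invariant for every $\alpha\models q$ and every non-zero $f\in\LC^n(\mathbb{Q})$. By the proof of \cref{lemmeCompletionRamGlobal}, such an $\alpha$ is a $\val^3_M$-block with $\mathbb{P}^+(\alpha/M)=\mathbb{P}^+(c/B)$, hence $\val^3_M$-separated by \cref{val3BSep-Libre}; therefore $f(a)$ is a ramifier of $f(\alpha)$ over $M$, and by \cref{coupureRam} the cut $\ct(f(\alpha)/M)$ is the connected component, containing $f(\alpha)$, of $f(a)+(G'\setminus\underline{G'}_M)$ if $p$ is outer, resp.\ of $f(a)+(\overline{H'}^M\setminus H')$ if $p$ is inner. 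Since $f(a)\in A$ and $G'$, $H'$, $\underline{G'}_M$, $\overline{H'}^M$ are $\Aut(M/A)$-invariant convex subgroups, this cut is $\Aut(M/A)$-invariant, which finishes the proof.

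The delicate point — and the reason the statement holds here, in contrast with the mixed-block situation illustrated by the example $c_1=\sqrt2+\varepsilon$, $c_2=\varepsilon\sqrt2$, where invariance genuinely fails — is the observation in the first step that every non-zero $f(c)$ remains ramified over $A$ with the common class $\delta$: this is exactly what lets us centre every relevant cut at the $A$-point $f(a)$, and it rests on the interaction of $\val^3_B$-separatedness (\cref{val3BSep-Libre}) with the facts that all $c_i$ are $B$-ramified adding the single Archimedean class $\delta$ and that $\delta(c_i/A)=\delta$ by cut-independence (\cref{coupInvRam}). Everything else is bookkeeping with \cref{coupInvRam}, \cref{coupureRam}, \cref{inegalitesRafinementsGrp} and \cref{lemmeCompletionRamGlobal}.
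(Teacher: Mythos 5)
Your proof is correct and follows essentially the same route as the paper's: normalise the ramifiers into $A$ (using that $\delta(c_i/A)=\delta(c_i/B)$ by cut-independence), identify via \cref{coupInvRam} the at most two candidate invariant cuts and when each is consistent with $\tp(c_i/B)$, invoke \cref{lemmeCompletionRamGlobal} for completeness of $q$, and derive invariance from $\val^3_B$-separatedness together with the fact that $f(b)\in A$ and $G'(M)$, $H'(M)$ are $\Aut(M/A)$-invariant. Your write-up is somewhat more detailed (e.g.\ the explicit argument that the equalities $G=G'$, $H=H'$ cannot both fail, and that $f(a)$ ramifies $f(c)$ for every non-zero $f$), but the substance is identical.
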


\begin{proof}
We can assume $b_j\in A$ for every $j$. Suppose $c_i>b_i$. As $\indep{c}{A}{B}{\cut}$, recall that $H'\leqslant H\leqslant G\leqslant G'$. Let $p_<,p_>$ be the global $1$-types corresponding to the respective cuts $\ct_<(\ct(c_i/A)/M)=\ct_>(b_i\mod H'/M)$, $\ct_>(\ct(c_i/A)/M)=\ct_>(b_i\mod {\underline{G'}_M}/M)$. One can see that these two types are the only global $\Aut(M/A)$-invariant extensions of $\tp(c_i/A)$. Moreover $p_<$ (resp. $p_>$) is consistent with $\tp(c_i/B)$ if and only if $H=H'$ (resp. $G=G'$).
\par Now, let $p$ be a global $\Aut(M/A)$-invariant extension of $\tp(c_i/B)$, and $q=p\cup\tp(c/B)$. Then $q$ is complete by \cref{lemmeCompletionRamGlobal}. Let us show that $q$ is $\Aut(M/A)$-invariant.
\par Let $f\in \LC^n(\mathbb{Q})$ be non-zero. One just has to show that both the cosets $f(b)\mod H'(M)$ and $f(b)\mod G'(M)$ are $\Aut(M/A)$-invariant. However, $H'(M)$ and $G'(M)$ are both $\Aut(M/A)$-invariant, and the $b_i$ were now chosen in $A$. This concludes the proof.
\end{proof}

Note that $p_<$ (thus the extension $p_<\cup\tp(c/B)$) is inner, while $p_>$ (thus $p_>\cup\tp(c/B)$) is outer.

\begin{remark}
The groups $H'$ and $G'$ have the same points in $A$. By quantifier elimination, it is not hard to show that $G'$ does not admit an $A$-($\vee$/type)-definable proper convex subgroup that strictly contains $H'$. Recall that $H'\leqslant H\leqslant G\leqslant G'$ by cut-independence. As a result, if $G\neq G'$, then $G$ cannot be $A$-type-definable. Similarly, if $H\neq H'$, then $H$ is not $A$-$\vee$-definable.
\end{remark}

\begin{corollary}\label{DOAGDescription1ou2ExtInvRam}
If one of the $c_i$ is Archimedean over $A$, or $H$ is not $A$-$\vee$-definable, or $G$ is not $A$-type-definable, then $\tp(c/B)$ has exactly one global $\Aut(M/A)$-invariant extension. Else, it has exactly two. More precisely, if $G$ is $A$-type-definable, then $\tp(c/B)$ admits a global $\Aut(M/A)$-invariant extension that is outer ; while if $H$ is $A$-$\vee$-definable and none of the $c_i$ is Archimedean over $A$, then $\tp(c/B)$ has a global $\Aut(M/A)$-invariant extension that is inner.
\end{corollary}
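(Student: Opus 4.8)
The plan is to derive the corollary from the two preceding propositions — write $(\ast)$ for the one treating a block some of whose elements are Archimedean over $A$, and $(\ast\ast)$ for the one treating a block all of whose elements are ramified over $A$ — together with \cref{lemmeCompletionRamGlobal}, \cref{coupInvArch}, and the quantifier-elimination remark stated just above. First I would fix a single coordinate $i$, chosen Archimedean over $A$ whenever some $c_i$ is, and reduce everything to $\tp(c_i/B)$. By \cref{lemmeCompletionRamGlobal} the map $p\mapsto p\cup\tp(c/B)$ sends each global $\Aut(M/A)$-invariant extension $p$ of $\tp(c_i/B)$ to a complete global type extending $\tp(c/B)$; it is injective; every global $\Aut(M/A)$-invariant extension $q$ of $\tp(c/B)$ has its $i$-th coordinate pushforward $q_i$ in the domain, and then $q=q_i\cup\tp(c/B)$ by completeness, so $q$ is in the image; and the image consists of $\Aut(M/A)$-invariant types — this is exactly the ``moreover'' clause of $(\ast)$ in the first case and of $(\ast\ast)$ in the second. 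Hence the global $\Aut(M/A)$-invariant extensions of $\tp(c/B)$ biject with those of $\tp(c_i/B)$, compatibly with the inner/outer dichotomy.

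Next I would split into the two cases. If some $c_i$ is Archimedean over $A$, then by \cref{coupInvArch} $G=G(c/A)$ is $A$-type-definable, and $(\ast)$ says $\tp(c_i/B)$ has exactly one global $\Aut(M/A)$-invariant extension, whose lift is outer; so $\tp(c/B)$ has exactly one, and it is outer — consistent with all clauses of the statement. If instead every $c_i$ is ramified over $A$, put $H'=H(c/A)$, $G'=G(c/A)$ and record the chain $H'\le H\subsetneq G\le G'$ (strict in the middle since $c$ is $B$-ramified; the outer inequalities from $\indep{c}{A}{B}{\cut}$ via \cref{HPlusGrand}). Then, unless $H=H'$, the convex subgroup $H$ is a proper convex subgroup of $G'$ strictly containing $H'$, hence is not $A$-($\vee$/type)-definable by the remark above; since moreover $H'$ is $A$-$\vee$-definable and $G'$ is $A$-type-definable, this yields $H=H'\iff H$ is $A$-$\vee$-definable and, likewise, $G=G'\iff G$ is $A$-type-definable. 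Feeding these equivalences into $(\ast\ast)$ — one global $\Aut(M/A)$-invariant extension of $\tp(c_i/B)$ when $H\ne H'$ or $G\ne G'$, two when $H=H'$ and $G=G'$ — and using the bijection of the first paragraph gives the counting part. For the refinements: if $G$ is $A$-type-definable then $G=G'$, so the outer extension $p_{>}$ of $\tp(c_i/B)$ isolated in the proof of $(\ast\ast)$ exists and its lift is an outer global $\Aut(M/A)$-invariant extension of $\tp(c/B)$; symmetrically, if $H$ is $A$-$\vee$-definable and no $c_i$ is Archimedean over $A$, then $H=H'$, so the inner extension $p_{<}$ exists and its lift is inner.

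The only delicate point I expect is the passage between the combinatorial conditions ``$H=H'$'', ``$G=G'$'' in which $(\ast\ast)$ is phrased and the definability conditions appearing in the corollary: this rests entirely on the quantifier-elimination remark, and it requires having the inclusion chain $H'\le H\subsetneq G\le G'$ firmly in place so that $H$ (resp.\ $G$) genuinely is a proper convex subgroup of $G'$ strictly containing $H'$ in the relevant case. Everything else is bookkeeping with $(\ast)$, $(\ast\ast)$ and \cref{lemmeCompletionRamGlobal}.
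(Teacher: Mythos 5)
Your proposal is correct and follows exactly the derivation the paper intends: the corollary is left unproved there precisely because it follows from the two propositions, the inner/outer notes after them, and the preceding remark, combined with \cref{lemmeCompletionRamGlobal} exactly as you do (bijection via $p\mapsto p\cup\tp(c/B)$, then translation of "$H=H'$, $G=G'$" into the definability conditions). The one point worth making explicit in your write-up is that $H<G$ strictly (from \cref{coupureRam}, since $c$ is $B$-ramified), which you need so that $H$, and $G$ when $G\neq G'$, really are \emph{proper} convex subgroups of $G'$ strictly containing $H'$ before invoking the remark.
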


\subsection{Gluing everything together}\label{homeoDOAG}
We recall that we adopt \cref{DOAGHypMonstre}. We now have enough tools to prove \ref{existenceBaseInvariance}. We can actually prove the following more precise statement:

\begin{proposition}\label{classificationRam}
Suppose $c=(c_{ik})_{ik}$ is a finite $\val^2_B$-block of $B$-ramified points from $M$ that is $\val^3_B$-separated, such that its $\val^3_B$-blocks are the $(c_i)_{i\in E}=((c_{ik})_k)_{i\in E}$. Let $G=G(c/B)$, $H=H(c/B)$. Suppose the indices $i\in E$ are ordered such that $i<l$ if and only if we have $\delta(c_{i}/B)<\delta(c_{l}/B)$. Define $I,O\subset E$ as follows:
\begin{itemize}
\item If $H$ is not $A$-$\vee$-definable, then $I=\emptyset,O=E$.
\item If $G$ is not $A$-type-definable, then $I=E,O=\emptyset$.
\item Else, $I=\emptyset$, and $O$ is the least final segment of $E$ containing the set of indices $o\in E$ such that one of the $(c_{ok})_k$ is Archimedean over $A$.
\end{itemize}
Either way, let $J=E\setminus(I\cup O)$. Then the following conditions hold:
\begin{enumerate}
\item For all $i\in I$, $\tp(c_i/B)$ has exactly one $\Aut(M/A)$-invariant global extension $p_i$, and it is inner.
\item For all $o\in O$, $\tp(c_o/B)$ has at least one $\Aut(M/A)$-invariant global extension, of which exactly one is outer, denote it by $q_o$.
\item For all $j\in J$, $\tp(c_j/B)$ has exactly two $\Aut(M/A)$-invariant global extensions: $r_j$, which is inner, and $s_j$, which is outer.
\item The map $K\longmapsto (p_i)_{i\in I},(r_j)_{j\in K},(s_j)_{j\in J\setminus K},(q_o)_{o\in O}$ is a bijection from the set of initial segments of $J$ to the set of strong $\val^3_B$-block extensions of $c$. In particular, $c$ admits exactly $|J|+1$ strong $\val^3_B$-block extensions.
\end{enumerate}
\end{proposition}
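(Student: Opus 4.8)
The plan is to read off claims (1)--(3) from \cref{DOAGDescription1ou2ExtInvRam} applied to each $\val^3_B$-block $c_i$ in turn, and then to deduce (4) from \cref{forceBaseInv} together with a short bookkeeping argument on the relative order of inner and outer types.

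First I would record the uniformity the hypotheses impose. Since $c$ is a $\val^2_B$-block of $B$-ramified points, all its components share one $\val^1_B$-value, so $G(c_{ik}/B)=G$ and, by \cref{DOAGComparaisonGH}, $H(c_{ik}/B)=H$ for all $i,k$; moreover each $c_i$ is itself $\val^3_B$-separated (being a subtuple of one) and inherits $\indep{c_i}{A}{B}{\cut}$ from the ambient cut-independence of $c$. The decisive point is that ``$G$ is $A$-type-definable'' and ``$H$ is $A$-$\vee$-definable'' are \emph{global} conditions, not depending on $i$, and that an Archimedean-over-$A$ component forces the first: if $c_{ik}$ is Archimedean over $A$ then by \cref{coupInvArch} we have $G=G(c_{ik}/B)=G(c_{ik}/A)$. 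Dually, by the remark following \cref{DOAGDescription1ou2ExtInvRam}, ``$H$ not $A$-$\vee$-definable'' means $H\neq H(c_i/A)$ for every $i$, and ``$G$ not $A$-type-definable'' means $G\neq G(c_i/A)$ for every $i$; these two cannot hold simultaneously, since (as the next paragraph shows) each would force the unique invariant extension of $\tp(c_i/B)$ to be, respectively, outer and inner, contradicting $\indep{c_i}{A}{B}{\cut}$. Thus the three cases defining $I,O,J$ are exhaustive and disjoint.

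Given this, claims (1)--(3) follow by applying \cref{DOAGDescription1ou2ExtInvRam} and its constituent propositions to each $c_i$. If $G$ is not $A$-type-definable, then no component of any $c_i$ is Archimedean over $A$, so the ``all components ramified over $A$'' proposition applies; since $G\neq G(c_i/A)$ the outer type $p_>$ is inconsistent with $\tp(c_i/B)$, so $\tp(c_i/B)$ has a unique invariant extension $p_i$, which is inner. If $H$ is not $A$-$\vee$-definable, then similarly --- using the ``Archimedean component'' proposition for those $c_i$ that have such a component, and the ``all ramified'' one together with $H\neq H(c_i/A)$ for the rest --- $\tp(c_i/B)$ has a unique invariant extension, which is outer. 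In the remaining case $\tp(c_i/B)$ has exactly two invariant extensions, inner $r_i$ and outer $s_i$, unless $c_i$ has a component Archimedean over $A$, in which case it has exactly one, outer. To match the definition of $I,O,J$ one only notes that the indices $o\in O$ with no Archimedean component still carry two invariant extensions, and one names $q_o$ the outer one --- which is statement (2).

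For (4) I would invoke \cref{forceBaseInv}: a weak $\val^3_B$-block extension of $c$ is strong iff, in the $\delta(c_i/B)$-ordering, every index carrying an inner type precedes every index carrying an outer type. When $J=\emptyset$ (the first two cases) there is a single weak block extension, all of whose types are forced and all outer or all inner; it is vacuously strong and is the image of $K=\emptyset$, so the count $|J|+1=1$ holds. Otherwise $I=\emptyset$, and any strong block extension must use $q_o$ at each $o\in O$: writing $o_\ast=\min O$, if $O\neq\emptyset$ then $o_\ast$ has a component Archimedean over $A$, hence is forced to its unique (outer) extension, so no $o\geqslant o_\ast$ --- in particular no $o\in O$ --- may be chosen inner without violating \cref{forceBaseInv}. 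The only remaining freedom is the choice of $r_j$ or $s_j$ at the $j\in J$, all of which lie strictly below $o_\ast$; since every index of $O$ is outer and above all of $J$, strength amounts to demanding that the set $K$ of $j\in J$ on which $r_j$ is chosen be an initial segment of $J$, and conversely every such $K$ gives a strong block extension. This is the bijection claimed, and a finite chain of size $|J|$ has exactly $|J|+1$ initial segments. The step I expect to be the main obstacle is this last argument: one must see that letting $O$ be the least \emph{final} segment containing the Archimedean-component indices (rather than that set itself) is precisely what collapses the order constraint of \cref{forceBaseInv} to ``$K$ is an initial segment of $J$'', and one must check carefully that $\min O$ is forced outer; everything else is routine once one has the observation, via \cref{coupInvArch}, that an Archimedean-over-$A$ component forces $G$ to be $A$-type-definable.
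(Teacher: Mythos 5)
Your proof is correct and follows essentially the same route as the paper's: conditions (1)--(3) are read off from \cref{DOAGDescription1ou2ExtInvRam} and its constituent propositions, and condition (4) is deduced from \cref{forceBaseInv} via the observation that every $o\in O$ is preceded by an index with an $A$-Archimedean component (you use $\min O$; the paper uses an arbitrary such $o'\leqslant o$), which forces all of $O$ to be outer and reduces strength to $K$ being an initial segment of $J$. The extra details you supply (that an $A$-Archimedean component forces $G$ to be $A$-type-definable via \cref{coupInvArch}, and that the first two cases are mutually exclusive) are points the paper leaves implicit, and they check out.
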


\begin{proof}
Note that $I,O,J$ are pairwise-disjoint convex subsets of $E$ which cover $E$. In fact, $I$ is an initial segment of $E$, $O$ is a final segment, and $J$ is in-between. The first two conditions are easy consequences of \cref{DOAGDescription1ou2ExtInvRam}. The third condition also easily follows from the fact that for all $j\in J$, none of the $(c_{jk})_k$ is Archimedean over $A$. There remains to prove the last condition.
\par Suppose we have $o\in O$ such that $\tp(c_o/B)$ has a global $\Aut(M/A)$-invariant extension $q'$ which is inner. Let us show that $q'$ cannot be extended to a strong $\val^3_B$-block extension of $c$. Let $(q'_e)_{e\in E}$ be some strong block extension of $c$. By definition of $O$, there exists $o'$, $k$ such that $o'\leqslant o$ and $c_{o'k}$ is Archimedean over $A$. As a result, $q_{o'}$ is the unique global $\Aut(M/A)$-invariant extension of $\tp(c_{o'}/B)$ by \cref{DOAGDescription1ou2ExtInvRam}, therefore $q'_{o'}=q_{o'}$. As $o'\leqslant o$, by \cref{forceBaseInv}, $q'_{o}$ must be outer, thus $q'_{o}\neq q'$ and we are done.
\par As a result, all the strong block extensions of $c$ extend $(p_i)_{i\in I},(q_o)_{o\in O}$. Then, each such extension $(q'_e)_e$ identifies with a subset of $J$: the set of all $j$ such that $q'_j=r_j$. This concludes the proof, as \cref{forceBaseInv} tells us that the valid choices are exactly the initial segments of $J$.
\end{proof}

\begin{corollary}\label{preuveExistenceStrongExt}
If $c$ is a finite $\val^3_B$-separated family from $M$ such that $\indep{c}{A}{B}{\cut}$, then $c$ admits a strong $\val^3_B$-block extension.
\end{corollary}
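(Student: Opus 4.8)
The plan is to reduce to a single $\val^2_B$-block, apply the classifications obtained in \cref{existenceBaseInvArch} and \cref{existenceBaseInvRam}, and then reassemble the pieces with the gluing results of \cref{sectGlue}. First I would record some routine preliminaries: since $\val^3_B$ refines $\val^2_B$, which refines $\val^1_B$, the definition of refinement immediately gives that the $\val^3_B$-separatedness of $c$ forces $c$ to be $\val^2_B$- and $\val^1_B$-separated, that every subtuple of $c$ is again $\val^3_B$-separated, and --- from the interval characterization in \cref{defCutIndep} --- that $\indep{d}{A}{B}{\cut}$ holds for every subtuple $d$ of $c$. Partition $c$ into its $\val^2_B$-blocks; this is a coarsening of the partition into $\val^3_B$-blocks, so it suffices to choose a strong $\val^3_B$-block extension of each $\val^2_B$-block of $c$ and then check that the union of all the types occurring in them is consistent with $\tp(c/B)$.

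Next, I would handle a single $\val^2_B$-block $d$ of $c$. By definition of $\val^2_B$, $d$ consists either of $B$-Archimedean or of $B$-ramified points. If $d$ is $B$-Archimedean, then $d$ is a single $\val^3_B$-block (the valuation $\val^3_B$ refines only the ramified $\val^2_B$-blocks), so by \cref{conclusionCasArchBaseInv} we have $\indep{d}{A}{B}{\inv}$, and any global $\Aut(M/A)$-invariant extension $P_d$ of $\tp(d/B)$ is by itself a strong $\val^3_B$-block extension of $d$. If $d$ is $B$-ramified, then \cref{classificationRam} supplies a strong $\val^3_B$-block extension of $d$, and \cref{tmpRecollementVal3} turns it into a global $\Aut(M/A)$-invariant type $P_d$ that extends $\tp(d/B)$ and contains every type of that block extension. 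In both cases we have fixed a strong $\val^3_B$-block extension of $d$ and a global $\Aut(M/A)$-invariant $P_d$ with $P_d|_B=\tp(d/B)$ which contains all of its types.

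Finally I would reassemble exactly along the lines of the proof of \cref{preuveRecollementVal3}. Group the $\val^2_B$-blocks of $c$ according to the $\val^1_B$-block $c^{[k]}$ of $c$ containing them; each $c^{[k]}$ meets at most one $B$-ramified and one $B$-Archimedean $\val^2_B$-block. Set $Q_k=P_d$ when $c^{[k]}$ is a single $\val^2_B$-block, and otherwise let $Q_k$ be a tensor product of the two relevant $P_d$'s, which by \cref{produitTensVal2} satisfies $Q_k|_B=\tp(c^{[k]}/B)$. Then $(Q_k)_k$ is a weak $\val^1_B$-block extension of $c$, hence weakly orthogonal by \cref{WOrthVal1}, so $P:=\bigcup_k Q_k$ is a consistent, complete, $\Aut(M/A)$-invariant global type; and since $c$ is $\val^2_B$-separated, \cref{WOrthVal2} together with \cref{inductionWOrth} shows $(\tp(c^{[k]}/B))_k$ is weakly orthogonal over $B$, so $P|_B=\tp(c/B)$. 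Every type of every chosen block extension lies in some $P_d\subseteq Q_k\subseteq P$, so their union is a subset of $P$ and in particular consistent with $P|_B=\tp(c/B)$; this is the required strong $\val^3_B$-block extension of $c$.

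The substantive work of this corollary has already been done in \cref{classificationRam} (which itself rests on the inner/outer analysis and \cref{forceBaseInv}) and in \cref{conclusionCasArchBaseInv}, so here the only point requiring real care is the bookkeeping in the reassembly: choosing the order of the tensor factors so that \cref{produitTensVal2} applies, and correctly deducing the weak orthogonality of the $\tp(c^{[k]}/B)$ \emph{over $B$} --- which is what guarantees $P|_B=\tp(c/B)$ rather than something strictly stronger --- from $\val^2_B$-separatedness via \cref{WOrthVal2}, despite the global types $Q_k$ themselves not being weakly orthogonal. No idea beyond those of \cref{sectGlue} and \cref{secBaseInv} is required.
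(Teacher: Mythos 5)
Your proof is correct and follows essentially the same route as the paper: reduce to the $\val^2_B$-blocks via \cref{WOrthVal2}, then invoke \cref{conclusionCasArchBaseInv} for the Archimedean blocks (which are indeed single $\val^3_B$-blocks) and \cref{classificationRam} for the ramified ones. The paper stops there --- weak orthogonality of the types of the $\val^2_B$-blocks over $B$ already makes the union of the chosen types consistent with $\tp(c/B)$ --- so your further reassembly through \cref{tmpRecollementVal3}, \cref{produitTensVal2} and \cref{WOrthVal1} into a single global $\Aut(M/A)$-invariant type $P$ is correct but not needed for this corollary; it essentially re-proves \cref{preuveRecollementVal3} along the way.
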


\begin{proof}
By \cref{WOrthVal2}, it is enough to show that each $\val^2_B$-block of $c$ has a $\val^3_B$-block extension. The Archimedean $\val^2_B$-blocks of $c$ coincide with its Archimedean $\val^3_B$-blocks, by definition of $\val^3_B$. We find a $\val^3_B$-block extension of those blocks by \cref{conclusionCasArchBaseInv} (the valid choices are built explicitly in \cref{classificationArch}), and we find a block extension of the $B$-ramified $\val^2_B$-blocks by \cref{classificationRam}.
\end{proof}

We just proved the property \ref{existenceBaseInvariance}, but we are actually very close to get a full classification of the space of global $\Aut(M/A)$-invariant extensions of $\tp(c/B)$. We achieve this in the remainder of this section, but these results are not necessary for our main results about forking.

\begin{lemma}\label{finCompletionRam}
Suppose $c=(c_{ij})_{ij}$ is a finite $\val^2_B$-block of $B$-ramified points from $M$ that is $\val^3_B$-separated, whose $\val^3_B$-blocks are the $(c_i)_i=((c_{ij})_j)_i$. Let $(p_i)_i$ be a strong $\val^3_B$-block extension of $c$. Then $q=\bigcup\limits_i p_i\cup\tp(c/B)$ is a complete type in $S(M)$.
\end{lemma}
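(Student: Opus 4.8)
The plan is to get consistency for free and prove completeness by reducing, through the apparatus of the previous subsections, to the already-established completeness of the individual $p_i$ together with the ordering data recorded in $\tp(c/B)$. Consistency of $q$ is precisely the assumption that $(p_i)_i$ is a \emph{strong} $\val^3_B$-block extension. For completeness I would fix a realization $\alpha=(\alpha_{ij})_{ij}\models q$ in a monster model and show that $\tp(\alpha/M)$ is forced by $q$. Each $\alpha_i\models p_i$, so $\tp(\alpha_i/M)$ is already determined; what remains is to see how the blocks fit together. Let $H<G$ be the convex subgroups produced by \cref{abstractionGH}, and partition the index set of $\val^3_B$-blocks into the inner set $I$ and the outer set $O$ of \cref{defIO}, ordered in each case by $i<k\Leftrightarrow\delta(c_i/B)<\delta(c_k/B)$; write $\alpha_I=(\alpha_i)_{i\in I}$ and $\alpha_O=(\alpha_i)_{i\in O}$.

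The first substantive step is to read off the algebraic shape of $\alpha$ from $q$. By \cref{abstractionGH}, for $i\in O$ every nonzero $f$ in the variables of $\alpha_i$ gives $f(\alpha_i)-b\in G\setminus{\underline{G}_M}$ for some $b\in B$, so, arguing as in \cref{coupInvArch} and \cref{coupureRam}, each $\alpha_{ij}$ is ramified over $M$ and its cut over $M$ is a connected component of the translate by $b$ of $G\setminus{\underline{G}_M}$; symmetrically, for $i\in I$ the cut is a component of a translate of ${\overline{H}^M}\setminus H$. The formulas of $\tp(c/B)\subseteq q$ expressing $\Delta(c_{ij}-b_{ij})=\Delta(c_{il}-b_{il})$ force $\delta(\alpha_i/M)$ to be well-defined (independent of $j$), and those expressing $\Delta(c_{ij}-b_{ij})<\Delta(c_{kl}-b_{kl})$ for $i<k$ force $\delta(\alpha_i/M)<\delta(\alpha_k/M)$; hence the $\val^3_M$-blocks of $\alpha$ are exactly the $\alpha_i$. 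Moreover $\mathbb{P}^+(\alpha_i/M)$ is computed from rational comparisons of the $f(\alpha_{ij})-b$ with all parameters in $B$, so it is determined by $\tp(\alpha_i/B)=\tp(c_i/B)$ and equals $\mathbb{P}^+(c_i/B)$, which is $\mathbb{Q}$-free by $\val^3_B$-separatedness of $c$ and \cref{val3BSep-Libre}. Thus each $\alpha_i$ is $\val^3_M$-separated, and since separatedness is block-local (as noted after \cref{defSep}) so is each of $\alpha$, $\alpha_I$, $\alpha_O$, and in particular $\alpha$ is $\val^2_M$-separated as well.

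Now I would conclude exactly as in \cref{tmpRecollementVal3}. If $I$ or $O$ is empty, $\alpha$ is a single $\val^2_M$-block of $M$-ramified points that is $\val^3_M$-separated, so \cref{completionRam} applied with $(c,B)$ replaced by $(\alpha,M)$ says $\tp(\alpha/M)$ is determined by the $\tp(\alpha_i/M)$ together with the ordering of the $\delta(\alpha_i/M)$; the former are the $p_i$ and the latter is the fixed $B$-ordering, both dictated by $q$, so $\tp(\alpha/M)$ is determined. If both $I$ and $O$ are nonempty, then $G(\alpha_I/M)={\overline{H}^M}<{\underline{G}_M}=H(\alpha_O/M)$, so $\alpha_I$ and $\alpha_O$ are $\val^2_M$-blocks of distinct values and their types over $M$ are weakly orthogonal by \cref{WOrthVal2}; applying the case just treated to $\alpha_I$ and to $\alpha_O$ separately shows that $\tp(\alpha_I/M)$ and $\tp(\alpha_O/M)$ are determined by $q$, hence so is $\tp(\alpha/M)$. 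In either case $q$ is complete.

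The step I expect to be the main obstacle is the middle paragraph: verifying that the $\val^3_M$-block decomposition of $\alpha$, its ordering, and the $\mathbb{Q}$-freeness of each $\mathbb{P}^+(\alpha_i/M)$ are genuinely forced by $q$ — that is, that the ordering and projective data built into $\tp(c/B)$ persist for realizations taken over the larger base $M$. Everything afterwards is a direct assembly of \cref{abstractionGH}, \cref{completionRam}, \cref{WOrthVal2} and \cref{val3BSep-Libre}, along the lines of \cref{tmpRecollementVal3}.
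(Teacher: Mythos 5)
Your proposal is correct and follows essentially the same route as the paper: consistency is immediate from strongness, and completeness is obtained by showing that any realization of $q$ is a $\val^3_M$-separated tuple whose $\val^3_M$-blocks are the $\alpha_i$ with the $\delta$-ordering dictated by $\tp(c/B)$, then invoking \cref{completionRam} over $M$. Your explicit splitting into $\alpha_I$ and $\alpha_O$ via \cref{WOrthVal2} when both inner and outer types occur is in fact slightly more careful than the paper's proof, which applies \cref{completionRam} directly (the same orthogonality step appears in the paper's proof of \cref{tmpRecollementVal3}).
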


\begin{proof}
Suppose the indices $i$ are ordered such that $i<k$ if and only if we have $\delta(c_{i}/B)<\delta(c_{k}/B)$. Let $b_{ij}\in\ram(c_{ij}/B)$. We know from \cref{tmpRecollementVal3} that $q$ is consistent. Let $\alpha,\beta\models q$. Then each $\alpha_i,\beta_i$ is a $\val^3_M$-block of $M$-ramified points, and:
$$\Delta(\alpha_{ij}-b_{ij})=\delta(\alpha_i/M),\Delta(\beta_{ij}-b_{ij})=\delta(\beta_i/M)$$
for all $i$ and $j$. Moreover, for $i<k$, and arbitrary indices $jl$, we have $\tp(c/B)\models |x_{kj}-b_{kj}|>n|x_{il}-b_{kl}|$ for all $n>0$. It follows that we have $\delta(\alpha_i/M)<\delta(\alpha_k/M)$ and $\delta(\beta_i/M)<\delta(\beta_k/M)$. We can apply \cref{completionRam} to get $\alpha\equiv_M\beta$, which concludes the proof.
\end{proof}

\begin{remark}\label{espaceFini}
In the setting of \cref{finCompletionRam}, by \cref{classificationRam}, the Stone space of all the global $\Aut(M/A)$-invariant extensions of $\tp(c/B)$ is finite Hausdorff, hence it is discrete.
\end{remark}

\begin{remark}
Let $X$ be a topological space. Suppose $(O_i)_i$ is an open partition of $X$. Then the open subsets of $X$ are exactly the subsets that can be written $\bigcup\limits_i O'_i$, with $O'_i$ an open subset of $O_i$. As a result, $X=\coprod\limits_i O_i$.
\end{remark}

\begin{corollary}\label{coproduit}
Let $X$ be a topological space, and $Y$ a discrete topological space such that we have a continuous map $X\longrightarrow Y$. Then $X$ is the coproduct of the fibers.
\end{corollary}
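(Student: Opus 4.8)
The statement is a purely topological fact, so the plan is to reduce it immediately to the previous remark, which says that if $X$ carries an open partition $(O_i)_i$ then $X=\coprod_i O_i$. First I would let $f\colon X\longrightarrow Y$ be the given continuous map, and consider the family $(f^{-1}(y))_{y\in Y}$. Since $Y$ is discrete, every singleton $\{y\}$ is open (and closed) in $Y$, so by continuity every fiber $f^{-1}(y)$ is an open subset of $X$; the fibers are pairwise disjoint and their union is all of $X$, so $(f^{-1}(y))_{y\in Y}$ is an open partition of $X$ in the sense of the preceding remark.

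\textbf{Applying the remark.} Having exhibited this open partition, I would invoke the previous remark verbatim: it tells us that the open subsets of $X$ are exactly the sets of the form $\bigcup_{y} U_y$ with $U_y$ open in $f^{-1}(y)$, and hence that $X=\coprod_{y\in Y} f^{-1}(y)$ as topological spaces. This is precisely the assertion that $X$ is the coproduct of the fibers of $f$, so the proof is complete. One small point worth spelling out, if one wants to be careful, is that the subspace topology that each fiber $f^{-1}(y)$ inherits from $X$ is the same topology appearing in the coproduct decomposition --- but that is automatic, since $f^{-1}(y)$ is open in $X$, so its subspace topology and its topology-as-a-summand agree.

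\textbf{Main obstacle.} Honestly there is no real obstacle here: the entire content is the observation ``discrete codomain $+$ continuity $\Rightarrow$ fibers are open'', after which the cited remark does all the work. The only thing to be mildly attentive to is not to overlook the case where some fibers are empty (those contribute nothing to the coproduct and cause no trouble), and to make sure the statement is being read as a statement about the topology on $X$ and not merely about its underlying set. So the write-up will be short: one sentence to produce the open partition, one sentence to cite the remark, done.
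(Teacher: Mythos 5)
Your proof is correct and is exactly the argument the paper intends: the corollary is stated as an immediate consequence of the preceding remark, and your observation that continuity into a discrete space makes the fibers an open partition is the whole content. Nothing to add.
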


\begin{lemma}\label{tmpAInvComplet}
Suppose $c=(c_{ij})_{ij}(c'_k)_k$ is a finite, $\val^3_B$-separated, $\val^1_B$-block such that $c'$ is the family of its $B$-Archimedean points, and its $B$-ramified $\val^3_B$-blocks are the $(c_i)_i=((c_{ij})_j)_i$. Let $(p_i)_i$ be a strong $\val^3_B$-block extension of $(c_i)_i$, and suppose all the $p_i$ are outer. Let $G=G(c/B)=G(c'/A)$. For each $i$ and $j$, choose $b_{ij}\in\ram(c_{ij}/B)$. Let $d=(b_{ij})_{ij}(c'_k)_k$. Let $q_k$ be a global $\Aut(M/A)$-invariant extension of $\tp(c'_k/B)$, and let $q$ be a complete global extension of $\tp(c/B)$ which extends $\bigcup\limits_i p_i$ and $\bigcup\limits_k q_k$. If $q(x)\models f(x)-f(d)\not\in {\underline{G}_M}$ for all non-zero $f\in \LC^n(\mathbb{Q})$, then $q$ is $\Aut(M/A)$-invariant.
\end{lemma}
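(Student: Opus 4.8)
The plan is to adapt the proof of \cref{tmpClassifArch} to the present mixed situation, the role of $f(c)$ there being played here by $f(d)$. Assume toward a contradiction that $q$ is not $\Aut(M/A)$-invariant; then there are a non-zero $f\in\LC^n(\mathbb{Q})$ and a cut $X$ over $M$ with $q(x)\models f(x)\in X$ whose induced global $1$-type is not $\Aut(M/A)$-invariant, and it suffices to contradict this by showing that $X$ is $\Aut(M/A)$-invariant after all.

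The first step is to check that $q(x)\models f(x)-f(d)\in G\setminus\underline{G}_M$. Writing $f=\sum_{ij}\lambda_{ij}x_{ij}+\sum_k\mu_k x'_k$, we have $f(x)-f(d)=\sum_{ij}\lambda_{ij}(x_{ij}-b_{ij})+\sum_k\mu_k(x'_k-c'_k)$. Since $b_{ij}\in\ram(c_{ij}/B)$ and all coordinates of $c$ lie in the same $\val^1_B$-block, \cref{coupureIncluseDansG} gives $\tp(c_{ij}/B)\models y-b_{ij}\in G(c_{ij}/B)=G$; since $c'_k$ is $B$-Archimedean, \cref{coupureArch} gives $\tp(c'_k/B)\models y-c'_k\in G(c'_k/B)=G$. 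As $q$ extends $\bigcup_i p_i\cup\bigcup_k q_k$ and $G$ is a subgroup, summing yields $q(x)\models f(x)-f(d)\in G$, and the hypothesis on $q$ supplies $q(x)\models f(x)-f(d)\notin\underline{G}_M$.

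The second step identifies $X$. Because $d$ and $c$ agree on the Archimedean coordinates and $c_{ij}-b_{ij}\in G\cap M=G(M)$ for every $ij$, we get $f(c)-f(d)\in G(M)$, hence $f(d)+G(M)=f(c)+G(M)$ as convex subsets of $M$; combined with the first step, $q$ forces $f(x)$ to lie just above or just below this coset, so $X$ is one of the two cuts over $M$ adjacent to $f(c)+G(M)$. It therefore suffices to show that $f(c)+G(M)$ is $\Aut(M/A)$-invariant as a subset of $M$, since then the two adjacent cuts induce $\Aut(M/A)$-invariant global types. Now $G=G(c'/A)$ is $A$-type-definable, hence $\Aut(M/A)$-invariant, so it is enough to prove $G(f(c)/A)=G$: indeed \cref{coupureIncluseDansG} then gives $\ct(f(c)/A)\subseteq f(c)+G$, so $\sigma(f(c))-f(c)\in G$ for every $\sigma\in\Aut(M/A)$, which is exactly the required invariance.

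It remains to establish $G(f(c)/A)=G$, and this is where the hypotheses really bite. On one hand $G(f(c)/A)\geqslant G(f(c)/B)=G$, using $\val^3_B$-separatedness of $c$ together with the fact that all coordinates lie in one $\val^1_B$-block. On the other hand, by the ultrametric inequality for $\val^1_A$ (\cref{DOAGVal1BienDef}), $G(f(c)/A)$ is at most the largest $G(\cdot/A)$ among the coordinates occurring in $f$: each Archimedean $c'_k$ is Archimedean over $A$ with $G(c'_k/A)=G$ by \cref{ArchBImpliqueArchA} and \cref{coupInvArch}, and each ramified $c_{ij}$ has $G(c_{ij}/A)=G(c_{ij}/B)=G$ because the $p_i$ are outer, so the first alternative of \cref{coupInvRam} applies to the pushforward to $c_{ij}$ and forces $G(c_{ij}/B)=G(c_{ij}/A)$; it is the given equality $G(c/B)=G(c'/A)$ that pins the ramified and Archimedean sides to the same convex subgroup $G$. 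Hence $G(f(c)/A)=G$, and we are done. I expect this uniform control of $G(f(c)/A)$ over all $f\neq 0$ to be the only real obstacle; the rest is the same cut-and-convex-subgroup bookkeeping as in \cref{tmpClassifArch} and \cref{coupInvArch}, and the degenerate cases (for instance $G=\{0\}$, where the standing hypothesis on $q$ is vacuous) cause no difficulty.
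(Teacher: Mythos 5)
Your proof is correct, and it reaches the conclusion by a genuinely different route from the paper's. The paper splits $f=g+h$ into its $B$-ramified and $B$-Archimedean parts, invokes \cref{finCompletionRam} to see that the pushforward of $q$ onto the ramified coordinates is already $\Aut(M/A)$-invariant, invokes \cref{tmpClassifArch} for the Archimedean coordinates, and then uses outerness to recognise both resulting cuts as translates of connected components of $G\setminus{\underline{G}_M}$, whose sum is again an $A$-invariant translate (with a case distinction on $g=0$, $h=0$). You bypass that decomposition entirely: after checking $q(x)\models f(x)-f(d)\in G\setminus{\underline{G}_M}$ uniformly, you reduce everything to the single algebraic identity $G(f(c)/A)=G$, obtained by squeezing $G\leqslant G(f(c)/B)\leqslant G(f(c)/A)\leqslant\max\left(G(c_{ij}/A),G(c'_k/A)\right)=G$ between $\val^3_B$-separatedness on the $B$-side and outerness plus \cref{coupInvRam}/\cref{coupInvArch} on the $A$-side; \cref{coupureIncluseDansG} then makes the coset $f(c)+G(M)=f(d)+G(M)$, hence its two adjacent cuts, $\Aut(M/A)$-invariant. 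What your approach buys is uniformity (no case split, no appeal to \cref{finCompletionRam} or to the strongness of the block extension beyond consistency of $q$); what the paper's approach buys is that it recycles statements already proved for the classification of the extension space. One small imprecision: when some $c_{ij}$ is Archimedean over $A$ (though ramified over $B$), \cref{coupInvRam} does not apply and you should quote \cref{coupInvArch} instead; the conclusion $G(c_{ij}/A)=G(c_{ij}/B)$ is the same either way, so nothing breaks.
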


\begin{proof}
Let $f\in \LC^n(\mathbb{Q})$ be non-zero. As is the proof of \cref{tmpClassifArch}, it suffices to show that $q(x)\models f(x)\in X$, with $X$ some $\Aut(M/A)$-invariant cut over $M$. By $\val^3_B$-separatedness, we have $G(f(c)/B)=G$. Write $f=g+h$, with $g\in \LC^{|(c_{ij})_{ij}|}(\mathbb{Q})$, $h\in \LC^{|(c'_k)_k|}(\mathbb{Q})$. By \cref{forceBaseInv}, $(p_i)_i$ is a strong block extension of $(c_{ij})_{ij}$. By \cref{finCompletionRam}, the pushforward of $q$ by the projection over the coordinates $(x_{ij})_{ij}$ is $\Aut(M/A)$-invariant, thus there is $X$ an $\Aut(M/A)$-invariant cut over $M$ such that $q(x)\models g((x_{ij})_{ij})\in X$. If $h=0$, then $q(x)\models f(x)\in X$. Now suppose $h\neq 0$. If $g=0$, then we are done as the proof is the same as in \cref{tmpClassifArch}, so suppose $g\neq 0$. By $\val^3_B$-separatedness, $f(c)$ is Archimedean over $B$, and $G(f(c)/B)=G$. By \cref{coupInvArch}, the two global $\Aut(M/A)$-invariant extensions of $\tp(h((c'_k)_k)/B)$ are the translates by $h((c'_k)_k)$ of the connected components of $G\setminus{\underline{G}_M}$. By \cref{tmpClassifArch}, the pushforward of $q$ by the projection on the coordinates $(x'_k)_k$ is $\Aut(M/A)$-invariant, therefore there is $Y$ one of those two translates so that $q(x)\models h((x'_k)_k)\in Y$. As the $p_i$ are outer, and by $\val^3_B$-separatedness, $X$ is the translate by $g((b_{ij})_{ij})$ of some connected component of the type-definable set $G\setminus{\underline{G}_M}$. Then, we have $q(x)\models f(x)-f(d)\in G\setminus{\underline{G}_M}$, which means that $q(x)\models f(x)\in L\cup R$, with:
$$L= \ct_<\left(\left[g((b_{ij})_{ij})+h((c'_k)_k)\mod G(M)\right]/M\right)$$
$$R= \ct_>\left(\left[g((b_{ij})_{ij})+h((c'_k)_k)\mod G(M)\right]/M\right)$$
By $A$-invariance of $X$ and $Y$, both $L$ and $R$ are $\Aut(M/A)$-invariant, and concluding the proof.
\end{proof}

\begin{lemma}\label{classifVal1Block}
Suppose $c=(c_{ij})_{ij}(c'_k)_k$ is a finite, $\val^3_B$-separated, $\val^1_B$-block such that $c'$ is the family of its $B$-Archimedean points, and its $B$-ramified $\val^3_B$-blocks are the $(c_i)_i=((c_{ij})_j)_i$. Let $(p_i)_i$ be a strong $\val^3_B$-block extension of $(c_i)_i$. Let $q$ be the global $\Aut(M/A)$-invariant extension of $\tp((c_{ij})_{ij}/B)$ which extends $\bigcup\limits_i p_i$ ($q$ exists and is unique, by \cref{finCompletionRam}).
\par Then the following description yields a full classification of the Stone space $F_1$ of global $\Aut(M/A)$-invariant types extending $q\cup\tp(c/B)$:
\par Suppose $c'$ is not empty (or else $q\cup\tp(c/B)$ is already complete). Let $O$ be the set of indices $o$ such that $p_o$ is outer. Let $G=G(c/B)=G(c'/A)$. Define ${N}$, $H$, ${N}'$ as in the statement of \cref{classificationArch}. For each $i$ and $j$, choose $b_{ij}\in\ram(c_{ij}/B)$, and choose $\alpha$ a realization of the projection of $q$ onto the coordinates $(x_o)_{o\in O}$. Let $F_2$ be the closed subspace of $S_{(x_o)_{o\in O},x'}(\{0\})$ corresponding to the types of $\mathbb{Q}-free$ families satisfying $\tp^{{N}'}((\alpha_{oj}-b_{oj}\mod H)_{oj}/\{0\})$. Then the function mapping $\tp^{{N}}((\gamma_{ij})_{i\not\in O, j}\alpha(\beta_k)_k/M)$ to
$$\tp^{{N}'}((\alpha_{oj}-b_{oj}\mod H)_{o\in O, j}(\beta_k-c'_k\mod H)_k/\emptyset)$$
is a well-defined homeomorphism $F_1\longrightarrow F_2$.
\end{lemma}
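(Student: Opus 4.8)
The plan is to run the argument of \cref{classificationArch} in this more general situation, with \cref{tmpAInvComplet} playing the role of \cref{tmpClassifArch}, and exploiting the structural fact that the ramified part of every type in $F_1$ is forced. If $c'$ is empty, $q\cup\tp(c/B)=q$ is already complete by \cref{finCompletionRam}, so $F_1$ is a point and there is nothing to prove; assume $c'\ne\emptyset$. Since $q$ is, by \cref{finCompletionRam}, a \emph{complete} global type in the ramified variables $(x_{ij})_{ij}$, every $r\in F_1$ restricts to $q$ on those variables. Hence any realization of $r_{|M}$ can be brought, by an automorphism of $N$ over $M$, to the form $(\gamma_{ij})_{i\notin O,j}\,\alpha\,(\beta_k)_k$ with outer-ramified part exactly the prescribed tuple $\alpha$; this is why the inner-ramified coordinates $(\gamma_{ij})_{i\notin O,j}$ carry nothing beyond $q$ and may be dropped from the image, and why the outer-ramified part of the image is always the fixed type $\tp^{N'}((\alpha_{oj}-b_{oj}\mod H)_{oj}/\{0\})$. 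I also recall that a global $\Aut(M/A)$-invariant type in these variables is determined by its restriction to $M$ (each pushforward by an $f\in\LC(\mathbb{Q})$ is a global invariant $1$-type, hence a cut over $M$), so the definition of $g$ through $\tp^N(\cdot/M)$ makes sense.

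Well-definedness and $g(F_1)\subseteq F_2$ then go as in \cref{classificationArch}. Two realizations of $r_{|M}$ with outer part $\alpha$ are conjugate by some $\tau\in\Aut(N/M)$ fixing $\alpha$; as $\tau$ fixes $G(M)$ it fixes the convex hull $H$, so it descends to $\tau'\in\Aut(N')$ fixing $(\alpha_{oj}-b_{oj}\mod H)_{oj}$ and carrying one tuple $(\beta_k-c'_k\mod H)_k$ to the other, giving equal types over $\emptyset$. For $\mathbb{Q}$-freeness of $g(r)$: by \cref{coupInvArch} one has $G(c'_k/B)=G$ and $\beta_k-c'_k\in G(N)\setminus H$ for each $k$, while by \cref{coupInvRam}, together with the restriction of $q$ to $(x_o)_{o\in O}$ extending the outer types $(p_o)_{o\in O}$ (a property preserved by the completion via \cref{finCompletionRam}), one has $\alpha_{oj}-b_{oj}\in G(N)\setminus H$; then $\val^3_B$-separatedness of $c$ forces, for any non-zero linear combination $f$ in the outer-ramified and Archimedean variables, that $f(c)$ is $B$-Archimedean or $B$-ramified with $G(f(c)/B)=G$, whence the corresponding value at the realization minus $f$ of the prescribed ramifiers/Archimedean points lands in $G(N)\setminus H$, i.e. is nonzero modulo $H$. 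Continuity is immediate by quantifier elimination, and injectivity follows exactly as in \cref{classificationArch}: if $r_1\ne r_2$ in $F_1$, the discrepancy already occurs at some $f(x)$ touching an Archimedean coordinate (they agree on the ramified ones), and then \cref{coupInvArch} shows the two global $\Aut(M/A)$-invariant extensions of $\tp(f(c)/B)$ differ precisely by which connected component of $G\setminus\underline{G}_M$ the induced cut lies in, equivalently by the sign of $f$ of the image in $N'$, so $g(r_1)\ne g(r_2)$.

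For surjectivity — the heart of the matter — I would start from a type in $F_2$, realize it in $N'$ by a $\mathbb{Q}$-free family whose outer part is literally $(\alpha_{oj}-b_{oj}\mod H)_{oj}$ and whose Archimedean part is some $(\zeta_k)_k$, and lift the latter to $(\zeta_k^{\#})_k$ in $N$. As in \cref{classificationArch}, auxiliary elements produce $\sigma\in\Aut(N/G(M))$ moving the $\zeta_k^{\#}$ into $G(N)\setminus H$; $\sigma$ descends to $N'$ and does not change the $N'$-type to be hit, so we may assume $\zeta_k^{\#}\in G(N)\setminus H$. Extend $\alpha$ to a realization $\gamma\alpha$ of $q$, and set $\beta_k:=c'_k+\zeta_k^{\#}$. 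Then $\tp^N(\gamma\alpha\beta/M)$ extends $q\cup\tp(c/B)$: the ramified part is $q$, each $\beta_k$ shares the cut of $c'_k$ over $B$ by \cref{coupureArch}, and weak orthogonality of the ramified and Archimedean $\val^2_B$-blocks of $c$ (\cref{WOrthVal2}) assembles these into $\tp(c/B)$; and it restricts to $q$ and to the prescribed Archimedean type. To see it lies in $F_1$ I invoke \cref{tmpAInvComplet} with $d=(b_{ij})_{ij}(c'_k)_k$: for non-zero $f$, the element $f(\text{realization})-f(d)$ is the sum of $f$ of the inner-ramified coordinates minus their ramifiers (which lies in $G(N)$ by \cref{coupInvRam} and \cref{inegalitesRafinementsGrp}), of $f_O(\alpha)-f_O(b)$ (which lies in $G(N)\setminus H$ when the outer part $f_O$ of $f$ is nonzero, $q$ being outer on those coordinates), and of $f'((\zeta_k^{\#}))$ (which lies in $G(N)\setminus H$ when the Archimedean part $f'$ is nonzero, by $\mathbb{Q}$-freeness); by the ultrametric inequality the summand of largest Archimedean value controls the sum, so it lies in $G(N)\setminus H$, and \cref{tmpAInvComplet} applies. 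Its (unique) global extension is the desired preimage, and $g$, a continuous bijection between the compact Hausdorff spaces $F_1$ and $F_2$, is a homeomorphism.

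I expect the delicate point to be exactly the last verification in the surjectivity step: ensuring the built realization stays in the band $G\setminus\underline{G}_M$ modulo the relevant convex subgroups, i.e. that $\val^3_B$-separatedness of $c$ genuinely lifts to $\tp^N(\gamma\alpha\beta/M)$ so that the hypothesis of \cref{tmpAInvComplet} holds. This is the one place where the fixed outer type $q$ (through \cref{coupInvRam} and \cref{finCompletionRam}), the inner-ramified coordinates (through \cref{inegalitesRafinementsGrp} and \cref{HPlusGrand}), and the freely chosen Archimedean coordinates must be controlled simultaneously, so essentially all the structure theory of the previous subsections is in play at once; the rest of the proof is a routine transcription of \cref{classificationArch}.
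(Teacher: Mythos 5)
Your proposal follows the paper's proof in all essentials: the same transcription of \cref{classificationArch} with \cref{tmpAInvComplet} supplying invariance, the same use of \cref{coupInvArch}/\cref{coupInvRam} and outerness of the $p_o$ to place linear combinations in $G\setminus\underline{G}_M$, and the same lifting argument for surjectivity. The one organizational difference is how you dispose of the inner-ramified coordinates: the paper factors $g$ through the projection $\pi:(x_i)_i,x'\mapsto (x_o)_{o\in O},x'$ and proves $F_1\to\pi(F_1)$ is a homeomorphism by weak orthogonality (\cref{WOrthVal2}, using that inner realizations satisfy $G(\cdot/M)=\overline{H(c/B)}^M<G$ while the rest has value $G$, so they lie in distinct $\val^2_M$-blocks), whereas you normalize realizations over $M$ and observe that the inner contribution to any $f$ lands in $M+H$. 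Both work; the paper's factorization has the advantage of packaging the claim as an instance of an already-proved gluing principle rather than redoing the cut computation.

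There is one step that, as written, would fail: in the surjectivity argument you take $\sigma\in\Aut(N/G(M))$ to move the lifts $\zeta_k^{\#}$ into $G(N)\setminus H$ and assert that this ``does not change the $N'$-type to be hit.'' It preserves the type of the full tuple over $\emptyset$, but the map $g$ computes the image against the \emph{fixed} tuple $(\alpha_{oj}-b_{oj}\mod H)_{oj}$, which $\sigma$ need not fix (these elements lie in $G(N)\setminus H$, not in $G(M)$, so $\sigma$ is free to move them). Consequently $\tp^{N'}((\alpha_{oj}-b_{oj}\mod H)_{oj}(\sigma(\zeta_k^{\#})\mod H)_k/\emptyset)$ may differ from the target, and your construction could produce a preimage of a different element of $F_2$. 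The paper avoids this by taking $\sigma\in\Aut\bigl(N/H+\sum_{oj}\mathbb{Q}\cdot(\alpha_{oj}-b_{oj})\bigr)$, which descends to an automorphism of $N'$ fixing the outer part pointwise; you should impose the same condition (the cut-matching argument producing $\sigma$ goes through unchanged over this larger base). With that correction the proof is complete.
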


\begin{proof}
\par Let $d=((b_{oj})_{o\in O, j}(c'_k)_k)$, and let $\pi$ be the projection:
$$(x_i)_i,x'\longmapsto (x_o)_{o\in O},x'$$
\par Let us show first that the map from the statement factors through a homeomorphism $\pi(F_1)\longrightarrow F_2$. The proof is very similar to that of \cref{classificationArch}.

\begin{itemize}[]
\item Let $\beta\in N$ be such that $\tp(\alpha\beta/M)\in\pi(F_1)$. Let $f_1$ in $\LC^{|\alpha|}(\mathbb{Q})$, $f_2$ in $ \LC^{|\beta|}(\mathbb{Q})$ be such that $f_1+f_2\neq 0$. Just as in \cref{classificationArch}, in order to show that the map $g:\ \pi(F_1)\longrightarrow F_2$ given by the statement is well-defined, it suffices to show that we have $f_1(\alpha)+f_2(\beta)-(f_1+f_2)(d)\not\in {\underline{G}_M}$. If $f_2\neq 0$, then $(f_1+f_2)(d)$ is Archimedean over $A$, which concludes the proof by \cref{coupInvArch} just as in \cref{classificationArch}. As all the $p_o$ are outer, and $\alpha$ is $\val^3_B$-separated (as $(c_{oj})_{oj}\equiv_B\alpha$ is), we have $f_1(\alpha)-(f_1+0)(d)\not\in {\underline{G}_M}$, proving that $g$ is well-defined.

\item Let us show that $g$ is injective. Let $p_1$, $p_2\in \pi(F_1)$ be distinct, and $f_1$, $f_2\in \LC(\mathbb{Q})$ be as in the previous item. We use the same case disjunction ($f_2\neq 0$ or $f_2=0$) as in the previous item to show that the pushforward of $p_i$ by $f_1+f_2$ is (as a cut over $M$) a translate by $(f_1+f_2)(d)$ of one of the two connected components of $G\setminus {\underline{G}_M}$. Then, it follows from the same reasoning as in \cref{classificationArch} that $g$ is injective.

\item Let us now prove surjectivity. Let $\beta'$ be a tuple from $N'$ such that $r'=\tp((\alpha_{oj}-b_{oj}\mod H)_{oj}\beta'/\{0\})\in F_2$. Let $\beta$ be a preimage of $\beta'$ by the projection $N\longrightarrow N'$. Let $d_1, d_2\in N$ be such that we have $\Delta(d_1)>\Delta(\beta_i)$, $\Delta(d_k)>\Delta(\alpha_{oj}-b_{oj})$, and $d_2\not\in{\underline{G}_M}$ for all $i, j, k, o$. Let $\sigma\in \Aut\left(N/H+\sum\limits_{oj}\mathbb{Q}\cdot(\alpha_{oj}-b_{oj})\right)$ be such that $\sigma(|d_1|)=|d_2|$, and let $r=\tp(\alpha(c'_k+\sigma(\beta_k))_k/M)$. Then $r$ is $\Aut(M/A)$-invariant by \cref{tmpAInvComplet}, and, just as in \cref{classificationArch}, $r$ is a preimage of $r'$ by $g$.

\item We conclude that $g$ is a homeomorphism $\pi(F_1)\longrightarrow F_2$.
\end{itemize}
\par It remains to show that $F_1\longrightarrow \pi(F_1)$ is homeo. By \cref{finCompletionRam}, recall that $p=\tp((c_i)_{i\not\in O}/B)\cup\bigcup\limits_{i\not\in O} p_i$ is a complete global type. Let $q'$ be in $\pi(F_1)$. Then, if $\gamma_1\models p,\gamma_2\models q'$, one can note that $G(\gamma_2/M)=G$, while $G(\gamma_1/M)={\overline{H(c/B)}^M}<G$ (this is the difference between being inner and outer). As a result, the $\val^1_M$(and thus $\val^2_M$)-values of the elements of $\gamma_1$ (there is only one such value, but we do not care) are different from the values of the elements of $\gamma_2$. By \cref{WOrthVal2}, $p$ and $q$ are weakly orthogonal, which shows that $F_1\longrightarrow\pi(F_1)$ is homeo.
\end{proof}

When put together with \cref{espaceFini} and \cref{coproduit}, we get that the space of global $\Aut(M/A)$-invariant extensions of the type of any finite, $\val^3_B$-separated $\val^1_B$-block $(c_i)_i, c'$ that is cut-independent is the finite coproduct of the topological spaces described in \cref{classifVal1Block}. The continuous map we consider for \cref{coproduit} is the projection $\pi:\ ((x_i)_i, x')\longmapsto (x_i)_i$. Note that, by \ref{classifVal1Block}, any global $\Aut(M/A)$-invariant extension of the type of $(c_i)_i$ over $B$ extends to one of the type of $((c_i)_i, c')$ (as one can always extend the type of some $\mathbb{Q}$-free family to an arbitrary type of a larger $\mathbb{Q}$-free family), so $\pi$ is onto the finite space described in \cref{espaceFini}. Now, given a finite, $\val^3_B$-separated tuple $c$ that is cut-independent, we just described $(S_i)_i$, the Stone spaces of the global $\Aut(M/A)$-invariant extensions of the type of each $\val^1_B$-block of $c$. By \ref{recollementVal1} and \cref{produitOrth}, the Stone space of every global $\Aut(M/A)$-invariant extension of the type of the full tuple $c$ is exactly the finite direct product of the $S_i$. Let us rephrase all of these conclusions (with their hypothesis) in a single statement for good measure:

\begin{remark}\label{DOAGDescriptionFinaleHomeo}
    In DOAG, let $M$ be a sufficiently saturated and strongly homogeneous model, and $C\geqslant A\leqslant B$ be $\mathbb{Q}$-vector subspaces, such that\\ \noindent $\indep{C}{A}{B}{\cut}$. Let $c=(c_{ij})_{ij}$ be some $\val^3_B$-separated family of $C$, with $(c_i)_i=((c_{ij})_j)_i$ its $\val^1_B$-blocks. For each $i$, let $d_i$ be the tuple of $B$-ramified points of $c_i$. Let $F_i$ be the space of all global $\Aut(M/A)$-invariant extensions of $\tp(d_i/B)$, which is finite, and fully described by \cref{classificationRam} and \cref{finCompletionRam}. For each $i$, and for each $q\in F_i$, let $S_q$ be the space (described in \cref{classifVal1Block}) of global $\Aut(M/A)$-invariant extensions of $\tp(c_i/B)$ which extend $q$. We recall that $S_i$ is homeomorphic to some closed set of parameter-free DOAG-types. Then the space $S_i$ of all global $\Aut(M/A)$-invariant extensions of $\tp(c_i/B)$ is of course the finite disjoint union $\bigcup\limits_{q\in F_i}S_q$, which is naturally homeomorphic to the coproduct $\coprod\limits_{q\in F_i}S_q$. Lastly, the whole space of global $\Aut(M/A)$-invariant extensions of $\tp(c/B)$ is naturally homeomorphic to $\prod\limits_i S_i$.
    \par In conclusion, we established a natural and explicit homeomorphism between the space of global $\Aut(M/A)$-invariant extensions of $\tp(c/B)$, and a topological space of the form $\prod\limits_i\coprod\limits_j F_{ij}$, with $(F_{ij})_{ij}$ some closed sets of parameter-free DOAG-types.
\end{remark}

Moreover, as definable bijections are homeomorphisms in $S(M)$, this homeomorphism type is kept when replacing $c$ by an $A$-interdefinable tuple. Note that we show in the next section that every cut-independent tuple is $A$-interdefinable with a $\val^3_B$-separated, cut-independent family. As a result, this will yield a complete description of the Stone space of global $\Aut(M/A)$-invariant extensions of any non-forking type.

%!TEX root = main.tex

\section{Normal forms}\label{sectionFormesNormales}
We built in the last two sections a nice framework to get a fine understanding of a particular class of tuples, the $\val^3_B$-separated families. What remains for us to do in order to prove \cref{thmTechniqueEnonce} is to prove \ref{existenceNormalisation}, which allows us to always reduce to the case where we deal with such a nice family.
\par We adopt \cref{DOAGHypCLibre} in this section.%Let $M\models$\DOAG, and $A\leqslant B$ be $\mathbb{Q}$-vector subspaces of $M$, such that $M$ is $|B|^+$-saturated, and strongly $|B|^+$-homogeneous. Let $c=c_1\ldots c_n$ be a finite tuple from $M$ such that $\indep{c}{A}{B}{\cut}$. Without loss (see \cref{indepInterdef}), we also assume that $c$ is a lift of a $\mathbb{Q}$-free family from $\faktor{M}{B}$.

\par We have to show that $c$ is $A$-interdefinable with a $\val^3_B$-separated family $d$, which is cut-independent by \cref{indepInterdef}. In fact, $d$ will also be $\val^3_A$-separated. This could be useful, as we saw in \cref{classificationRam} that we have to keep track of which points from $d$ are Archimedean or ramified over $A$ in order to have a good understanding of the global $\Aut(M/A)$-invariant extensions of its type. By \cref{operationsInterdef}, $d$ is the image of $c$ by a composition of $A$-translations, and a map from \GL$_n(\mathbb{Q})$. In order to build $d$, we start with a particular basis of $\dcl(Ac)$ (remember that the definable closure of a set is the $\mathbb{Q}$-vector subspace generated by said set in \DOAG), then we go through a process of several steps, where we apply $A$-translations and maps from \GL$_n(\mathbb{Q})$ to this basis. At the end of each step, our current tuple satisfies an additional property from a list of conditions, the conjunction of which is a sufficient condition to be a $\val^3_B$ and $\val^3_A$-separated family.

\begin{remark}\label{rappelArchAB}
Recall that, if $d\in M$ is a $B$-Archimedean singleton such that $\indep{d}{A}{B}{\cut}$, then $d$ must be Archimedean over $A$ (see \cref{ArchBImpliqueArchA}). In particular, $G(d/B)=G(d/A)$.
\end{remark}

The following definition allows us to enumerate our families in a way that helps us to simultaneously consider their $\val^3_B$-blocks and their $\val^3_A$-blocks. The elements of our families are indexed by the subgroups $G(-/B)$, and the eventual Archimedean classes $\delta(-/B)$ that they bring. We use the single quote $'$ to refer to elements that are Archimedean over $A$ and ramified over $B$. We use the tilde $\tilde{\ }$ to refer to elements that are ramified over $A$ (and thus over $B$ if the family is cut-independent from $B$ over $A$). By elimination, the other elements are those that are Archimedean over $B$ and $A$.

\begin{definition}
We say that the family $(d_{Gi})_{Gi}(d'_{G'\delta'i'})_{G'\delta'i'}(\tilde{d}_{\tilde{G}\tilde{\delta}\tilde{i}})_{\tilde{G}\tilde{\delta}\tilde{i}}$ is \textit{under normal enumeration} (with respect to $A$ and $B$) if the following conditions hold:
\begin{itemize}
\item All the $d_{Gi}$ are Archimedean over $B$.
\item All the $d'_{G'\delta'i'}$ are Archimedean over $A$ and ramified over $B$.
\item All the $\tilde{d}_{\tilde{G}\tilde{\delta}\tilde{i}}$ are ramified over $A$.
\item For all $G$, $i$, we have $G(d_{Gi}/B)=G$.
\item For all $G'$, $\delta'$, $i'$, we have $G(d'_{G'\delta'i'}/B)=G'$ and $\delta(d'_{G'\delta'i'}/B)=\delta'$.
\item For all $\tilde{G}$, $\tilde{\delta}$, $\tilde{i}$, we have $G(\tilde{d}_{\tilde{G}\tilde{\delta}\tilde{i}}/B)=\tilde{G}$, and $\delta(\tilde{d}_{\tilde{G}\tilde{\delta}\tilde{i}}/A)=\tilde{\delta}$.
\end{itemize}
\end{definition}

Given a normal enumeration, the ramified $\val^3_B$-blocks of the family are $((d'_{G'\delta'i'})_{i'}(\tilde{d}_{G'\delta'\tilde{i}})_{\tilde{i}})_{G'\delta'}$, its Archimedean $\val^3_B$-blocks are $((d_{Gi})_i)_G$, its ramified $\val^3_A$-blocks are $((\tilde{d}_{\tilde{G}\tilde{\delta}\tilde{i}})_{\tilde{i}})_{\tilde{G}\tilde{\delta}}$, and its Archimedean $\val^3_A$-blocks are $((d_{Gi})_i(d'_{G\delta'i'})_{\delta'i'})_G$. One can note that the presence of the elements of $d'$ is the reason why none of the valuations $\val^3_A$, $\val^3_B$ refines the other.
\par Let us now give the list of properties that we want to satisfy:

\begin{definition}
Let $dd'\tilde{d}$ be a family under normal enumeration with respect to $A$ and $B$. We  say that $dd'\tilde{d}$ is \textit{under normal form} with respect to $A$ and $B$ if it is a lift of a $\mathbb{Q}$-free family from $\faktor{M}{B}$, and the following conditions hold:

\begin{enumerate}[label=(P\arabic*), ref=(P\arabic*)]
\item \label{P1} For all $\tilde{G}$ and $\tilde{\delta}$, $(\tilde{d}_{\tilde{G}\tilde{\delta}\tilde{i}})_{\tilde{i}}$ is a lift of a $\mathbb{Q}$-free family from $\faktor{M_{\leqslant\tilde{\delta}}}{M_{<\tilde{\delta}}}$.
\item \label{P2} Non-trivial linear combinations of $dd'$ are Archimedean over $A$.
\item \label{P3} For all $G$, non-trivial linear combinations $e$ of $(d_{Gi})_{i}(d'_{G\delta'i'})_{\delta'i'}$ satisfy $G(e/A)=G$.
\item \label{P4} For all $G$, any non-trivial linear combination $c$ of the tuple $(d_{Gi})_i$ is Archimedean over $B$ (which implies $G(c/B)=G$ if the family is cut-independent, by \ref{P3} and \cref{rappelArchAB}).
\item \label{P5} For all $G'$ and $\delta'$, non-trivial linear combinations $e'$ of the tuple $(d'_{G'\delta'i'})_{i'}(\tilde{d}_{G'\delta'\tilde{i}})_{\tilde{i}}$ are ramified over $B$, and satisfy $\delta(e'/B)=\delta'$.
\end{enumerate}
\end{definition}

\begin{remark}
Condition \ref{P1} implies that each ramified $\val^3_A$-block is $\val^3_A$-separated. The conjunction of \ref{P2} and \ref{P3} is equivalent to each Archimedean $\val^3_A$-block being $\val^3_A$-separated. Likewise, \ref{P4} (resp. \ref{P5}) is equivalent to each Archimedean (resp. ramified) $\val^3_B$-block being $\val^3_B$-separated. As a result, if we manage to prove that $c$ is $A$-interdefinable with a family under normal form, then this family would be simultaneously $\val^3_A$-separated and $\val^3_B$-separated, and \ref{existenceNormalisation} would follow.
\end{remark}

\begin{remark}
From the definition of the normal form, if $dd'\tilde{d}$ is under normal form with respect to $A$ and $B$, then one can note that any subfamily of $dd'\tilde{d}$ is also under normal form with respect to $A$ and $B$.
\end{remark}

\begin{definition}
Define $C$ to be the $\mathbb{Q}$-vector space generated by $Ac$. We  define a \textit{basis} (resp. \textit{free family}) \textit{of $C$ over $A$} as a lift of a basis (resp. free family) of $\faktor{C}{A}$.
\end{definition}

\begin{remark}
Remember that $dim\left(\faktor{C}{A}\right)$ is finite. It is easy to see that any concatenation of lifts of bases of $\faktor{C_{\leqslant\tilde{\delta}}}{C_{<\tilde{\delta}}}$, with $\tilde{\delta}\in\Delta(C)\setminus\Delta(A)$, is a free family over $A$, hence a finite family. In particular, $\Delta(C)\setminus\Delta(A)$ is finite.
\end{remark}

\begin{lemma}\label{premierP1}
For each $\tilde{\delta}\in\Delta(C)\setminus\Delta(A)$, let $\tilde{d}_{\tilde{\delta}}$ be a lift of a basis of $\faktor{C_{\leqslant\tilde{\delta}}}{C_{<\tilde{\delta}}}$, and let $\tilde{d}$ be the concatenation of all the $\tilde{d}_{\tilde{\delta}}$. Then, for all $e$ in $C\setminus \dcl(A\tilde{d})$, there exists $\tilde{e}\in \dcl(A\tilde{d})$ such that $e+\tilde{e}$ is Archimedean over $A$.
\par Moreover, if $e$ is ramified over $A$, then we can choose $\tilde{e}$ such that there exists $\tilde{\delta}\in\Delta(C)\setminus\Delta(A)$ for which $\Delta(e+\tilde{e})<\tilde{\delta}\leqslant\Delta(e)$.
\end{lemma}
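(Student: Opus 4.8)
The plan is to drive the Archimedean value of $e$ over $A$ down in finitely many steps, subtracting at each step an element of $\dcl(A\tilde d)=A+\sum_{\tilde\delta,j}\mathbb{Q}\,\tilde d_{\tilde\delta j}$, the point being that $\Delta(C)\setminus\Delta(A)$ is finite. If $e$ is already Archimedean over $A$ (in particular if $e\in A$) we simply take $\tilde e=0$; so I may assume $e$ is ramified over $A$ and set $\delta_0=\delta(e/A)\in\Delta(C)\setminus\Delta(A)$. Fixing a ramifier $a\in\ram(e/A)$, one has $\Delta(e-a)=\delta_0$ by \cref{defPetitDelta}, and moreover $\delta_0\leqslant\Delta(e)$: if instead $\Delta(a)>\Delta(e)$, then \cref{triangleIsocele} would give $\Delta(e-a)=\Delta(a)\in\Delta(A)$, contradicting that $a$ is a ramifier.

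Next I would pass to the components. We have $e-a\in C_{\leqslant\delta_0}$, and its class modulo $C_{<\delta_0}$ is a nonzero element of the Archimedean $\mathbb{Q}$-vector space $C_{\delta_0}=C_{\leqslant\delta_0}/C_{<\delta_0}$ (\cref{DOAGComp}), of which the images of $\tilde d_{\delta_0}$ form a spanning family by construction. Hence there are rationals $\lambda_j$ with $e^{(1)}:=e-a-\sum_j\lambda_j\tilde d_{\delta_0 j}\in C_{<\delta_0}$; note $e^{(1)}\in C$, that $e^{(1)}=e+\tilde e_1$ with $\tilde e_1\in\dcl(A\tilde d)$, and that $\Delta(e^{(1)})<\delta_0$.

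Then I iterate: as long as the current point $e^{(m)}\in C$ is ramified over $A$, the same construction produces $e^{(m+1)}=e^{(m)}+(\text{an element of }\dcl(A\tilde d))$ with $\Delta(e^{(m+1)})<\delta(e^{(m)}/A)\leqslant\Delta(e^{(m)})$. If $e^{(m+1)}$ is still ramified over $A$, then $\delta(e^{(m+1)}/A)\leqslant\Delta(e^{(m+1)})<\delta(e^{(m)}/A)$, so the values $\delta(e^{(0)}/A)>\delta(e^{(1)}/A)>\cdots$ strictly decrease inside the finite set $\Delta(C)\setminus\Delta(A)$, and the process must stop at some $e^{(k)}$ which is Archimedean over $A$. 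Setting $\tilde e:=e^{(k)}-e\in\dcl(A\tilde d)$ then gives the first assertion. For the ``moreover'' part I take $\tilde\delta=\delta_0=\delta(e/A)$: since $e$ is ramified we have $k\geqslant 1$, and the values $\Delta(e^{(m)})$ strictly decrease for $m\geqslant 1$, whence $\Delta(e+\tilde e)=\Delta(e^{(k)})\leqslant\Delta(e^{(1)})<\delta_0\leqslant\Delta(e)$, as required.

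The only slightly delicate points are the termination of the iteration — which rests on the finiteness of $\Delta(C)\setminus\Delta(A)$ together with the strict drop of $\delta(e^{(m)}/A)$ at each step — and the inequality chain $\Delta(e+\tilde e)<\delta(e/A)\leqslant\Delta(e)$ needed for the second statement; both reduce to the elementary valuation identities of \cref{triangleIsocele}, \cref{defPetitDelta} and \cref{DOAGComp}, so I do not anticipate any genuine obstacle. (The argument in fact applies to every $e\in C$, so the hypothesis $e\notin\dcl(A\tilde d)$ is not used; it is retained only because that is the case needed in the sequel.)
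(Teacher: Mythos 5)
Your proof is correct and follows essentially the same route as the paper's: iteratively subtract a ramifier together with a $\mathbb{Q}$-linear combination of $\tilde d_{\tilde\delta_n}$ to push $\Delta(e+\tilde e_n)$ strictly below $\tilde\delta_n=\delta(e+\tilde e_n/A)$, and terminate by finiteness of $\Delta(C)\setminus\Delta(A)$, taking $\tilde\delta=\delta(e/A)$ for the ``moreover'' clause. Your side remarks (the justification via the components $C_{\delta_0}$, and the observation that $e\notin\dcl(A\tilde d)$ is never used) are accurate but do not change the argument.
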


\begin{proof}
Let us build $\tilde{e}$ by induction. Let $\tilde{e}_0=0\in \dcl(A\tilde{d})$. If $e+\tilde{e}_n$ is not Archimedean over $A$, then let $a\in\ram(e+\tilde{e}_n/A)$, and $\tilde{\delta}_n=\delta(e+\tilde{e}_n/A)$. By definition of $\tilde{d}$, there exists $v$ a linear combination of $ \tilde{d}_{\tilde{\delta}_n}$ for which we have $\Delta(e+\tilde{e}_n-a-v)<\tilde{\delta}_n$, so let us set $ \tilde{e}_{n+1}=\tilde{e}_n-a-v$.
\par By finiteness of $\Delta(C)\setminus\Delta(A)$, the sequence of the $\tilde{\delta}_n$, and hence that of the $\tilde{e}_n$, must eventually stop. The ultimate value $\tilde{e}$ of the $\tilde{e}_n$ witnesses the first part of the statement. If $e$ is ramified over $A$, then $\tilde{e}\neq 0= \tilde{e}_0$, so $\tilde{\delta}_0$ exists and witnesses the second part of the statement.
\end{proof}

\begin{lemma}\label{deuxiemeP1}
For each $\tilde{\delta}\in\Delta(C)\setminus\Delta(A)$, let $\tilde{d}_{\tilde{\delta}}$ be a lift of a basis of $\faktor{C_{\leqslant\tilde{\delta}}}{C_{<\tilde{\delta}}}$, and let $\tilde{d}$ be the concatenation of all the $\tilde{d}_{\tilde{\delta}}$. Let $(v_n)_n$ be a sequence such that $(\tilde{d},v_n)$ is a basis of $C$ over $A$ for each $n$. Suppose that, for all $n$, there exists $v_1'$ a term from $v_n$, $v_2'$ a term from $v_{n+1}$, and $\delta\in\Delta(C)\setminus\Delta(A)$, such that $\Delta(v_2')<\delta\leqslant\Delta(v_1')$, and $v_{n+1}$ is obtained from $v_n$ by replacing $v_1'$ by $v_2'$.
\par Then the sequence $(v_n)_n$ must eventually stop.
\end{lemma}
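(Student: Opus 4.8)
The plan is to produce a strictly decreasing monovariant with values in $\mathbb{N}$, which forces the process to terminate. First I would recall from the remark just preceding the lemma that $\Delta(C)\setminus\Delta(A)$ is finite, and fix an enumeration $\delta_1<\dots<\delta_k$ of it. To any $x\in M$ I attach the integer $\ell(x)=\#\{\,i\in\{1,\dots,k\}:\delta_i\leqslant\Delta(x)\,\}$, so that $0\leqslant\ell(x)\leqslant k$ for every $x$.

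The only computation to carry out is the following: if $x,y\in M$ satisfy $\Delta(y)<\delta\leqslant\Delta(x)$ for some $\delta\in\Delta(C)\setminus\Delta(A)$, then $\ell(y)<\ell(x)$. Indeed, writing $\delta=\delta_j$, the chain $\delta_1<\dots<\delta_j\leqslant\Delta(x)$ gives $\ell(x)\geqslant j$, whereas $\delta_i\geqslant\delta_j>\Delta(y)$ for every $i\geqslant j$ shows that $\{\,i:\delta_i\leqslant\Delta(y)\,\}\subseteq\{1,\dots,j-1\}$, hence $\ell(y)\leqslant j-1$; combining, $\ell(y)\leqslant j-1<j\leqslant\ell(x)$.

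With this in hand, I set $\Phi(v_n)=\sum\ell(w)$, the sum ranging over the coordinates $w$ of the tuple $v_n$ (a finite sum, since $(\tilde d,v_n)$ is a basis of $C$ over $A$, so each $v_n$ has the fixed length $\dim(C/A)-|\tilde d|$). By hypothesis $v_{n+1}$ is obtained from $v_n$ by changing a single coordinate, from some term $v_1'$ of $v_n$ to some term $v_2'$ of $v_{n+1}$ with $\Delta(v_2')<\delta\leqslant\Delta(v_1')$ for some $\delta\in\Delta(C)\setminus\Delta(A)$, all other coordinates being unchanged. The previous paragraph then yields $\Phi(v_{n+1})=\Phi(v_n)-\ell(v_1')+\ell(v_2')<\Phi(v_n)$. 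Thus an infinite sequence $(v_n)_n$ as in the statement would produce an infinite strictly decreasing sequence $(\Phi(v_n))_n$ of non-negative integers, which is absurd; so the sequence $(v_n)_n$ must eventually stop.

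I do not anticipate a real obstacle here: the only point requiring a little thought is the choice of the right monovariant, namely recognizing that the finiteness resource being consumed is that of $\Delta(C)\setminus\Delta(A)$, and that the hypothesis ``the replacement jumps across some $\delta\in\Delta(C)\setminus\Delta(A)$'' is precisely what makes $\ell$ drop on the modified coordinate. Archimedean classes lying outside $\Delta(C)\setminus\Delta(A)$ are irrelevant, which is why it suffices to count only the $\delta_i$ lying below a given element.
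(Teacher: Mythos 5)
Your proof is correct and follows essentially the same strategy as the paper's: it uses the finiteness of $\Delta(C)\setminus\Delta(A)$ to bucket the coordinates of $v_n$ according to which $\delta_i$ lie below their Archimedean class, and observes that each replacement strictly decreases a well-founded monovariant. The only difference is cosmetic --- the paper packages the bucket counts $F(n,i)$ into an ordinal $\sum_i F(n,i)\,\omega^i\in\omega^{N+1}$, while your integer-valued $\Phi(v_n)=\sum_w\ell(w)$ is a simpler target that suffices because the tuple length is fixed and exactly one coordinate drops to a strictly lower bucket at each step.
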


\begin{proof}
Let $N=|\Delta(C)\setminus\Delta(A)|<\omega$, $(\delta_n)_{n<N}$ the strictly increasing enumeration of $\Delta(C)\setminus\Delta(A)$, and $\mathcal{I}$ be the set of intervals: $$\left\lbrace \left]-\infty, \delta_0\right[, \left[\delta_0, \delta_1\right[, \left[\delta_1, \delta_2\right[, \ldots \left[\delta_{N-2}, \delta_{N-1}\right[, \left[\delta_{N-1}, +\infty\right[\right\rbrace$$
\par Define on $\mathcal{I}$ the only total order that makes the canonical projection $f:\ \Delta(C)\longrightarrow\mathcal{I}$ an order-preserving map. With respect to that order, let $g$ be the unique order-isomorphism $\mathcal{I}\longrightarrow \{0, 1\ldots  N\}$. For each $i$ between $0$ and $N$, let $F(n, i)$ be the number of terms $v$ from $v_n$ such that $g(f(\Delta(v)))=i$. Finally, let $\rho$ be the following function:
$$
n\longmapsto F(n, N)\times\omega^N + F(n, N-1)\times\omega^{N-1}+\ldots +F(n, 1)\times\omega+F(n, 0)
$$
then $\rho$ has its values in the well-ordered set $\omega^{N+1}$. By hypothesis, for each $n$, there exists $m<m'$ such that $F(n, m')=F(n+1, m')+1$, $F(n, m)+1=F(n+1, m)$, and $F(n, i)=F(n+1, i)$ for every $i\neq m$, $m'$. As a result, $\rho$ is strictly decreasing, so the well-ordering forces the sequence $(v_n)_n$ to stop eventually.
\end{proof}

\begin{lemma}\label{DOAGP2}
For each $\tilde{\delta}\in\Delta(C)\setminus\Delta(A)$, let $u_{\tilde{\delta}}$ be a lift of a basis of $\faktor{C_{\leqslant\tilde{\delta}}}{C_{<\tilde{\delta}}}$, and let $u$ be the concatenation of all the $u_{\tilde{\delta}}$. Then there exists $dd'\tilde{d}$ a basis of $C$ over $A$, under normal enumeration with respect to $A$ and $B$, for which \ref{P2} holds, and $\tilde{d}=u$ (so \ref{P1} holds as well).
\end{lemma}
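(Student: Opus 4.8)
The plan is to complete the given family $u$ to a basis of $C$ over $A$ by an iterative procedure whose single step is supplied by \cref{premierP1} and whose termination is exactly \cref{deuxiemeP1}, and then to reorganize the newly added basis vectors into the two blocks $d$ and $d'$.

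First I would dispose of the bookkeeping. Since $C\cap M_{<\tilde\delta}=C_{<\tilde\delta}$, the quotient $\faktor{C_{\leqslant\tilde\delta}}{C_{<\tilde\delta}}$ embeds into $\faktor{M_{\leqslant\tilde\delta}}{M_{<\tilde\delta}}$, so each $u_{\tilde\delta}$ is a lift of a $\mathbb{Q}$-free family from $\faktor{M_{\leqslant\tilde\delta}}{M_{<\tilde\delta}}$; hence once we declare $\tilde d=u$, property \ref{P1} holds. Moreover every entry $v$ of $u_{\tilde\delta}$ satisfies $\Delta(v)=\tilde\delta\notin\Delta(A)$ and $v\notin A$, so $0\in\ram(v/A)$ and $\delta(v/A)=\tilde\delta$; thus $u$ can be indexed as $(\tilde d_{\tilde G\tilde\delta\tilde i})$ with $\tilde\delta=\delta(\tilde d_{\tilde G\tilde\delta\tilde i}/A)$ and $\tilde G=G(\tilde d_{\tilde G\tilde\delta\tilde i}/B)$, exactly as required by a normal enumeration.

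The heart of the argument is the construction of $dd'$ with \ref{P2}. Start from any completion $(u,v^{(0)})$ of $u$ to a basis of $C$ over $A$; this exists because $u$ is free over $A$ and $\faktor{C}{A}$ is finite-dimensional. Given a basis $(u,v^{(n)})$ of $C$ over $A$, if every non-trivial $\mathbb{Q}$-linear combination of $v^{(n)}$ is Archimedean over $A$, stop. Otherwise some non-trivial combination $e=\sum_j\lambda_j v^{(n)}_j$ fails to be Archimedean over $A$; since $v^{(n)}$ is a basis of $C$ over $\dcl(Au)\supseteq A$ we have $e\notin A$, so $e$ is ramified over $A$. Pick an index $j_0$ with $\lambda_{j_0}\neq 0$ and $\Delta(v^{(n)}_{j_0})$ maximal among $\{\Delta(v^{(n)}_l):\lambda_l\neq 0\}$, so that $\Delta(e)\leqslant\Delta(v^{(n)}_{j_0})$ by the ultrametric inequality. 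Apply the \emph{moreover} part of \cref{premierP1} to $e$ to obtain $\tilde e\in\dcl(A\tilde d)=\dcl(Au)$ and $\tilde\delta\in\Delta(C)\setminus\Delta(A)$ with $\Delta(e+\tilde e)<\tilde\delta\leqslant\Delta(e)$, and let $v^{(n+1)}$ be $v^{(n)}$ with the entry $v^{(n)}_{j_0}$ replaced by $e+\tilde e$. Because $\tilde e\in\dcl(Au)$ and $\lambda_{j_0}\neq 0$, the family $(u,v^{(n+1)})$ is again a basis of $C$ over $A$; and $\Delta(e+\tilde e)<\tilde\delta\leqslant\Delta(e)\leqslant\Delta(v^{(n)}_{j_0})$ shows that the passage from $v^{(n)}$ to $v^{(n+1)}$ is precisely of the form hypothesized in \cref{deuxiemeP1}. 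That lemma then forces the process to stop after finitely many steps, and when it stops every non-trivial linear combination of the resulting tuple $v$ is Archimedean over $A$, i.e.\ \ref{P2} holds with $dd':=v$.

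To finish, split $v$ into the subtuple $d$ of its entries that are Archimedean over $B$ and the subtuple $d'$ of its entries that are ramified over $B$ (this is a genuine partition, since an entry lying in $B$ counts as Archimedean over $B$); indexing $d$ by $G(\cdot/B)$ and $d'$ by the pair $(G(\cdot/B),\delta(\cdot/B))$, together with the indexing of $\tilde d=u$ from the first paragraph, exhibits $dd'\tilde d$ as a basis of $C$ over $A$ under normal enumeration with respect to $A$ and $B$ for which \ref{P2} (and \ref{P1}) holds. The main obstacle is fitting the single step into the rigid format demanded by \cref{deuxiemeP1}: one is not allowed to perform arbitrary $\GL_n(\mathbb{Q})$ changes of basis, since these can raise the ordinal complexity measure used there, so exactly one basis vector may be altered at a time, and the delicate point is to arrange that its $\Delta$-value genuinely drops past an element of $\Delta(C)\setminus\Delta(A)$ — which is exactly what the choice of $j_0$ together with the \emph{moreover} clause of \cref{premierP1} delivers.
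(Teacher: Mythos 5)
Your proposal is correct and follows essentially the same route as the paper: complete $u$ to a basis of $C$ over $A$, then repeatedly use the ``moreover'' clause of \cref{premierP1} to replace one basis vector at a time by an $A$-Archimedean correction whose Archimedean class drops past an element of $\Delta(C)\setminus\Delta(A)$, with termination guaranteed by \cref{deuxiemeP1}. The only difference is that you spell out the final reindexing into the blocks $d$, $d'$, $\tilde d$, which the paper leaves implicit.
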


\begin{proof}
Let $w_0$ be some basis of $C$ over $A$ containing $u$. Note that $w_n=(u,v_n)$ does not witness the statement if and only if there exists a non-trivial linear combination $v'$ of $v_n$ that is ramified over $A$. In that case, let $v$ be some term of $v_n$ that appears in $v'$, chosen such that $\Delta(v)$ is maximal. By valuation inequality, we must have $\Delta(v)\geqslant\Delta(v')$. Apply \cref{premierP1} to find $u'\in\dcl(Au)$ and $\delta\in\Delta(C)\setminus\Delta(A)$ for which $\Delta(v'+u')<\delta\leqslant\Delta(v')\leqslant\Delta(v)$. Replace $v$ by $v'+u'$ in $w_n$, let $w_{n+1}$ be the new family obtained. Now, $w_{n+1}$ is also a basis of $C$ over $A$ containing $u$.
\par The sequence $(v_n)_n$ witnesses the hypothesis of \cref{deuxiemeP1}, so it must eventually stop, and its last element witnesses the statement.
\end{proof}

\begin{remark}\label{rqueConservationP2}
If $w=(u, v)$ witnesses \cref{DOAGP2}, and $M$ is an invertible matrix of size $|v|$ with coefficients in $\mathbb{Q}$, then $(u, (Mv))$ also witnesses \cref{DOAGP2}.
\end{remark}

\begin{lemma}
Let $dd'\tilde{d}=(v, \tilde{d})$ be a free family over $A$ that witnesses \ref{P1} and \ref{P2}, and $N=|v|$. Then there exists $M\in \GL_N(\mathbb{Q})$ such that $((Mv)$, $\tilde{d})$ witnesses \ref{P3} (and thus \ref{P1}, \ref{P2} by \cref{rqueConservationP2}).
\end{lemma}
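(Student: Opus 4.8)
The plan is to recognise that, under our standing hypotheses, securing \ref{P3} is the purely algebraic task of replacing $v$ by a basis of $W:=\mathrm{span}_{\mathbb{Q}}(v)$ which is $\val^1_A$-separated in the sense of \cref{defSep}, and then to produce such a basis by a well-founded descent in the style of \cref{deuxiemeP1}.

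The first step, which carries the whole idea, is that $\val^1_A$ and $\val^1_B$ coincide on $W\setminus\{0\}$. Since $v$ is part of a basis of $C$ over $A$, we have $W\subseteq C$ and $W\cap A=\{0\}$, so any $e\in W\setminus\{0\}$ is a non-trivial linear combination of $dd'$; hence $e\notin A$, $e$ is Archimedean over $A$ by \ref{P2}, and $\indep{e}{A}{B}{\cut}$ since $\indep{C}{A}{B}{\cut}$. By \cref{coupInvArch} this forces $G(e/A)=G(e/B)$. As a result, once $Mv$ (for some $M\in\GL_N(\mathbb{Q})$) is a $\val^1_A$-separated basis of $W$, the family $(Mv,\tilde d)$, suitably re-enumerated, witnesses \ref{P3}: all elements of $Mv$ are Archimedean over $A$ (again by \ref{P2}), so $\tilde d$ is unchanged and the elements of $Mv$ sort into the new $d$'s (those Archimedean over $B$) and the new $d'$'s (those ramified over $B$, still Archimedean over $A$), indexed by their $\val^1_B$-value, which equals their $\val^1_A$-value, and, for the $d'$'s, by $\delta(\cdot/B)$; \ref{P1} and \ref{P2} persist by \cref{rqueConservationP2}; and, each $\val^1_A$-block of $Mv$ being $\val^1_A$-separated with $\val^1_A$-value equal to its index $G$, every non-trivial combination $e$ of that block has $G(e/A)=G$, which is exactly \ref{P3}.

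It remains to build a $\val^1_A$-separated basis of $W$, which I would do as in \cref{deuxiemeP1}. First, $\val^1_A$ takes only finitely many values $g_1<\dots<g_k$ on $W\setminus\{0\}$: elements with pairwise distinct $\val^1_A$-values are $\mathbb{Q}$-linearly independent (multiplication by a nonzero rational is $\emptyset$-definable, hence preserves $\val^1_A$, and then the ultrametric inequality, cf.\ \cref{triangleIsocele}, rules out a non-trivial relation), so there are at most $N$ of them. Now start from $v$; as long as the current basis $w=(w_1,\dots,w_N)$ is not $\val^1_A$-separated, choose scalars $(\lambda_l)_l$, not all zero, with $\val^1_A(\sum_l\lambda_l w_l)<g_m:=\max\{\val^1_A(w_l):\lambda_l\neq 0\}$ and an index $j$ with $\lambda_j\neq 0$ and $\val^1_A(w_j)=g_m$, and replace $w_j$ by $\sum_l\lambda_l w_l$; this is an invertible $\mathbb{Q}$-linear substitution because $\lambda_j\neq 0$. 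Assigning to a basis the tuple $(F_k,\dots,F_1)\in\mathbb{N}^k$ where $F_i$ counts the $w_l$ with $\val^1_A(w_l)=g_i$, such a step strictly decreases this tuple for the lexicographic order ($F_i$ is unchanged for $i>m$, while $F_m$ drops by one and some $F_{m'}$ with $m'<m$ rises); since $\mathbb{N}^k$ ordered lexicographically is well-founded (it is order-isomorphic to $\omega^k$), the procedure terminates, and the final basis, obtained from $v$ by some $M\in\GL_N(\mathbb{Q})$, is $\val^1_A$-separated.

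I expect the only genuine difficulty to be spotting the first step. Without the collapse $\val^1_A=\val^1_B$ on $W$ afforded by \ref{P2} and cut-independence, \ref{P3} entangles the two valuations; once one has it, \ref{P3} becomes the classical existence of a valuation basis for a single, finite-valued valuation, and the rest is the bounded bookkeeping already used to establish \ref{P1} and \ref{P2}.
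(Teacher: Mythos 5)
Your proof is correct and takes essentially the same route as the paper's: a well-founded replacement procedure turning $v$ into a $\val^1_A$-separated basis of its $\mathbb{Q}$-span, the only difference being organizational (the paper inducts on $N$ and shrinks the maximal non-separated $\val^1_A$-block until it is separated, whereas you run a single descent with a lexicographic rank in the style of \cref{deuxiemeP1}). Your preliminary observation that $G(e/A)=G(e/B)$ for every non-zero $e$ in the span of $v$ — which reconciles the $\val^1_B$-indexing built into \ref{P3} with $\val^1_A$-separation — is left implicit in the paper but is genuinely needed, and you justify it correctly from \ref{P2}, cut-independence and \cref{coupInvArch}.
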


\begin{proof}
This is by induction over $N$. \ref{P3} trivially holds if $N=0, 1$.
\par For every $G$, let $v_G$ be the $\val^1_A$-block of $v$ of value $G$. Let $G$ be maximal such that $v_G$ is not $\val^3_A$-separated. As long as $v_G$ is not $\val^3_A$-separated, there must exist $e$ a non-trivial linear combination of $v_G$ such that $G(e/A)<G$, then replace some term of $v_G$ that appears in $e$ by $e$ (making that replacement corresponds to replacing $v$ by $Mv$ for some $M\in \GL_N(\mathbb{Q})$). As we have $G(e/A)<G$, these replacements always decrease the size of $v_G$, and leave $v_{>G}$ untouched as the value of $e$ is smaller, so the $\val^1_A$-blocks of value larger than $G$ remain separated, thus we keep the maximality of $G$. As a result, these replacements eventually stop, and they stop before we removed all the terms from $v_G$, because if there is only one term left in $v_G$, then $v_G$ is clearly separated.
\par By induction hypothesis, we can replace $v_{<G}$ by $Mv_{<G}$ such that the family $((Mv_{<G}),\tilde{d})$ witnesses \ref{P3}. It is now clear that $((Mv_{<G}),v_{\geqslant G},\tilde{d})$ witnesses \ref{P3}. The only changes we made to the original family $v$ are operations from $\GL_N(\mathbb{Q})$, concluding the proof.
\end{proof}

\begin{remark}\label{DOAGConservationP3}
Let $w=dd'\tilde{d}=(v, \tilde{d})$ be a free family over $A$ that witnesses \ref{P1}, \ref{P2}, \ref{P3}. Let $v_G$ be the $\val^1_A$-block of $v$ of value $G$. If we replace $v_G$ by $Mv_G$ in $w$ for some invertible matrix $M$, then the new family $w'$ still witnesses \ref{P1}, \ref{P2} and \ref{P3}.
\end{remark}

\begin{lemma}
There exists a basis of $C$ over $A$ that witnesses \ref{P1}, \ref{P2}, \ref{P3}, \ref{P4}.
\end{lemma}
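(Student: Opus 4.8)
The idea is to refine the family produced by the previous lemma (which already witnesses \ref{P1}, \ref{P2}, \ref{P3}) by acting within the $\val^1_A$-blocks, so as to also achieve \ref{P4}, without destroying \ref{P1}--\ref{P3}. By \cref{DOAGConservationP3}, any operation that replaces a $\val^1_A$-block $v_G$ by $Mv_G$ for $M\in\GL(\mathbb{Q})$ preserves \ref{P1}, \ref{P2}, \ref{P3}, so we have the freedom to work block by block, and inside each block we may further restrict our attention to the sub-block of $B$-Archimedean points: condition \ref{P4} only constrains non-trivial linear combinations of the $(d_{Gi})_i$, and by \ref{P3} together with \cref{rappelArchAB} (\cref{ArchBImpliqueArchA}), such combinations, if $B$-Archimedean, automatically have $G(-/B)=G$. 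So the task reduces to: given the $\val^3_A$-separated block $v_G$ of value $G$, choose $M\in\GL(\mathbb{Q})$ so that the sub-block of $Mv_G$ consisting of its $B$-Archimedean terms has the property that \emph{every} non-trivial $\mathbb{Q}$-linear combination of those terms is again $B$-Archimedean.

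**Key steps.** First I would fix $G$ and look at $v_G$; since $\indep{C}{A}{B}{\cut}$, the subgroup $G(d/B)$ for $d\in v_G$ satisfies $H(d/A)\le H(d/B)\le G(d/B)\le G(d/A)=G$ by \cref{HPlusGrand} and \cref{inegalitesRafinementsGrp}. Being $B$-Archimedean means $G(d/B)=G$ (using $G(d/A)=G$ and \cref{coupInvArch}/\cref{ArchBImpliqueArchA}); being $B$-ramified means $H(d/B)=H(d/A)$ and $\delta(d/B)$ exists. The natural move is to perform, inside $v_G$, a Gaussian-elimination-type procedure: as long as some non-trivial combination $e$ of the currently-$B$-Archimedean terms of $v_G$ fails to be $B$-Archimedean — i.e. $e$ is $B$-ramified, so $G(e/B)<G$ — replace one of the $B$-Archimedean terms appearing in $e$ by $e$ itself. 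This replacement strictly decreases the number of $B$-Archimedean terms in $v_G$ (the new term $e$ is $B$-ramified), while keeping $v_G$ a basis over $A$ of the same span and keeping it $\val^3_A$-separated (the replacement is an element of $\GL(\mathbb{Q})$ fixing the $A$-side data, by \cref{DOAGConservationP3}; note $G(e/A)=G$ still, by \ref{P3}, so we stay inside the block $v_G$). Hence the procedure terminates. At termination, the sub-family of $B$-Archimedean terms of $v_G$ has the property that no non-trivial combination of it is $B$-ramified, which is exactly \ref{P4}. Finally I would re-index: after doing this for every $G$, rename the $B$-Archimedean terms as the $d_{Gi}$, the $B$-ramified but $A$-Archimedean terms as the $d'_{G\delta'i'}$ (grouping by $\delta'=\delta(-/B)$), and the $A$-ramified terms as the $\tilde d_{\tilde G\tilde\delta\tilde i}$, so that the resulting family is under normal enumeration and witnesses \ref{P1}--\ref{P4}.

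**Main obstacle.** The delicate point is to make sure that replacing a $B$-Archimedean term by a $B$-ramified combination $e$ genuinely decreases a well-founded quantity and cannot be undone by some later step — in particular that it does not reintroduce $B$-Archimedean combinations that were previously ramified, nor spoil the maximality bookkeeping the way the $\val^3_A$-separation argument in the previous lemma had to juggle the blocks $v_{>G}$. Here the situation is cleaner because we only ever move \emph{within} one fixed block $v_G$ and we only ever convert a $B$-Archimedean term into a $B$-ramified one, never the reverse: the span of the $B$-Archimedean terms is a $\mathbb{Q}$-subspace that strictly shrinks at each step (a term leaves it and the new term $e$ lies in the $B$-ramified cone, $G(e/B)<G$), so the count of $B$-Archimedean basis vectors is a strictly decreasing natural number. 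One must also check that the operations across different blocks $G$ commute in the sense of not interfering, which is immediate from \cref{DOAGConservationP3} since each step is supported on a single $\val^1_A$-block. Packaging the termination as an induction on $|v_G|$, exactly parallel to the proof of the \ref{P3}-lemma, closes the argument.
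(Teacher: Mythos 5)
Your proof is correct and follows essentially the same route as the paper's: starting from a basis witnessing \ref{P1}--\ref{P3}, you repeatedly replace a $B$-Archimedean term of a block by a non-$B$-Archimedean combination $e$, terminate because the number of $B$-Archimedean terms strictly decreases, and invoke \cref{DOAGConservationP3} to preserve \ref{P1}--\ref{P3}. (One harmless slip: being $B$-ramified does not mean $G(e/B)<G$ --- the $d'_{G'\delta'i'}$ in a normal enumeration are $B$-ramified with $G(-/B)=G'$ --- but your termination argument only uses that $e$ is $B$-ramified, so nothing breaks.)
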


\begin{proof}
Let $w=dd'\tilde{d}$ be a basis of $C$ over $A$ that witnesses \ref{P1}, \ref{P2} and \ref{P3}. For each $G$, as long as there is a non-trivial linear combination $e$ of the $(d_{Gi})_i$ that is not Archimedean over $B$, then replace some term of $(d_{Gi})_i$ that appears in $e$ by $e$. Each of these replacements removes a term from $(d_{Gi})_i$ and adds a new term to $(d'_{G\delta'i'})_{\delta'i'}$. This eventually stops as the number of terms of $(d_{Gi})_i$ is finite and strictly decreases at each step.
\par By \cref{DOAGConservationP3}, the new family $w'$ that we obtain after all these replacements still witnesses \ref{P1}, \ref{P2}, \ref{P3}. We perform these replacements for each $G$, and the new family obtained clearly witnesses \ref{P4}.
\end{proof}

\begin{remark}\label{P5OpEl}
Let $w=dd'\tilde{d}$ be a free family over $A$ that witnesses \ref{P1}, \ref{P2}, \ref{P3}, \ref{P4}, and $G'$ some index. Then, if we replace $(d'_{G'\delta'i'})_{\delta'i'}$ by $M(d'_{G'\delta'i'})_{\delta'i'}$ in $w$ for some invertible matrix $M$, it is clear that the new family still witnesses \ref{P1}, \ref{P2}, \ref{P3}, \ref{P4}.
\end{remark}

\begin{lemma}\label{P5Translation}
Let $w=dd'\tilde{d}$ be a basis of $C$ over $A$ that witnesses \ref{P1}, \ref{P2}, \ref{P3}, \ref{P4}. Let $d'_{G'\delta'k'}$ be a term from $d'$, and $\tilde{e}$ be a linear combination of the $(\tilde{d}_{G'\tilde{\delta}\tilde{i}})_{\tilde{\delta}\tilde{i}}$. If we replace $d'_{G'\delta'k'}$ by $e=d'_{G'\delta'k'}+\tilde{e}$ in $w$, then the new family obtained is still  a basis of $C$ over $A$ that witnesses \ref{P1}, \ref{P2}, \ref{P3}, \ref{P4}.
\end{lemma}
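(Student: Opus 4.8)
The plan is to verify that replacing $d'_{G'\delta'k'}$ by $e = d'_{G'\delta'k'} + \tilde{e}$ preserves being a basis of $C$ over $A$ and each of the four conditions \ref{P1}--\ref{P4} in turn. First I would note that the replacement is an elementary operation: since $\tilde{e}$ is a linear combination of the $(\tilde{d}_{G'\tilde{\delta}\tilde{i}})_{\tilde{\delta}\tilde{i}}$, which are all terms of $\tilde{d}$ (distinct from $d'_{G'\delta'k'}$), the new tuple is obtained from $w$ by adding to one coordinate a $\mathbb{Q}$-linear combination of the other coordinates, hence it is still a basis of $\dcl(Ac)/A$. Next, I would check that the element $e$ is still Archimedean over $A$ (so the normal enumeration is preserved, with $e$ sitting in the $d'$-part): since $w$ witnesses \ref{P2}, the combination $d'_{G'\delta'k'} + \tilde{e}$ of $dd'$-terms and $\tilde{d}$-terms\,---\,wait, $\tilde{e}$ involves $\tilde{d}$-terms, so this does not directly follow from \ref{P2}. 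Instead I would argue: $\tilde{e}$ is a combination of the $\tilde d_{G'\tilde\delta\tilde i}$, and by \ref{P1} each ramified $\val^3_A$-block is $\val^3_A$-separated, so $\Delta(\tilde e) \in \Delta(C)$, whereas (by \ref{P2}) $d'_{G'\delta'k'}$ being Archimedean over $A$ means $\Delta(d'_{G'\delta'k'} - a) \notin \Delta(A)$ but $\Delta(d'_{G'\delta'k'}) \in \Delta(B)$; the key point is that $G(d'_{G'\delta'k'}/B) = G'$ and the $\tilde d_{G'\tilde\delta\tilde i}$ all have $G(-/B) = G'$ as well, and adding an element of $G'$ does not change $G(-/B)$. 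So I would show $G(e/B) = G'$ and that $e$ stays ramified over $B$, Archimedean over $A$, with $\delta(e/B) = \delta'$: this uses \cref{triangleIsocele} applied to the Archimedean values $\delta(d'_{G'\delta'k'}/B) = \delta'$ versus $\Delta(\tilde e - (\text{suitable ramifier shift}))$, together with the fact that $\tilde e$ lies in the $\vee$-definable group cut out by the $\tilde\delta$'s, all of which are smaller than $\delta'$ in the relevant cut, or at any rate land in a compatible position so that the sum keeps $\delta'$.

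Having established the normal enumeration is preserved, I would dispatch \ref{P1} (it only concerns the $\tilde d$-blocks, which are untouched) and \ref{P2} (non-trivial linear combinations of the new $dd'$: any such combination equals a combination $e''$ of the old $dd'$ plus a multiple of $\tilde e$; by \ref{P2} the combination $e''$ of old $dd'$ is Archimedean over $A$, and adding $\tilde e$, a combination of $\tilde d_{G'\bullet\bullet}$-terms, we again invoke the argument above\,---\,if the coefficient of $e$ in the combination is nonzero the same analysis as for $e$ itself applies; if it is zero then the combination is untouched). For \ref{P3}: a non-trivial linear combination of $(d_{Gi})_i (d'_{G\delta'i'})_{\delta'i'}$ in the new family, for $G \neq G'$, is literally a combination in the old family, so nothing changes; for $G = G'$ it again equals an old combination plus a multiple of $\tilde e \in G'$, and since $G(\text{old combination}/A) = G'$ and $\Delta(\tilde e) < \Delta$ of the relevant witnessing elements (by \ref{P1}, $\tilde e$ lands below $G'$ in $\val^1_A$, as the $\tilde d_{G'\tilde\delta\tilde i}$ have $G(-/B)=G'$ hence $G(-/A)\le G'$, but more precisely $H(-/A)$-considerations via \cref{classeUnique} put them strictly inside), the ultrametric inequality \cref{triangleIsocele} gives $G(\text{new combination}/A) = G'$. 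Finally \ref{P4} concerns only the $(d_{Gi})_i$ tuples, which are entirely untouched by the replacement, so it is immediate.

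The main obstacle I expect is the bookkeeping in the second paragraph: carefully confirming that $\tilde e$, being a combination of exactly those $\tilde d$-terms indexed by the \emph{same} $G'$, satisfies $\tilde e \in G'(=G(d'_{G'\delta'k'}/B))$ but sits strictly below the $\delta'$-layer\,---\,i.e. $\Delta(\tilde e) < \delta'$ in the appropriate quotient\,---\,so that \cref{triangleIsocele} applies and the Archimedean class $\delta'$ and the group $G'$ are genuinely unchanged when passing to $e$. This is where one must use that $\tilde d_{G'\tilde\delta\tilde i}$ are ramified over $A$ with $G(-/B) = G'$, so by \cref{classeUnique} their added classes $\tilde\delta$ are at $\ct_>(\Delta(\stab)/\Delta(A))$, which is below $\delta'$; hence linear combinations $\tilde e$ of them have $\Delta(\tilde e - (\text{ramifier}))$ of that small order, while $\Delta(\tilde e)$ itself may be large but still lies in $G'$. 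Once this layering is pinned down, every other verification is a routine application of the ultrametric inequality and the definitions, exactly as in the preceding lemmas and their remarks (\cref{rqueConservationP2}, \cref{DOAGConservationP3}, \cref{P5OpEl}).
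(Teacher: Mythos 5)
Your overall decomposition is the right one and matches the paper's: \ref{P1} and \ref{P4} concern only blocks that are untouched once one knows $e$ stays $A$-Archimedean and $B$-ramified, and \ref{P2}, \ref{P3} reduce to analysing combinations $u+\lambda\tilde e$ with $u$ a non-trivial combination of the old $dd'$. But the step you yourself single out as the crux rests on a false claim. You assert that the classes $\tilde\delta$ contributed by the $\tilde d_{G'\tilde\delta\tilde i}$ lie below $\delta'$ (citing \cref{classeUnique}), hence that $\tilde e$ sits strictly below the $\delta'$-layer and $\delta(e/B)=\delta'$. \cref{classeUnique} only pins down the \emph{cut} of these classes over $\Delta(B)$; $\delta'$ and the $\tilde\delta$'s all realize that same cut and can be interleaved in either order --- this is precisely why $\val^3_B$ and the ordering of ramified blocks by their $\delta$'s have to be introduced at all. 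Moreover, in the one place the lemma is applied, \cref{P5Final}, the new element satisfies $\delta(e/B)<\delta'$, so your claim $\delta(e/B)=\delta'$ fails there outright. You also quote the comparison of convex subgroups backwards: $G(-/B)\leqslant G(-/A)$ always, so $G(\tilde d_{G'\tilde\delta\tilde i}/B)=G'$ gives $G(\tilde d_{G'\tilde\delta\tilde i}/A)\geqslant G'$, not $\leqslant G'$ (equality does hold, but via \cref{GPlusGrandDonneHPlusGrand} together with \cref{HPlusGrand}, not in the direction you wrote). Consequently $\tilde e$ does not land ``below $G'$ in $\val^1_A$'': it lands \emph{at} $G'$, and a bare ultrametric inequality at the $\val^1_A$- or $\Delta$-level cannot exclude cancellation in $u+\lambda\tilde e$.

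The correct mechanism lives one level up, in $\val^2_A$ and $\val^3_A$: $u$ is $A$-Archimedean with $G(u/A)=G'$ by \ref{P2} and \ref{P3}, while $\tilde e$ is $A$-ramified with $G(\tilde e/A)=G'$, and by \cref{defVal2} ramified points sit strictly below Archimedean ones within the same $\val^1_A$-block; \cref{newArchVsRam} then yields that $u+\lambda\tilde e$ is $A$-Archimedean with $G(u+\lambda\tilde e/A)=G'$, which is exactly what \ref{P2} and \ref{P3} require. No comparison between $\delta'$ and the $\tilde\delta$'s is needed, or indeed available. On the $B$-side one only needs that $e$ is not $B$-Archimedean --- immediate since $\val^2_B(e)\leqslant\max\bigl(\val^2_B(d'_{G'\delta'k'}),\val^2_B(\tilde e)\bigr)$ and both summands are $B$-ramified of $\val^1_B$-value at most $G'$ --- together with $G(e/B)=G(e/A)=G'$ from \cref{coupInvArch}; the value $\delta(e/B)$ is simply allowed to change.
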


\begin{proof}
First of all, as $\val^2_A(d'_{G'\delta'k'})>\val^2_A(\tilde{e})$, $e$ is Archimedean over $A$, and $G(e/A)=G'$. As $\val^2_B(e)\leqslant \max(\val^2_B(d'_{G'\delta'k'}), \val^2_B(\tilde{e}))$, $e$ cannot be Archimedean over $B$. This implies that the $A$-ramified and $B$-Archimedean blocks of our new family are left untouched, hence we keep the properties \ref{P1}, \ref{P4}. Let $u'$ be the new block of $A$-Archimedean and $B$-ramified points in the normal enumeration. Then $e$ lies in the block $u'_{G'}$. In order to show that \ref{P2} and \ref{P3} hold in the new family $du'\tilde{d}$, we just have to show that the $\val^3_A$-block $d_{G'}u_{G'}'$ is $\val^3_A$-separated. Indeed, it would follow that the whole $du'$ is $\val^3_A$-separated, which is clearly equivalent to \ref{P2} and \ref{P3} being true in $du'\tilde{d}$.
\par Any non-trivial linear combination of $d_{G'}u'_{G'}$ can be written $u+\lambda\cdot\tilde{e}$, with $u$ a non-trivial linear combination of $d_{G'}d'_{G'}$, and $\lambda\in\mathbb{Q}$. As \ref{P2} and \ref{P3} hold in $dd'\tilde{d}$, we have $G(u/A)=G'$, and $u$ is Archimedean over $A$. Then, we have $\val^3_A(u)>\val^3_A(\tilde{e})$, hence $\val^3_A(u+\lambda\cdot\tilde{e})=\val^3_A(u)$. This value equals that of the corresponding block, $\val^3_A(d_{G'}d'_{G'})$, as \ref{P2} and \ref{P3} hold in $dd'\tilde{d}$, thus it equals $\val^3_A(d_{G'}u'_{G'})$, concluding the proof.
\end{proof}

\begin{lemma}\label{P5Final}
Let $w=dd'\tilde{d}$ be a basis of $C$ over $A$ that witnesses \ref{P1}, \ref{P2}, \ref{P3}, \ref{P4}, and not \ref{P5}. Then there exists $G'$, $\delta'$, and two $\mathbb{Q}$-linear maps $f, g\in \LC(\mathbb{Q})$, such that $v=f((d'_{G'\delta'i'})_{i'})+g((\tilde{d}_{G'\delta'\tilde{i}})_{\tilde{i}})$ is ramified over $B$, and $\delta(v/B)<\delta'$.
\par Moreover, if $d'_{G'\delta'k'}$ appears in $v$, then replacing it by $v$ in $w$ gives a new family $w'$ which is still a basis of $C$ over $A$ that witnesses \ref{P1}, \ref{P2}, \ref{P3}, \ref{P4}.
\end{lemma}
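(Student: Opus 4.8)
Here is the plan I would follow.

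\medskip
\noindent\textbf{Unfolding the failure of \ref{P5}.} By \cref{val3BSep-Libre} together with the equivalence recorded just after the definition of \emph{normal form} (condition \ref{P5} is equivalent to each ramified $\val^3_B$-block being $\val^3_B$-separated), the failure of \ref{P5} for $w$ means that some ramified $\val^3_B$-block $(d'_{G'\delta'i'})_{i'}(\tilde d_{G'\delta'\tilde i})_{\tilde i}$ is not $\val^3_B$-separated, i.e. $\mathbb P^+$ of this block over $B$ is not $\mathbb Q$-free. Concretely, I would fix ramifiers $\beta_{i'}\in\ram(d'_{G'\delta'i'}/B)$ and $\tilde\beta_{\tilde i}\in\ram(\tilde d_{G'\delta'\tilde i}/A)$; note that by \cref{coupInvRam} the $\tilde\beta_{\tilde i}$ also lie in $\ram(\tilde d_{G'\delta'\tilde i}/B)$ with $\delta(\tilde d_{G'\delta'\tilde i}/B)=\delta'$, and that by \ref{P1} all the $\tilde d_{G'\delta'\tilde i}$ and all the $\tilde\beta_{\tilde i}$ lie in $M_{\leqslant\delta'}$, resp. $M_{<\delta'}$. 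A non-trivial $\mathbb Q$-linear dependence among the $\mathbb P^+$-classes then produces $\mathbb Q$-linear $f$ (non-zero, since by \ref{P1} the $\tilde d$-part alone is already $\val^3_B$-separated, hence free in $M_{\leqslant\delta'}/M_{<\delta'}$) and $g$ such that, writing $p=f((d'_{G'\delta'i'})_{i'})$, $q=g((\tilde d_{G'\delta'\tilde i})_{\tilde i})$, $\beta=f((\beta_{i'})_{i'})+g((\tilde\beta_{\tilde i})_{\tilde i})\in B$ and $e'=p+q$, one has $e'-\beta\in M_{<\delta'}$.

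\medskip
\noindent\textbf{Locating the combinations and choosing $v$.} By \ref{P2} and \ref{P3}, $p$ is Archimedean over $A$ with $G(p/A)=G'$, so $G(p/B)=G'$ by \cref{coupInvArch}; since $p-f((\beta_{i'})_{i'})$ is a $\mathbb Q$-combination of the $d'_{G'\delta'i'}-\beta_{i'}\in M_{\leqslant\delta'}\subseteq G'$, \cref{DOAGPropRamificateur} and \cref{conditionsRam} give that $f((\beta_{i'})_{i'})\in\ram(p/B)$, so $p$ is ramified over $B$ with $G(p/B)=G'$ and $\delta(p/B)=\Delta\!\big(p-f((\beta_{i'})_{i'})\big)\leqslant\delta'$. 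Similarly, if $g\neq0$, then $q$ is ramified over $A$ with $\delta(q/A)=\delta'$ by \ref{P1}, hence ramified over $B$ with $\delta(q/B)=\delta'$ by \cref{coupInvRam}, and the computation above (elements of $B\cap G'$ lie in $M_{<\delta'}$, so translating $q$ by them fixes its image in $M_{\leqslant\delta'}/M_{<\delta'}$ and hence its cut over $B$) shows moreover $G(q/B)=G'$ and $H(q/B)=H(p/B)$. Now two cases. If $\delta(p/B)<\delta'$, I take $v:=p$ (the $g$ of the statement being $0$): done. If $\delta(p/B)=\delta'$, then since $\Delta(e'-\beta)<\delta'$ while $p-f((\beta_{i'})_{i'})$ has value exactly $\delta'$, we must have $g\neq0$ and the $\delta'$-parts of $p$ and $q$ cancel; as $p$ and $q$ then behave over $B$ exactly like $\delta'$-ramified $d'$-elements with common $G=G'$, $H=H(p/B)$, the analogue of \cref{newRamVsRam}/\cref{coupureRam} together with cut-independence and $e'-\beta\in C+B$ (recall $\Delta(C+B)\setminus\Delta(B)$ is finite and a rank-one extension adds at most one Archimedean class) shows that $e'$ is still ramified over $B$, with $\delta(e'/B)=\Delta(e'-\beta)<\delta'$; I take $v:=e'$. \emph{I expect this last step — ruling out that the cancellation pushes $e'$ into an Archimedean-over-$B$ position, i.e. keeping $G(e'/B)=G'$ — to be the main obstacle; it is exactly where the bookkeeping of the invariant convex subgroups $G(-/B),H(-/B)$ from Section~\ref{sectGlue} is needed, and the key intermediate fact is $G(q/B)=G'$, $H(q/B)=H(d'_{G'\delta'i'}/B)$ proved above.} In either case $f\neq0$, so some $d'_{G'\delta'k'}$ has a non-zero coefficient in $v$, as required.

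\medskip
\noindent\textbf{The second assertion.} Suppose $d'_{G'\delta'k'}$ appears in $v=f((d'_{G'\delta'i'})_{i'})+g((\tilde d_{G'\delta'\tilde i})_{\tilde i})$. First I would check that $v$ is Archimedean over $A$ with $G(v/A)=G'$: it cannot be ramified over $A$, for then, as $f$-combinations of the $d'_{G'\delta'i'}$ are Archimedean over $A$ of value $G'$ (by \ref{P2}, \ref{P3}) while $g$-combinations of the $\tilde d_{G'\delta'\tilde i}$ are ramified over $A$ of value $\leqslant G'$ with $\delta(-/A)=\delta'$ (by \ref{P1}), we would need the $g$-part to dominate in $\val^3_A$, giving $\delta(v/A)=\delta'=\delta(v/B)$ by \cref{coupInvRam}, contradicting $\delta(v/B)<\delta'$; hence $v$ is Archimedean over $A$ of value $G'$ and ramified over $B$ (of value $G(v/B)=G'$ by \cref{coupInvArch}). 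Thus $v$ still occupies a ``$d'$-type'' slot, only with a smaller parameter $\delta(v/B)<\delta'$. Since $f$ has a non-zero coefficient on $d'_{G'\delta'k'}$, the replacement of $d'_{G'\delta'k'}$ by $v$ in $w$ factors as: first an invertible operation within the sub-block $(d'_{G'\delta'i'})_{i'}$ — a special case of the invertible operation on $(d'_{G'\delta'i'})_{\delta'i'}$ of \cref{P5OpEl}, which preserves \ref{P1}–\ref{P4} (these do not depend on the $\delta'$-subindexing of $d'$) — and then a translation of that coordinate by $g((\tilde d_{G'\delta'\tilde i})_{\tilde i})$, a combination of the $(\tilde d_{G'\tilde\delta\tilde i})_{\tilde\delta\tilde i}$, which preserves \ref{P1}–\ref{P4} by \cref{P5Translation}. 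Hence the new family $w'$ is again a basis of $C$ over $A$ witnessing \ref{P1}, \ref{P2}, \ref{P3}, \ref{P4}, as claimed.
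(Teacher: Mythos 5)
Your treatment of the second assertion (factoring the replacement of $d'_{G'\delta'k'}$ by $v$ into an invertible operation on the sub-block handled by \cref{P5OpEl}, followed by a translation by a combination of the $\tilde d$'s handled by \cref{P5Translation}) is exactly the paper's argument and is fine, as is your case $\delta(p/B)<\delta'$. The gap is in the first assertion, precisely at the step you flag as the expected main obstacle: when $\delta(p/B)=\delta(q/B)=\delta'$ and the $\delta'$-parts cancel, you have not shown that $e'=p+q$ is still ramified over $B$. The ingredients you list cannot close it: knowing $G(p/B)=G(q/B)=G'$ only gives $G(e'/B)\leqslant G'$ by the ultrametric inequality, and it is exactly in the cancellation case that a strict drop is the danger (after which $e'$ could a priori be Archimedean over $B$, or have $\Delta(e'-\beta)\in\Delta(B)$); \cref{newRamVsRam} treats the case of \emph{distinct} $\delta$'s, the opposite of cancellation; and the ``rank-one extension adds at most one Archimedean class'' observation only identifies what $\delta(e'/B)$ must be \emph{if} $e'$ is ramified over $B$, not that it is.

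The missing idea — the actual content of the paper's proof — is to pivot to the $A$-side. Since $f\neq 0$, properties \ref{P1}, \ref{P2}, \ref{P3} make the family $\val^3_A$-separated, and because the $A$-ramified terms $\tilde d_{G'\delta'\tilde i}$ sit strictly below the $A$-Archimedean terms $d'_{G'\delta'i'}$ in $\val^2_A$ (both at $\val^1_A$-level $G'$), one gets $\val^3_A(v)=\val^3_A(d'_{G'})$: so $v$ itself is Archimedean over $A$ with $G(v/A)=G'$, hence $G(v/B)=G'$ by \cref{coupInvArch}. Since every term of the block lies in $B+G'$, so does $v$; thus $v\in B+G(v/B)$ with $v\notin B$, which is ramification over $B$ by \cref{conditionsRam}, and $\delta(v/B)<\delta'$ then follows from the $\val^3_B$ ultrametric inequality. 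Note that you do prove ``$v$ is Archimedean over $A$'' later, in your second-assertion paragraph — but that argument invokes $\delta(v/B)<\delta'$, so it cannot be recycled to close the gap in part one without circularity; the $A$-Archimedean-ness of $v$ must come first, directly from separatedness and $f\neq 0$.
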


\begin{proof}
By definition of \ref{P5}, there exists $G'$, $\delta'$, and $\mathbb{Q}$-linear maps $f, g$ in $\LC(\mathbb{Q})$ such that, either $v=f((d'_{G'\delta'i'})_{i'})+g((\tilde{d}_{G'\delta'\tilde{i}})_{\tilde{i}})$ is not ramified over $B$, or $\delta(v/B)\neq\delta'$.
\par Note that $f$ must be non-zero, otherwise the above hypothesis fails by \ref{P1}. By \ref{P1}, \ref{P2}, \ref{P3}, $dd'\tilde{d}$ is $\val^3_A$-separated. As $f$ is non-zero, $\val^3_A(v)=\val^3_A(d'_{G'})$, thus $v$ is Archimedean over $A$, and $G(v/A)=G'=G(v/B)$. For all $i'$, $\tilde{i}$, we have $d'_{G'\delta'i'},\tilde{d}_{G'\delta'\tilde{i}}\in B+ G'$ (by the definition of being ramified over $B$), so $v\in B+ G'=B+ G(v/B)$, therefore $v$ must be ramified over $B$, thus by hypothesis we have $\delta(v/B)\neq \delta'$. By valuation inequality (for $\val^3_B$), we must have $\delta(v/B)<\delta'$, and we get the first part of the statement.
\par Suppose that $d'_{G'\delta'k'}$ appears in $v$. Then, replacing $d'_{G'\delta'k'}$ by $v'=f((d'_{G'\delta'i'})_{i'})$ is just a replacement of $(d'_{G'\gamma'i'})_{\gamma'i'}$ by $M(d'_{G'\gamma'i'})_{\gamma'i'}$ for some invertible matrix $M$. By \cref{P5OpEl}, the new family is still a basis of $C$ over $A$ that witnesses \ref{P1}, \ref{P2}, \ref{P3}, \ref{P4}. By \ref{P2} and \ref{P3}, $v'$ is Archimedean over $A$, and $G(v'/A)=G'$. Moreover, as all the $(d'_{G'\delta'i'})_{i'}$ are in $B+ G'$, $v'$ must be ramified over $B$. Then, $w'$ is obtained by replacing $v'$ by $v$ in (a normal enumeration of) $w''$. This is exactly the operation described in \cref{P5Translation}, concluding the proof.
\end{proof}

\begin{theorem}\label{existenceFormeNormale}
There exists $w=dd'\tilde{d}$ a basis of $C$ over $A$ that  is under normal form with respect to $A$ and $B$.
\end{theorem}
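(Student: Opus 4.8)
The plan is to build the normal form by one more round of iterated elementary operations, on top of the family delivered by the preceding four lemmas, and then run a well-founded-rank argument for termination, exactly in the style of \cref{deuxiemeP1}.

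First I would fix, by the last four lemmas, a basis $w=dd'\tilde d$ of $C$ over $A$ that is under normal enumeration with respect to $A$ and $B$ and already witnesses \ref{P1}, \ref{P2}, \ref{P3} and \ref{P4}. The next observation is that any such $w$ is automatically a lift of a $\mathbb{Q}$-free family from $\faktor{M}{B}$: if a $\mathbb{Q}$-linear combination $e$ of $w$ with not-all-zero coefficients lay in $B$, then $e$ would be a point of $C$ inside the closed interval $[e,e]$ with bounds in $B$, so $\indep{C}{A}{B}{\cut}$ would force $e\in A$, contradicting that $w$ is free over $A$. (The same argument even shows $C\cap B=A$, whence $\dim_{\mathbb{Q}}\faktor{(B+C)}{B}=\dim_{\mathbb{Q}}\faktor{C}{A}<\omega$, which I will reuse for termination.) So the only property of the normal form that can still fail for $w$ is \ref{P5}.

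Then I would iterate \cref{P5Final}: as long as the current family fails \ref{P5}, that lemma returns a new basis of $C$ over $A$, still under normal enumeration and still witnessing \ref{P1}--\ref{P4}, in which exactly one term $d'_{G'\delta'k'}$ of the $d'$-block has been replaced by some $v$ with $G(v/B)=G'$ and $\delta(v/B)<\delta'$, all other $d'$-terms keeping their $\delta$-value over $B$. In other words, one step removes a $d'$-term from the ``slot'' indexed by $\delta'$ and adds a $d'$-term in a slot indexed by a strictly smaller Archimedean class. To see this halts, note that every Archimedean class that can label a $d'$-term of a basis of $C$ over $A$ in normal enumeration, and every class $\delta(v/B)$ produced by \cref{P5Final}, lies in $S:=\Delta(B+C)\setminus\Delta(B)$, which is finite since distinct classes in $S$ give $\mathbb{Q}$-independent elements modulo $B$, so $|S|\leqslant\dim_{\mathbb{Q}}\faktor{(B+C)}{B}=\dim_{\mathbb{Q}}\faktor{C}{A}<\omega$. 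Enumerate $S$ as $\delta_1<\dots<\delta_N$; for a family $w$ in normal enumeration let $F(w,k)$ be the number of $d'$-terms of $w$ whose $\delta$-value over $B$ is $\delta_k$, and set $\rho(w)=F(w,N)\,\omega^{N}+\dots+F(w,1)\,\omega\in\omega^{N+1}$. A single \cref{P5Final} step decreases $F(\cdot,k_0)$ by one at the index $k_0$ of the old slot and increases $F(\cdot,k_1)$ by one at some index $k_1<k_0$, leaving the other $F(\cdot,k)$ unchanged, so $\rho$ strictly decreases; by well-foundedness of $\omega^{N+1}$ the process stops, and the family it stops on is a basis of $C$ over $A$ in normal enumeration witnessing \ref{P1}--\ref{P5} and free over $B$, i.e.\ under normal form.

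The one step I expect to require genuine care is the bookkeeping of the last paragraph: verifying that every replacement coming from \cref{P5Final} keeps all the relevant Archimedean classes inside the single fixed finite set $S$, so that one ordinal rank $\rho$ can govern the whole iteration at once (including checking that the only $d'$-term whose slot changes is the replaced one). Once that is in place, termination and hence \cref{existenceFormeNormale} follow just as in \cref{deuxiemeP1}.
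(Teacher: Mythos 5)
Your proposal is correct and follows essentially the same route as the paper: start from a basis witnessing \ref{P1}--\ref{P4}, iterate \cref{P5Final}, and prove termination by the strictly decreasing ordinal rank in $\omega^{N+1}$ built from counting $d'$-terms per Archimedean class of $\Delta(B+C)\setminus\Delta(B)$, exactly as in \cref{deuxiemeP1}. Your explicit verification that the family is a lift of a $\mathbb{Q}$-free family from $\faktor{M}{B}$ (via cut-independence) is a point the paper leaves implicit, and is a welcome addition rather than a divergence.
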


\begin{proof}
Let $w_0=dd'^0\tilde{d}$ be a basis of $C$ over $A$ that witnesses \ref{P1}, \ref{P2}, \ref{P3}, \ref{P4}.
\par Suppose $w_n=dd'^n\tilde{d}$ does not witness \ref{P5}. Then the hypothesis of \cref{P5Final} hold in $w_n$, so we do the replacement from this lemma. Let $w_{n+1}$ be the new family obtained.
\par To prove the theorem, we have to prove that the sequence $(w_n)_n$ must stop.
\par To do this, we once again build a strictly decreasing map with values in $\omega^\omega$. Let $N=|\Delta(C+B)\setminus\Delta(B)|$, and $(\delta_i)_{i<N}$ be the strictly increasing enumeration of $\Delta(CB)\setminus\Delta(B)$. Define $F(n, i)$ as the number of terms $c$ from $d'^n$ for which $\delta(c/B)=\delta_i$. We define $g$ as the map:
$$
n\longmapsto F(n, N)\times\omega^N + F(n, N-1)\times\omega^{N-1}+\ldots +F(n, 1)\times\omega+F(n, 0)
$$
\par For each $n$, the family $w_{n+1}$ is obtained from $w_n$ by replacing some term $v$ from $d'^n$ by a vector $v'$ from $d'^{n+1}$ for which $\delta(v'/B)<\delta(v/B)$, so $g$ is strictly decreasing. This concludes the proof.
\end{proof}

With that, we proved \cref{thmTechniqueEnonce}, the main technical theorem of this paper stating that $\indep{}{}{}{\cut}=\indep{}{}{}{\inv}$ in DOAG. The reasoning is explained in the end of \cref{sectOAG}, and the different steps of the proof are carried out in \cref{WOrthVal1}, \cref{WOrthVal2}, \cref{tmpRecollementVal3}, \cref{preuveExistenceStrongExt} and \cref{existenceFormeNormale}.

%This chapter corresponds to sections 5 and 6 of our preprint (\cite{hossain2023forking}).

\section{Quantifier-free types in the Presburger language}

\begin{definition}
The \textit{Presburger language} is defined as :
$$\mathcal{L}_P=\left\lbrace +, -, <, 0, \boldone, (\mathfrak{d}_{l^n})_{l\textup{ prime}, n>0}\right\rbrace$$
\par Given an ordered Abelian group $G$, we see $G$ as an $\mathcal{L}_P$-structure by setting $\mathfrak{d}_{l^n}(x)$ as the predicate $\exists y,\ l^n y=x$, and setting $\boldone=\min(]0, +\infty[)$ if $G$ is discrete, else we interpret $\boldone$ as the only $\emptyset$-definable choice, $\boldone=0$ (this is the standard interpretation of the language). A \textit{special} subgroup of $G$ is a pure subgroup (that is a relatively divisible subgroup) $A\leqslant G$ containing $\boldone$. Special subgroups will correspond to definably closed sets in the groups we are interested in (see \cref{specialImpliqueDefClos}).
\par Define $S^m_{\qf}(A)$ to be the Stone space of quantifier-free types over $A$ in $m$ variables. Given an $m$-tuple $c$, we write $\tp_l(c/A)$ as the partial type generated by the set of quantifier-free formulas satisfied by $c$, with parameters in $A$, which only involve the predicates $(\mathfrak{d}_{l^n})_{n>0}$ (equality does not appear in these formulas either). We write $S^m_l(A)$ for the Stone space of all such partial types, and we call its elements the \textit{$l$-types}. We define similarly $\tp_<(c/A)$, $S^m_<(A)$ for the quantifier-free formulas which only involve the predicates $=$, $<$.
\par For ease of notation, define the set of indices $J$ as the union of the set of primes with $\{<\}$. For every $j\in J$, we have natural restriction maps $S^m_{\qf}(A)\longrightarrow S^m_j(A)$, and they are all surjective. We  write $(\pi_j)_j$ for those maps.
\end{definition}

\begin{assumptions}\label{ROAGMMonstreA}
We fix an ordered Abelian group $M$, and a special subgroup $A\leqslant M$, such that ${M}$ is $|A|^+$-saturated and strongly $|A|^+$-homogeneous.%Let ${M}$ be an ordered Abelian group, and $A\leqslant {M}$ a special subgroup of ${M}$, such that ${M}$ is $|A|^+$-saturated and strongly $|A|^+$-homogeneous.
\end{assumptions}

\begin{lemma}[Standard variant of the Chinese remainder theorem]\label{thmChinois}
Let $L$ be a finite set of primes, and $N>0$. Then for all $(a_l)_l\in M^L$, there exists $b\in M$ such that $M\models\mathfrak{d}_{l^N}(a_l-b)$ for all $l\in L$.
\end{lemma}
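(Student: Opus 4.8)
The statement is a group-theoretic analogue of the Chinese remainder theorem: given finitely many primes $l\in L$, an exponent $N>0$, and targets $a_l\in M$, we want a single $b\in M$ that is $\equiv a_l$ modulo $l^N M$ for each $l$, i.e. such that $\mathfrak{d}_{l^N}(a_l-b)$ holds for all $l\in L$. The plan is to induct on $|L|$, the base case $|L|\leqslant 1$ being trivial (take $b=a_l$, or $b=0$ if $L=\emptyset$). For the inductive step, suppose $L=L_0\cup\{p\}$ with $p\notin L_0$ and $N'=\prod_{l\in L_0}l^N$, so that $N'$ and $p^N$ are coprime integers.

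\textbf{Key steps.} First I would apply the induction hypothesis to $L_0$ to obtain $b_0\in M$ with $\mathfrak{d}_{l^N}(a_l-b_0)$ for every $l\in L_0$. It remains to adjust $b_0$ by an element of $N'M=\bigcap_{l\in L_0}l^NM$ (an intersection of subgroups which, since the $l^N$ are pairwise coprime, equals $N'M$) so as to also hit $a_p$ modulo $p^NM$. Concretely, since $\gcd(N',p^N)=1$, pick integers $u,v$ with $uN'+vp^N=1$, and set $b=b_0+uN'(a_p-b_0)$; equivalently, write $a_p-b_0 = uN'(a_p-b_0)+vp^N(a_p-b_0)$ and note $b - b_0 = uN'(a_p-b_0)\in N'M$, while $a_p - b = a_p-b_0 - uN'(a_p-b_0) = vp^N(a_p-b_0)\in p^NM$. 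Then for $l\in L_0$ we have $a_l - b = (a_l-b_0) - (b-b_0)$ with $a_l-b_0\in l^NM$ (by choice of $b_0$) and $b-b_0\in N'M\subseteq l^NM$, hence $\mathfrak{d}_{l^N}(a_l-b)$; and $a_p-b\in p^NM$ gives $\mathfrak{d}_{p^N}(a_p-b)$. This $b$ works for all of $L$.

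\textbf{Main obstacle.} The only genuine subtlety is making sure that the purely $\mathbb{Z}$-linear manipulation above stays inside $M$ and respects the meaning of the predicates $\mathfrak{d}_{l^N}$: the predicate $\mathfrak{d}_{l^N}(x)$ says precisely $x\in l^NM$, so multiplication of an element of $M$ by a rational such as $uN'$ is not \emph{a priori} defined, but here $uN'\in\mathbb{Z}$, so $uN'(a_p-b_0)\in M$ without any divisibility assumption, and the argument is elementary. One should also record the elementary fact that for pairwise coprime integers $m_1,\dots,m_k$ one has $\bigcap_i m_i M = (\prod_i m_i)M$ in any abelian group, which follows by the same Bézout manipulation; this is what justifies folding the $l^N$, $l\in L_0$, into the single modulus $N'$. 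No saturation or model-theoretic input is needed — the lemma is a statement about the abelian group $M$ alone — which is why it is labelled a ``standard variant''.
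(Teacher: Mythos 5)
Your proof is correct. The paper states this lemma without proof (it is flagged as a standard fact), and your Bézout/induction argument is exactly the standard one it implicitly relies on: the key computation $b=b_0+uN'(a_p-b_0)$ with $uN'+vp^N=1$ gives $a_p-b=vp^N(a_p-b_0)\in p^NM$ and $b-b_0\in N'M\subseteq l^NM$ for $l\in L_0$, and all manipulations are by integer scalars, so everything stays in $M$. One minor remark: you do not actually need the equality $\bigcap_i m_iM=(\prod_i m_i)M$, only the trivial inclusion $N'M\subseteq l^NM$ for $l\in L_0$, which follows from $l^N\mid N'$.
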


\begin{corollary}\label{surjPremier}
The natural map $S^m_{\qf}(A)\longrightarrow \prod\limits_{l\textup{ prime}} S^m_l(A)$ is surjective.
\end{corollary}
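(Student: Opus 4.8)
The statement \cref{surjPremier} asserts that the natural restriction map $S^m_{\qf}(A)\longrightarrow\prod_{l\text{ prime}}S^m_l(A)$ is surjective. I would prove this by a compactness argument, reducing the infinite product to finite products and invoking \cref{thmChinois} coordinate-by-coordinate on the variables. The key point is that an $l$-type only constrains divisibility by powers of $l$, and these constraints for distinct primes are ``independent'' in the sense controlled by the Chinese remainder theorem.

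\textbf{Step 1: reduce to a finitely satisfiable union of types.} Fix, for each prime $l$, an $l$-type $p_l\in S^m_l(A)$; I want to produce $q\in S^m_{\qf}(A)$ with $\pi_l(q)=p_l$ for all $l$. Since $\bigcup_l p_l$ is a set of quantifier-free $\mathcal{L}_P$-formulas over $A$ (in the variables $x_1,\dots,x_m$), by saturation of $M$ and the compactness theorem it suffices to show that every finite subset is realized in $M$; then any complete quantifier-free type over $A$ extending the realized type $\tp_{\qf}(c/A)$ will restrict to each $p_l$ (one must check that adding the order/equality part does not disturb the $l$-type parts, which is automatic since $\pi_l$ only reads off $\mathfrak{d}_{l^n}$-formulas). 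Actually it is cleaner to directly build a tuple $c\in M^m$ realizing $\bigcup_l p_l$: take $q:=\tp_{\qf}(c/A)$. A finite subset of $\bigcup_l p_l$ mentions only finitely many primes $l\in L$ and, for each, finitely many formulas; each such formula is a Boolean combination of atomic formulas $\mathfrak{d}_{l^n}\!\big(\sum_i \lambda_i x_i - a\big)$ with $\lambda_i\in\mathbb{Z}$, $a\in A$, $n>0$. So I am reduced to: given finitely many primes $L$ and, for each $l\in L$, finitely many such divisibility constraints consistent as the $l$-type $p_l$, find a single $c\in M^m$ satisfying all of them simultaneously.

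\textbf{Step 2: realize each $p_l$ separately, then glue via CRT.} Since $p_l\in S^m_l(A)$ is a consistent partial type and $M$ is $|A|^+$-saturated, there is $c^{(l)}\in M^m$ with $\tp_l(c^{(l)}/A)=p_l$. Now I want a single $c$ that is ``congruent to $c^{(l)}$ modulo high powers of $l$'' for each $l\in L$ simultaneously. Let $N$ be larger than every exponent $n$ appearing in the finitely many relevant formulas. Applying \cref{thmChinois} coordinatewise: for each $i\in\{1,\dots,m\}$, the elements $a_l:=c^{(l)}_i$ (indexed by $l\in L$) yield some $c_i\in M$ with $M\models\mathfrak{d}_{l^N}(c^{(l)}_i-c_i)$ for all $l\in L$. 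Then for any $\lambda\in\mathbb{Z}^m$ and $a\in A$ and $n\le N$, we have $\sum_i\lambda_i c_i \equiv \sum_i\lambda_i c^{(l)}_i \pmod{l^n}$ (since $l^N\mid l^n\cdot(\text{stuff})$, more precisely $\mathfrak{d}_{l^N}$ implies $\mathfrak{d}_{l^n}$ of the same element after noting $\sum_i\lambda_i(c_i-c^{(l)}_i)$ is divisible by $l^N$ hence by $l^n$), so $M\models\mathfrak{d}_{l^n}\!\big(\sum_i\lambda_i c_i - a\big)\iff M\models\mathfrak{d}_{l^n}\!\big(\sum_i\lambda_i c^{(l)}_i - a\big)$. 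Hence $c$ satisfies exactly the same $l^n$-divisibility formulas (for $n\le N$, all that matter) as $c^{(l)}$, i.e. $c$ realizes the chosen finite subset of $p_l$ for every $l\in L$. By compactness this gives the desired $c\in M^m$ realizing all of $\bigcup_l p_l$, and $q:=\tp_{\qf}(c/A)$ satisfies $\pi_l(q)=p_l$ for every prime $l$ (for $l\notin L$ in a given finite stage there is nothing to check, and compactness stitches the stages together).

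\textbf{Expected main obstacle.} The genuinely substantive ingredient is \cref{thmChinois}, which is already granted. The only real care needed is bookkeeping: confirming that $\mathfrak{d}_{l^N}$ of an element implies $\mathfrak{d}_{l^n}$ of it for $n\le N$ (clear: if $x=l^N y$ then $x=l^n(l^{N-n}y)$), that the truth value of each atomic $\mathfrak{d}_{l^n}$-formula on a $\mathbb{Z}$-linear combination is preserved when the coordinates are shifted by multiples of $l^N$, and that passing from the finitely-realized types to a complete quantifier-free type over $A$ (via saturation/compactness) does not alter the $l$-type coordinates — which holds because $\pi_l$ forgets the order and equality atomic formulas entirely. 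No step here is technically deep; the claim is essentially a packaging of CRT plus compactness.
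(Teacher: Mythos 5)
Your proof is correct and follows essentially the same route as the paper: realize each $p_l$ by a tuple $c^{(l)}$, then apply \cref{thmChinois} coordinatewise together with compactness/saturation to produce a single tuple congruent to each $c^{(l)}$ modulo all powers of $l$, and note that such congruences preserve the $l$-type since atomic $\mathfrak{d}_{l^n}$-formulas on $\mathbb{Z}$-linear combinations are insensitive to shifts by elements of $l^nM$. The paper compresses the bookkeeping you spell out into the single sentence ``in particular, $\tp_l(b/A)=\tp_l(a_l/A)$ for all $l$''.
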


\begin{proof}
For each $l$, let $a_l=(a_{i, l})_i\in M^m$. By \cref{thmChinois} and compactness, for each $i$, there must exist $b_i\in M$ such that $\mathfrak{d}_{l^N}(b_i-a_{i, l})$ for all $l$, $N$. In particular, $\tp_l(b/A)=\tp_l(a_l/A)$ for all $l$, which concludes the proof.
\end{proof}

\begin{remark}
The map $\pi:\ S^m_{\qf}(A)\longrightarrow\prod\limits_{j\in J}S^m_j(A)$ is obviously injective. Note that $S^m_<(A)$ is not a factor of the product of \cref{thmChinois}, thus $\pi$ may not be surjective.
\end{remark}

\section{Basic properties of regular groups}
The class of regular ordered Abelian groups (\ROAG) has several equivalent definitions. The ones that are necessary for us are quantifier elimination in $\mathcal{L}_P$, and some compatibility conditions between the $<$-types and the $l$-types. The definitions involving Archimedean groups and definable convex subgroups will be relevant, because they give a motivation as to why we are interested in \ROAG.
\par The equivalence between these different definitions is folklore. In this section, we prove the easy implications between these equivalent definitions, and we give references for the harder implications.% We  state it in this section, as well as give a proof of the easy directions.

\begin{definition}\label{defROAG}
Let $G$ be an ordered Abelian group. We  say that $G$ is \textit{regular} if, for every positive integer $n$, every interval of $G$ that contains at least $n$ elements intersects $nG$.
\par For any ordered Abelian group $G$, we recall that $\div(G)$ refers to the divisible closure of $G$, which we  see as an ordered group to which the order on $G$ naturally extends.
\end{definition}

\begin{lemma}\label{bijectionQLibre}
Let $G\models$ \ROAG, and $A$ a special subgroup of $G$. Let $F$ be the closed subspace of $S^m_{\qf}(A)$ of types of $m$-tuples that are $\mathbb{Q}$-free over $A$ (i.e. lifts in $G^m$ of $\mathbb{Q}$-free families from $\faktor{\div(G)}{\div(A)}$). Then the map $F\longrightarrow\pi_<(F)\times\prod_{l\textup{ prime}} S^m_l(A)$ is a homeomorphism.
\end{lemma}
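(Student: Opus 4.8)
The plan is to prove the map is a continuous bijection between compact Hausdorff spaces, hence a homeomorphism; the real content is in surjectivity. First I would note that continuity is immediate from quantifier elimination in $\mathcal{L}_P$: the $\mathcal{L}_P$-type of an $m$-tuple is determined by its quantifier-free type, which by QE is the full type, and the map in question is just the restriction map $\tp_{\qf}(c/A)\mapsto(\tp_<(c/A),(\tp_l(c/A))_l)$, which is continuous because each coordinate is a restriction. Injectivity follows from the remark just before \cref{thmChinois}: the map $\pi\colon S^m_{\qf}(A)\to\prod_{j\in J}S^m_j(A)$ is injective because a quantifier-free $\mathcal{L}_P$-formula is a Boolean combination of $<$-atomic and $\mathfrak{d}_{l^n}$-atomic formulas, and a complete quantifier-free type is determined by which of these it contains, i.e. by the $S^m_<(A)$-part together with all the $S^m_l(A)$-parts. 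Restricting $\pi$ to the closed subspace $F$ keeps it injective, and its image lands in $\pi_<(F)\times\prod_l S^m_l(A)$ by definition of $\pi_<(F)$.

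The heart of the argument is surjectivity: given $p_<\in\pi_<(F)$ and $(p_l)_l\in\prod_l S^m_l(A)$, I must produce a $\mathbb{Q}$-free (over $A$) $m$-tuple $c$ in some elementary extension of $G$ (or in $G$ itself by saturation) with $\tp_<(c/A)=p_<$ and $\tp_l(c/A)=p_l$ for every prime $l$. The strategy is: realize $p_<$ by some $\mathbb{Q}$-free tuple $c^0$ (possible since $p_<\in\pi_<(F)$ means $p_<$ extends to a $\mathbb{Q}$-free quantifier-free type), then correct $c^0$ by adding a small tuple to fix the divisibility behavior without disturbing the order type. Concretely, I would work in a sufficiently saturated $G$, take $c^0\models p_<$ witnessing $\mathbb{Q}$-freeness over $A$, realize each $p_l$ by some tuple $a_l$, and then look for $\epsilon=(\epsilon_i)_i$ with $c^0+\epsilon$ realizing everything. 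The $\epsilon_i$ should be chosen so that for each prime $l$ and each $N>0$, $c^0_i+\epsilon_i$ has the same $l^N$-divisibility pattern (relative to $A$) as $a_{i,l}$; by \cref{surjPremier} and the Chinese Remainder Theorem \cref{thmChinois}, the divisibility constraints across distinct primes are jointly consistent, so one can find, for each $i$, an element $b_i$ realizing the correct $l$-type simultaneously for all $l$; setting $\epsilon_i=b_i-c^0_i$, the issue is that we need $\epsilon_i$ small enough not to change $\tp_<(c^0/A)$. Here regularity enters decisively: the key point is that, by regularity, the set of elements with a prescribed combination of divisibility conditions $\mathfrak{d}_{l^{N_l}}(x-a_{i,l})$ (for finitely many $l$, chosen large) meets every interval that is long enough, and more importantly one can arrange $\epsilon_i$ to lie in the convex subgroup of ``infinitesimals relative to the cut $\ct(c^0_i/A)$'' — so adding $\epsilon_i$ does not move $c^0_i$ across any $A$-point. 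One must also check that $\mathbb{Q}$-freeness is preserved, which is automatic since $\mathbb{Q}$-freeness over $A$ is an open condition detected by the $<$-type (a non-trivial $\mathbb{Q}$-linear combination lying in $A$ is visible to $\tp_<$), and we have not changed the $<$-type.

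I would organize the surjectivity proof as a compactness argument: form the partial $\mathcal{L}_P$-type $\Sigma(x)$ in variables $x=(x_i)_i$ consisting of (i) all $<$-formulas from $p_<(x)$, (ii) all $\mathfrak{d}$-formulas from $\bigcup_l p_l(x)$, and (iii) the $\mathbb{Q}$-freeness scheme over $A$. To show $\Sigma$ is finitely satisfiable, take a finite fragment: finitely many $<$-constraints, finitely many divisibility constraints involving primes $l_1,\dots,l_r$ up to exponents $l_1^{N_1},\dots,l_r^{N_r}$, and finitely many $\mathbb{Q}$-freeness inequalities. Realize the $<$-part (plus $\mathbb{Q}$-freeness, consistent since $p_<\in\pi_<(F)$) by a $\mathbb{Q}$-free tuple $c^0$; then for each coordinate $i$, use regularity to find $\epsilon_i$ in the relevant ``small'' convex subgroup with $\mathfrak{d}_{l_k^{N_k}}(c^0_i+\epsilon_i-a_{i,l_k})$ for all $k$ — this is where regularity guarantees the congruence class modulo $\prod_k l_k^{N_k}\cdot G$ meets the (long) interval of elements with the right cut, and the Chinese Remainder Theorem handles the coprime exponents. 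Then $c^0+\epsilon$ satisfies the finite fragment. The main obstacle I anticipate is making precise the interplay in this last step: ensuring simultaneously that $\epsilon_i$ is small enough (lies below the relevant $A$-points, so the $<$-type is untouched) and that the divisibility pattern can still be achieved — this requires knowing the convex subgroup of infinitesimals is itself ``regular enough'' / divisible enough modulo the finitely many relevant $l^N$, which is exactly the strength of the regularity hypothesis (every sufficiently long interval meets $nG$). I would also need to be a little careful that the divisibility formulas in (ii) are consistent among themselves for a single coordinate across all primes, which is precisely \cref{surjPremier}. Once $\Sigma$ is shown finitely satisfiable, saturation of $M$ (from \cref{ROAGMMonstreA}) gives a realization in $M$, completing surjectivity, and hence the proof that the map is a homeomorphism.
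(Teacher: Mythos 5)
Your overall strategy is the paper's: realize $p_<$ by a $\mathbb{Q}$-free tuple, use \cref{surjPremier} to realize all the $p_l$ simultaneously by a single tuple, and then perturb the order-realization so as to match the divisibility data without disturbing $\tp_<$. Two points in your perturbation step need repair, and the first is a genuine gap. Your mechanism for preserving $\tp_<$ — take $\epsilon_i$ ``infinitesimal relative to the cut of $c^0_i$ over $A$'' — simply does not exist in the discrete case: if the congruence $\mathfrak{d}_{l^N}(c^0_i+\epsilon_i-a_{i,l})$ is not already satisfied by $\epsilon_i=0$, then $\epsilon_i\neq 0$, hence $|\epsilon_i|\geqslant\boldone$, and $\boldone$ is not infinitesimal with respect to any set containing it; since $\boldone\in A$ (specialness), no nonzero $\epsilon_i$ lies ``below the relevant $A$-points.'' The paper's proof splits into cases here: when $G$ is discrete it takes $\epsilon_i=k_i\cdot\boldone$ with $0\leqslant k_i\leqslant N$ (regularity applied to the interval $[c_i-c'_i,\,c_i-c'_i+N\cdot\boldone]$), so $\epsilon_i\in A$, and the preservation of $\tp_<$ is established by a different argument: if some $f\in\LC^m(\mathbb{Z})$ and $a\in A$ had $f(c^0)<a\leqslant f(c^0+\epsilon)$, then $f(c^0)$ would lie in the interval $[a-f((k_i\boldone)_i),a]$, which is contained in $A$ because $A$ is special — contradicting $\mathbb{Q}$-freeness of $c^0$ over $A$. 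You need this (or something equivalent); smallness alone cannot do it.

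The second point is an imprecision rather than a fatal error: even in the dense case, taking each $\epsilon_i$ small relative only to the cut $\ct(c^0_i/A)$ is not enough, because $\tp_<$ records the cuts of all $\mathbb{Z}$-linear combinations $f(c^0)$, and a combination such as $c^0_1-c^0_2$ may sit much closer to an $A$-point than either coordinate does to the points defining its own cut. The fix (which is what the paper does) is to take $\epsilon$ infinitesimal with respect to the entire special subgroup $D$ generated by $A$ and $c^0$; then $f(\epsilon)$ is infinitesimal with respect to $D$ while $f(c^0)-a$ is a nonzero element of $D$ (by $\mathbb{Q}$-freeness), so no sign changes. With these two corrections your compactness scheme goes through and matches the paper's argument; the rest (continuity, injectivity, compact-Hausdorff) is fine as you state it.
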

\noindent In particular, $\pi_l(F)=S^m_l(A)$ for every prime $l$.
\begin{proof}
Two elements having the same type must have the same $j$-type for all $j\in J$, therefore the map is injective, and it is clearly continuous. Let us prove that it is surjective.
\par Let $p=(p_j)_j$ be in the direct product. Let $c=(c_i)_i$ be a realization of $p_<$ which is $\mathbb{Q}$-free over $A$. By \cref{surjPremier}, let $c'$ be some tuple which realizes simultaneously all the $(p_l)$ for $l$ prime ($c'$ might not be $\mathbb{Q}$-free over $A$, but it does not matter). Let us show that there exist $d=(d_i)_{i<m}$ such that $d_i-c'_i\in\bigcap\limits_{N>0} NG$ for all $i$, and $d\models p_<$ (in particular, $d$ will be $\mathbb{Q}$-free over $A$). The existence of $d$ will be established by a disjunction of two cases.
\par Suppose $G$ is discrete. Let $N>0$. Then, by regularity of $G$ applied to the interval $[c_i-c_i', c_i-c_i'+N\cdot \boldone]$, there must exist an integer $k_i$ between $0$ and $N$ such that $c_i-c_i'+k_i\cdot \boldone\in NG$. Let us show that $d'=(d_i')_i=(c_i+k_i\cdot \boldone)_i\models p_<$. If not, then there must exist some atomic formula with predicate $=$ or $<$ which is satisfied by $c$ and not $d'$, or vice-versa. Without loss, there must exist $a\in A$, and some non-zero $f\in \LC^m(\mathbb{Z})$ such that $f(c)<a\leqslant f((c_i+k_i\cdot \boldone)_i)$. By subtracting $f((k_i\cdot \boldone)_i)$ to the second inequality, $f(c)$ belongs to the interval $[a-f((k_i\cdot \boldone)_i), a]$, which is included in $A$ as $A$ is special, contradicting $\mathbb{Q}$-freeness of $c$ over $A$. It follows that $d'\models p_<$, and we conclude by compactness that $d$ exists.
\par Now, suppose $G$ is dense. Let $D$ be the special subgroup generated by $A$, and all the $(c_i)_i$. For every positive element $a$ of $D$, the open interval $]c_i-c_i', c_i-c_i'+a[$ is infinite, thus contains an element of $NG$ by regularity, for any $N>0$. By compactness, there exists, for each $i$, an element $d_i'$ such that $d_i'\in ]c_i-c_i', c_i-c_i'+a[$ for every positive $a\in D$, and $d'_i\in\bigcap\limits_{N>0} NG$. Let $d_i=d_i'+c_i'$. Notice that $d_i-c_i$ is a positive element which is infinitesimal with respect to $D$. It remains to show that $d\models p_<$. Let $f\in \LC^m(\mathbb{Z})$ be non-zero, and $a\in A$, such that $f(c)>a$. As each $d_i-c_i$ is infinitesimal with respect to $D$, this is also the case for $f(d-c)=f(d)-f(c)$, thus $|f(d)-f(c)|<f(c)-a$, thus $f(c)-f(d)<f(c)-f(a)$, and we conclude that $f(d)>f(a)$, proving that $d\models p_<$.
\par In both cases, we have that $d\models p_<$. As $d_i-c_i'\in l^NG$ for every $l$ prime and $N>0$, we have $d\models p_l$ for every $l$, which concludes the proof.
\end{proof}

\begin{remark}\label{clotureDefROAG}
Note that, in this proof, $d_i$ can always be chosen so that $d_i>c_i$ for each $i$ (in the discrete case, replace $k_i$ by $k_i+N>0$). In particular, if we have $c=c'$ in the construction of the proof, then the tuple $d$ that we build is distinct from $c$, and re-iterating this operation generates infinitely many pairwise-distinct tuples having the same quantifier-free type as $d$. It follows that any consistent quantifier-free type over $A$ whose realizations are $\mathbb{Q}$-free over $A$ has infinitely many realizations.
\end{remark}

\begin{definition}\label{defSpine}
\par Let $G$ be an ordered Abelian group, $n<\omega$ and $g\in G$. Define $H_n(g)$ to be the largest convex subgroup of $G$ for which we have $(g+H_n(g))\cap nG=\emptyset$ (by convention, if $g\in nG$, then $H_n(g)=\{0\}$). This convex subgroup is definable:
$$
H_n(g)=\{0\}\text{ or }\left\lbrace x\in G|\forall y\in G,\ \left(|y|\leqslant n|x|\Longrightarrow g+y\not\in nG\right)\right\rbrace
$$
for if $x\in G$, $z\in G$ satisfy $\Delta(x)=\Delta(z)$ ($\Delta$ is defined in \cref{defArchVal}) and $z+g\in nG$, then $x\not\in H_n(g)$ is witnessed by choosing $y=z\pm mn|x|$, with $m$ the least natural integer for which $|z|-mn|x|\leqslant n|x|$.
\end{definition}

In the literature, the set of the $H_n(g)$ for all $g\in G$ is called the $n$\textit{-spine} of $G$. This is a definable family of definable convex subgroups. One of the most general “complexity class" of ordered Abelian groups that is still considered rather “nice" is the class of ordered Abelian groups with finite spines. This class contains in particular the dp-finite ordered Abelian groups. For reference, section 2 of \cite{Farre} gives nice characterizations and a quantifier elimination result for this class.

\begin{proposition}\label{equROAG}
Let $G$ be an ordered Abelian group. Then the following conditions are equivalent:
\begin{enumerate}
\item $G$ does not have any proper non-trivial definable convex subgroup.
\item For all $g\in G$, for all $n<\omega$ $H_n(g)=\{0\}$.
\item $G$ is regular.
\item The theory of $G$ eliminates quantifiers in the Presburger language.
\item There exists an Archimedean ordered Abelian group elementarily equivalent to $G$.
\end{enumerate}
\end{proposition}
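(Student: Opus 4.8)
The plan is to treat \cref{equROAG} as essentially a repackaging of classical results on regular ordered abelian groups: the chain of equivalences $(1)\Leftrightarrow(3)\Leftrightarrow(4)\Leftrightarrow(5)$ for regular groups is folklore (going back to A.~Robinson and to Zakon), so I would prove by hand only the genuinely elementary links and cite the rest, and the real remaining task is to fit the spine condition $(2)$ into the circle. I would do this with the trivial implication $(1)\Rightarrow(2)$, the almost-as-trivial $(3)\Rightarrow(2)$, and the substantial $(2)\Rightarrow(1)$, which goes through the structure theory of definable convex subgroups.

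\emph{The elementary implications.} For $(1)\Rightarrow(2)$: the displayed description in \cref{defSpine} shows $H_n(g)$ is a convex subgroup of $G$ definable over $\{g\}$, and it is always \emph{proper} --- if $g\notin nG$ then $0=g+(-g)\in(g+G)\cap nG$ forces $H_n(g)\neq G$, and if $g\in nG$ then $H_n(g)=\{0\}$ by convention --- so hypothesis $(1)$ makes every $H_n(g)$ trivial. The reverse link $(3)\Rightarrow(2)$ is just as direct, contrapositively: if $H_n(g)\neq\{0\}$, pick a positive $h\in H_n(g)$; since $H_n(g)$ is a subgroup all multiples $kh$ lie in it, so for $k$ with $kh\geq(n-1)\boldone$ (or $k=1$ if $G$ is densely ordered) the honest interval $[g,g+kh]$ has at least $n$ elements, lies in the convex coset $g+H_n(g)$, hence misses $nG$, contradicting regularity. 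Finally $(5)\Rightarrow(3)$ is elementary: an Archimedean group is a subgroup of $(\mathbb R,+,<)$ and is visibly regular (a discrete one is a copy of $\mathbb Z$; a dense one has arbitrarily small positive elements, so $nG$ does too, so $nG$ is order-dense and meets the interior of any interval with two distinct points), and since for each fixed $n$ regularity is expressed by a single $\mathcal L_P$-sentence (divisibility by $n$ being quantifier-free expressible, cf.\ \cref{thmChinois}) it transfers along elementary equivalence; the remaining implications $(3)\Rightarrow(4)$, $(3)\Rightarrow(5)$ and $(4)\Rightarrow(1)$ are the ones I would take from the literature on regular groups, their proofs amounting to a careful description of the complete theory of a regular group and the construction of an Archimedean realization of the relevant $[G:nG]$-invariants.

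\emph{The implication $(2)\Rightarrow(1)$ and the main difficulty.} For this last link I would invoke the fact that in an ordered abelian group every definable convex subgroup is, up to a finite error bounded in terms of the defining formula, a Boolean combination of members of the $n$-spines; this is the structural input behind the quantifier-elimination result of \cite{Farre}, whose finiteness hypothesis on the spines holds automatically here because $(2)$ forces every spine to be $\{\{0\}\}$. Hence, once all $H_n(g)$ are trivial, the only definable convex subgroups are $\{0\}$ and $G$, which is $(1)$. I expect this to be the main obstacle of the whole proposition: condition $(2)$ only controls one very explicit definable family, and promoting it to a statement about \emph{every} definable convex subgroup genuinely requires the (cited) quantifier-elimination machinery for ordered abelian groups --- everything else is routine bookkeeping around the combinatorial condition $(3)$ and the classical picture of regular groups.
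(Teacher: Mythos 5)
There is a genuine gap: your collection of implications does not close the circle. Tally what you actually establish: you prove $(1)\Rightarrow(2)$, $(2)\Rightarrow(1)$, $(3)\Rightarrow(2)$, $(5)\Rightarrow(3)$, and you cite $(3)\Rightarrow(4)$, $(3)\Rightarrow(5)$, $(4)\Rightarrow(1)$. From this graph one can reach every condition starting from $(3)$ or $(5)$, but starting from $(1)$ or $(2)$ one can only reach $(1)$ and $(2)$: nowhere do you show that a group with trivial spines (or with no proper non-trivial definable convex subgroup) is \emph{regular}. Your ``almost-as-trivial'' $(3)\Rightarrow(2)$ is correct but points the wrong way --- it is already a consequence of $(3)\Rightarrow(4)\Rightarrow(1)\Rightarrow(2)$ and contributes nothing to the equivalence. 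The implication the argument actually needs is the converse $(2)\Rightarrow(3)$, and that is not trivial: it is the combinatorial heart of the paper's proof. There one assumes an interval $I$ with $|I|\geqslant n$ and $I\cap nG=\emptyset$, enlarges $I$ in the discrete case so that $|I|\geqslant 2n+1$, picks an increasing $(2n+1)$-tuple $f(0)<\dots<f(2n)$ in $I$ with minimal gap $h>0$, and sums gaps to get $nh\leqslant f(n)-f(0)$ and $nh\leqslant f(2n)-f(n)$, so that $[f(n)-nh,\,f(n)+nh]\subset I$ is disjoint from $nG$ and witnesses $h\in H_n(f(n))\setminus\{0\}$. Without this (or some substitute such as a direct proof of $(1)\Rightarrow(5)$), conditions $(1)$ and $(2)$ are only shown to be consequences of regularity, not equivalent to it.

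Two smaller points. Your $(2)\Rightarrow(1)$ via the spine machinery of \cite{Farre} is much heavier than necessary and becomes redundant once $(2)\Rightarrow(3)$ is in place, since then $(2)\Rightarrow(3)\Rightarrow(4)\Rightarrow(1)$; the paper never needs to control arbitrary definable convex subgroups by spines. Also note that the paper proves $(4)\Rightarrow(1)$ by hand (exhibiting, around a proper non-trivial definable convex subgroup $H$, two points with the same quantifier-free type but different types), while only $(3)\Rightarrow(4)$ and $(3)\Rightarrow(5)$ are taken from Weispfenning, Zakon and Robinson--Zakon; citing $(4)\Rightarrow(1)$ is defensible, but you should be aware that it is not one of the standard references' statements and is easiest to just prove directly.
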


\begin{proof}
The directions that we show in order to establish the equivalence are $1\Longrightarrow 2\Longrightarrow 3$, $3\Longrightarrow 4\Longrightarrow 1$ and $3\Longrightarrow 5\Longrightarrow 1$.
\par The implications $1\Longrightarrow 2$ and $5\Longrightarrow 1$ are trivial, $3\Longrightarrow 4$ is due to Weispfenning (\cite{QEROAG}, Theorems 2.3 and 2.6), and Presburger in the discrete case, and $3\Longrightarrow 5$ follows from (\cite{Zakon}, Theorem 2.5) and (\cite{RobinsonZak}, Theorem 4.7).
\par Let us show $2\Longrightarrow 3$ by contraposition. We have an interval $I$ of $G$ and an integer $n$ such that $|I|\geqslant n$ and $I\cap nG=\emptyset$. If $|I|<2n+1$, then $G$ is discrete, and the points from $I+n\cdot \boldone$ have the same cosets $\mod nG$ as those of $I$, so we can assume $|I|\geqslant 2n+1$ by replacing $I$ by $I\cup (I+n\cdot \boldone)\cup (I+2n\cdot \boldone)$. Let $f$ be a strictly increasing map $\{0,\ldots  2n\}\longrightarrow I$, and $h=\min\left\lbrace f(i+1)-f(i)|0\leqslant i\leqslant 2n-1\right\rbrace$. As $f$ is strictly increasing, we have $h\neq 0$. As $h$ is minimal, we can sum $n$ many inequalities to get $n\cdot h\leqslant\sum\limits_{i=0}^{n-1} (f(i+1)-f(i))=f(n)-f(0)$. With the same argument, $n\cdot h\leqslant f(2n)-f(n)$, thus $\left[ f(n)-n\cdot h, f(n)+n\cdot h\right]\subset I$, which is disjoint from $nG$. It follows that $h\in H_n(f(n))\setminus \{0\}$, so $2$ fails.
\par Let us prove $4\Longrightarrow 1$ by contraposition. Let $G$ be an ordered Abelian group with a proper non-trivial definable convex subgroup $H$. Let $A$ be a definably closed subset of $G$ such that $H$ is $A$-definable. Let $A_1=A_{\geqslant 0}\cap H$, $A_2=(A_{>0}\setminus H)\cup\{+\infty\}$. Let $X_<=\left(\bigcap\limits_{a\in A_1, b\in A_2}]a, b[\right)$, an $A$-type definable set which is non-empty by compactness. For each prime $l$, let $X_l=\bigcap\limits_N l^NG$, which corresponds to $\tp_l(0/A)$. Let us show that the partial type defined as $q(x):\ x\in X_<\cap\bigcap\limits_l X_l$ is consistent with $H$ and $\neg H$. Let $a\in A_1$, $b\in A_2$, $N>0$. By compactness, it suffices to show that $Y=]a, b[\cap NG$ intersects $H$ and $\neg H$, which is witnessed by $N\cdot a\in Y\cap H$, and $N\cdot(b-a)\in Y\setminus H$. Now let $h\in H, g\not\in H$ be two realizations of $q$. Then $\qftp(h/A)=q=\qftp(g/A)$, but $\tp(h/A)\neq\tp(g/A)$, and the theory of $G$ does not eliminate quantifiers in $\mathcal{L}_P$.\qedhere

\begin{center}
\begin{tikzpicture}
\draw[red, very thick] (0, 0) -- (2, 0);
\draw[green, very thick] (2, 0) -- (4, 0);
\draw[black] (4, 0) -- (6, 0);
\draw[blue, very thick] (6, 0) -- (8, 0);
\filldraw[black, thick] (0, 0) circle (2pt) node[anchor=south]{$0$};
\filldraw[gray, thick] (3, 0) circle (2pt) node[anchor=south]{$h$};
\filldraw[gray, thick] (5, 0) circle (2pt) node[anchor=south]{$g$};
\draw[red] (1, 0) node[anchor=north]{$A_1$};
\draw[green] (3, 0) node[anchor=north]{$H$};
\draw[blue] (7, 0) node[anchor=north]{$A_2$};
\end{tikzpicture}
\end{center}
\end{proof}

Note that in particular, the domain of the homeomorphism given by \cref{bijectionQLibre} is in fact a space of \textit{complete types}.
\begin{corollary}\label{specialImpliqueDefClos}
The definable closure of a parameter set $A$ coincides with the special subgroup it generates.
\end{corollary}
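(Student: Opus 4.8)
The plan is to prove the two inclusions separately, with the forward inclusion essentially formal and the reverse one relying on \cref{bijectionQLibre} and \cref{clotureDefROAG}.

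Write $D$ for the special subgroup of $M$ generated by $A$. First I would check that $D\subseteq\dcl(A)$. The subgroup $\langle A\cup\{\boldone\}\rangle$ generated by $A$ together with $\boldone$ is contained in $\dcl(A)$, since $+$, $-$, and the $\emptyset$-definable constant $\boldone$ all belong to $\mathcal{L}_P$; and for the relative divisible closure I would use that $M$, being ordered, is torsion-free, so whenever $nx$ lies in $\langle A\cup\{\boldone\}\rangle$ for some $n\geqslant 1$ the element $x$ is the \emph{unique} solution of $n\,y=nx$ and hence lies in $\dcl(A)$ as well. This gives $D\subseteq\dcl(A)$, so $\dcl(A)=\dcl(D)$; replacing $A$ by $D$, I may assume from now on that $A$ is itself a special subgroup.

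For the reverse inclusion I would take $c\in\dcl(A)$ and argue by contradiction, assuming $c\notin A$. Since $A$ is relatively divisible in $M$, no nonzero integer multiple of $c$ lies in $A$, hence $c\notin\div(A)$; thus the singleton $c$ is $\mathbb{Q}$-free over $A$ in the sense of \cref{bijectionQLibre}. Moreover $\qftp(c/A)$ already implies $\mathbb{Q}$-freeness over $A$: it contains the quantifier-free formulas $\lambda x\neq a$ for all $\lambda\in\mathbb{Z}\setminus\{0\}$ and $a\in A$, and — using once more that $A$ is special — any element of $M$ satisfying all of these is $\mathbb{Q}$-free over $A$. Consequently \cref{clotureDefROAG} applies and tells us that $\qftp(c/A)$ has infinitely many realizations in $M$. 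But the theory of $M$ eliminates quantifiers in $\mathcal{L}_P$ by \cref{equROAG}, so $\tp(c/A)$ is determined by $\qftp(c/A)$ and likewise has infinitely many realizations in $M$, contradicting $c\in\dcl(A)$. Hence $c\in A=D$, and together with the first inclusion this yields $\dcl(A)=D$.

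I expect the only delicate points to be bookkeeping: making sure the reduction to the case where $A$ is a special subgroup is legitimate (so that \cref{bijectionQLibre} and \cref{clotureDefROAG} genuinely apply), and checking that ``$\mathbb{Q}$-free over $A$'' is a consequence of the quantifier-free type rather than of the full type. No new ingredient beyond \cref{bijectionQLibre}, \cref{clotureDefROAG} and quantifier elimination should be required — the statement really is a corollary in the strict sense.
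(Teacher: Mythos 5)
Your proof is correct and follows essentially the same route as the paper's: reduce to the case where $A$ is a special subgroup, observe that any $c$ outside it is $\mathbb{Q}$-free over it, and invoke \cref{clotureDefROAG} (together with quantifier elimination) to get infinitely many realizations of $\tp(c/A)$. The extra bookkeeping you supply — the torsion-freeness argument for the first inclusion and the check that $\mathbb{Q}$-freeness is already forced by the quantifier-free type — is exactly what the paper's one-line proof leaves implicit.
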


\begin{proof}
The special subgroup generated by $A$ is clearly included in $\dcl(A)$. Conversely, if $A$ is a special subgroup, and $c\not\in A$, then the $1$-tuple $c$ is $\mathbb{Q}$-free over $A$, thus $c\not\in\acl(A)$ by \cref{clotureDefROAG}.
\end{proof}

There is another important corollary which allows us to better understand independence:

\begin{corollary}\label{indepCorrespProduit}
Let ${M}\models$\ROAG, let $A\leqslant B$ be special subgroups of $M$ such that ${M}$ is $|B|^+$ saturated and strongly $|B|^+$-homogeneous, and let $c=c_1\ldots c_n\in M$. Suppose $c_1\ldots c_n$ is $\mathbb{Q}$-free over $B$, and let $F$ be the closed subspace of $S^n(M)$ of tuples that are $\mathbb{Q}$-free over $M$. For each $j\in J$, consider the action of $\Aut({M}/A)$ on $\pi_j(F)$. Then the following conditions are equivalent:
\begin{enumerate}
\item $\indep{c}{A}{B}{\inv}$ (resp. $\indep{c}{A}{B}{\bo}$).
\item For each $j\in J$, $\tp_j(c/B)$, which is a partial type over $B$, can be extended to an $\Aut(M/A)$-invariant (resp. of bounded orbit) element of $\pi_j(F)$.
\end{enumerate}
\end{corollary}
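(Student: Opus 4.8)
The plan is to reduce the statement about $\Aut(M/A)$-invariant (or bounded-orbit) global extensions to the same statement applied separately to each factor $\pi_j(F)$, using \cref{bijectionQLibre} to identify $F$ with the topological product $\pi_<(F)\times\prod_{l}S^n_l(M)$. First I would note that, by \cref{bijectionQLibre} applied over $M$ (which is legitimate since $M$ is $|B|^+$-saturated, hence $\aleph_1$-saturated and a fortiori a model of \ROAG), the space $F$ of global types of $n$-tuples $\mathbb{Q}$-free over $M$ is homeomorphic, via $p\mapsto(\pi_j(p))_{j\in J}$, to $\pi_<(F)\times\prod_{l\textup{ prime}}S^n_l(M)$. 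Since $\tp(c/B)$ itself is $\mathbb{Q}$-free over $B$, any global extension of it in $S^n(M)$ realizing $\mathbb{Q}$-freeness over $M$ lies in $F$, and conversely: indeed, by \cref{bijectionQLibre} over $B$, a global extension of $\tp(c/B)$ is determined by its components $\pi_j$, and $\mathbb{Q}$-freeness over $M$ of the tuple can always be arranged (extend the $<$-type to a $\mathbb{Q}$-free one over $M$ as in the proof of \cref{bijectionQLibre}). So $\indep{c}{A}{B}{\inv}$ holds iff there is a point of $F$ extending $\tp(c/B)$ fixed by $\Aut(M/A)$, and similarly for $\indep{}{}{}{\bo}$ with "fixed" replaced by "of bounded orbit".

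Next I would observe that the homeomorphism $F\cong\prod_{j}\pi_j(F)$ is $\Aut(M/A)$-equivariant: each restriction map $\pi_j\colon S^n_{\qf}(M)\to S^n_j(M)$ commutes with the action of $\Aut(M/A)$ on types (automorphisms act on $\qf$-formulas preserving the sublanguage defining $j$-types), so the product action on $\prod_j\pi_j(F)$ corresponds to the action on $F$. Hence a point $p\in F$ has finite/bounded orbit (resp. is fixed) iff each of its coordinates $\pi_j(p)$ does — the "only if" is clear since $\pi_j$ is equivariant, and the "if" direction uses that a product of boundedly-many-orbit points has boundedly-many-orbit image, together with $\kappa$-saturation of $M$ (for some $\kappa>|B|$) so that "bounded" has a uniform meaning; for invariance the equivalence is immediate. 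Combining: there is a global extension of $\tp(c/B)$ in $F$ which is $\Aut(M/A)$-invariant (resp. bounded-orbit) iff for each $j\in J$ the partial type $\tp_j(c/B)$ extends to an $\Aut(M/A)$-invariant (resp. bounded-orbit) point of $\pi_j(F)$. This is precisely the claimed equivalence, once one checks that the $j$-th coordinate of a global extension of $\tp(c/B)$ ranges exactly over all elements of $\pi_j(F)$ extending $\tp_j(c/B)$, which is again \cref{bijectionQLibre} (the product decomposition says the coordinates vary independently).

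I expect the main subtlety to be the bounded-orbit case: one must argue that an orbit in the product $\prod_j\pi_j(F)$ is bounded iff each projected orbit is bounded. The "only if" is trivial, but the "if" direction requires care, since an a priori infinite product of finite-cardinality sets could be large; here one uses that $J$ is countable and that each $S^n_j(M)$ is a space on which $\Aut(M/A)$ acts with the orbit in question of cardinality $<\kappa$, so the product orbit injects into $\prod_j(\text{orbit}_j)$, of cardinality $\le\kappa^{\aleph_0}$, which is still bounded relative to a sufficiently saturated $M$ (or one restricts to a cofinal sublanguage, noting that the type is determined by finitely many $l$-coordinates together with the $<$-coordinate once a bound is fixed). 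The invariance case has no such issue and follows purely formally from equivariance of the homeomorphism.
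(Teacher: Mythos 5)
Your proposal is correct and follows essentially the same route as the paper: restrict attention to $F$, use the $\Aut(M/A)$-equivariant bijection $F\longrightarrow\prod_j\pi_j(F)$ coming from \cref{bijectionQLibre} and quantifier elimination, project along the equivariant surjections $F\longrightarrow\pi_j(F)$ for the top-to-bottom direction, and observe that the orbit of $(p_j)_j$ sits inside the (still bounded) Cartesian product of the orbits for the converse. The one step you leave implicit is why a witness of $\indep{c}{A}{B}{\bo}$ must already lie in $F$; the paper settles this by noting that a global extension of $\tp(c/B)$ realized by a tuple that is not $\mathbb{Q}$-free over $M$ divides over $A$ and therefore has an unbounded orbit.
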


\begin{proof}
First of all, note that the global extensions of $\tp(c/B)$ realized by tuples that are not $\mathbb{Q}$-free over $M$ divide over $A$, which implies that they have an unbounded orbit (see for instance \cite{Tent2012ACI}, Exercise 7.1.5), so we do have to restrict ourselves to $F$.
\par For each $j\in J$, $F\longrightarrow\pi_j(F)$ is an equivariant surjection, thus the stabilizer of a point of $F$ is a subgroup of that of its image, so the cardinal of its orbit is larger than the supremum of the cardinals of the orbits of each of its images, and we get the top-to-bottom direction by contraposition.
\par Suppose for each $j$, we have $p_j\in\pi_j(F)$ a witness of the second condition: it extends $\tp_j(c/B)$, and its orbit is a singleton (resp. bounded). Note that the orbit of $(p_j)_j$ under $\Aut({M}/A)$ is contained in the Cartesian product of the orbits of the $(p_j)_j$, therefore it is also a singleton (resp. bounded). Now, by \cref{bijectionQLibre} and quantifier elimination, the map $F\longrightarrow\prod\limits_j\pi_j(F)$ is an equivariant bijection, so the consistent global type corresponding to $(p_j)_j$ witnesses the first condition, and we get the bottom-to-top direction.
\end{proof}

Note that, given special subgroups $A\leqslant B$, and a tuple $c$, $c$ is always $A$-interdefinable with a subtuple $d$ which is $\mathbb{Q}$-free over $A$ (choose any of those subtuples, of maximal length), thus there is a natural equivariant homeomorphism between the space of global extensions of $\tp(c/B)$ and that of $\tp(d/B)$. Corollary \ref{indepCorrespProduit} completes the picture, and gives us easy conditions on $d$ to check whether $\indep{c}{A}{B}{\inv}$ or $\indep{c}{A}{B}{\bo}$.

\begin{remark}
Corollary \ref{indepCorrespProduit} raises one interesting question: does the orbit of an element of $\prod\limits_j\pi_j(F)$ coincide with the Cartesian product of the orbits of its components ? In other words, for $p_j\in\pi_j(F)$, is the natural (injective) map: 
$$\faktor{\Aut({M}/A)}{\bigcap\limits_j Stab(p_j)}\longrightarrow\prod\limits_j\faktor{\Aut({M}/A)}{Stab(p_j)}$$ a bijection ?
\end{remark}
\section{Invariance and boundedness of the global extensions of the partial types}

\begin{assumptions}\label{ROAGHypPrincipales}
Let ${M}\models$ \ROAG, let $A\leqslant B$ be special subgroups of ${M}$, let $\lambda=\max(|B|, 2^{\aleph_0})^+$, let $c=c_1\ldots c_n$ be a tuple from $M$, and suppose that ${M}$ is $\lambda$-saturated and strongly $\lambda$-homogeneous. Suppose $c$ is $\mathbb{Q}$-free over $B$, and let $F$ be the closed space of global types whose realizations are $\mathbb{Q}$-free over $M$. For each $j\in J$, let $p_j=\tp_j(c/B)$. A set of cardinality $\kappa$ is called \textit{small} if  ${M}$ is $\kappa^+$-saturated and strongly $\kappa^+$-homogeneous, else it is \textit{large}.
\end{assumptions}

\par By \cref{indepCorrespProduit}, in order to understand the global extensions of $\tp(c/B)$ which are invariant or have a bounded orbit under the action of $\Aut(M/A)$, one has to understand, for each $j\in J$ the global extensions in $\pi_j(F)$ of $p_j$ which are invariant or have a bounded orbit.

\subsection{Partial types using equality and order}

\begin{assumptions} 
In this subsection, on top of \cref{ROAGHypPrincipales} we fix $\overline{M}\models$ \DOAG~ some $|M|^+$-saturated, strongly $|M|^+$-homogeneous elementary extension of $\div({M})$.
\end{assumptions}

\begin{remark}\label{rqueCompletionDOAG=<}
Let $d$ be some tuple from $M$. Then $d\models p_<$ if and only if $c$ and $d$ have the same type over $B$ in $\overline{{M}}$.
\end{remark}

\begin{lemma}\label{realisationDansM}
Let $D$ be some small special subgroup of ${M}$. Let $\alpha=\alpha_0\ldots \alpha_{m-1}$ be a tuple from $\overline{{M}}$ which is $\mathbb{Q}$-free over $D$. Suppose for all non-zero $f\in \LC^m(\mathbb{Q})$, and for all $d\in D$, we have $f(\alpha)\not\in\left]d, d+\boldone\right[$ (this interval of $\overline{{M}}$ being by convention empty when $\boldone=0$). Then there exists $\sigma\in \Aut(\overline{{M}}/D)$ such that $\sigma(\alpha)$ is a tuple from $M$.
\end{lemma}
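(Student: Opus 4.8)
The plan is to realize $\alpha$ inside $M$ by a back-and-forth/extension argument, reducing to the one-dimensional case and using regularity together with saturation. First I would reduce to showing the following local statement: for every tuple $\beta$ from $\overline{M}$ that is $\mathbb{Q}$-free over $D$, satisfies the same hypothesis as $\alpha$ (no nonzero $\mathbb{Q}$-linear combination falls into an interval $]d,d+\boldone[$ with $d\in D$), and lies in $\dcl_{\overline M}(D\cup M_0)$ for some small $M_0\leqslant M$, and for every element $\gamma\in\overline M$ with $D'\beta\gamma$ still satisfying the hypothesis over $D'=\dcl(D\beta)$, one can find $\gamma'\in M$ with $\gamma'\equiv_{D'}\gamma$ in $\overline M$. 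Granting this, one builds $\sigma$ coordinate by coordinate: enumerate $\alpha=\alpha_0\ldots\alpha_{m-1}$, and successively move $\alpha_k$ to an element of $M$ by an automorphism of $\overline M$ fixing $D$ and the images of $\alpha_0\ldots\alpha_{k-1}$, using strong homogeneity of $\overline M$; the key point is that the hypothesis ``no linear combination is in $]d,d+\boldone[$'' is preserved under these moves because it is part of the type over the relevant parameter set, and $\mathbb{Q}$-freeness over $D$ is preserved for the same reason.

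The heart of the argument is therefore the one-variable step, which is exactly where regularity of $M$ enters. Given $\gamma\in\overline M$ with $\gamma\notin\dcl_{\overline M}(D')$ (the case $\gamma\in\dcl_{\overline M}(D')$ being trivial since then $\gamma$ is a $\mathbb{Q}$-linear combination of $D'$-elements, hence already in $M$ up to the divisible-closure bookkeeping, but one must check it actually lands in $M$ — here the hypothesis that no combination hits $]d,d+\boldone[$ is used to rule out the discrete obstruction), I want to find $\gamma'\in M$ with the same cut over $D'$ and, in the discrete case, the same position relative to $D'+\mathbb{Z}\boldone$. By saturation of $M$ (it is $\lambda$-saturated with $\lambda>|D'|$ since $D'$ is small), the cut of $\gamma$ over $D'\cap M$ is realized in $M$ provided it is finitely satisfiable, which it is; but I must ensure the realization can be chosen to avoid all the forbidden intervals $]d,d+\boldone[$. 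This is precisely where ``$M$ is regular'' should be invoked in the form used in the proof of \cref{bijectionQLibre}: in the dense case every nonempty open $D$-interval is infinite, so one has enough room; in the discrete case one uses that the cut of $\gamma$ over $D'$ is not of the form ``immediately above $d$'' for $d\in D'$ (this is the content of the hypothesis $f(\alpha)\notin]d,d+\boldone[$, specialized to $f$ the relevant projection composed with the change of basis), so again a realization in $M$ exists.

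Concretely, I would run the one-variable step as follows. Write $D''=D'\cap M$ (a small special subgroup of $M$, since $D'$ is the divisible closure in $\overline M$ of a small subgroup and $\overline M$ is an elementary extension of $\div(M)$, so $D''$ generates $D'$ over $\mathbb{Q}$). The $\mathcal{L}_P$-quantifier-free type of $\gamma$ over $D'$ in $\overline M$ is determined (by quantifier elimination, \cref{equROAG}, applied in $\overline M\models\DOAG$, or rather by the $\DOAG$ quantifier elimination in the ordered $\mathbb{Q}$-vector space language) by its cut over $D'$. I want $\gamma'\in M$ realizing the restriction of this cut to $D''$, with the extra constraint coming from the forbidden intervals. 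Consider the partial type over $D''$ asserting $x$ lies in the same cut as $\gamma$ and $x\notin\bigcup_{d\in D''}]d,d+\boldone[$. Finite satisfiability in $M$ reduces to: for any interval $]a,b[$ with $a,b\in D''\cup\{\pm\infty\}$ containing $\gamma$, there is a point of $M$ in $]a,b[$ avoiding the finitely many relevant $]d,d+\boldone[$'s. In the dense case $]a,b[$ is infinite (it meets $D''$ densely or is cofinal/coinitial appropriately), so this is clear. In the discrete case, the hypothesis on $\gamma$ guarantees the cut of $\gamma$ over $D'$ is not ``just above'' any element of $D'+\mathbb{Z}\boldone$; since $\gamma\notin D'+\mathbb{Z}\boldone$ there are at least two, hence infinitely many, elements of $M$ congruent to $\gamma$ modulo $\boldone$ in the interval (use regularity exactly as in the discrete case of \cref{bijectionQLibre}: translate by multiples of $\boldone$ to enlarge, then pick), and infinitely many of those avoid the finitely many forbidden unit-intervals. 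Then saturation gives $\gamma'\in M$ realizing the whole partial type, and $\gamma'\equiv_{D'}\gamma$ in $\overline M$ because the cut (plus the discrete position data, which is now the same) determines the $\DOAG$-type.

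\textbf{Main obstacle.} The delicate point I expect to fight with is the bookkeeping that makes the induction go through: after moving $\alpha_0\ldots\alpha_{k-1}$ into $M$, the parameter set over which $\alpha_k$ must be handled is $D$ together with those images, and I need this enlarged set to still be a \emph{small special} subgroup of $M$ (so saturation applies) and I need the hypothesis ``no nonzero $\mathbb{Q}$-linear combination of $\alpha_k\ldots\alpha_{m-1}$ over the new base lands in a $]d,d+\boldone[$'' to survive. Survival of the hypothesis is automatic since it is expressed by the $\mathcal{L}_P$-type over that base and we only applied $D$-automorphisms of $\overline M$; but one must be careful that the statement involves \emph{all} linear combinations $f(\alpha)$, $f\in\LC^m(\mathbb{Q})$, not just the single remaining coordinate, so the inductive invariant should be phrased for the full remaining tuple over the growing base, and the one-variable step must be upgraded to: extend a partial realization of the tuple. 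I think the cleanest route is not to do it coordinate-by-coordinate but to show directly by a single compactness argument in $M$ that the (complete, by $\DOAG$ quantifier elimination) type $\tp^{\overline M}(\alpha/D)$, restricted to a type over $D\cap M$ together with the divisibility/position constraints, is finitely satisfiable in $M$ — each finite piece being handled by the regularity-plus-room argument above applied simultaneously to finitely many linear combinations — and then invoke $\lambda$-saturation of $M$ once. The automorphism $\sigma$ then comes from strong homogeneity of $\overline M$, mapping $\alpha$ to the realization found in $M$. $\hfill\qed$
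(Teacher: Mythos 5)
Your proposal follows essentially the same route as the paper's proof: induct on coordinates, enlarge the base at each step to the small special subgroup $D_i$ generated by $D$ and the already-placed coordinates, observe that $\mathbb{Q}$-freeness and the hypothesis ``no nonzero linear combination lands in $]d,d+\boldone[$'' transfer to the new base because the automorphisms used fix $D$ pointwise, and reduce via strong homogeneity and saturation of $M$ to showing that every interval with bounds in $\div(D_i)\cup\{\pm\infty\}$ containing the next coordinate meets $M$ — density plus regularity in the dense case, the unit-interval hypothesis plus regularity in the discrete case. The one justification to repair is ``at least two, hence infinitely many'' in the discrete case: the correct derivation (as in the paper) is that, after clearing denominators, $N_1N_2\alpha_i$ lies in $]N_2d_1,N_1d_2[$ yet avoids every translate $]N_2d_1+N\boldone,N_2d_1+(N+1)\boldone[$ and every point $N_2d_1+N\boldone$ (these parameters lie in $D_i$), which forces the interval to contain at least $N_1N_2$ points of $M$, exactly what regularity requires.
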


\begin{proof}
Suppose by induction we have $\sigma_i\in \Aut(\overline{{M}}/D)$ sending $\alpha_{<i}$ to a tuple from $M$ for some $i< m$ ($\sigma_0=id$ will do for $i=0$). Let $D_i$ be the special subgroup of ${M}$ generated by $D\sigma_i(\alpha_{<i})$, which is exactly the relative divisible closure in $M$ of $D+\sum\limits_{k<i}\mathbb{Z}\cdot\sigma_i(\alpha_k)$. Then, for all non-zero $f\in \LC^{n-i}(\mathbb{Q})$, for all $d\in D_i$, we have by hypothesis $f(\alpha_{\geqslant i})\not\in \left]d, d+\boldone\right[$. Let us find $\tau\in \Aut(\overline{{M}}/D_i)$ such that $\tau(\sigma_i(\alpha_i))\in M$. Then we could set $\sigma_{i+1}=\tau\circ\sigma_i$, and conclude by induction.
\par By strong homogeneity of $\overline{{M}}$, it is enough to show that any interval of $\overline{{M}}$ with bounds in $\div(D_i)\cup\{\pm\infty\}$ containing $\sigma_i(\alpha_i)$ has a point $\beta$ in $M$. Let $I$ be such an interval. If either the lower or the upper bound of $I$ is in $\{\pm\infty\}$, then $\beta$ clearly exists, one may choose some multiple of $d$ with large enough absolute value if $d\neq 0$, else choose any non-trivial element with correct sign. Now, suppose $I=\left]\dfrac{d_1}{N_1}, \dfrac{d_2}{N_2}\right[$, with $d_k\in D_i$, $N_k>0$ (by the $\mathbb{Q}$-freeness assumption on $\alpha$, $\sigma_i(\alpha_i)\not\in \div(D_i)$, thus it does not matter whether the bounds of $I$ belong to $I$). Then $I$ has a point in $M$ if and only if $]N_2 d_1, N_1 d_2[$ has a point in $N_1N_2M$. If ${M}$ is dense, then $I$ has infinitely many points in $M$, and we can use the axioms of \ROAG~ to conclude.
\par There remains to deal with the case where ${M}$ is discrete. We established earlier that the $\mathbb{Q}$-linear combinations of $\sigma_i(\alpha_{\geqslant i})$ do not belong to $]d, d+\boldone[$ for any $d\in D_i$. In particular, for every $N\in\mathbb{Z}$:
$$N_1N_2\sigma_i(\alpha_i)\not\in \bigcup\limits_{N\in\mathbb{Z}}]N_2d_1+N\cdot\boldone, N_2d_1+(N+1)\cdot\boldone[$$
and by $\mathbb{Q}$-freeness we also have $N_1N_2\sigma_i(\alpha_i)\not\in \left\lbrace N_2d_1+N\cdot\boldone|N\in\mathbb{Z}\right\rbrace$. As a result, $N_1N_2\sigma_i(\alpha_i)\in]N_2d_1, N_1d_2[\setminus]N_2d_1, N_2d_1+(N_1N_2+2)\cdot\boldone]$, which must imply that $N_2d_1+(N_1N_2+1)\cdot\boldone<N_1d_2$, so $I$ has at least $N_1N_2+1$ (in fact, infinitely many) points in $M$, and we can also conclude with the axioms of \ROAG.
\end{proof}

\begin{lemma}\label{realiserExtNonDev}
Let $\alpha\in M^n$ be a tuple which is $\mathbb{Q}$-free over $B$, such that $p=\tp^{\DOAG}(\alpha/B)$ does not fork over $A$. Let $D$ be a small special subgroup of $M$ containing $B$, and let $q\in S^n_\DOAG(\div(D))$ be an extension of $p$ which does not fork over $A$. Then $q$ has a realization in $M^n$.
\end{lemma}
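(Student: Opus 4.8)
The plan is to realize $q$ in the divisible model $\overline{M}$ and then push the realization down into $M$ using \cref{realisationDansM}. Concretely, I would first fix a realization $\beta\models q$ inside $\overline{M}$, which exists since $\overline{M}$ is $|M|^+$-saturated and $|\div(D)|\leqslant|M|$. It then suffices to verify that $\beta$ meets the two hypotheses of \cref{realisationDansM}: that $\beta$ is $\mathbb{Q}$-free over $D$, and that $f(\beta)\notin\left]d,d+\boldone\right[$ for every non-zero $f\in\LC^n(\mathbb{Q})$ and every $d\in D$. Granting this, \cref{realisationDansM} yields $\sigma\in\Aut(\overline{M}/D)$ with $\sigma(\beta)\in M^n$; since elements of $\div(D)$ are definable over $D$ in $\DOAG$, $\sigma$ fixes $\div(D)$ pointwise, so $\sigma(\beta)$ still realizes $q$, and we are done.

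To get $\mathbb{Q}$-freeness, suppose $f(\beta)\in\div(D)$ for some non-zero $f\in\LC^n(\mathbb{Q})$. Then $q$ implies $f(x)\in[f(\beta),f(\beta)]$, a closed bounded interval with bounds in $\dcl(AD)=\div(D)$. Since $q$ does not fork over $A$, \cref{typePasInvariant} forbids this interval from being disjoint from $\dcl(A)=\div(A)$, so $f(\beta)\in\div(A)\subseteq\div(B)$. As $\beta\models q$ and $q$ extends $p=\tp^{\DOAG}(\alpha/B)$, we have $\beta\models p$, hence $f(\alpha)$ and $f(\beta)$ have the same type over $\div(B)$; as $f(\beta)\in\div(B)$, this forces $f(\alpha)=f(\beta)\in\div(A)$, contradicting the $\mathbb{Q}$-freeness of $\alpha$ over $B\supseteq A$.

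The interval condition is the heart of the matter. It is vacuous when $M$ is dense ($\boldone=0$), so I would assume $M$ discrete, with $\boldone$ its least positive element. One reduces to $f\in\LC^n(\mathbb{Z})$: writing $f=g/N$ with $g\in\LC^n(\mathbb{Z})$, a violation $f(\beta)\in\left]d,d+\boldone\right[$ gives $g(\beta)\in\left]Nd,Nd+N\boldone\right[$, which is already excluded by the hypotheses for $g$ at the parameters $Nd+k\boldone\in D$ ($0\leqslant k<N$) together with the $\mathbb{Q}$-freeness of $\beta$ over $D$. So assume $f\in\LC^n(\mathbb{Z})$ is non-zero, $d\in D$, and $f(\beta)\in\left]d,d+\boldone\right[$. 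Since $q$ does not fork over $A$, \cref{typePasInvariant} applied to the closed interval $[d,d+\boldone]$ (bounds in $\div(D)$, containing $f(\beta)$) gives some $a\in\div(A)$ with $d\leqslant a\leqslant d+\boldone$. Now the key observation is that $\boldone$ is the least positive element of $D$ as well, so $\mathbb{Z}\boldone$ is the least Archimedean class of $D$, and hence every element of $\div(D)$ of absolute value $\leqslant\boldone$ already lies in $\mathbb{Q}\boldone$; applying this to $a-d\in\div(D)$, we get $a-d=r\boldone$ for some rational $r\in[0,1]$. Since $\boldone\in\div(A)$ and $\div(A)$ is a $\mathbb{Q}$-vector space, $d=a-r\boldone\in\div(A)$, whence $d\in\div(A)\cap M=A\subseteq B$. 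Finally, $\ct(f(\beta)/\div(D))\subseteq\left]d,d+\boldone\right[$ and is contained in $\ct(f(\beta)/\div(B))=\ct(f(\alpha)/\div(B))$; the latter cut does not straddle the points $d,d+\boldone\in\div(B)$, so it too lies in $\left]d,d+\boldone\right[$, giving $f(\alpha)\in\left]d,d+\boldone\right[$. But $f(\alpha)\in M$ and $\left]d,d+\boldone\right[\cap M=\emptyset$ since $M$ is discrete, a contradiction.

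I expect this last paragraph to be the main obstacle: the first two steps are direct applications of \cref{typePasInvariant} and \cref{realisationDansM}, but verifying interval-avoidance requires the interplay between forking and the discreteness of $M$, through the fact that $\div(A)$, being divisible and containing $\boldone$, meets $[d,d+\boldone]$ only at points congruent to $d$ modulo $\mathbb{Q}\boldone$.
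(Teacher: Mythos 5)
Your proposal is correct and follows essentially the same route as the paper: realize $q$ in the saturated divisible model, verify the hypotheses of \cref{realisationDansM} via the cut-independence characterization of non-forking from \cref{thmTechniqueEnonce}, and push the realization down into $M$. The only differences are cosmetic — you spell out the $\mathbb{Q}$-freeness of the realization over $D$ (which the paper asserts without proof) and reach the final contradiction through discreteness of $M$ at $f(\alpha)$ rather than by scaling by $N$ and contradicting $\mathbb{Q}$-freeness of $\alpha$, but both are the same computation.
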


\begin{proof}
First of all, the realizations of $q$ are $\mathbb{Q}$-free over $D$. Let $d\in D$, and let $f$ be a non-zero element of $\LC^n(\mathbb{Q})$. By \cref{realisationDansM}, one just has to prove that $q(x)\models f(x)\not\in ]d, d+\boldone[$. If not, then by \cref{thmTechniqueEnonce}, the interval $[d, d+\boldone]$ must have a point in $\div(A)$. By multiplying everything by a sufficiently large $N>0$, the interval $]Nd, Nd+N\cdot\boldone[$ has a point in $A$, and $Nd\in D$, thus $Nd\in A$.
\par In particular, we have $q_{|A}(x)\models Nf(x)\in]Nd, Nd+N\cdot\boldone[$, therefore $p(x)\models Nf(x)\in ]Nd, Nd+N\cdot\boldone[$, and $Nf(\alpha)\in ]Nd, Nd+N\cdot\boldone[$. As $Nd$ and $\alpha$ are in $A$, we have $Nf(\alpha)\in A$, contradicting $\mathbb{Q}$-freeness of $\alpha$.
\end{proof}

\begin{proposition}\label{ordreOrbites}
By \cref{rqueCompletionDOAG=<}, let $h$ be the natural injection:
$$\pi_<(F)\longrightarrow S^n_{\DOAG}(\div(M))$$
\par Consider the action of $\Aut({M}/A)$ on $Q=\pi_<(F)$. Then the following conditions are equivalent:
\begin{enumerate}
\item Some element of $Q$ is invariant and extends $p_<$.
\item Some element of $Q$ has a bounded orbit and extends $p_<$.
\item The partial type $p_<$ does not fork over $A$.
\item The partial type $p_<$ does not divide over $A$.
\item Every closed bounded interval with bounds in $B$ containing a $\mathbb{Z}$-linear combination of $c$ also has a point in $\div(A)$.
\item For some $p\in Q$ extending $p_<$, $h(p)$ extends to some $\Aut(\overline{{M}}/A)$-invariant type over $\overline{{M}}$.
\end{enumerate}
\end{proposition}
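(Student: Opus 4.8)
The plan is to prove the cycle $(1)\Rightarrow(2)\Rightarrow(4)\Rightarrow(5)\Rightarrow(6)\Rightarrow(1)$, with $(3)$ inserted between $(2)$ and $(4)$. Several arrows are formal. $(1)\Rightarrow(2)$ is trivial, since an $\Aut(M/A)$-invariant global type has a one-point orbit. For $(2)\Rightarrow(3)\Rightarrow(4)$ I use the hierarchy $\indep{}{}{}{\inv}\subseteq\indep{}{}{}{\bo}\subseteq\indep{}{}{}{\f}\subseteq\indep{}{}{}{\d}$ of \cref{inclusionsIndepTriviales}: a global $<$-type $p\in Q$ extending $p_<$ with bounded $\Aut(M/A)$-orbit cannot contain a $<$-formula that divides over $A$ (an $A$-indiscernible sequence of conjugates of such an instance would, by pigeonhole on the bounded orbit, yield $k$ conjugates of $p$ that coincide, so $k$ of the corresponding instances would be jointly consistent, contradicting $k$-inconsistency — cf.\ the use of \cite{Tent2012ACI}, Exercise~7.1.5, in \cref{indepCorrespProduit}); by \cref{bijectionQLibre} the forking and dividing of the \emph{partial} type $p_<$ are controlled by its $<$-formulas alone, so the $<$-type witness $p$ is enough and the inclusions transfer to $p_<$. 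The genuine work is in $(4)\Rightarrow(5)$, $(5)\Leftrightarrow(6)$ and $(6)\Rightarrow(1)$, all of which pass through the \DOAG-completion $\overline M$ via \cref{rqueCompletionDOAG=<}.

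For $(4)\Rightarrow(5)$ I argue contrapositively. If $(5)$ fails there is a closed bounded interval $[b_1,b_2]$ with $b_1,b_2\in B$ containing some $v=\sum_i m_ic_i$ and missing $\div(A)$. Since $c$ is $\mathbb{Q}$-free over $B$ we get $v\notin B$, hence $b_1<b_2$ and $[b_1,b_2]$ has non-empty interior; and since no point of $\div(A)$ — a fortiori of $A$ — lies in $[b_1,b_2]$, the elements $b_1$ and $b_2$ realize the same cut over $A$. Using the saturation of $M$ and the Chinese remainder theorem (\cref{thmChinois}) I then build inside $M$ an $A$-indiscernible sequence of $\Aut(M/A)$-conjugates of the pair $(b_1,b_2)$ whose associated closed intervals are pairwise disjoint: disjointness of $[b_1,b_2]$ from $\div(A)$ is exactly the room needed to slide the interval within its $\div(A)$-cut, and since we move by honest automorphisms all of the Presburger data, in particular the $l$-types, is preserved. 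This is the \ROAG-analogue of \cref{divisionTriviale1}, and it exhibits the $<$-formula $b_1\le\sum_i m_ix_i\le b_2$, which lies in $p_<$, as a formula dividing over $A$.

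For $(5)\Leftrightarrow(6)$: by \cref{rqueCompletionDOAG=<}, $p_<$ and $q:=\tp^{\DOAG}(c/\div(B))$ have the same realizations in $M^n$, and a routine rescaling (clearing denominators to move between $B$ and $\div(B)$ and between $\sum_i\mathbb{Z}c_i$ and $\sum_i\mathbb{Q}c_i$, and translating by the $\div(A)$-summand to absorb it) identifies $(5)$ with $\indep{c}{\div(A)}{\div(B)}{\cut}$ computed in $\overline M$. By \cref{thmTechniqueEnonce} this holds iff $q$ admits a global extension $\bar p$ invariant under $\Aut(\overline M/\div(A))=\Aut(\overline M/A)$; invariance of $\bar p$ together with $\mathbb{Q}$-freeness of $c$ over $\div(A)$ forces the realizations of $\bar p|_{\div(M)}$ to be $\mathbb{Q}$-free over $\div(M)$, so $\bar p|_{\div(M)}=h(p)$ for a unique $p\in\pi_<(F)$, and this $p$ extends $p_<$ and witnesses $(6)$. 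Conversely $h$ is an equivariant injection — $\Aut(M/A)$ acting on $S^n_{\DOAG}(\div(M))$ through the canonical extension of automorphisms to $\div(M)$ — so if $h(p)=\bar p|_{\div(M)}$ for an $\Aut(\overline M/A)$-invariant $\bar p$, then extending each $\sigma\in\Aut(M/A)$ to $\overline M$ and restricting back fixes $h(p)$, whence $p$ is $A$-invariant; this gives $(6)\Rightarrow(1)$. One may instead build the $A$-invariant $p$ by hand, realizing the non-forking restrictions $\bar p|_{\div(D)}$ of $q$ inside $M^n$ along an increasing chain of small special subgroups $D\supseteq B$ by \cref{realiserExtNonDev} and \cref{realisationDansM}. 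Finally $(1)\Rightarrow(3)$ by the pigeonhole of $(2)\Rightarrow(3)$, closing the loop.

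The main obstacle I expect is $(4)\Rightarrow(5)$: the dividing must be manufactured inside $M$, not merely in $\overline M$, so the $A$-indiscernible family of pairwise-disjoint free intervals must be compatible with the full Presburger structure, and one has to check carefully that a closed bounded interval of $B$ disjoint from $\div(A)$ can be translated to arbitrarily many pairwise-disjoint $A$-conjugate positions — which is precisely where regularity, through \cref{thmChinois}, and the saturation of $M$ come in. A secondary subtlety is that $(3)$ and $(4)$ concern the \emph{partial} type $p_<$ while the global objects at hand live in $\pi_<(F)$ and are not complete $\mathcal{L}_P$-types, so the forking/dividing bookkeeping has to be carried out at the level of the $<$-formulas of $p_<$, which is what makes \cref{bijectionQLibre} necessary there.
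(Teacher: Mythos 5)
Your cycle is the paper's cycle ($1\Rightarrow2\Rightarrow3\Rightarrow4\Rightarrow5\Rightarrow6\Rightarrow1$) and most of the individual arrows match the paper's arguments ($4\Rightarrow5$ by sliding the interval through its $\div(A)$-free cut, $6\Rightarrow1$ by equivariance of $h$), but two steps need repair. In $(2)\Rightarrow(3)$, your pigeonhole argument only shows that the bounded-orbit witness $p\in Q$ contains no single dividing formula, i.e.\ that $p$ does not \emph{divide}; non-forking of the partial type $p_<$ does not follow, and the claim that forking of $p_<$ is ``controlled by its $<$-formulas alone'' is not what \cref{bijectionQLibre} says — forking means implying a finite disjunction of dividing formulas in the \emph{full} language. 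The correct move (and the paper's) is to complete $p$ to a genuine element of $F$ by adjoining, for each prime $l$, the $\Aut(M)$-invariant partial type $q_l=\tp_l(0/M)$: the resulting complete global type is consistent by \cref{bijectionQLibre}, still has bounded orbit, hence does not fork over $A$ by \cref{inclusionsIndepTriviales}, and contains $p_<$.

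In $(5)\Rightarrow(6)$, your main line takes the $\Aut(\overline{M}/A)$-invariant extension $\bar p$ of $\tp^{\DOAG}(c/\div(B))$ supplied by \cref{thmTechniqueEnonce} and asserts that $\bar p_{|\div(M)}=h(p)$ for some $p\in\pi_<(F)$ on the sole grounds that its realizations are $\mathbb{Q}$-free over $\div(M)$. But $h$ is not onto the $\mathbb{Q}$-free types in $S^n_{\DOAG}(\div(M))$: when $M$ is discrete, a DOAG-type placing some $f(x)$ inside $\left]d, d+\boldone\right[$ is realized by no tuple of any elementary extension of $M$ as an ROAG. One must verify that the non-forking extension avoids these gaps, which is exactly the content of \cref{realiserExtNonDev}; the ``by hand'' construction you relegate to an optional aside (realizing non-forking restrictions over increasing small special subgroups inside $M^n$ via \cref{realiserExtNonDev} and \cref{realisationDansM}, together with a compactness argument reducing the failure of $(6)$ to finitely many obstructing intervals) is in fact the necessary argument, and is precisely how the paper proves $5\Rightarrow6$. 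With those two repairs your proof coincides with the paper's.
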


\begin{proof}
The directions $1\Longrightarrow 2$ and $3\Longrightarrow 4$ are immediate.
\par Let us prove $2\Longrightarrow 3$. Let $q_<\in Q$ witness $2$. For each prime $l$, let $q_l(x)
$ in $S^n_l(M)$ be the partial type $\left\lbrace \mathfrak{d}_{l^N}\left(f(x)\right)|n>0, f\in \LC(\mathbb{Z}), f\neq 0\right\rbrace=\tp_l(0/M)$. Note that $q_l$ is clearly invariant. Then the complete global type in $F$ corresponding to $(q_<, (q_l)_l)$ (recall $F\longrightarrow \pi_<(F)\times\prod\limits_{l\textup{ prime}} S^n_l(M)$ is a homeomorphism by \cref{bijectionQLibre}, thus this type is consistent) has a bounded orbit, and thus does not fork over $A$, and we get $3$.
\par Let us prove $4\Longrightarrow 5$. Suppose we have $b_k\in B$, $f\in \LC(\mathbb{Z})$ such that the formula $f(c)\in[b_1, b_2]$ witnesses the failure of $5$. Let us show that the formula $f(x)\in [b_1, b_2]$ divides over $A$, witnessing the failure of $4$. As in the proof of \cref{typePasInvariant}, it is enough to find $d\equiv_A b_1$ such that $d>b_2$. As $[b_1, b_2]$ has no point in $\div(A)$, we have $b_k\not\in A$, thus the singletons $b_1$ and $b_2$ are both $\mathbb{Q}$-free over $A$. Now, by \cref{bijectionQLibre}, one just has to show that $\tp_<(b_1/A)$ is consistent with $]b_2, +\infty[$. If not, then all the elements in $M$ of $]b_2, +\infty[$ have a type over $A$ in $\overline{{M}}$ which is distinct from that of $b_1$. As a result, by the characterization of $1$-types in \DOAG, there must exist in $\overline{{M}}$ a point in $\div(A)\cap ]b_1, b_2]$, a contradiction.
\par Let us show $5\Longrightarrow 6$. Suppose $6$ fails. Then, by \cref{thmTechniqueEnonce}, $p_<$ is inconsistent with the partial type $\left\lbrace f(x)\not\in I|I\in\mathcal{I}, f\in \LC(\mathbb{Z})\right\rbrace$, with $\mathcal{I}$ the set of closed bounded intervals with bounds in $M$ that have no points in $\div(A)$. By compactness, there exist finite subsets $\mathcal{I}'\subset\mathcal{I}$, $G\subset \LC^n(\mathbb{Z})$ such that $p_<(x)\models\bigvee\limits_{I\in\mathcal{I}', g\in G}g(x)\in I$. Let $D$ be the special subgroup of ${M}$ generated by $B$ and the bounds of the elements of $\mathcal{I}'$. Then, for all $p\in Q$ extending $p_<$, the restriction of $p$ to $D$ corresponds to a type in $S^n_{\DOAG}(D)$ which forks over $A$ in \DOAG.
\par Suppose now, by contradiction, that $5$ holds. Let $q$ be the type of $S^n_\DOAG(B)$ corresponding to $p_<$. Then $q$ does not fork over $A$. By extension, let $q'\in S^n_\DOAG(D)$ be an extension of $q$ which does not fork over $A$. By \cref{realiserExtNonDev}, $q'$ must admit a realization $\beta$ in $M^n$ (in particular $\beta\models p_<$). As $q'$ does not fork over $A$, we have $g(\beta)\not\in I$ for all $g\in G$, $I\in\mathcal{I}'$. As a result, $q_<=\tp_<(\beta/D)$ cannot be extended to any element of $F$. This means that in any elementary extension of ${M}$, no realization of $q_<$ is $\mathbb{Q}$-free over $M$. By compactness, this means that there exist finite subsets $P\subset M$, $G'\subset \LC^n(\mathbb{Q})\setminus\{0\}$, such that $q_<(x)\models \bigvee\limits_{m\in P, g\in G'}g(x)=m$.
\par Then we can reach a contradiction by extending $q'$ once more: let $\tilde{D}$ be the special subgroup of ${M}$ generated by $D\cup P$, let $\tilde{q}\in S^n_\DOAG(\tilde{D})$ be an extension of $q'$ which does not fork over $A$. Then with the same argument, $\tilde{q}$ has a realization $\gamma\in M^n$, and by hypothesis $\gamma$ is not $\mathbb{Q}$-free over $\tilde{D}$, a contradiction with the fact that $\tilde{q}$ does not fork over $A$.
\par Suppose $p\in Q$ witnesses $6$, and let us show that $p$ is $\Aut({M}/A)$-invariant, which would allow us to conclude the whole proof with the direction $6\Longrightarrow 1$. Let $q\in S^n_{\DOAG}(\overline{{M}})$ be some global $\Aut(\overline{{M}}/A)$-invariant extension of $h(p)$, and $\sigma\in \Aut({M}/A)$. Then $\sigma$ extends uniquely to $\sigma'$, an automorphism of the ordered group $\div(M)$, which fixes $A$ pointwise. We clearly have $h(\sigma(p))=\sigma'(h(p))$ (look at the atomic formulas with predicate $=, <$ that belong to $X_<$, check that their image by $\sigma$ is satisfied by the realizations of $\sigma'(h(p))$). By quantifier elimination in \DOAG, $\sigma'$ is a partial elementary map in $\overline{M}$. By strong homogeneity of $\overline{{M}}$, $\sigma'$ extends to some $\tilde{\sigma}\in \Aut(\overline{{M}})$. As $\sigma'$ pointwise-fixes $A$, so does $\tilde{\sigma}$, thus $\tilde{\sigma}(q)=q$. Now, we conclude that: $\sigma(p)=h^{-1}(\sigma'(h(p))=h^{-1}(\sigma'(q_{|\div(M)}))=h^{-1}(\tilde{\sigma}(q_{|\div(M)}))=h^{-1}(\tilde{\sigma}(q)_{|\tilde{\sigma}(\div(M))})=h^{-1}(q_{|\div(M)})=h^{-1}(h(p))=p$.
\end{proof}

The fifth condition of \cref{ordreOrbites} is a very simple geometric condition, the kind of statement that would be very satisfactory for a characterization of forking. In the subsections \ref{sousSectFini} and \ref{sousSectInfini}, we  look for similar conditions for the $(p_l)_l$.
\subsection{Mapping monsters to monsters}\label{sousSectHomeoOrdre}

\begin{remark}
    Up to isomorphism, the unique discrete subgroup of $\mathbb{R}$ is $\mathbb{Z}$, and all its elements are $\emptyset$-definable in $\mathcal{L}_P$. As a result, any discrete model of \ROAG~ is an elementary extension of $\mathbb{Z}$, which we identify with $\dcl(\emptyset)$.
\end{remark}

If $d$ is a tuple in some elementary extension of $M$, then \cref{ordreOrbites} shows that $p_<$ does not fork/divide over $A$ if and only if the corresponding partial type (via $h$) in $S_\DOAG(\div(M))$ does not fork over $A$. Moreover, one can easily see that $h$ is continuous, and the restriction of $h$ to the closed subspace of $\pi_<(F)$ of extensions of $p_<$ which do not fork over $A$ is, by \cref{realiserExtNonDev}, onto the closed subspace of $S(\div(M))$ of extensions of $h(p_<)$ which do not fork over $A$. It follows that they are homeomorphic.
\par One may think that we just proved that this topological space is homeomorphic to the one described in \cref{DOAGDescriptionFinaleHomeo}, but there is actually a subtle obstruction, coming from the fact that $\div(M)$ is not necessarily $|B|^+$-saturated. For the remainder of this subsection, we show that the space of extensions of $p_<$ which do not fork over $A$ is indeed homeomorphic to the space described \cref{DOAGDescriptionFinaleHomeo}. For this, we have to do a case disjunction depending on whether $M$ is dense or not.
\par If $M$ is dense, we show that there exists a sufficiently saturated elementary extension $N$ of $M$ such that $\div(N)$ is also sufficiently saturated.
\par If $M$ is discrete, then $\div(M)$ is never $\aleph_1$-saturated, for it does not admit non-zero elements having smaller Archimedean value than $\boldone$. The homeomorphism $h$ that we build does not come from the divisible closure, but from the quotient $\faktor{M}{\mathbb{Z}}$, which is a model of DOAG. We show that this structure is sufficiently saturated.
\par Note that we do not care about strong homogeneity here. Indeed, one may easily see that a partial global $A$-invariant type on an $|A|^+$-saturated first-order structure extends uniquely to an $A$-invariant type over any $|A|^+$-saturated elementary extension, regardless of strong homogeneity.

\begin{definition}
    Suppose $M$ is discrete. We define $\widetilde{M}$ as the quotient $\faktor{M}{\mathbb{Z}}$, and $\pi$ the canonical surjection. As $\mathbb{Z}$ is a convex subgroup, $\widetilde{M}$ is naturally an ordered Abelian group by \cref{cutDef}, and $\pi$ is an order-preserving map.
\end{definition}

\begin{proposition}
    If $M$ is discrete, then $\widetilde{M}\models \DOAG$.
\end{proposition}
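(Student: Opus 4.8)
The plan is to verify the three defining features of a model of $\DOAG$ for $\widetilde{M}=\faktor{M}{\mathbb{Z}}$: that it is a nontrivial ordered abelian group, that it is torsion-free (which is automatic), and that it is divisible.

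First I would note that $\mathbb{Z}=\mathbb{Z}\cdot\boldone$ is a convex subgroup of $M$. Since $M$ is discrete, $\boldone$ is its least positive element, so for every $x\in M$ there is nothing strictly between $x$ and $x+\boldone$; an immediate induction on $m$ then shows that every $w$ with $0\leqslant w\leqslant m\cdot\boldone$ equals $j\cdot\boldone$ for some $0\leqslant j\leqslant m$, and hence that the set $\mathbb{Z}\cdot\boldone$ is convex. By \cref{correspGrpConv_ClassesArch}, $\widetilde{M}$ is therefore an ordered abelian group, with $\pi$ order-preserving. It is nontrivial because, by \cref{ROAGHypPrincipales}, $M$ is $(2^{\aleph_0})^+$-saturated, hence uncountable, so $M\neq\mathbb{Z}$ and $\widetilde{M}\neq\{0\}$.

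The only real content is divisibility, which I would deduce straight from regularity (\cref{defROAG}). Fix $x\in M$ and an integer $n>0$. By the discreteness observation above, the closed interval $[x,x+(n-1)\cdot\boldone]$ has exactly the $n$ elements $x,x+\boldone,\dots,x+(n-1)\cdot\boldone$; by regularity it meets $nM$, so there are $k\in\{0,\dots,n-1\}$ and $y\in M$ with $x+k\cdot\boldone=ny$. Applying $\pi$ and using $k\cdot\boldone\in\mathbb{Z}=\Ker\pi$ yields $\pi(x)=n\cdot\pi(y)$, so $\widetilde{M}$ is divisible. Since ordered abelian groups are torsion-free, $\widetilde{M}$ is a nontrivial torsion-free divisible ordered abelian group, i.e. a nontrivial ordered $\mathbb{Q}$-vector space, and therefore $\widetilde{M}\models\DOAG$.

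I do not anticipate a genuine obstacle here: the argument is short, and the two points that need care — convexity of $\mathbb{Z}\cdot\boldone$ and the exact cardinality of the interval $[x,x+(n-1)\cdot\boldone]$ — are both routine consequences of discreteness. One could instead invoke \cref{equROAG} to replace $M$ by an elementarily equivalent Archimedean group, but routing divisibility of the quotient through regularity directly seems the cleanest path.
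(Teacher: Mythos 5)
Your proof is correct. The overall shape matches the paper's (non-triviality from saturation, then divisibility of the quotient), but the divisibility step is carried out differently: the paper observes that, since any discrete model of \ROAG\ is an elementary extension of $\mathbb{Z}$, the quotient and remainder of Euclidean division by $m$ are $\emptyset$-definable, so $a=m\cdot b+k\cdot\boldone$ for some $0\leqslant k<m$ and one applies $\pi$; you instead apply the regularity axiom directly to the $n$-element interval $[x,x+(n-1)\cdot\boldone]$ to find $k$ with $x+k\cdot\boldone\in nM$. The two arguments are essentially equivalent in content (and your use of regularity on such an interval is exactly the manoeuvre the paper performs in the proof of \cref{bijectionQLibre}), but yours is marginally more self-contained, needing only \cref{defROAG} rather than the identification of discrete regular groups with models of Presburger arithmetic. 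Your explicit checks that $\mathbb{Z}\cdot\boldone$ is convex and that the quotient is torsion-free are details the paper leaves implicit; they are correct and harmless.
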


\begin{proof}
    By saturation, $M\neq\mathbb{Z}$, therefore $\widetilde{M}$ is non-trivial. Let $\pi(a)\in \widetilde{M}$, and $m>0$. The quotient and remainder for the Euclidean division by $m$ are $\emptyset$-definable functions in $M$, therefore there exists $b\in M$ such that $a=m\cdot b+k\cdot\boldone$ for some $k\in\mathbb{Z}$ ($k<m$), in particular $\pi(a)=m\cdot\pi(b)$, concluding the proof.
\end{proof}

\begin{lemma}\label{ROAGHomeoModZ}
    The map $h':\ \tp_<(c/M)\longrightarrow \tp\left(\pi(c)/\widetilde{M}\right)$ is a well-defined continuous injection $\pi_<(F)\longrightarrow S^n_\DOAG\left(\widetilde{M}\right)$.
\end{lemma}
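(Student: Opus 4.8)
The plan is to realise $p_<\in\pi_<(F)$ by a tuple $d$ in a monster elementary extension $N\succeq M$, extend $\pi$ to the canonical surjection $N\to\widetilde N:=\faktor{N}{\mathbb Z\boldone}$, and set $h'(p_<)=\tp^{\DOAG}(\pi(d)/\widetilde M)$. First I would record that this makes sense: $N$ is discrete and $N\neq\mathbb Z$, so $\widetilde N\models\DOAG$ by the proposition just proved, and the inclusion $\widetilde M\hookrightarrow\widetilde N$ is an embedding of ordered abelian groups — injectivity and monotonicity hold because $\mathbb Z\boldone$ is literally the same subset of $M$ and of $N$ and because $M\preceq N$ — hence it is elementary by model-completeness of $\DOAG$; thus $\tp^{\DOAG}(\pi(d)/\widetilde M)$ is the restriction to $\widetilde M$ of a complete type over $\widetilde N$.

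The technical core is a single observation about discreteness. Since ``$\boldone$ is the least positive element'' is first-order, it holds in $N$ too, so between consecutive cosets $m+k\boldone$ and $m+(k+1)\boldone$ of $\mathbb Z\boldone$ there is no room. On $\pi_<(F)$ every tuple is $\mathbb Q$-free over $M$, so for every nonzero $f\in\LC^n(\mathbb Z)$ and $m\in M$ we have $f(d)\notin M\supseteq m+\mathbb Z\boldone$; combined with the no-gap fact this forces $f(d)$ to lie entirely below or entirely above the coset $m+\mathbb Z\boldone$, i.e.
$$ f(d)<m \iff f(d)<m+k\boldone\text{ for all }k\in\mathbb Z \iff \pi(f(d))<\pi(m). $$
Since $\pi$ maps $M$ onto $\widetilde M$, this identifies $\ct(f(d)/M)$ with the full $\pi|_M$-preimage of $\ct(\pi(f(d))/\widetilde M)$; in particular each of these cuts determines the other, and after clearing denominators so does $\ct(g(\pi(d))/\widetilde M)$ for every $g\in\LC^n(\mathbb Q)$ (writing $g=\tfrac1N f$, so $g(\pi(d))=\tfrac1N\pi(f(d))$).

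With this dictionary, well-definedness and injectivity of $h'$ are immediate from quantifier elimination: $\tp_<(d/M)$ is determined by the cuts $\ct(f(d)/M)$, $f\in\LC^n(\mathbb Z)$, and $\tp^{\DOAG}(\pi(d)/\widetilde M)$ is determined by the cuts $\ct(g(\pi(d))/\widetilde M)$, $g\in\LC^n(\mathbb Q)$, and the dictionary says these two families carry the same information. For continuity it suffices to check that the preimages of the sub-basic clopens $[g(x)<\pi(m)]$ and $[g(x)=\pi(m)]$ of $S^n_{\DOAG}(\widetilde M)$ are clopen in $\pi_<(F)$; writing $g=\tfrac1N f$ with $f\in\LC^n(\mathbb Z)$, the formula $g(x)=\pi(m)$ amounts to $f(d)\in Nm+\mathbb Z\boldone$ and so pulls back to $\emptyset$ when $f\neq 0$ (and is constant when $f=0$), while $g(x)<\pi(m)$ pulls back, by the displayed equivalence, to $[f(x)<Nm]\cap\pi_<(F)$, which is clopen since $[f(x)<Nm]$ is already clopen in $S^n_<(M)$.

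I do not anticipate a real obstacle: once the no-gap observation for $\mathbb Z\boldone$ is isolated, everything is bookkeeping with quantifier elimination in $\mathcal L_P$ and in $\DOAG$. The two points deserving attention are ensuring $\widetilde M\preceq\widetilde N$ (so that the target of $h'$ is literally a type over $\widetilde M$), for which model-completeness of $\DOAG$ is used, and keeping in mind throughout that membership in $\pi_<(F)$ forces $\mathbb Q$-freeness over $M$ — this is exactly what makes all equality instances collapse and what drives the ``entirely above / entirely below a coset'' trichotomy.
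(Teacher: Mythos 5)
Your proposal is correct and follows essentially the same route as the paper: the heart of both arguments is that $\mathbb{Q}$-freeness over $M$ forces $f(d)$ (for $f\in\LC^n(\mathbb{Z})$ nonzero) to lie entirely above or entirely below each coset $m+\mathbb{Z}\boldone$, so that $f(d)<m$ is equivalent to $\pi(f(d))<\pi(m)$, while all equality instances collapse; denominators are cleared exactly as in the paper's proof. Your additional care in checking $\widetilde{M}\preceq\widetilde{N}$ via model-completeness of \DOAG~is a point the paper leaves implicit, but it does not change the argument.
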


\begin{proof}
    Let $f\in\LC^n(\mathbb{Q})$, $b\in M$, and $c$, $d\in M^n$ such that $c$ and $d$ are $\mathbb{Q}$-free over $\dcl(b)$. Let $m>0$ be such that $m\cdot f\in\LC^n(\mathbb{Z})$.
    \par Let us show that $h'$ is well-defined. If $\tp_<(c/\dcl(b))=\tp_<(d/\dcl(b))$, then:
    $$
    \begin{array}{rcl}
    f(\pi(c))>\pi(b) & \Longleftrightarrow & m\cdot f(\pi(c))>m\cdot\pi(b)\\
    &\Longleftrightarrow&\forall k\in \mathbb{Z}\ m\cdot f(c)+k\cdot\boldone>b\\
    &\Longleftrightarrow&\forall k\in \mathbb{Z}\ m\cdot f(d)+k\cdot\boldone>b\\
    &\Longleftrightarrow&f(\pi(d))>\pi(b)
    \end{array}
    $$
    $$
    \begin{array}{rcl}
    f(\pi(c))=\pi(b)&\Longleftrightarrow&m\cdot f(\pi(c))=m\cdot\pi(b)\\
    &\Longleftrightarrow& m\cdot f(c)\in (m\cdot b+\mathbb{Z})\\
    &\Longrightarrow&m\cdot f(c)\in\dcl(b)\\
    &\Longrightarrow&\left\lbrace
    \begin{array}{l}
    f=0\\
    b\in\mathbb{Z}
    \end{array}
    \right.\textup{ by }\mathbb{Q}\textup{-freeness}\\
    &\Longrightarrow& f(\pi(d))=\pi(b)
    \end{array}
    $$
    it follows that $\tp(\pi(c)/\pi(\dcl(b)))=\tp(\pi(d)/\pi(\dcl(b)))$, therefore $h'$ is well-defined. The above computations also show us that $h'$ is continuous.
    \par Now, if $f\in\LC^n(\mathbb{Z})$, then, by $\mathbb{Q}$-freeness, we have:
    $$f(c)>b\Longleftrightarrow\forall k\in\mathbb Z\ f(c)>b+k\cdot\boldone\Longleftrightarrow f(\pi(c))>\pi(b)$$
    and:
    $$f(c)=b\Longrightarrow f=0\Longrightarrow f(\pi(c))=0$$
    it follows that $h'$ is an injection.
\end{proof}

\begin{lemma}\label{lemmeROAGHomeoOrdreTmp}
Suppose that $M$ is discrete. Let $h'$ be the map from \cref{ROAGHomeoModZ}, and let $q\in\pi_<(F)$. Then the following are equivalent:

\begin{enumerate}
    \item For all $f\in\LC^n(\mathbb{Z})$, for all closed bounded intervals $I$ with bounds in $M$ having no point in $\div(A)$, $q(x)\models f(x)\not\in I$.
    \item For all $f\in\LC^n(\mathbb{Q})$, for all closed bounded intervals $I$ with bounds in $\pi(M)$ having no point in $\pi(A)$, $h'(q)(x)\models f(x)\not\in I$.
\end{enumerate}
\end{lemma}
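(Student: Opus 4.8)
The plan is to split the statement into a purely syntactic translation of formulas between $q$ and $h'(q)$, and a combinatorial dictionary between the intervals occurring in the two conditions. For the first part I would reuse the computations from the proof of \cref{ROAGHomeoModZ}: for non-zero $f\in\LC^n(\mathbb{Z})$ and $b\in M$, the formula ``$f(x)>b$'' belongs to $q$ if and only if ``$f(x)>\pi(b)$'' belongs to $h'(q)$, while ``$f(x)=b$'' belongs to neither (by $\mathbb{Q}$-freeness over $M$ of the realizations of $q$, and over $\widetilde{M}$ of those of $h'(q)$); for $f=0$ one has $f(x)=0$ and $0\in\div(A)$, $0\in\pi(A)$. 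Since $q$ and $h'(q)$ are complete $<$-types, this yields, for $b_1\leqslant b_2$ in $M$, the equivalence: ``$f(x)\notin[b_1,b_2]$''$\,\in q\,\Longleftrightarrow\,$``$f(x)\notin[\pi(b_1),\pi(b_2)]$''$\,\in h'(q)$. Moreover, since $\pi(M)=\widetilde{M}$ and $\pi(A)$ is a $\mathbb{Q}$-vector space, scaling $f$ and the interval by a common positive integer shows that condition $(2)$ is unchanged if $f$ is restricted to $\LC^n(\mathbb{Z})$; so both conditions can now be read with the same family of linear maps.

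After this reduction the lemma amounts to the assertion that $[b_1,b_2]\mapsto[\pi(b_1),\pi(b_2)]$ sends the closed bounded intervals of $M$ disjoint from $\div(A)$ onto the closed bounded intervals of $\widetilde{M}$ disjoint from $\pi(A)$. Concretely I need: (i) if $[b_1,b_2]\cap\div(A)=\emptyset$ then $[\pi(b_1),\pi(b_2)]\cap\pi(A)=\emptyset$; and (ii) every closed bounded $[\bar b_1,\bar b_2]$ in $\widetilde{M}$ with $[\bar b_1,\bar b_2]\cap\pi(A)=\emptyset$ equals $[\pi(b_1),\pi(b_2)]$ for some lift $b_1\leqslant b_2$ in $M$ with $[b_1,b_2]\cap\div(A)=\emptyset$ (any lift works, a one-point interval lifting to a one-point interval). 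Given (i) and (ii), $(1)\Longleftrightarrow(2)$ is then immediate, since a ``bad'' interval imposes no constraint on either side, and (i)--(ii) together with the formula translation transport the constraint coming from each ``good'' interval of one structure to the corresponding interval of the other. Part (i) is a short case analysis of the order on $\widetilde{M}=\faktor{M}{\mathbb{Z}\boldone}$: from $\pi(b_1)\leqslant\pi(a)\leqslant\pi(b_2)$ with $a\in A$ one deduces that one of $b_1$, $b_2$, or some translate $a+k\boldone\in A$ lies in $[b_1,b_2]$, contradicting $[b_1,b_2]\cap\div(A)=\emptyset$ since $A\subseteq\div(A)$.

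For (ii), the key device is to extend $\pi$ to a surjective ordered-group homomorphism $\tilde\pi\colon\div(M)\to\widetilde{M}$ whose kernel is the convex subgroup $\mathbb{Q}\boldone$ of $\div(M)$ generated by $\boldone$: this works because, $M$ being discrete, $\mathbb{Z}\boldone$ is the convex subgroup of $M$ generated by $\boldone$, so its divisible hull $\mathbb{Q}\boldone$ is convex in $\div(M)$ and $\div(M)/\mathbb{Q}\boldone$ is again divisible, i.e.\ is $\widetilde{M}$. Using that $A$ is relatively divisible in $M$ and that $\widetilde{M}$ is divisible one obtains $\div(A)\cap M=A$ and $\tilde\pi(\div(A))=\pi(A)$; hence any $x\in[b_1,b_2]\cap\div(A)$ (taken inside $\div(M)$) maps to $\tilde\pi(x)\in[\pi(b_1),\pi(b_2)]\cap\pi(A)$, which gives (ii) by contraposition. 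The main obstacle I anticipate is exactly this interval dictionary: an interval that is short in $M$ (of length a multiple of $\boldone$) collapses to a point in $\widetilde{M}$, so the naive length-preserving picture fails, and it is precisely the purity of $A$ in $M$---packaged into $\div(A)\cap M=A$ and $\tilde\pi(\div(A))=\pi(A)$---that makes the correspondence exact. The formula-level translation is then only careful bookkeeping built on the identities already established for \cref{ROAGHomeoModZ}.
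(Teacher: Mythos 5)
Your proof is correct and follows essentially the same route as the paper's: both rest on the formula translation established for \cref{ROAGHomeoModZ} together with a two-way dictionary between closed bounded intervals of $M$ avoiding $\div(A)$ and those of $\widetilde{M}$ avoiding $\pi(A)$, using in one direction that $\boldone\in A$ and in the other the relative divisibility of $A$ plus definable Euclidean division (which the paper carries out inline where you package it as the extended quotient $\tilde\pi\colon\div(M)\to\widetilde{M}$ with $\tilde\pi(\div(A))=\pi(A)$). The only organizational difference is that the paper argues by contraposition with explicit witnesses and clears denominators after lifting, while you scale on the $\widetilde{M}$ side first; the content is the same.
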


\begin{proof}
    Suppose $1$ fails, and let $f$, $I$ be a witness. As $A$ is a special subgroup, we have $\mathbb{Z}\subset A$, therefore the interval:
    $$[\min(I)-m\cdot\boldone, \max(I)+m\cdot\boldone]$$
has no point in $A$ for every $m<\omega$. It follows that:
$$I'=[\pi(\min(I)), \pi(\max(I))]$$
has no point in $\pi(A)$. As a result, $f$, $I'$ is a witness of the failure of $2$.
    \par Conversely, suppose $2$ fails, and let $f, I$ be a witness. We must have $f\neq 0$, otherwise $I$ would have a point in $\pi(A)$. Let $b_1$, $b_2$ be respective preimages of $\min(I)$, $\max(I)$ by $\pi$. Let $m>0$ be such that $m\cdot f\in\LC^n(\mathbb{Z})$. As $q\in\pi_<(F)$, and $f\neq 0$, $q(x)\models m\cdot f(x)\not\in M$. This would not be the case if we had $m\cdot \pi(b_1)=m\cdot \pi(b_2)$, therefore we have $m\cdot \pi(b_1)<m\cdot \pi(b_2)$, thus $m\cdot b_1<m\cdot b_2$, and $q(x)\models m\cdot f(x)\in I'=[m\cdot b_1, m\cdot b_2]$. In order to conclude the proof, it suffices to show that $I'$ has no point in $\div(A)$, which would establish the failure of $1$, with the witness $m\cdot f$, $I'$. Let $a\in A$, and $k>0$. If we had $\dfrac{a}{k}\in I'$, then we would have $\dfrac{1}{mk}\pi(a)\in I$. However, as $A$ is definably closed, and the quotient for the Euclidean division by $mk$ is $\emptyset$-definable, we would have $\dfrac{1}{mk}\pi(a)\in\pi(A)$, a contradiction.
\end{proof}

\begin{corollary}
    Suppose $M$ is discrete, and let $q\in\pi_<(F)$. Then the following are equivalent:
    \begin{enumerate}
        \item $q$ does not fork over $A$.
        \item $h'(q)$ does not fork over $\pi(A)$.
    \end{enumerate}
\end{corollary}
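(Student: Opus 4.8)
The plan is to factor the equivalence through \cref{lemmeROAGHomeoOrdreTmp}: I will prove separately that ``$q$ does not fork over $A$'' is equivalent to its condition $(1)$, and that ``$h'(q)$ does not fork over $\pi(A)$'' is equivalent to its condition $(2)$, after which the corollary is immediate. A preliminary remark: $\pi(A)=\faktor{A}{\mathbb{Z}}$ is a $\mathbb{Q}$-subspace of $\widetilde M=\faktor{M}{\mathbb{Z}}$. Indeed $A$ is special and $\mathbb{Z}\leqslant A$, and since Euclidean division by any $n$ is $\emptyset$-definable in $\mathcal{L}_P$ (hence $A$ is closed under it), $\pi(A)$ is divisible; so $\dcl^{\DOAG}(\pi(A))=\pi(A)$, and condition $(2)$ genuinely speaks of cuts of $\widetilde M\models\DOAG$ over $\pi(A)$.

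\emph{The DOAG side.} I claim that a complete global type $p\in S^n_{\DOAG}(\widetilde M)$ does not fork over a $\mathbb{Q}$-subspace $E\leqslant\widetilde M$ if and only if $p(x)\models f(x)\notin I$ for every non-zero $f\in\LC^n(\mathbb{Q})$ and every closed bounded interval $I$ with bounds in $\widetilde M$ and no point in $E$. The forward direction is \cref{typePasInvariant} applied in the o-minimal structure $\widetilde M$ (the two bounds of such an $I$ have the same cut over $E$, hence the same $1$-type over $E$, so $I$ divides over $E$; pulling back along $f$ preserves dividing). Conversely, if $p$ avoids all such intervals, then for each non-zero $f$ the pushforward $f_\ast p$ is a global $1$-type avoiding all ``bad'' $E$-intervals, so by the argument of \cref{divisionTriviale0} its cut over $\widetilde M$ is $\ct_<$ or $\ct_>$ of the $E$-type-definable set $\ct(f(c)/E)$, whence $f_\ast p$ is $\Aut(\widetilde M/E)$-invariant; by quantifier elimination $p$ is then $\Aut(\widetilde M/E)$-invariant, and an invariant complete global type never forks over a set it is invariant over. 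Taking $E=\pi(A)$ yields ``$h'(q)$ does not fork over $\pi(A)$'' $\iff(2)$.

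\emph{The ROAG side.} For ``$q$ does not fork over $A$'' $\Longrightarrow(1)$, argue contrapositively exactly as in the proof of $(4)\Rightarrow(5)$ of \cref{ordreOrbites}: if $(1)$ fails, $q$ implies $f(x)\in[b_1,b_2]$ for some non-zero $f\in\LC^n(\mathbb{Z})$ and $b_1,b_2\in M$ with $[b_1,b_2]$ having no point in $\div(A)$; then $b_1,b_2$ are $\mathbb{Q}$-free over $A$, so by \cref{bijectionQLibre} $\tp_<(b_1/A)$ is consistent with $x>b_2$, and as in \cref{divisionTriviale1} the formula $f(x)\in[b_1,b_2]$ divides over $A$, so $q$ forks over $A$. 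For the converse, assume $(1)$. Via the homeomorphism of \cref{bijectionQLibre}, let $\hat q\in F$ be the complete global type corresponding to $(q,(\tp_l(0/M))_l)$; it extends $q$. By \cref{lemmeROAGHomeoOrdreTmp}, $(1)$ gives $(2)$, so by the DOAG side $h'(q)$ is $\Aut(\widetilde M/\pi(A))$-invariant. Any $\sigma\in\Aut(M/A)$ fixes $\mathbb{Z}=\dcl(\emptyset)$ setwise and descends to $\tilde\sigma\in\Aut(\widetilde M/\pi(A))$ with $h'(\sigma q)=\tilde\sigma(h'(q))$ (checked on atomic $<$-formulas, as in the last paragraph of the proof of \cref{ordreOrbites}); since $h'$ is injective, $\sigma q=q$. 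As the $l$-components $\tp_l(0/M)$ are $\Aut(M)$-invariant, $\hat q$ itself is $\Aut(M/A)$-invariant, hence does not fork over $A$, and therefore neither does $q$, being contained in $\hat q$.

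I expect the delicate step to be the converse on the ROAG side: one must both exhibit the global witness $\hat q$ (routine, thanks to \cref{bijectionQLibre}) and, above all, transfer $A$-invariance \emph{back} from $\widetilde M$ to $M$ through $h'$, keeping careful track that $q$ is only a $<$-type rather than a full quantifier-free type and that reduction modulo $\mathbb{Z}$ is an equivariant homeomorphism onto its image. The DOAG characterization of global non-forking used on the other side is routine given \cref{thmTechniqueEnonce} and the analysis of cuts in \cref{sectOAG}.
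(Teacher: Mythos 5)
Your proof is correct and follows essentially the same route as the paper: both reduce the statement to \cref{lemmeROAGHomeoOrdreTmp} by showing that non-forking of $q$ over $A$ is equivalent to its condition (1) and non-forking of $h'(q)$ over $\pi(A)$ to its condition (2). The paper obtains these two sub-equivalences by citing condition 5 of \cref{ordreOrbites} and \cref{defCutIndep} together with \cref{thmTechniqueEnonce}, whereas you re-derive them directly via invariance of global types (which, since the types at hand are already global, lets you bypass the hard theorem on the DOAG side and transfer invariance back through $h'$ on the ROAG side); this is a harmless, slightly more self-contained variation of the same argument.
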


\begin{proof}
    By condition $5$ of \cref{ordreOrbites}, $q$ does not fork over $A$ if and only if condition $1$ of \cref{lemmeROAGHomeoOrdreTmp} holds. By \cref{defCutIndep}, the second condition of \cref{lemmeROAGHomeoOrdreTmp} exactly states that $h'(q)$ avoids all the definable sets witnessing cut-dependence (i.e. forking-dependence by \cref{thmTechniqueEnonce}) from $\pi(M)$ over $\pi(A)$, therefore $h'(q)$ does not fork over $\pi(A)$ if and only if condition $2$ of \cref{lemmeROAGHomeoOrdreTmp} holds.
\end{proof}

\begin{lemma}
    Suppose $M$ is discrete. Let $q\in S^n_\DOAG(\widetilde{M})$ be the type of a family which is $\mathbb{Q}$-free over $\widetilde{M}$. Then there exists $p\in\pi_<(F)$ such that $h'(p)=q$.
\end{lemma}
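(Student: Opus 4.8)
The plan is to lift a realization of $q$ from a quotient of a saturated elementary extension of $M$ back up to that extension. Fix an elementary extension $N\succeq M$ that is $|M|^+$-saturated, and let $\pi_N\colon N\longrightarrow \faktor{N}{\mathbb{Z}}=:\widetilde N$ be the quotient map. By the Euclidean-division argument used just above (applied to $N$ instead of $M$), $\widetilde N$ is a non-trivial divisible ordered Abelian group, hence $\widetilde N\models\DOAG$. Since $\mathbb{Z}=\dcl(\emptyset)$ is $\bigvee$-definable over $\emptyset$, the quotient $\widetilde M=\faktor{M}{\mathbb{Z}}$ sits inside $\widetilde N$ as an ordered subgroup, and as $\DOAG$ is model-complete this inclusion is elementary, $\widetilde M\preceq\widetilde N$; moreover, again because $\mathbb{Z}$ is $\bigvee$-definable over $\emptyset$, a standard pull-back argument shows that $\widetilde N$ inherits $|M|^+$-saturation from $N$ (a type of $\widetilde N$ over fewer than $|M|^+$ parameters lifts, together with finitely many witnesses for its equalities, to a finitely satisfiable partial $\mathcal{L}_P$-type in $N$, whose realization projects down). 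As $q$ is a type over $\widetilde M$ and $|\widetilde M|\leqslant|M|$, we may therefore fix a realization $\bar\alpha=(\bar\alpha_i)_{i<n}$ of $q$ inside $\widetilde N$; by hypothesis $\bar\alpha$ is $\mathbb{Q}$-free over $\widetilde M$.

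Next I would pick an arbitrary lift $\alpha=(\alpha_i)_{i<n}\in N^n$ with $\pi_N(\alpha_i)=\bar\alpha_i$ for each $i$, and set $p=\tp_<(\alpha/M)$. The key point is that $\alpha$ is $\mathbb{Q}$-free over $M$: otherwise there would be $\lambda\in\mathbb{Q}^n\setminus\{0\}$ and $k\in\mathbb{Z}_{>0}$ with $k\lambda\in\mathbb{Z}^n$ and $\sum_i k\lambda_i\,\alpha_i\in M$, and applying $\pi_N$ would give $\sum_i k\lambda_i\,\bar\alpha_i=\pi_N\!\bigl(\sum_i k\lambda_i\,\alpha_i\bigr)\in\widetilde M$, so $\sum_i\lambda_i\,\bar\alpha_i\in\div(\widetilde M)$, contradicting $\mathbb{Q}$-freeness of $\bar\alpha$ over $\widetilde M$. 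Consequently the complete global $\mathcal{L}_P$-type $\tp(\alpha/M)$ implies the partial type over $M$ expressing $\mathbb{Q}$-freeness over $M$ (namely $\bigwedge\{kf(x)\neq m:\ f\in\LC^n(\mathbb{Z})\setminus\{0\},\ k\geqslant 1,\ m\in M\}$), so all its realizations are $\mathbb{Q}$-free over $M$; hence $\tp(\alpha/M)\in F$ and therefore $p=\pi_<(\tp(\alpha/M))\in\pi_<(F)$.

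Finally, by the definition of $h'$ in \cref{ROAGHomeoModZ}, $h'(p)=\tp^{\DOAG}(\pi_N(\alpha)/\widetilde M)=\tp^{\DOAG}(\bar\alpha/\widetilde M)=q$, the last equality because $\bar\alpha\models q$ and $\widetilde M\preceq\widetilde N$. This exhibits the required preimage $p$.

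The argument is mostly routine bookkeeping, and the one step I expect to be the genuine obstacle — the one I would take care to spell out in detail — is the transfer of saturation and elementarity to the quotient $\widetilde N=\faktor{N}{\mathbb{Z}}$: one must argue that $\widetilde M\preceq\widetilde N$ with $\widetilde N$ still $|M|^+$-saturated, so that $q$ is genuinely realized inside a quotient of an honest elementary extension of $M$, rather than only in some abstract model of $\DOAG$. Everything else (lifting, $\mathbb{Q}$-freeness, and unwinding the definition of $h'$) is straightforward.
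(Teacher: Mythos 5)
Your argument is correct, but it takes a genuinely different route from the paper's. The paper proves surjectivity by compactness entirely inside $M$: a finite fragment of $q$ is a consistent $\DOAG$-formula with parameters in $\widetilde M$, hence (since $\widetilde M\models\DOAG$ and the theory is complete) realized by some $\delta\in\widetilde M$ itself, and an arbitrary lift of $\delta$ to $M$ satisfies the corresponding finite fragment of the pulled-back partial type; compactness then produces $p$. That argument needs no elementary extension and no saturation of any quotient. You instead realize $q$ in one stroke inside $\widetilde N$ for an $|M|^+$-saturated $N\succeq M$ and lift the whole realization, which obliges you to establish that $\widetilde M\preceq\widetilde N$ (via model-completeness of $\DOAG$) and that $\widetilde N$ is $|M|^+$-saturated --- the latter is precisely the proposition the paper proves a few lines below for a different purpose, so nothing you use is unavailable, but your route imports noticeably more machinery than the paper's two-line lifting argument. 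Your verification that the lift $\alpha$ is $\mathbb{Q}$-free over $M$ (pushing a putative relation down through $\pi_N$) and the identification $h'(p)=q$ are both sound; the only point worth keeping explicit is that $h'$ is defined on $\pi_<(F)$, whose elements are realized only in elementary extensions of $M$, so your reading $h'(\tp_<(\alpha/M))=\tp^{\DOAG}(\pi_N(\alpha)/\widetilde M)$ for $\alpha\in N^n$ is the intended interpretation of the formula in \cref{ROAGHomeoModZ}.
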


\begin{proof}
Let $I$ be a finite set. For each $i\in I$, let $b_i, b'_i\in M$, and let $f_i\in\LC(\mathbb{Q})$, such that $q(x)\models \pi(b_i)<f_i(x)<\pi(b'_i)$. By compactness, it suffices to find a tuple $d$ in $M$ such that $\pi(b_i)<f_i(\pi(d))<\pi(b'_i)$ for every $i$. By consistency of $q$, there exists a tuple $\delta$ in $\widetilde{M}$ such that $\pi(b_i)<f_i(\delta)<\pi(b'_i)$ for every $i$. We simply choose $d$ an arbitrary lift of $\delta$ by $\pi$.
\end{proof}

It follows that every type over $\widetilde{M}$ which extends $h'(p_<)$ and which does not fork over $\pi(A)$ has a lift in $\pi_<(F)$ by $h'$ which does not fork over $A$. As a result, $h'$ restricts to a homeomorphism between the space of types in $\pi_<(F)$ which extend $p_<$ and which do not fork over $A$, and the space of types in $S_\DOAG(\widetilde{M})$ which extend $h'(p_<)$ and which do not fork over $\pi(A)$.
\par Now we show that $\div(M)$ or $\widetilde{M}$ is sufficiently saturated:

\begin{proposition}
    Let $\kappa$ be an infinite cardinal such that $M$ is $\kappa$-saturated. If $M$ is dense, then $\div(M)$ is $\kappa$-saturated.
\end{proposition}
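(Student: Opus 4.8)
The plan is to reduce to realizing $1$-types and to lift such a type from $\div(M)$ down to $M$ by hand, using both density and regularity of $M$.

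\emph{Reduction to $1$-types.} A structure is $\kappa$-saturated as soon as it realizes every complete $1$-type over a parameter set of size $<\kappa$: one realizes an $n$-type coordinate by coordinate, and adjoining one realization at a time keeps the parameter set of size $<\kappa$ since $\kappa$ is infinite. So fix $q(x)\in S^{\DOAG}_1(E)$ with $E\subseteq\div(M)$ and $|E|<\kappa$. Writing each element of $E$ as a quotient of an element of $M$, choose $E_0\subseteq M$ with $|E_0|\leqslant|E|$ and $E\subseteq\div\langle E_0\rangle=:V$; then $V=\dcl^{\DOAG}(E)$, so $q$ is exactly a cut of the ordered $\mathbb{Q}$-vector space $V$, say with lower part $L$ and upper part $U$, $L\sqcup U=V$. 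If this cut is named by an element of $V\subseteq\div(M)$ we are done, so assume it is proper ($L$ has no greatest element, $U$ no least).

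\emph{Lifting the cut to $M$.} Define a partial $\mathcal{L}_P$-type $p(y)$ with parameters in $E_0$ by declaring, for every integer $n>0$ and every $d\in\langle E_0\rangle$ (which is a term in $E_0$): the formula ``$ny>d$'' lies in $p$ if $d/n\in L$, and ``$ny<d$'' lies in $p$ if $d/n\in U$ (exactly one occurs, as $d/n\in V=L\sqcup U$). Two points must be checked. First, $p\models y\notin V$: if $y=d_0/n_0\in V$ then $d_0/n_0$ is not extremal in its side of the cut, so there is $d^*=d/n^*$ on the same side but strictly closer to the cut, and the membership of ``$n^*y>d$'' or ``$n^*y<d$'' in $p$ is then violated by $d_0/n_0$. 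Second, and this is the heart of the argument, $p$ is finitely satisfiable in $M$. A finite fragment of $p$ involves finitely many moduli; let $N$ be their common multiple and rescale each bound $d/n$ appearing to $(N/n)d\in\langle E_0\rangle$. The fragment then asks for $y\in M$ with $u_i<Ny$ for finitely many $u_i\in\langle E_0\rangle$ having $u_i/N\in L$, and $Ny<v_j$ for finitely many $v_j\in\langle E_0\rangle$ having $v_j/N\in U$. Since $u_i/N\in L<U\ni v_j/N$ we get $u:=\max_i u_i<\min_j v_j=:v$ in $M$ (with $u=-\infty$, resp. $v=+\infty$, if the corresponding list is empty). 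As $M$ is dense, the interval $(u,v)$ of $M$ is infinite, in particular of size $\geqslant N$, so by regularity of $M$ (\cref{defROAG}) it meets $NM$; any $y\in M$ with $Ny\in(u,v)$ satisfies the fragment.

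\emph{Conclusion.} Since $|E_0|<\kappa$ and $M$ is $\kappa$-saturated, $p$ is realized by some $b\in M\subseteq\div(M)$. By construction $b\notin V$, and for every $e=d/n\in V$ we have $b>e\iff nb>d\iff d/n\in L$, and symmetrically for $<$; so the cut of $b$ over $V$ is precisely $(L,U)$. By quantifier elimination in $\DOAG$ — a $1$-type over $V$ is determined by the cut of the variable, as recorded by the displayed equivalence following \cref{DOAGDeclarationSEV} — this gives $\tp^{\DOAG}(b/V)=q$, hence $\tp^{\DOAG}(b/E)=q$, as wanted. The main obstacle is the middle paragraph: the naive ``uniform'' lift, asking $b$ itself to realize the cut, does not force enough about the multiples $nb$, whereas asking $b$ and every $nb$ to realize the \emph{same} cut is inconsistent; the remedy is to rescale the cut modulus by modulus, after which density supplies a long interval and regularity an $N$-divisible point inside it — exactly the place where the hypothesis that $M$ is dense (and not discrete) is used.
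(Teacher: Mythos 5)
Your argument is correct and takes a genuinely different route from the paper's. The paper also reduces to realizing a single cut, but it phrases the cut as a consistent family of open intervals $(I_i)_i$ of $\div(M)$ and shrinks each $I_i$ to an interval with endpoints in $M$; to keep the shrunken family consistent it must first manufacture, by a separate density-plus-saturation argument, a uniform nonzero $\epsilon\in M$ with $\Delta(\epsilon)<\Delta(\sup I_i-\inf I_i)$ for every $i$, and only then does regularity produce the new endpoints inside the two $\epsilon$-buffers. You instead clear denominators formula by formula (replacing $y>d/n$ by $ny>d$), so that consistency only ever has to be checked on a finite fragment, where a single common denominator $N$ suffices and density plus regularity directly yield a point of $NM$ in the relevant interval; the auxiliary $\epsilon$ disappears and $\kappa$-saturation of $M$ is invoked once at the end. (Both proofs in fact establish the stronger statement that every such cut is already realized by an element of $M$ itself.) One slip to repair: your dichotomy ``realized in $V$ / proper'' omits the cuts where $L$ has a greatest element or $U$ a least one while the type is still not realized --- for instance the type of a positive element infinitesimal with respect to $V$. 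These types must also be realized for saturation. Fortunately your construction of $p$ and your finite-satisfiability argument apply to them verbatim (the rescaled bounds still satisfy $\max_i u_i<\min_j v_j$ because $L<U$), and your concluding computation of $\ct(b/V)$ from the biconditionals $b>d/n\iff d/n\in L$ never uses properness; only the stated case split needs correcting, not the proof.
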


\begin{proof}
    Suppose $M$ is dense. Let $(I_i)_{i}$ be a consistent family of strictly less than $\kappa$ many intervals from $\div(M)$. It suffices to show that $\bigcap\limits_i I_i$ has a point in $\div(M)$ (as saturation for unary types implies saturation). We may assume every intersection of finitely many intervals from the family is also in the family. If some $I_i$ has empty interior, then it is a singleton, and the proof is trivial, therefore we may assume that every $I_i$ is open of non-empty interior.

    \par As $M$ is $\kappa$-saturated and dense, there exists a non-zero $\epsilon\in M$ such that $\Delta(\epsilon)<\Delta(\sup(I_i)-\inf(I_i))$ ($\Delta$ is defined in \cref{defArchVal}) for all $i$. In particular:

    $$
    \inf(I_i)<\inf(I_i)+|\epsilon|<\sup(I_i)-|\epsilon|<\sup(I_i)
    $$
    
    For each $i$, let $m_i<\omega$ be sufficiently large, such that $m_i\cdot\inf(I_i)$, $m_i\cdot (\inf(I_i)+|\epsilon|)$, $m_i\cdot(\sup(I_i)-|\epsilon|)$, $m_i\cdot\sup(I_i)$ are all in $M$. By regularity, as both the intervals $]m_i\cdot\inf(I_i), m_i\cdot (\inf(I_i)+|\epsilon|)[$ and $] m_i\cdot(\sup(I_i)-|\epsilon|), m_i\cdot\sup(I_i)[$ are infinite, they have a point in $m_i\cdot M$, therefore there exists $a_i, b_i\in M$ such that:
    $$
    \inf(I_i)<a_i<\inf(I_i)+|\epsilon|<\sup(I_i)-|\epsilon|<b_i<\sup(I_i)
    $$
    Let $I'_i=]a_i, b_i[$, which is an interval of $M$. We have $I'_i\subset I_i$ for every $i$. It suffices to show that the family $(I'_i)_i$ has a point in $M$. By saturation and density, it suffices to show that $a_i<b_j$ for every $i, j$. It suffices to show that $\inf(I_i)+|\epsilon|<\sup(I_j)-|\epsilon|$. As $(I_i)_i$ is closed under finite intersection, and by definition of $\epsilon$, we have:
    $$
    \begin{array}{ccl}
    2\cdot|\epsilon| & < &\sup(I_i\cap I_j)-\inf(I_i\cap I_j)\\
    &=&\min(\sup(I_i),\sup(I_j))-\max(\inf(I_i), \inf(I_j))\\
    & \leqslant & \sup(I_j)-\inf(I_i)
    \end{array}
    $$
    hence the result is proved.
\end{proof}

\begin{proposition}
    Let $\kappa$ be an infinite cardinal such that $M$ is $\kappa$-saturated, and suppose $M$ is discrete. Then $\widetilde{M}$ is $\kappa$-saturated.
\end{proposition}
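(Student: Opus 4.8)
The plan is to reduce to realising $1$-types in $\widetilde{M}$, lift such a type along $\pi$ to a partial $\mathcal{L}_P$-type over $M$ (after clearing denominators), check finite satisfiability in $M$ by an elementary Euclidean-division estimate, and conclude by $\kappa$-saturation of $M$.

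First I would invoke the standard fact that a structure is $\kappa$-saturated as soon as it realises every $1$-type over every parameter set of size $<\kappa$ (realise the variables one at a time). So fix $B\subseteq\widetilde{M}$ with $|B|<\kappa$ and $p\in S_1(B)$; we must realise $p$ in $\widetilde{M}$. Since $\widetilde{M}\models\DOAG$ is o-minimal and eliminates quantifiers in the language of ordered $\mathbb{Q}$-vector spaces, $\dcl(B)=\mathbb{Q}\langle B\rangle\subseteq\widetilde{M}$, and $p$ is either the type $x=t_0$ of a point $t_0\in\mathbb{Q}\langle B\rangle$, which is realised by $t_0\in\widetilde{M}$ (recall $\widetilde{M}$ is divisible), or a proper cut $(L,R)$ of $\mathbb{Q}\langle B\rangle$ with $L<R$; the subcases $L=\emptyset$ or $R=\emptyset$ are easy, so assume $L,R\neq\emptyset$.

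Pick $B'\subseteq M$ with $\pi\restriction B'$ a bijection onto $B$, so $|B'|=|B|<\kappa$ and $\mathbb{Z}\langle B'\rangle\subseteq\dcl_M(B')$. For $t\in L$ write $t=\pi(m_t)/N_t$ with $N_t\in\mathbb{N}_{>0}$, $m_t\in\mathbb{Z}\langle B'\rangle$ (clear denominators), and likewise $s=\pi(m_s)/N_s$ for $s\in R$; using divisibility of $\widetilde{M}$ one checks that for $d\in M$ the condition that $\pi(d)$ realises $p$ is equivalent to $d$ satisfying the partial $\mathcal{L}_P$-type
$$\Sigma(x)=\bigcup_{t\in L}\{\,N_t x-m_t>k\cdot\boldone : k\in\mathbb{N}\,\}\cup\bigcup_{s\in R}\{\,m_s-N_s x>k\cdot\boldone : k\in\mathbb{N}\,\},$$
which is a type over $\dcl_M(B')$. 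The key step is that $\Sigma$ is finitely satisfiable in $M$: given finitely many $t_1,\dots,t_r\in L$, $s_1,\dots,s_l\in R$ and a bound $K$, set $t^*=\max_i t_i$, $s^*=\min_j s_j$ in $\widetilde{M}$; consistency of $p$ gives $t^*<s^*$, and over a common denominator $N$ this reads $t^*=\pi(u)/N<\pi(v)/N=s^*$, i.e. $\Delta(v-u)>\Delta(\boldone)$. Since $M$ is an elementary extension of $\mathbb{Z}$, Euclidean division by $2N$ gives $v-u=2Nq+r\cdot\boldone$ with $q\in M$ and $q>k\cdot\boldone$ for all $k$, and dividing $u$ by $N$ gives $u=Na+r_u\cdot\boldone$; then $d=a+q$ satisfies $Nd-u\ge N(q-\boldone)$ and $v-Nd\ge Nq$, whence $\pi(u)<\pi(Nd)<\pi(v)$, i.e. $t^*<\pi(d)<s^*$, so $t_i<\pi(d)<s_j$ for all $i,j$, which covers the chosen finite fragment of $\Sigma$. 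Finally, being consistent, $\Sigma$ extends to a complete type over $\dcl_M(B')$, hence (types over $B'$ and over $\dcl_M(B')$ having the same realisations) to a type over $B'$, realised in $M$ by some $d$ since $M$ is $\kappa$-saturated and $|B'|<\kappa$; then $\pi(d)$ realises $p$.

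The only genuine obstacle is bookkeeping: the cut $p$ really does involve arbitrarily large denominators, so the lifted type $\Sigma$ is infinite even when $B$ is finite, and one must be careful (this is exactly where the passage through $\dcl_M(B')$ and the fact $M\succ\mathbb{Z}$ enter) that $\kappa$-saturation of $M$ still applies; once that is set up, the Euclidean-division estimate witnessing finite satisfiability is routine.
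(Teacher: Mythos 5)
Your proposal is correct and follows essentially the same route as the paper: reduce to unary types, lift each strict inequality of $\widetilde{M}$ along $\pi$ to the family of inequalities shifted by all integer multiples of $\boldone$, check finite consistency, and apply $\kappa$-saturation of $M$. The only difference is cosmetic: the paper verifies finite consistency by simply lifting a point of the (non-empty) finite intersection in $\widetilde{M}$, whereas your Euclidean-division construction builds a witness by hand, which works but is not needed.
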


\begin{proof}
    Let $(I_i)_i$ be a consistent family of strictly less than $\kappa$ many open intervals of non-empty interior of $\widetilde{M}$. Let $a_i, b_i\in M$ be such that $I_i=]\pi(a_i), \pi(b_i)[$. Then the family of every interval of the form $]a_i+k\cdot\boldone, b_i+k'\cdot\boldone[$, with $k, k'\in\mathbb{Z}$ is consistent in $M$. It can be defined with as many parameters as the initial family, therefore its intersection has a point $d\in M$. We clearly have $\pi(d)\in\bigcap\limits_i I_i$, concluding the proof.
\end{proof}

\begin{proposition}\label{propHomeoOrdre}
    The space of types in $\pi_<(F)$ which extend $p_<$ and do not fork over $A$ is homeomorphic to the space described in \cref{DOAGDescriptionFinaleHomeo}.
\end{proposition}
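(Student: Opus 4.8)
The plan is to assemble the required homeomorphism from the pieces already set up in this subsection, via one case split on whether $M$ is dense or discrete. First I would dispose of the degenerate case: if $p_<$ forks over $A$ the space in question is empty, so I may assume $p_<$ does not fork over $A$; then by \cref{ordreOrbites} (dense case) and the corollary to \cref{lemmeROAGHomeoOrdreTmp} (discrete case) the associated \DOAG-type over $\div(M)$ (resp. over $\widetilde{M}$) does not fork over $\div(A)$ (resp. over $\pi(A)$). The point already made in the prose preceding the statement is that the maps $h$ and $h'$ are continuous and, once restricted to the non-forking parts — using \cref{realiserExtNonDev} for surjectivity and compactness for the homeomorphism — identify the space of types in $\pi_<(F)$ extending $p_<$ that do not fork over $A$ with the space $Y_0$ of global \DOAG-types over $\bar M_0$ extending $\tp^{\DOAG}(c_0/B_0)$ that do not fork over $A_0$, where I write uniformly $\bar M_0=\div(M)$, $c_0=c$, $A_0\leqslant B_0 = \div(A)\leqslant\div(B)$ in the dense case (via $h$), and $\bar M_0=\widetilde{M}$, $c_0=\pi(c)$, $A_0\leqslant B_0=\pi(A)\leqslant\pi(B)$ in the discrete case (via $h'$, using \cref{ROAGHomeoModZ} and \cref{lemmeROAGHomeoOrdreTmp}). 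In both cases $A_0\leqslant B_0$ are $\mathbb{Q}$-vector subspaces, $c_0$ is $\mathbb{Q}$-free over $B_0$, and $\indep{c_0}{A_0}{B_0}{\cut}$ (the latter being a restatement of the non-forking hypothesis). So everything reduces to describing $Y_0$.

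Next I would invoke the two propositions just proved: $\bar M_0\models\DOAG$ is $\lambda$-saturated. Since $\lambda=\max(|B|,2^{\aleph_0})^+>|A_0|$, it is in particular $|A_0|^+$-saturated. I would then fix a sufficiently saturated, strongly homogeneous model $\bar M\models\DOAG$ with $\bar M_0$ an elementary substructure, and check that restriction to $\bar M_0$ is a homeomorphism from the space of global $\Aut(\bar M/A_0)$-invariant extensions of $\tp^{\DOAG}(c_0/B_0)$ onto $Y_0$. Well-definedness uses $\indep{}{}{}{\inv}\subseteq\indep{}{}{}{\f}$ (\cref{inclusionsIndepTriviales}) together with monotonicity of non-forking under restriction of the parameter set; surjectivity uses the abstract equality $\indep{}{}{}{\f}=\indep{}{}{}{\inv}$ in \DOAG~(\cref{DOAGRappelsNIP}), so that every element of $Y_0$, being a non-forking type over the small set $\bar M_0$, has a global $\Aut(\bar M/A_0)$-invariant extension restricting back to it; injectivity uses the remark recalled above, that an $A_0$-invariant global type over $\bar M$ is uniquely determined by its restriction to the $|A_0|^+$-saturated elementary substructure $\bar M_0$; and the homeomorphism follows since both spaces are closed subspaces of Stone spaces, hence compact Hausdorff, and the map is a continuous bijection.

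Finally, with $\bar M$ sufficiently saturated and strongly homogeneous, $A_0\leqslant B_0$ $\mathbb{Q}$-vector subspaces, $c_0$ $\mathbb{Q}$-free over $B_0$ and $\indep{c_0}{A_0}{B_0}{\cut}$, I would apply \cref{DOAGDescriptionFinaleHomeo} directly to $\tp^{\DOAG}(c_0/B_0)$ (the reduction to a $\val^3_{B_0}$-separated representative via \cref{existenceFormeNormale} and $A_0$-interdefinability is already built into that statement) to conclude that the space of global $\Aut(\bar M/A_0)$-invariant extensions is homeomorphic to a space of the form $\prod_i\coprod_j F_{ij}$ with the $F_{ij}$ closed sets of parameter-free \DOAG-types. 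Composing the three homeomorphisms — $h$ or $h'$, then the inverse of the restriction map, then the one of \cref{DOAGDescriptionFinaleHomeo} — finishes the proof. I expect the main obstacle is not any single step, since each is routine given what precedes it, but rather the need to route everything through $\bar M$ rather than pretend $\bar M_0$ is itself a monster: this is exactly the "subtle obstruction" flagged before the statement, and the care needed is that $\div(M)$ (resp. $\widetilde{M}$) really is saturated enough — furnished by the two preceding propositions — together with the observation (the unnamed remark above) that strong homogeneity of the intermediate model is irrelevant because $A_0$-invariant global types extend uniquely to any $|A_0|^+$-saturated elementary extension.
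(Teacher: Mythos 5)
Your proposal is correct and follows essentially the same route as the paper: the paper's proof of this proposition is literally the one-liner "the homeomorphism is given by $h$ if $M$ is dense, and $h'$ if it is discrete," with all the work you spell out (identification of the non-forking part of $\pi_<(F)$ with the non-forking extensions over $\div(M)$ resp.\ $\widetilde{M}$ via \cref{realiserExtNonDev}, the saturation of these intermediate structures, and the unnamed remark that invariance plus $|A|^+$-saturation makes the restriction from a monster a bijection without needing strong homogeneity) carried out in the prose and lemmas of the preceding subsection. Your reconstruction of that chain, including the final appeal to \cref{DOAGDescriptionFinaleHomeo}, matches the intended argument.
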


\begin{proof}
    The homeomorphism is given by $h$ if $M$ is dense, and $h'$ if it is discrete.
\end{proof}

\subsection{Partial types using a prime of finite index}\label{sousSectFini}
\par For the rest of this section, we fix a prime $l$.
\begin{proposition}\label{indicePuissance}
Let $G$ be a torsion-free Abelian group. Then, for all $N>0$, we have in $G^{eq}$ a $\emptyset$-definable group isomorphism between the two definable groups $\faktor{G}{lG}$ and $\faktor{l^NG}{l^{N+1}G}$.
\end{proposition}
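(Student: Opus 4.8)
The plan is to exhibit the isomorphism explicitly as the map induced by multiplication by $l^N$. First I would introduce the $\emptyset$-definable group homomorphism $\phi:\ G\longrightarrow G$, $x\longmapsto l^N x$. Its image is the definable subgroup $l^N G=\{y:\exists x\ y=l^N x\}$ (the solution set of $\mathfrak{d}_{l^N}$), and $\phi$ restricts to a surjective homomorphism $G\longrightarrow l^N G$. Since $\phi(lG)=l^{N+1}G$, the homomorphism $\phi$ carries $lG$ into $l^{N+1}G$, so it descends to a well-defined group homomorphism
$$\bar\phi:\ \faktor{G}{lG}\longrightarrow\faktor{l^N G}{l^{N+1}G},\qquad (x\bmod lG)\longmapsto (l^N x\bmod l^{N+1}G).$$

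Next I would check that $\bar\phi$ is a bijection. Surjectivity is immediate from surjectivity of $\phi$ onto $l^N G$: every coset in $\faktor{l^N G}{l^{N+1}G}$ is represented by some $l^N x$. For injectivity, suppose $l^N x\in l^{N+1}G$, say $l^N x=l^{N+1}y$ with $y\in G$; then $l^N(x-ly)=0$, and torsion-freeness of $G$ forces $x-ly=0$, i.e.\ $x\in lG$. This is the one and only place where the hypothesis that $G$ is torsion-free is used, and it is the crux of the argument — though it is entirely elementary.

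Finally I would address definability in $G^{eq}$. Both $\faktor{G}{lG}$ and $\faktor{l^N G}{l^{N+1}G}$ are quotients of $\emptyset$-definable subgroups of $G$ (namely $G$ and $l^N G$) by $\emptyset$-definable subgroups ($lG$ and $l^{N+1}G$), hence they are $\emptyset$-definable sorts of $G^{eq}$; alternatively one may realize $\faktor{l^N G}{l^{N+1}G}$ as the definable subgroup $\faktor{l^N G}{l^{N+1}G}\leqslant\faktor{G}{l^{N+1}G}$. The graph of $\bar\phi$ is the image of the $\emptyset$-definable set $\{(x,l^N x):x\in G\}$ under the ($\emptyset$-definable) pair of quotient maps, so it is $\emptyset$-definable; equivalently, ``$\bar\phi$ sends the class of $x$ to the class of $l^N x$'' is a condition expressible without parameters. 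Thus $\bar\phi$ is a $\emptyset$-definable group isomorphism, as required. I do not anticipate any genuine obstacle beyond careful bookkeeping of the $(-)^{eq}$ formalism (that a definable map respecting two definable equivalence relations descends to a definable map on the quotients): the only mathematical input is torsion-freeness, invoked exactly once for injectivity.
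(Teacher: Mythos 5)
Your proof is correct and is essentially the paper's argument run in the opposite direction: the paper defines the inverse map $l^Nx \bmod l^{N+1}G \longmapsto x\bmod lG$, using torsion-freeness for well-definedness where you use it for injectivity, but the key computation ($l^Nx-l^Ny\in l^{N+1}G\Rightarrow x-y\in lG$) is identical.
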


\begin{proof}
Let $x, y\in G$, and $N>0$. Suppose $l^Nx-l^Ny\in l^{N+1}G$. Then, as $G$ is torsion-free, we have $x-y\in lG$, thus the map:
$$f:\ l^Nx\mod l^{N+1}G\longmapsto x\mod lG$$
is a well-defined group homomorphism, which is clearly surjective. Now, if $x\not\in lG$, then $l^Nx\not\in l^{N+1}G$, so the map is injective.
\end{proof}

\begin{corollary}\label{orbitesBorneesIndiceFini}
If $lG$ has a finite index $d$, then $\bigcap\limits_N l^NG$ has index at most equal to $2^{\aleph_0}$.
\end{corollary}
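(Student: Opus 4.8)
The plan is to realize $\faktor{G}{\bigcap_N l^NG}$ as a subgroup of an inverse limit of finite groups and then bound the cardinality of that inverse limit. First I would dispose of the trivial case $d=1$: there $lG=G$, so $l^NG=G$ for all $N$ and $\bigcap_N l^NG=G$ has index $1$. So from now on assume $d\geqslant 2$. The starting observation is that \cref{indicePuissance} gives, for every $N\geqslant 0$, a group isomorphism $\faktor{l^NG}{l^{N+1}G}\cong\faktor{G}{lG}$, so each successive quotient in the chain $G\supseteq lG\supseteq l^2G\supseteq\cdots$ has exactly $d$ elements; telescoping, $[G:l^NG]=d^N$ is finite for every $N$.

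The key step is to consider the canonical homomorphism $\varphi\colon G\longrightarrow\varprojlim_N\faktor{G}{l^NG}$ induced by the projections $G\to\faktor{G}{l^NG}$ (this is a genuine inverse system since the $l^NG$ are descending). Its kernel is precisely $\bigcap_N l^NG$, so $\faktor{G}{\bigcap_N l^NG}$ embeds into $L:=\varprojlim_N\faktor{G}{l^NG}$. Now $L$ is a subset of the product $\prod_N\faktor{G}{l^NG}$, which is a product of countably many finite sets of sizes $d^N$; hence $|L|\leqslant\aleph_0^{\aleph_0}=2^{\aleph_0}$. Therefore $[G:\bigcap_N l^NG]=\bigl|\faktor{G}{\bigcap_N l^NG}\bigr|\leqslant|L|\leqslant 2^{\aleph_0}$, as claimed.

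I do not expect any genuine obstacle here: the argument is a standard profinite-completion computation. The only points requiring a little care are (i) checking that $\ker\varphi$ is literally the intersection $\bigcap_N l^NG$ rather than something a priori larger — but this is immediate from the definition of the inverse limit — and (ii) the crude cardinal arithmetic $\prod_N d^N\leqslant\aleph_0^{\aleph_0}=2^{\aleph_0}$, which is where the bound $2^{\aleph_0}$ (and not something smaller) comes from, matching exactly the constant appearing in \cref{mainThm}.
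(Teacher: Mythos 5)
Your proof is correct and is essentially the paper's argument in different clothing: the paper builds the tree of cosets $\bigcup_{N}\faktor{G}{l^NG}$ and identifies $\faktor{G}{\bigcap_N l^NG}$ with a set of branches, which is exactly your inverse limit $\varprojlim_N\faktor{G}{l^NG}$, and both conclude with the same cardinality bound on a countable product of finite sets.
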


\begin{proof}
Define a tree structure on $\bigcup\limits_{N\geqslant 0}\faktor{G}{l^N G}$ of root $G$, such that the parent of some node $x\mod l^{N+1} G$ is $x\mod l^N G$. Then every node has $d$ children and the tree has $\aleph_0$ levels, thus it admits at most $2^{\aleph_0}$ branches. To conclude, the map $x\mod \bigcap\limits_N l^NG\longmapsto (x\mod l^NG)_N$ is a natural bijection between $\faktor{G}{\bigcap\limits_N l^NG}$ and the set of the branches.
\end{proof}

\begin{lemma}\label{bougerLesCosets}
Let $D$ be some small special subgroup of ${M}$. Suppose we have $e_1, e_2\in M$, such that $e_1, e_2\not\in D+l^NM$, and $e_1-e_2\in l^{N-1}M$, for some $N>0$. Then we have $\tp_l(e_1/D)=\tp_l(e_2/D)$.
\end{lemma}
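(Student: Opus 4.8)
The statement is a purely arithmetic lemma about $l$-types in a (divisible-hull of a) torsion-free ordered Abelian group: I want to show that if $e_1,e_2$ both avoid the coset $D+l^NM$ but are congruent modulo $l^{N-1}M$, then no quantifier-free $l$-formula with parameters in $D$ distinguishes them. Recall that $\tp_l(-/D)$ is generated by the formulas $\mathfrak{d}_{l^k}(f(x)-d)$ and their negations, for $k>0$, $f\in\LC(\mathbb{Q})$ (equivalently $f\in\LC(\mathbb{Z})$ after clearing denominators) and $d\in D$. Since $e_1-e_2\in l^{N-1}M$, we have $f(e_1)-f(e_2)\in l^{N-1}M$ for every such $f$, so for $k\leqslant N-1$ the formula $\mathfrak{d}_{l^k}(f(x)-d)$ cannot separate $e_1$ from $e_2$: $f(e_1)-d\equiv f(e_2)-d\ (\mathrm{mod}\ l^{N-1}M)$, hence a fortiori mod $l^kM$. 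So the whole issue is with the predicates $\mathfrak{d}_{l^k}$ for $k\geqslant N$, and in fact it suffices — by a standard compatibility argument between the levels of the spine, using \cref{indicePuissance} and its proof — to control $\mathfrak{d}_{l^N}$, i.e. to show that for all $f\in\LC(\mathbb{Z})$ and $d\in D$, $f(e_1)-d\in l^NM\iff f(e_2)-d\in l^NM$ and, more generally, the congruence class of $f(e_i)-d$ modulo $l^NM$ is the same for $i=1,2$ once one knows it for the relevant combination.

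\textbf{Key steps.} First I would reduce to a single variable: writing $f(e_1)-f(e_2)=l^{N-1}m$ with $m\in M$, the question of whether $\mathfrak{d}_{l^k}(f(e_1)-d)$ and $\mathfrak{d}_{l^k}(f(e_2)-d)$ agree becomes a question about the element $g:=f(e_1)-d$ and the element $g-l^{N-1}m$. Second, I would observe that the hypothesis $e_i\notin D+l^NM$ is used precisely to guarantee that the discrepancy $l^{N-1}m$ lives one level \emph{below} where it could cause trouble: if $g=f(e_1)-d\notin l^NM$ then $\mathfrak{d}_{l^k}$ already fails for all $k\geqslant N$ on both $g$ and $g-l^{N-1}m$ (because subtracting something in $l^{N-1}M$ from an element not in $l^NM$ keeps it out of $l^NM$, hence out of every $l^kM$ with $k\geqslant N$), while the case $k<N$ was handled above; and if $g\in l^NM$ then I must still check $g-l^{N-1}m\in l^NM$, which is where I need $m$ itself to be divisible by $l$, equivalently $f(e_1)-f(e_2)\in l^NM$. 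This last point is the crux: it does \emph{not} follow from $e_1-e_2\in l^{N-1}M$ alone, so I will have to extract it from $g\in l^NM$ together with $e_1,e_2\notin D+l^NM$ and a careful bookkeeping over which $f$ can occur. The right way to organize this is to work directly with $\tp_l$ as a whole rather than formula-by-formula: pick an $l$-divisible enough elementary extension and move $e_1$ to $e_2$ by an automorphism over $D$, using saturation and \cref{bijectionQLibre}/\cref{surjPremier}-style surjectivity of the $l$-type factor to realize the required congruence pattern.

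\textbf{Main obstacle.} The delicate part is showing that the condition ``$e_i\notin D+l^NM$ for $i=1,2$'' propagates to ``$f(e_1)\equiv f(e_2)\ (\mathrm{mod}\ l^NM)$ whenever $f(e_1)-d\in l^NM$ for some $d\in D$'' — i.e. that the coset-avoidance hypothesis is exactly strong enough to close the one-level gap between $l^{N-1}M$ and $l^NM$. I expect this to require using that $D$ is a \emph{special} (relatively divisible) subgroup containing $\boldone$, so that $D+l^NM$ is genuinely a subgroup and $\faktor{M}{l^NM}$-cosets restrict well to $D$, together with torsion-freeness (via \cref{indicePuissance}) to climb from level $N$ to higher levels. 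Once that arithmetic core is isolated, the rest — translating between ``same $l$-type'' and ``automorphism over $D$'' — is routine via the saturation and strong homogeneity of $M$ assumed in \cref{ROAGMMonstreA}, combined with the surjectivity statements \cref{surjPremier} and \cref{bijectionQLibre} that let one prescribe an $l$-type freely.
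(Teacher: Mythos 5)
Your overall reduction is right — only atomic formulas $\mathfrak{d}_{l^m}(\lambda x-d)$ with $\lambda\in\mathbb{Z}$, $d\in D$ matter, levels $m\leqslant N-1$ are settled by $e_1-e_2\in l^{N-1}M$, and the work is at levels $m\geqslant N$ — but the step you actually assert for those levels is false, and the crux you isolate is left unresolved. The parenthetical ``subtracting something in $l^{N-1}M$ from an element not in $l^NM$ keeps it out of $l^NM$'' is wrong: take $g=l^{N-1}u$ with $u\notin lM$ and subtract $l^{N-1}u$. You then correctly observe that the congruence $f(e_1)\equiv f(e_2)$ modulo $l^NM$ does not follow from the hypotheses and must be ``extracted'', but you never extract it; and your fallback — moving $e_1$ to $e_2$ by an automorphism over $D$ — is circular, since producing such an automorphism requires already knowing that $e_1$ and $e_2$ have the same type over $D$, which is (more than) what the lemma asserts.

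The missing idea is a dichotomy on the coefficient $\lambda$. If $\lambda$ is coprime to $l$, Bézout turns $\mathfrak{d}_{l^m}(\lambda x-d)$ into $\mathfrak{d}_{l^m}(x-d')$ for some $d'\in D$; then for $m\geqslant N$ this formula is false of \emph{both} $e_1$ and $e_2$ by the hypothesis $e_i\notin D+l^NM$ (no congruence between $f(e_1)$ and $f(e_2)$ at level $N$ is needed — the formula simply never holds), while for $m<N$ it is decided the same way for both by $e_1-e_2\in l^{N-1}M$. If $l$ divides $\lambda$, then either $d\notin lM$ and the formula is false of everything, or $d\in lM$ (hence $d/l\in D$ by purity of the special subgroup $D$) and one divides the formula through by $l$ — using torsion-freeness, as in \cref{indicePuissance} — and inducts on $m$. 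This is exactly the paper's proof; your ``careful bookkeeping over which $f$ can occur'' points in the right direction, but without the coprimality split the argument does not close.
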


\begin{proof}
Any atomic formula with parameters in $D$ and predicate in $\left\lbrace\mathfrak{d}_{l^m}|m>0\right\rbrace$ can clearly be written $\mathfrak{d}_{l^m}(\lambda x-d)$, with $\lambda\in\mathbb{Z},m>0, d\in D$. We have several cases:
\begin{itemize}
\item $\lambda\in l\mathbb{Z}$, $d\not\in lM$, in which case the formula always fails.
\item $\lambda\in l\mathbb{Z}$, $d\in lM$, $m=1$, in which case the formula always holds.
\item $\lambda\not\in l\mathbb{Z}$, in which case $\lambda$ and $l$ are coprime. Therefore, by Bézout's identities, the formula is equivalent to $\mathfrak{d}_{l^m}(x-d')$ for some $d'\in D$. By hypothesis on the $e_i$, this formula is satisfied by $e_1$ if and only if it is satisfied by $e_2$.
\item $\lambda\in l\mathbb{Z}$, $d\in lM$, $m>1$, in which case the formula is equivalent to $\mathfrak{d}_{l^{m-1}}\left(\dfrac{\lambda}{l}x-\dfrac{d}{l}\right)$, and we reduce by induction to one of the above three cases.
\end{itemize}
either way, we clearly see that those formulas are satisfied by $e_1$ if and only if they are satisfied by $e_2$, which concludes the proof.
\end{proof}

\begin{corollary}\label{completionTypeModL}
Let $d$ be a tuple from $M$, which is $\mathbb{Q}$-free over $B$. Then $d\models p_l$ if and only if, for each $f\in \LC^n(\mathbb{Z})$, for each $N>0$, either we have $f(c)-f(d)\in l^NM$, or neither $f(c)$ nor $f(d)$ are in $ B+l^NM$.
\end{corollary}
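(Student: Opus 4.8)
The statement characterizes when a $\mathbb{Q}$-free tuple $d$ over $B$ has the same $l$-type over $B$ as $c$. The plan is to reduce the $l$-type of a tuple to the collection of $l$-types of its $\mathbb{Z}$-linear combinations (each a singleton), and then to use \cref{bougerLesCosets} componentwise. First I would record the standard fact that in the Presburger language the $l$-type of a tuple $d$ over $B$ is determined by, and determines, the family $(\tp_l(f(d)/B))_{f\in\LC^n(\mathbb{Z})}$: indeed every atomic formula in the predicates $\mathfrak{d}_{l^m}$ with parameters in $B$ in the variables $x_1,\dots,x_n$ has the shape $\mathfrak{d}_{l^m}(f(x)-b)$ for some $f\in\LC^n(\mathbb{Z})$, $b\in B$, $m>0$, so $\tp_l(d/B)=\tp_l(c/B)$ if and only if $\tp_l(f(d)/B)=\tp_l(f(c)/B)$ for every $f\in\LC^n(\mathbb{Z})$. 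This is the same bookkeeping as in the proof of \cref{bougerLesCosets}, with the single variable $x$ replaced by $f(x)$.

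Next I would fix $f\in\LC^n(\mathbb{Z})$ and compare $\tp_l(f(c)/B)$ with $\tp_l(f(d)/B)$ using the singleton criterion. Write $e_1=f(c)$, $e_2=f(d)$. For the right-to-left direction, suppose that for every $N>0$ either $e_1-e_2\in l^NM$ or neither $e_1$ nor $e_2$ lies in $B+l^NM$. If $e_1-e_2\in l^NM$ for all $N$, then $e_1$ and $e_2$ are congruent modulo $\bigcap_N l^N M\subseteq\bigcap_N l^N B$-cosets in the relevant sense and one checks directly (reusing the case analysis in \cref{bougerLesCosets}, now with $D=B$) that every atomic $l$-formula over $B$ holds of $e_1$ iff it holds of $e_2$; otherwise let $N$ be minimal such that $e_1-e_2\notin l^NM$, so $e_1-e_2\in l^{N-1}M$ and, by hypothesis, neither $e_1$ nor $e_2$ is in $B+l^NM$, and \cref{bougerLesCosets} (with $D=B$) gives $\tp_l(e_1/B)=\tp_l(e_2/B)$. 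Some care is needed to package the two sub-cases uniformly — note that the condition "$e_1-e_2\in l^NM$ for all $N$" can be absorbed into the second case by allowing $N=\infty$, or handled by a short separate argument that membership in $B+l^NM$ is what governs all the relevant divisibility predicates. In all cases $\tp_l(e_1/B)=\tp_l(e_2/B)$, and ranging over all $f$ we get $d\models p_l$.

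For the converse, assume $d\models p_l$, fix $f\in\LC^n(\mathbb{Z})$ and $N>0$, and suppose $e_1-e_2\notin l^NM$. I need to show that neither $e_1$ nor $e_2$ is in $B+l^NM$. Since $\tp_l(e_1/B)=\tp_l(e_2/B)$, the predicate $\mathfrak{d}_{l^N}(x-b)$ is satisfied by $e_1$ iff by $e_2$, for every $b\in B$; equivalently, $e_1\in B+l^NM$ iff $e_2\in B+l^NM$. If both were in $B+l^NM$, say $e_i=b_i+l^N m_i$ with $b_i\in B$, then from $\tp_l(e_1/B)=\tp_l(e_2/B)$ applied to the formulas $\mathfrak{d}_{l^N}(x-b_1)$ and $\mathfrak{d}_{l^N}(x-b_2)$ we would get $b_1-b_2\in l^NB$ (using that the truth of these predicates forces $b_1\equiv b_2\bmod l^NM$ and, since $B$ is special hence pure, $\bmod\, l^NB$), whence $e_1-e_2=(b_1-b_2)+l^N(m_1-m_2)\in l^NM$, contradicting our assumption. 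So neither is in $B+l^NM$, as required.

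The main obstacle I anticipate is the careful handling of the coset arithmetic in the "both in $B+l^NM$" sub-case of the converse: one must correctly argue that equality of $l$-types over $B$ forces the $B$-parts $b_1,b_2$ to agree modulo $l^NB$ rather than merely modulo $l^NM$, which is where purity (specialness) of $B$ is used, exactly as in \cref{bougerLesCosets}. Everything else is a routine translation of the singleton case through $\LC^n(\mathbb{Z})$-combinations.
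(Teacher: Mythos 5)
Your proof is correct and follows essentially the same route as the paper: reduce $\tp_l(d/B)=\tp_l(c/B)$ to the equality $\tp_l(f(d)/B)=\tp_l(f(c)/B)$ for all $f\in\LC^n(\mathbb{Z})$, then settle the singleton case by splitting on whether $f(c)\in B+l^NM$ for all $N$ and invoking \cref{bougerLesCosets} with $D=B$. One minor remark: in your converse, the appeal to purity of $B$ is superfluous — you only need $b_1-b_2\in l^NM$ (not $l^NB$) to conclude $e_1-e_2\in l^NM$, and that already follows from $e_2$ satisfying $\mathfrak{d}_{l^N}(x-b_1)$.
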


\begin{proof}
Any atomic formula $\varphi(x_1\ldots x_n)$ with predicate in $\left\lbrace\mathfrak{d}_{l^N}|N>0\right\rbrace$ can be written $\psi\left(f(x)\right)$, with $f\in \LC(\mathbb{Z})$, and $\psi$ an atomic formula on the same predicate with one variable. As a result, $d\models p_l$ if and only if $\tp_l\left(f(d)/B\right)=\tp_l\left(f(c)/B\right)$ for all $f\in \LC(\mathbb{Z})$. Fix $f\in \LC^n(\mathbb{Z})$.
\par Suppose that we have $f(c)\in B+l^NM$ for all $N>0$. For each $N$, let $b_N\in B$ such that $f(c)-b_N\in l^NM$. Then $\tp_l(f(c)/B)$ contains $\left\lbrace\mathfrak{d}_{l^N}(x-b_N)|N>0\right\rbrace$. This inclusion is an equality, for if $\mathfrak{d}_{l^N}(f(d)-b_N)$ for all $N$, then we have $f(d)-f(c)\in l^NM$ for all $N$, which clearly implies $\tp_l(f(c)/B)=\tp_l(f(d)/B)$. In particular, $\tp_l(f(d)/B)=\tp_l(f(c)/B)$ if and only if $f(d)-f(c)\in\bigcap\limits_N l^NM$.
\par Suppose now that there exists $N>0$ such that $f(c)\not\in B+l^NM$. The set of all such $N$ is a final segment of $\omega$, choose $N$ its least element. Let $b\in B$ such that $f(c)-b\in l^{N-1} M$ (if $N=1$, then $b=0$ will do). Then, by \cref{bougerLesCosets}, $\tp_l(f(c)/B)$ is generated by the partial type:
$$\{\mathfrak{d}_{l^{N-1}}(x-b)\}\cup\left\lbrace\neg\mathfrak{d}_{l^N}(x-b')| b'\in B\right\rbrace$$
this concludes the proof.
\end{proof}

\begin{remark}\label{indiceFiniOrbiteBornee}
If $[M:lM]$ is finite, then by \cref{orbitesBorneesIndiceFini} the $\emptyset$-type-definable equivalence relation $\left\lbrace\mathfrak{d}_{l^N}(x-y)|N>0\right\rbrace$ is bounded (has a small number of classes) and finer than the equivalence relation of having the same $l$-type over any parameter set, thus $S^n_l(M)$ is small, and each orbit of $S^n_l(M)$ under $\Aut({M}/A)$ is obviously bounded.
\par Moreover, the classes of this $\emptyset$-type-definable equivalence relation are all $\Aut(M^{eq}/\acl^{eq}(\emptyset))$-invariant, thus $\Aut(M^{eq}/\acl^{eq}(\emptyset))$ acts trivially over  $S^n_l(M)$.
\end{remark}

Let us recall that we adopted \cref{ROAGHypPrincipales} in the section.

\begin{proposition}\label{indiceFiniInvariant}
Suppose $[M:lM]$ is finite. Consider the action of $\Aut({M}/A)$ on $Q=S^n_l(M)$. Then $p_l$ has an invariant extension in $Q$ if and only if every $\mathbb{Z}$-linear combination of $c$ belongs to $\bigcap\limits_N(A+ l^NM)$.
\end{proposition}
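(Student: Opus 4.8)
The plan is to argue entirely with complete $l$-types over $M$, exploiting that $[M:lM]<\infty$ forces $[M:l^NM]<\infty$ for every $N$, and hence $\mathcal N=M+l^N\mathcal N$ for every elementary extension $\mathcal N\succeq M$ and every $N>0$ (since $[\mathcal N:l^N\mathcal N]=[M:l^NM]=[M+l^N\mathcal N:l^N\mathcal N]$, using elementarity and relative divisibility). Consequently every $q\in S^n_l(M)$ assigns, to each $f\in\LC^n(\mathbb Z)$ and each $N>0$, a unique coset $\gamma_{f,N}+l^NM$ of $M$ with $\mathfrak d_{l^N}(f(x)-\gamma_{f,N})\in q$, and these cosets are coherent in $N$; moreover the atomic $l$-formulas over $M$ in the variables $x$ are exactly the $\mathfrak d_{l^N}(f(x)-m)$ with $f\in\LC^n(\mathbb Z)$, $m\in M$, $N>0$.

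For the ``if'' direction, assume every $f(c)$ lies in $\bigcap_N(A+l^NM)$ and fix $a_{f,N}\in A$ with $f(c)-a_{f,N}\in l^NM$. Then $\tp_l(c/A)$ already decides every atomic $l$-formula over $M$: $\mathfrak d_{l^N}(f(x)-m)$ is decided by whether $m-a_{f,N}\in l^NM$, since $\mathfrak d_{l^N}(f(x)-a_{f,N})\in\tp_l(c/A)$. Hence $\tp_l(c/A)$ has a unique extension in $S^n_l(M)$, which must be $\tp_l(c/M)$; as $\tp_l(c/A)$ is $\Aut(M/A)$-invariant, so is its unique extension, and it extends $p_l=\tp_l(c/B)$.

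For the ``only if'' direction I argue contrapositively. Suppose $f_0(c)\notin A+l^{N_0}M$ for some $f_0\in\LC^n(\mathbb Z)$ and $N_0>0$, and suppose toward a contradiction that $q\in S^n_l(M)$ is $\Aut(M/A)$-invariant and extends $p_l$. Write $\gamma_N+l^NM$ for the coset $q$ assigns to $f_0(x)$ modulo $l^NM$; each such coset is $\Aut(M/A)$-invariant because $q$ is. Since $q\supseteq p_l\supseteq\tp_l(c/A)$ and both $q|_A$ and $\tp_l(c/A)$ are complete $l$-types over $A$, we get $q|_A=\tp_l(c/A)$; if $\gamma_{N_0}$ lay in $A+l^{N_0}M$, then $q$, hence $\tp_l(c/A)$, would contain $\mathfrak d_{l^{N_0}}(f_0(x)-a')$ for some $a'\in A$, forcing $f_0(c)\in A+l^{N_0}M$, a contradiction; so $\gamma_{N_0}\notin A+l^{N_0}M$. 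The contradiction then follows from the claim: \emph{if $(\gamma_N)_{N\ge1}$ is a coherent sequence in $M$ (i.e.\ $\gamma_{N+1}-\gamma_N\in l^NM$) such that each $\gamma_N+l^NM$ is $\Aut(M/A)$-invariant, then $\gamma_N\in A+l^NM$ for all $N$.}

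To prove the claim — which is the heart of the argument — let $N_0$ be least with $\gamma_{N_0}\notin A+l^{N_0}M$; coherence and minimality let one write $\gamma_{N_0}=a+l^{N_0-1}\mu$ with $a\in A$, and then $\mu\notin A+lM$, so $e:=[A+lM:lM]<d:=[M:lM]$. Put $\gamma:=\gamma_{N_0+1}$. By \cref{bougerLesCosets}, every element of $\mathcal C:=(a+l^{N_0-1}M)\setminus(A+l^{N_0}M)$ has the same $l$-type over $A$ as $\gamma\in\mathcal C$; now $\mathcal C$ is a union of $d-e$ cosets of $l^{N_0}M$, hence meets $d(d-e)\ge2$ cosets of $l^{N_0+1}M$, so we may pick $\gamma'\in\mathcal C$ with $\gamma'-\gamma\notin l^{N_0+1}M$. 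Since $\gamma'+l^{N_0+1}M\subseteq\mathcal C$, the condition ``$x\in\gamma'+l^{N_0+1}M$'' only refines $\tp_l(\gamma/A)$; using the product decomposition of \cref{bijectionQLibre} to keep the order type and all $l'$-adic types over $A$ ($l'\ne l$) unchanged, together with regularity of $M$ and \cref{thmChinois} (as in the proof of that lemma), one finds $\gamma''\in M$ with $\tp(\gamma''/A)=\tp(\gamma/A)$ and $\gamma''-\gamma'\in l^{N_0+1}M$. By strong homogeneity there is $\sigma\in\Aut(M/A)$ with $\sigma(\gamma)=\gamma''$, and $\sigma(\gamma)-\gamma\notin l^{N_0+1}M$ contradicts the invariance of $\gamma_{N_0+1}+l^{N_0+1}M$. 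The delicate point, which I expect to demand the most care, is exactly the construction of $\gamma''$: one must move the $l$-adic coset at a deep level while leaving the order and the remaining congruences over $A$ intact, and this is why one must pass one level beyond $N_0$ — a single coset of $l^{N_0}M$ can be $\Aut(M/A)$-invariant ``by accident'' (for instance when $l=2$, $[M:2M]=2$ and $A\subseteq 2M$) — and why the independence encoded in \cref{bijectionQLibre} is indispensable.
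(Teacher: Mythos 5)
Your proof is correct and follows essentially the same route as the paper's: the ``if'' direction via $A$-type-definability (hence completeness and invariance) of $p_l$, and the ``only if'' direction by moving the coset of $f_0(x)$ one level below the obstruction (modulo $l^{N_0+1}M$) with an automorphism over $A$ built from \cref{bougerLesCosets} together with the realizability of independently prescribed $j$-types as in the proof of \cref{bijectionQLibre}. The only cosmetic differences are that the paper fixes a single representative $\alpha$ of $\bigcap_N(\gamma_N+l^NM)$ by saturation and shifts it by some $m\in l^kM\setminus l^{k+1}M$, whereas you keep the coherent sequence of cosets and count the cosets of $l^{N_0+1}M$ available inside $\mathcal{C}$.
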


\begin{proof}
Suppose every $\mathbb{Z}$-linear combination of $c$ belongs to $\bigcap\limits_N(A+ l^NM)$. Then $p_l(M)$ can be written as the intersection of $A$-definable sets of the form:
$$\left\lbrace x|\mathfrak{d}_{l^N}\left(f(x)-a\right)\right\rbrace,$$
with $N>0, a\in A$, $f\in \LC(\mathbb{Z})$. As a result, $p_l(M)$ is $A$-type definable (thus $\Aut(M/A)$-invariant), and all its realizations $d\in M^n$ satisfy the conditions $f(c)-f(d)\in\bigcap\limits_N l^NM$ for all $f\in \LC^n(\mathbb{Z})$. By \cref{completionTypeModL}, $p_l$ is complete as an element of $Q$, so we get the right-to-left direction.
\par Suppose now that we have $f\in \LC^n(\mathbb{Z})$ and $k>0$ such that $f(c)$ does not belong to $A+l^kM$. Let $p\in Q$ be an extension of $p_l$. By compactness and finiteness of $[M:lM]$, let $\alpha\in M$ be such that $p(x)\models \mathfrak{d}_{l^N}\left(f(x)-\alpha\right)$ for all $N>0$. As $[M:A+l^kM]>1$, we must have by \cref{indicePuissance} the inequalities $1<[M:l^kM]=[M:lM]^k$, thus $1<[M:lM]=[l^kM:l^{k+1}M]$, so let $m\in l^kM\setminus l^{k+1}M$. Then, by \cref{bougerLesCosets}, $\tp_l(\alpha/A)=\tp_l(\alpha+m/A)$. Let $D$ be a special subgroup of ${M}$ of size $\leqslant \max(2^{\aleph_0}, |A|)$ containing $A$, and a system of representatives of the cosets of $\bigcap\limits_N l^NM$. By saturation of $M$, we can find $\beta\in M$ such that $\tp_l(\beta/D)=\tp_l(\alpha-m/D)$ (which implies $\beta-\alpha-m\in\bigcap\limits_N l^NM$), and $\tp_j(\beta/D)=\tp_j(\alpha/D)$ for all $j\neq l$ (which implies $\tp_j(\beta/A)=\tp_j(\alpha/A)$). Then, by strong homogeneity of $M$, there exists $\sigma\in \Aut({M}/A)$ such that $\sigma(\alpha)=\beta$. Now, we have $\sigma(p)(x)\models \mathfrak{d}_{l^{k+1}}\left(f(x)-\alpha-m\right)$, a formula that is inconsistent with $\mathfrak{d}_{l^{k+1}}\left(f(x)-\alpha\right)$. As a result, $\sigma(p)\neq p$, concluding the proof.
\end{proof}

\begin{remark}
Just like in subsection \ref{sousSectHomeoOrdre}, we would like to describe the space of global invariant/bounded extensions of $p_l$. In case the condition of \cref{indiceFiniInvariant} holds, we saw that $p_l$ is $\Aut(M/A)$-invariant and complete in $S^n_l(M)$, thus this space is a single point. In case the condition does not hold, the space of invariant extensions of $p_l$ is obviously empty, but we would like to describe the topological space of all the extensions that have a bounded orbit. This is what we  do in the remainder of this subsection, we  exhibit a homeomorphism between this orbit and some closed subspace of the Cantor space.
\end{remark}

\begin{definition}
Let $N>0$. We define $\mathcal{T}_N$ the tree $\{0, \ldots  N-1\}^{<\omega}$, and $\mathcal{B}_N=\{0,\ldots N-1\}^\omega$ the set of its branches. The $m$-th-level of $\mathcal{T}_N$ is the set $\{0, \ldots  N-1\}^m$. We see $\mathcal{B}_N$ as a topological space, with the elementary open sets being the $O_w=\{b\in\mathcal{B}_N|w\textup{ is a prefix of }b\}$ for every $w\in\mathcal{T}_N$. For every $m$, the opens $(O_w)_{w\in\{0,\ldots N-1\}^m}$ form a finite disjoint open cover of $\mathcal{B}_N$, thus each $O_w$ is in fact clopen.
\end{definition}

\begin{lemma}
For all $N, d>0$, $\mathcal{B}_N$ is naturally homeomorphic to $\mathcal{B}_{N^d}$.
\end{lemma}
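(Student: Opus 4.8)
The natural map groups the coordinates of a branch of $\mathcal{B}_N$ into consecutive blocks of length $d$ and reads each block as a single symbol in $\{0,\ldots,N^d-1\}$. First I would fix any bijection $\beta\colon\{0,\ldots,N-1\}^d\longrightarrow\{0,\ldots,N^d-1\}$ (for concreteness, base-$N$ expansion, $\beta(a_0,\ldots,a_{d-1})=\sum_{i<d}a_iN^i$, but the argument is insensitive to the choice). Then I define
$$\Phi\colon\mathcal{B}_N\longrightarrow\mathcal{B}_{N^d},\qquad \Phi(b)(m)=\beta\bigl(b(md),b(md+1),\ldots,b(md+d-1)\bigr)$$
for $b\in\mathcal{B}_N$ and $m<\omega$.

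\textbf{Key steps.} (1) \emph{$\Phi$ is a bijection.} Its inverse is given explicitly: to $c\in\mathcal{B}_{N^d}$ associate the branch whose block of coordinates $[md,md+d[$ is $\beta^{-1}(c(m))$; this is well-defined and two-sided inverse to $\Phi$ by construction. (2) \emph{$\Phi$ is continuous.} For $w\in\{0,\ldots,N^d-1\}^m$, the preimage $\Phi^{-1}(O_w)$ is exactly $O_{w'}$, where $w'\in\{0,\ldots,N-1\}^{md}$ is the unique word whose $k$-th length-$d$ block equals $\beta^{-1}(w(k))$ for each $k<m$; since the $(O_w)_w$ form a basis of clopens of $\mathcal{B}_{N^d}$, $\Phi$ is continuous. (3) \emph{Conclusion.} Each $\mathcal{B}_M$ is a countable product of finite discrete spaces, hence compact and Hausdorff; a continuous bijection between such spaces is a homeomorphism, so $\Phi$ is a homeomorphism $\mathcal{B}_N\cong\mathcal{B}_{N^d}$. (Alternatively, one checks directly that $\Phi^{-1}$ sends the basic clopen $O_{w'}$ for $w'$ of length $md$ to a basic clopen $O_w$, and for general lengths to a finite union of such, which also gives continuity of $\Phi^{-1}$ without invoking compactness.)

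\textbf{Remark on what is ``natural''.} At the level of trees, the map $w'\mapsto w'$-regrouped-into-$d$-blocks identifies the $(md)$-th level of $\mathcal{T}_N$ with the $m$-th level of $\mathcal{T}_{N^d}$, compatibly with the prefix (restriction) order; $\Phi$ is the induced map on branches, so the homeomorphism is canonical up to the fixed identification $\{0,\ldots,N-1\}^d\cong\{0,\ldots,N^d-1\}$.

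\textbf{Expected main obstacle.} There is no real obstacle here; the only point requiring care is bookkeeping with the block indexing $[md,md+d[$ and verifying that cylinder preimages are again cylinders, which is what makes continuity immediate. Everything else is formal.
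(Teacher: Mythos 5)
Your proposal is correct and follows essentially the same route as the paper: the block-grouping bijection on branches, the observation that cylinders at level $m$ of $\mathcal{T}_{N^d}$ correspond exactly to cylinders at level $md$ of $\mathcal{T}_N$, and (in your parenthetical alternative) the decomposition of an arbitrary cylinder of $\mathcal{T}_N$ into finitely many cylinders at a level divisible by $d$ to get continuity in the other direction. The only cosmetic differences are the direction in which you set up the map and your optional appeal to compactness for the final step, both of which are fine.
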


\begin{proof}
The natural bijection $\{0, \ldots  N^d-1\}\longrightarrow\{0, \ldots  N-1\}^d$ extends to a bijection $h:\ \mathcal{B}_{N^d}\longrightarrow\mathcal{B}_N$. For each $m<\omega$, the sets:
$$\left\lbrace h(O_w)|w\textup{ in the }m\textup{-th level of }\mathcal{T}_{N^d}\right\rbrace$$
$$\left\lbrace O_w|w\textup{ in the }md\textup{-th level of }\mathcal{T}_{N}\right\rbrace,
$$
coincide, thus $h^{-1}$ is continuous. To conclude, for each $w\in\mathcal{T}_N$, $O_w$ can be written as a finite union of elementary opens $O_v$, with $v$ in the $d\lceil|w|/d\rceil$-th level of $\mathcal{T}_N$, thus $h^{-1}(O_w)$ can be written as a finite union of some $O_v$, with $v$ in the $\lceil|w|/d\rceil$-th level of $\mathcal{T}_{N^d}$, therefore $h$ is continuous.
\end{proof}

\begin{lemma}
The two spaces $\mathcal{B}_l$ and $\mathbb{Z}_l$ (the topological space of $l$-adic integers) are naturally homeomorphic.
\end{lemma}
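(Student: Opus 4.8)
The plan is to produce the homeomorphism via the classical $l$-adic digit expansion. First I would define $h\colon \mathcal{B}_l\longrightarrow\mathbb{Z}_l$ by $h\bigl((a_m)_{m<\omega}\bigr)=\sum_{m<\omega}a_m\,l^m$, the sum taken in $\mathbb{Z}_l$. That $h$ is a bijection is exactly the statement that every $l$-adic integer admits a unique expansion with digits in $\{0,\ldots,l-1\}$: for surjectivity one builds the digits of a given $x\in\mathbb{Z}_l$ by induction, taking at stage $m$ the unique $a_m\in\{0,\ldots,l-1\}$ with $x-\sum_{i<m}a_i l^i\equiv a_m l^m\pmod{l^{m+1}}$; for injectivity one observes that if $\sum a_m l^m=\sum a'_m l^m$ in $\mathbb{Z}_l$ then, reducing mod $l^{m+1}$ and using the induction hypothesis $a_i=a'_i$ for $i<m$, one gets $a_m=a'_m$.

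Next I would match the clopen bases. For a word $w=(a_0,\ldots,a_{m-1})$ in the $m$-th level of $\mathcal{T}_l$, the elementary open $O_w$ is carried bijectively by $h$ onto the coset $\bigl(\sum_{i<m}a_i l^i\bigr)+l^m\mathbb{Z}_l$; as $w$ runs over the $m$-th level, these cosets run over all residues mod $l^m$, i.e. over the standard level-$m$ clopen basis of $\mathbb{Z}_l$. Hence $h$ sends basic clopen sets to basic clopen sets and $h^{-1}$ sends basic clopen sets to basic clopen sets, so both are continuous and $h$ is a homeomorphism. One may equivalently phrase this by noting that the truncation maps $\mathcal{B}_l\to\{0,\ldots,l-1\}^m$ exhibit $\mathcal{B}_l$ as the inverse limit $\varprojlim_m\{0,\ldots,l-1\}^m$ (with connecting maps dropping the last coordinate), and that under the digit identification $\{0,\ldots,l-1\}^m\cong\mathbb{Z}/l^m\mathbb{Z}$ those connecting maps become reduction mod $l^m$, so $\mathcal{B}_l\cong\varprojlim_m\mathbb{Z}/l^m\mathbb{Z}=\mathbb{Z}_l$.

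There is essentially no real obstacle here: both $\mathcal{B}_l$ and $\mathbb{Z}_l$ are compact Hausdorff with a countable clopen basis, so even exhibiting $h$ as a continuous bijection would suffice, and continuity of $h$ is immediate since the $m$-th $l$-adic digit of $h\bigl((a_i)_i\bigr)$ depends only on $(a_0,\ldots,a_m)$. The only point to emphasize is the word \emph{naturally}: the map $h$ involves no choices, being literally the digit expansion, which is what is needed so that in the subsequent applications it remains compatible with the algebraic structure inherited from $M$.
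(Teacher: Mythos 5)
Your proof is correct and is essentially the paper's own argument: both use the digit-expansion map $h\colon (w_m)_m\longmapsto\sum_{m<\omega}w_m l^m$, the paper phrasing the clopen-basis correspondence as an isomorphism between $\mathcal{T}_l$ and the tree of balls of $\mathbb{Z}_l$, while you verify it coset by coset. Nothing is missing.
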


\noindent Recall that with our conventions, the $l$-adic valuation of $l^m$ is $-m$.

\begin{proof}
The tree of non-empty balls of $\mathbb{Z}_l$ (including the whole $\mathbb{Z}_l$) ordered by inclusion is isomorphic to $\mathcal{T}_l$ via the isomorphism mapping each $w\in\mathcal{T}_l$ to the ball of radius $-|w|$ around $\sum\limits_{m} w_ml^m$.
%$$w\longmapsto B\left(\sum\limits_m w_m l^m, -m\right)$$
The corresponding bijection $\mathcal{B}_l\longrightarrow\mathbb{Z}_l$ is clearly $h:\ w\longmapsto\sum\limits_{m<\omega} w_ml^m$, which concludes the proof.
\end{proof}

\begin{lemma}
Suppose $[M:lM]=l^d$ is finite. Then $S^n_l(M)$ is naturally homeomorphic to the direct product $\mathcal{B}_{l^d}^n$ (with $n=|c|$).
\end{lemma}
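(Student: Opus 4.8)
The plan is to realise $S^n_l(M)$ as the $n$-th power of the inverse limit of the finite groups $\faktor{M}{l^N M}$, and then to identify that inverse limit with $\mathcal{B}_{l^d}$ by a tree isomorphism.

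First I would record the combinatorics of the filtration $(l^N M)_N$. By \cref{indicePuissance}, multiplication by $l^N$ induces a group isomorphism $\faktor{M}{lM}\cong\faktor{l^N M}{l^{N+1}M}$, so $[l^N M:l^{N+1}M]=[M:lM]=l^d$ for every $N$, and hence $[M:l^N M]=l^{dN}$ by telescoping. In particular each $\faktor{M}{l^N M}$ is finite, and the reduction maps $\faktor{M}{l^{N+1}M}\to\faktor{M}{l^N M}$ form an inverse system of finite sets whose ``tree'' $T=\bigsqcup_N\faktor{M}{l^N M}$ (parent map $=$ reduction) has a single root $\faktor{M}{l^0M}=\{0\}$ and exactly $l^d$ children at each node. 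Any such tree is isomorphic, as a tree, to $\mathcal{T}_{l^d}$ (choose level-by-level bijections of the children-sets), and such an isomorphism carries the branch space $\varprojlim_N\faktor{M}{l^N M}$ homeomorphically onto $\mathcal{B}_{l^d}$; taking $n$-th powers gives $\big(\varprojlim_N\faktor{M}{l^N M}\big)^n\cong\mathcal{B}_{l^d}^n$.

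Next I would build a map $\Psi\colon S^n_l(M)\to\big(\varprojlim_N\faktor{M}{l^N M}\big)^n$. Fix, for each $N>0$, a finite set of coset representatives of $l^N M$ in $M$; since $M\models\forall x\bigvee_{r}\mathfrak{d}_{l^N}(x-r)$ and these representatives are pairwise incongruent mod $l^N M$, any $p\in S^n_l(M)$ decides, for each coordinate $i$ and each $N>0$, a unique coset $\xi_{i,N}(p)\in\faktor{M}{l^N M}$ with $p\vdash\mathfrak{d}_{l^N}(x_i-m)$ for $m\in\xi_{i,N}(p)$; these cosets are compatible under reduction, so $\Psi(p):=\big((\xi_{i,N}(p))_N\big)_{i}$ is well defined. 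The crucial point is that $\Psi$ is a bijection. Injectivity and the description of the fibres come from \cref{completionTypeModL}: because $M+l^N M=M$, that corollary says that for tuples $c,d$ from $M$ one has $\tp_l(d/M)=\tp_l(c/M)$ iff $f(c)-f(d)\in l^N M$ for all $f\in\LC^n(\mathbb{Z})$ and all $N>0$, which (taking $f$ the coordinate projections, and noting these imply the general case) is equivalent to $c$ and $d$ having the same coset tower in each variable; thus an $l$-type over $M$ is exactly the data of a compatible coset tower in each variable. For surjectivity, given any family of compatible towers, the partial $l$-type over $M$ prescribing them (using representatives $m_{i,N}\in M$) is finitely satisfiable in $M$, hence consistent, hence extended by some $p\in S^n_l(M)$, which by the previous sentence has exactly the prescribed towers. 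Finally $\Psi$ is continuous — a basic clopen of the inverse-limit power pulls back to a set of the form $[\mathfrak{d}_{l^N}(x_i-m)]$ — so it is a continuous bijection between compact Hausdorff spaces, hence a homeomorphism. Composing with the homeomorphism of the previous paragraph yields $S^n_l(M)\cong\mathcal{B}_{l^d}^n$.

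The only genuinely delicate step is the bijectivity of $\Psi$, and within it the observation $M+l^N M=M$ together with the correct reading of \cref{completionTypeModL} when the base is all of $M$; everything else is bookkeeping with finite-index computations (via \cref{indicePuissance}) and the abstract fact that a rooted tree with constant branching $l^d$ has branch space $\mathcal{B}_{l^d}$. For $n\ge 1$ this argument also re-proves $|S^n_l(M)|\le 2^{\aleph_0}$, consistent with \cref{indiceFiniOrbiteBornee} and \cref{orbitesBorneesIndiceFini}; the case $n=0$ is trivial, as both sides are a point.
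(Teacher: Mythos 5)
Your proof is correct and follows essentially the same route as the paper: both identify an $l$-type over $M$ with the compatible tower of cosets mod $l^NM$ decided in each coordinate, and then match the resulting coset tree level by level with $\mathcal{T}_{l^d}$ using the constant branching number $[l^NM:l^{N+1}M]=l^d$ from \cref{indicePuissance}. The only cosmetic difference is that you package the branch space as $\varprojlim_N \faktor{M}{l^NM}$ while the paper writes it as $\faktor{M}{\bigcap_N l^NM}$ (the two agree here by saturation), and your appeal to \cref{completionTypeModL} over base $M$ is really just the elementary observation that an $l$-type over $M$ is determined by the coordinatewise cosets.
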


\begin{proof}
Note that each point of $S^n_l(M)$ is a partial type that can be written $\left\lbrace x\in Y_m|m<\omega\right\rbrace$, with $Y_m\in \left(\faktor{M}{l^mM}\right)^n$. In particular, $S^n_l(M)$ is in natural bijection with $\left(\faktor{M}{\bigcap\limits_{m<\omega} l^mM}\right)^n$. For any $\mathcal{L}_P(M)$-term $t(x)$, the definable subset of $M$ defined by $\mathfrak{d}_{l^m}(t(x))$ is a union of cosets $\mod l^mM$, thus the elementary clopen subsets of $\left(\faktor{M}{\bigcap\limits_{m<\omega} l^mM}\right)^n$ induced by $S^n_l(M)$ are the Cartesian products of cosets of $l^mM$ for $m<\omega$. We build by induction a sequence of bijections $(f_m)_m$ between the $m$-th level of $\mathcal{T}_{l^d}$ and $\faktor{M}{l^mM}$. Suppose by induction hypothesis that we have $f_m$ ($f_0$ is the bijection between the point $\left\lbrace\faktor{M}{M}\right\rbrace$ and the point $\{\emptyset\}$). For each $w$ in the $m$-th level, $w$ has exactly $l^d$ children in the $m+1$-th level, and $f_m(w)$ is the union of exactly $l^d$ cosets $\mod l^{m+1}M$, so there is a bijection between them for each $w$, and we set $f_{m+1}$ as the union of those bijections. Then the $(f_m)_m$ define an homeomorphism between $\mathcal{B}_{l^d}$ and $\faktor{M}{\bigcap\limits_m l^mM}$. This concludes the proof.
\end{proof}

\begin{corollary}\label{homeoIndiceFini}
The closed space of every global extension of $p_l$ of bounded orbit under the action of $\Aut(M/A)$ is naturally homeomorphic to some closed subspace of $\mathbb{Z}_l^n$.
\end{corollary}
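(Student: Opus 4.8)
The plan is to get this for free from the two structural facts already proved in this subsection, so the argument is essentially a bookkeeping exercise. First I would recall from Remark \ref{indiceFiniOrbiteBornee} that, since $[M:lM]$ is finite, the space $S^n_l(M)$ is small and \emph{every} orbit under $\Aut(M/A)$ is bounded. Consequently the closed subspace of $\pi_l(F)=S^n_l(M)$ consisting of the global extensions of $p_l$ of bounded orbit is not a proper subspace of anything: it is exactly the set of all types in $S^n_l(M)$ extending the partial type $p_l=\tp_l(c/B)$, i.e.\ the clopen-closed set $[p_l]\subseteq S^n_l(M)$. So the only thing left to do is to exhibit $S^n_l(M)$, and hence any of its closed subspaces, inside $\mathbb{Z}_l^n$.

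For that I would chain the homeomorphisms established just above: writing $[M:lM]=l^d$, the last lemma gives a natural homeomorphism $S^n_l(M)\cong\mathcal{B}_{l^d}^n$; the lemma asserting $\mathcal{B}_N\cong\mathcal{B}_{N^d}$, applied with $N=l$, gives $\mathcal{B}_{l^d}\cong\mathcal{B}_l$; and the lemma identifying $\mathcal{B}_l$ with the $l$-adic integers gives $\mathcal{B}_l\cong\mathbb{Z}_l$. Composing these yields a natural homeomorphism $S^n_l(M)\cong\mathbb{Z}_l^n$, which sends the closed subspace of global extensions of $p_l$ to a closed subspace of $\mathbb{Z}_l^n$, proving the corollary.

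There is no real obstacle here; the statement is a formal consequence of material already in hand. The single point worth spelling out is that "global extensions of $p_l$ of bounded orbit" coincides with the full closed set of extensions of $p_l$, which is precisely the content of Remark \ref{indiceFiniOrbiteBornee}. If desired, one can append the observation that $\mathbb{Z}_l^n$ is itself homeomorphic to the Cantor space, so the space in question is in particular a closed subspace of the Cantor space, matching the informal statement made in the remark preceding this corollary.
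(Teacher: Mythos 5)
Your proposal is correct and follows exactly the intended route: by Remark \ref{indiceFiniOrbiteBornee} every orbit is bounded, so the space in question is just the closed set of all extensions of $p_l$ in $S^n_l(M)$, and the chain $S^n_l(M)\cong\mathcal{B}_{l^d}^n\cong\mathcal{B}_l^n\cong\mathbb{Z}_l^n$ from the three preceding lemmas finishes the argument. Nothing is missing.
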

\subsection{Partial types using a prime of infinite index}\label{sousSectInfini}
We still fix a prime $l$, and we assume here that $[M:lM]$ is infinite.
\begin{lemma}\label{lemmechoixRpzTpL}
Let $D$ be a small special subgroup of ${M}$, and $\alpha=\alpha_1\ldots \alpha_m$ a finite tuple from $M$ which is $\mathbb{Q}$-free over $D$. Let $\beta=\beta_1\ldots \beta_m$ be another tuple from $M$ such that $\tp_l(\beta/D)=\tp_l(\alpha/D)$. Then there exists $\gamma=\gamma_1\ldots \gamma_m\equiv_D\alpha$ such that $\gamma_i-\beta_i\in\bigcap\limits_N l^NM$ for all $i$.
\end{lemma}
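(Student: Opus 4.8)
The plan is to exhibit $\gamma$ as a realization in ${M}$ of a suitable type over $D\beta$, using that $D\beta$ is small (so ${M}$ is $|D\beta|^+$-saturated and strongly $|D\beta|^+$-homogeneous), in two stages. Write $q_< = \tp_<(\alpha/D)$ and $q_{l'} = \tp_{l'}(\alpha/D)$ for each prime $l'$; since $\alpha$ is $\mathbb{Q}$-free over $D$ we have $q_<\in\pi_<(F)$, and by hypothesis $\tp_l(\beta/D)=q_l$.

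\emph{First stage: adjust $\beta$ on the other primes.} I would consider the partial type over $D\beta$ in variables $y=y_1\ldots y_m$
$$\Sigma(y)\ =\ \bigcup_{l'\neq l} q_{l'}(y)\ \cup\ \bigl\{\,\mathfrak{d}_{l^N}(y_i-\beta_i)\ :\ i\leqslant m,\ N>0\,\bigr\}.$$
A finite fragment of $\Sigma$ involves only finitely many primes and divisibility predicates $\mathfrak{d}_{l'^N}$ up to some level $N$; applying \cref{thmChinois} coordinate by coordinate, with target $\beta_i$ at the prime $l$ and $\alpha_i$ at each of the finitely many other primes in play, and noting that agreement mod $(l')^NM$ preserves the $l'$-atomic formulas of $q_{l'}$ of level $\leqslant N$, one checks that this fragment is satisfiable in ${M}$. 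Hence $\Sigma$ is consistent and is realized by some $\beta'\in M^m$. Then $\beta'_i-\beta_i\in\bigcap_N l^NM$ for all $i$, which (as in the computation in the proof of \cref{completionTypeModL}) forces $\tp_l(\beta'/D)=\tp_l(\beta/D)=q_l$; so $\beta'$ realizes $q_{l'}$ for \emph{every} prime $l'$.

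\emph{Second stage: re-run the construction of \cref{bijectionQLibre}.} I would apply, inside ${M}$ with base $D$, the construction in the proof of \cref{bijectionQLibre} with the $\mathbb{Q}$-free tuple $\alpha$ in the role of ``$c$'' (so $q_<$ in the role of ``$p_<$'') and $\beta'$ in the role of ``$c'$'' --- recall that argument only uses $\mathbb{Q}$-freeness of ``$c$'', not of ``$c'$'', and splits into the dense and discrete cases via regularity of ${M}$. This produces $\gamma=\gamma_1\ldots\gamma_m\in M^m$ with $\gamma\models q_<$ and $\gamma_i-\beta'_i\in\bigcap_N NM$ for all $i$. From $\gamma\models q_<$ it follows, as in \cref{clotureDefROAG}, that $\gamma$ is $\mathbb{Q}$-free over $D$, and from $\gamma_i-\beta'_i\in\bigcap_N NM\subseteq\bigcap_N(l')^NM$ that $\tp_{l'}(\gamma/D)=\tp_{l'}(\beta'/D)=q_{l'}$ for every prime $l'$. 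Thus $\gamma$ and $\alpha$ have the same $<$-type and the same $l'$-type over $D$ for all $l'$, so $\gamma\equiv_D\alpha$ by quantifier elimination in $\mathcal{L}_P$ (\cref{bijectionQLibre}). Finally $\gamma_i-\beta_i=(\gamma_i-\beta'_i)+(\beta'_i-\beta_i)\in\bigcap_N NM+\bigcap_N l^NM\subseteq\bigcap_N l^NM$, which is what we want.

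I expect the only genuinely delicate point to be the finite-satisfiability check in the first stage: one must verify that \cref{thmChinois} really permits matching the $l'$-type of $\alpha$ for all $l'\neq l$ and the $l$-coset of $\beta$ simultaneously, and that passing to the limit is legitimate (which is exactly what compactness plus saturation of ${M}$ over the small set $D\beta$ provides). The remaining ingredients --- that the proof of \cref{bijectionQLibre} is robust enough to be re-used with an arbitrary, not necessarily $\mathbb{Q}$-free, target tuple, and that an infinitely $l'$-divisible perturbation preserves $l'$-types --- are routine.
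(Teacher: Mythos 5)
Your proof is correct, but it is organized differently from the one in the paper, so a comparison is worthwhile. Both arguments are two-stage perturbations of $\beta$ inside $\bigcap_N l^NM$, but the stages are not the same. The paper's first move is a \emph{genericity} perturbation: since $\bigcap_N l^NM$ is large (this uses the subsection's standing hypothesis that $[M:lM]$ is infinite), one can pick $\epsilon_i\in\bigcap_N l^NM$ so that $\epsilon$ is $\mathbb{Q}$-free over the special subgroup generated by $D\alpha\beta$; this makes $(\alpha_i-\beta'_i)_i$ $\mathbb{Q}$-free over $D'=\langle D\beta'\rangle$, after which the \emph{statement} of \cref{bijectionQLibre}, applied over the enlarged base $D'$ to the difference tuple, produces in one shot a tuple $e\in\left(\bigcap_N l^NM\right)^m$ with $\tp_j(e/D')=\tp_j(\alpha-\beta'/D')$ for all $j\neq l$ (including $j=<$), and $\gamma=\beta'+e$. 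You instead first match the $l'$-types for all primes $l'\neq l$ by a Chinese-remainder-plus-compactness perturbation, and then fix the $<$-type by re-running the \emph{construction inside the proof} of \cref{bijectionQLibre} with $\alpha$ as the order-template and $\beta'$ as the target, staying over the base $D$ throughout. What each buys: your route never invokes largeness of $\bigcap_N l^NM$, so it does not depend on the infinite-index hypothesis and is in that sense slightly more general and more elementary; the paper's route stays at the level of the already-proved homeomorphism (no need to reopen its proof), at the cost of the genericity step and the change of base. All the delicate points you flag — that \cref{thmChinois} lets you prescribe residues at finitely many primes simultaneously, that an infinitely $l'$-divisible shift preserves $l'$-types over any parameter set, and that the construction in \cref{bijectionQLibre} uses $\mathbb{Q}$-freeness only of the template tuple $c$ and not of the target $c'$ (indeed the paper itself applies it to a $c'$ coming from \cref{surjPremier}) — check out.
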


\begin{proof}
As $\bigcap\limits_N l^NM$ is large, choose by induction $\epsilon_i\in\bigcap\limits_N l^NM$ such that $\epsilon_i$ is not in the special subgroup generated by $D\alpha\beta\epsilon_{<i}$, and let $\beta_i'=\beta_i+\epsilon_i$. Then we have $\beta_i-\beta_i'\in\bigcap\limits_N l^NM$, and $\epsilon$ is $\mathbb{Q}$-free over the special subgroup generated by $D\alpha\beta$. As a result, $\alpha$ (and hence $(\alpha_i-\beta_i')_i$) is $\mathbb{Q}$-free over $D'$, the special subgroup generated by $D\beta'$. By \cref{bijectionQLibre}, there exists in $M$ a tuple $e=e_1\ldots e_m$ such that $e_i\in\bigcap\limits_N l^NM$ for all $i$, and $\tp_j(e/D')=\tp_j((\alpha_i-\beta'_i)_i/D')$ for all $j\neq l$. Let $\gamma_i=\beta_i'+e_i$. Then, for all $j\neq l$, we have $\tp_j(\gamma/D')=\tp_j(\alpha/D')$. Moreover, we have $\gamma_i-\beta_i'\in\bigcap\limits_Nl^NM$ for all $i$, thus $\tp_l(\gamma/D)=\tp_l(\beta'/D)=\tp_l(\beta/D)=\tp_l(\alpha/D)$, so we conclude that $\gamma\equiv_D\alpha$.
\end{proof}

\begin{remark}\label{ROAGInfiniImpliqueDense}
Recall that any discrete model of \ROAG~ is an elementary extension of $\mathbb{Z}$. In particular, if $[M:lM]$ is infinite for some $l$, then $M$ must be dense. In that case, the special subgroups of ${M}$ (which are exactly the definably closed sets) are its pure subgroups.
\end{remark}

\begin{lemma}\label{bougerNbreFiniCosetsIndiceInfini}
Let $(\alpha_i)_{i\in I}$ be a finite tuple from $M$ which is $\mathbb{Q}$-free over $B$. Suppose that for each $i\in I$, there exists $N>0$ such that $\alpha_i\not\in B+l^NM$, and choose $N_i$ the least of those integers for each $i\in I$. Then, there exists $(\beta_i)_i\equiv_B(\alpha_i)_i$ such that $\beta_i-\alpha_k\not\in l^{N_i}M$ for each $i, k\in I$.
\end{lemma}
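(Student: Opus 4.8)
\emph{Strategy.} I would reduce the statement to a question about $l$-types only, then to the prime $l$ ``at the first level'', where the infinitude of $[M:lM]$ supplies the room needed to move cosets.

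\emph{Step 1 (reduction via \cref{lemmechoixRpzTpL}).} It is enough to produce a tuple $\beta^0=(\beta^0_i)_{i\in I}$ in $M$ with $\tp_l(\beta^0/B)=\tp_l((\alpha_i)_i/B)$ and $\beta^0_i-\alpha_k\notin l^{N_i}M$ for all $i,k\in I$. Indeed, $(\alpha_i)_i$ is $\mathbb{Q}$-free over $B$ and $B$ is small under \cref{ROAGHypPrincipales}, so \cref{lemmechoixRpzTpL} applied with $D=B$ yields $(\beta_i)_i\equiv_B(\alpha_i)_i$ with $\beta_i-\beta^0_i\in\bigcap_N l^NM\subseteq l^{N_i}M$; then $\beta_i-\alpha_k=(\beta_i-\beta^0_i)+(\beta^0_i-\alpha_k)$ is a sum of an element of $l^{N_i}M$ and an element outside $l^{N_i}M$, hence outside $l^{N_i}M$. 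Moreover, by saturation it suffices to prove that the partial type $\tp_l((\alpha_i)_i/B)\cup\{\neg\mathfrak{d}_{l^{N_i}}(x_i-\alpha_k):i,k\in I\}$ is consistent (its parameters generate a small special subgroup), and by \cref{bijectionQLibre} this consistency question is insensitive to the $<$-part and to the other primes: it is purely about the prime $l$.

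\emph{Step 2 (factoring out $l^{N_i-1}$).} For each $i$, minimality of $N_i$ gives $b_i\in B$ and $a_i\in M$ with $\alpha_i=b_i+l^{N_i-1}a_i$; then $a_i\notin lM$, and in fact $a_i\notin B+lM$ (otherwise $\alpha_i\in B+l^{N_i}M$), and one checks that $(a_i)_i$ is again $\mathbb{Q}$-free over $B$. Looking for $\beta^0_i$ of the form $b_i+l^{N_i-1}\tilde a_i$, a direct computation unwinding atomic $l$-formulas through the substitution $f(x)\mapsto f(x)-f(b)$ shows that $\tp_l(\beta^0/B)=\tp_l((\alpha_i)_i/B)$ as soon as $\tp_l((\tilde a_i)_i/B)=\tp_l((a_i)_i/B)$; likewise $\beta^0_i-\alpha_k\notin l^{N_i}M$ is automatic when $\alpha_k-b_i\notin l^{N_i-1}M$, and otherwise is equivalent to $\tilde a_i\notin e_{ik}+lM$, where $l^{N_i-1}e_{ik}=\alpha_k-b_i$ (note $e_{ii}=a_i$). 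So the task becomes: find a $\mathbb{Q}$-free-over-$B$ tuple $(\tilde a_i)_i$ realizing $\tp_l((a_i)_i/B)$ with $\tilde a_i\notin S_i+lM$ for each $i$, where $S_i\subset M$ is the finite set $\{a_i\}\cup\{e_{ik}:k\in I\}$.

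\emph{Step 3 (moving the cosets mod $lM$), the crux.} Work in a $\lambda^+$-saturated $N\succeq M$; since $[M:lM]$ is infinite, $[N:lN]\geq\lambda>|M|$. By \cref{bijectionQLibre}, $\tp_l((a_i)_i/B)$ is realized by some $\mathbb{Q}$-free-over-$B$ tuple, and the point is that it can be realized by such a tuple $(\tilde a_i)_i$ with $\tilde a_i\notin S_i+lN$ for every $i$: because $a_i\notin B+lM$, the coset $a_i+lN$ is not pinned down over $B$, and inside the coset of $l^{N_i-1}N$ cut out by the $i$-th coordinate type there are $[N:lN]$ many cosets of $l^{N_i}N$, of which fewer than $\lambda$ are to be avoided; one realizes the tuple one coordinate at a time, at each step enlarging the base by the coordinates already chosen and by $S_i$, and using \cref{bougerLesCosets} to see that pushing $\tilde a_i$ off finitely many $l^{N_i}N$-cosets within its $l^{N_i-1}N$-coset does not disturb its $l$-type over the current base. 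By saturation the resulting tuple can be taken in $M$. I expect this last step — simultaneously preserving the whole tuple's $l$-type while independently pushing each coordinate off finitely many cosets — to be the only genuine difficulty; everything else is bookkeeping with \cref{lemmechoixRpzTpL}, \cref{bijectionQLibre} and \cref{completionTypeModL}.
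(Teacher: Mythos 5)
Your Steps 1 and 2 are sound and essentially reproduce the paper's own reductions: the paper likewise divides out $l^{N_i-1}$ to reduce to the case $N_i=1$, and likewise invokes \cref{lemmechoixRpzTpL} at the end to upgrade equality of $l$-types to full $B$-equivalence. The problem is Step 3, which you correctly identify as the crux but do not actually resolve. The claim that one can realize the tuple ``one coordinate at a time, \ldots pushing $\tilde a_i$ off finitely many cosets within its coset without disturbing its $l$-type over the current base'' fails exactly when the coset of $a_i$ modulo $lM$ is determined over $B$ by the earlier coordinates. Concretely, if $a_i\equiv b+\sum_{j<i}\lambda_j a_j \pmod{lM}$ with $b\in B$ (and, since $a_i\notin B+lM$, some $\lambda_j$ coprime to $l$), then any $(\tilde a_j)_j$ realizing $\tp_l((a_j)_j/B)$ must satisfy $\tilde a_i\equiv b+\sum_{j<i}\lambda_j\tilde a_j\pmod{lM}$: the coset of $\tilde a_i$ is pinned and there is nothing left to push. \cref{bougerLesCosets} cannot be applied there because its hypothesis $e_1,e_2\notin D+lM$ fails for $D$ the current base. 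Whether the forced coset avoids $S_i+lM$ then depends on a joint condition on the coordinates already chosen, which your coordinate-by-coordinate scheme never imposes.

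The paper closes this gap with $\mathbb{F}_l$-linear algebra in $V=\faktor{M}{lM}$: it extracts a subfamily $I_0$ whose images are an $\mathbb{F}_l$-basis modulo $U=\faktor{(B+lM)}{lM}$, places those coordinates in cosets that are $\mathbb{F}_l$-free over the \emph{small} subspace $U'$ spanned by $B$ together with all the original $\alpha_k$ (possible since $[M:lM]$ is infinite, hence $V$ is large), and then observes that every remaining coordinate is, modulo $B+lM$, a $\mathbb{Z}$-linear combination of the $I_0$-coordinates with some coefficient coprime to $l$, so its forced coset automatically lies outside $U'$ and in particular outside every forbidden coset $S_i+lM$. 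If you want to keep your inductive formulation, you would need to build this genericity into the choice of the ``free'' coordinates and prove the automatic avoidance for the ``dependent'' ones; as written, Step 3 is a genuine gap rather than bookkeeping.
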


\begin{proof}
Note that by \cref{ROAGInfiniImpliqueDense}, the special subgroups of ${M}$ are exactly its pure subgroups.
\par Let $b_i\in B$ such that $b_i-\alpha_i\in l^{N_i-1}M$. Suppose we can find a witness $(\gamma_i)_i$ of the statement with $(\alpha_i)_i$ replaced by $\left(\dfrac{\alpha_i-b_i}{l^{N_i-1}}\right)_i$ ($N_i$ will be replaced by $1$). Then we can choose $\beta_i=l^{N_i-1}\gamma_i+b_i$. For the remainder of the proof, we can suppose without loss that $N_i=1$ for every $i$ (in particular, $b_i=0$).
\par Let $V$ be the large $\mathbb{F}_l$-vector space $\faktor{M}{lM}$, and $U\leqslant V$ the $\mathbb{F}_l$-vector subspace $\faktor{(B+lM)}{lM}$. Note that $U$ is small, as it is the image by $B$ of the map $M\longrightarrow \faktor{M}{lM}$. Note also that we have $\alpha_i\in V\setminus U$ for every $i\in I$. Now, let $I_0\subset I$ such that $(\alpha_i)_{i\in I_0}$ is a lift of an $\mathbb{F}_l$-basis of the image in $\faktor{V}{U}$ of $U+\sum\limits_{i\in I}\mathbb{F}_l\cdot(\alpha_i\mod lM)$. Let $D$ be the special subgroup of ${M}$ generated by $B(\alpha_i)_{i\in I}$, and let $U'=\faktor{(D+lM)}{lM}$, a small vector subspace of $V$ containing $U$. As $\faktor{V}{U'}$ is large, one can choose an arbitrary family $(\beta_i)_{i\in I_0}\in M^{I_0}$ whose image in $\faktor{V}{U'}$ is $\mathbb{F}_l$-free. Now, for each non-zero $f\in \LC^{|I_0|}(\mathbb{Z})$, if $m$ is the largest integer such that all the coefficients of $f$ lie in $l^m\mathbb{Z}$, then we have:
$$f((\beta_i)_{i\in I_0})\in l^{m}M\ni f((\alpha_i)_{i\in I_0})$$
$$f((\beta_i)_{i\in I_0})\not\in B+l^{m+1}M\not\ni f((\alpha_i)_{i\in I_0})$$
thus we have $\tp_l((\alpha_i)_{i\in I_0}/B)=\tp_l((\beta_i)_{i\in I_0}/B)$ by the same reasoning as in \cref{completionTypeModL}. For each $i\in I_0$, as $\beta_i\mod lM\not\in U'$, we have $\beta_i\not\in D+lM$, thus $\beta_i-\alpha_k\not\in lM$ for each $k\in I$. By \cref{lemmechoixRpzTpL}, we can suppose without loss that $(\beta_i)_{i\in I_0}\equiv_B(\alpha_i)_{i\in I_0}$.
\par Choose $\sigma\in \Aut({M}/B)$ such that $\sigma(\alpha_i)=\beta_i$ for every $i\in I_0$. Let $\beta_i=\sigma(\alpha_i)$ for each $i\in I\setminus I_0$. Then we have $(\beta_i)_{i\in I}\equiv_B(\alpha_i)_{i\in I}$. In order to conclude, we need to show that $\beta_i-\alpha_k\not\in lM$ for each $i\in I\setminus I_0$, $k\in I$. Choose $i\in I\setminus I_0$. Then there must exist $f_i\in \LC(\mathbb{Z})$ such that $\alpha_i-f_i((\alpha_k)_{k\in I_0})\in B+lM$. Let $e_i\in B$ be such that $\alpha_i-f_i((\alpha_k)_{k\in I_0})-e_i$ lies in $lM$. Then $\beta_i-f_i((\beta_k)_{k\in I_0})-e_i\in lM$, so we just have to show that $f_i((\beta_k)_{k\in I_0})\not\in D+lM$. As $\alpha_i\not\in B+lM$, some coefficient of $f_i$ must be coprime with $l$, which concludes the proof as $(\beta_k\mod lM)_{k\in I_0}$ is $\mathbb{F}_l$-free over $U'$.
\end{proof}

\begin{lemma}\label{ROAGInfiniBLibre}
Let $(\alpha_i)_i$ be some tuple from $M$. Then there exists in $M$ a tuple $(\beta_i)_i$ which is $\mathbb{Q}$-free over $B$, and for which $\alpha_i-\beta_i\in\bigcap\limits_N l^NM$ for every $i$.
\end{lemma}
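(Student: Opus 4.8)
The plan is to construct $(\beta_i)_i$ one coordinate at a time, nudging each $\alpha_i$ by a generic element of $\bigcap_N l^N M$, exactly in the spirit of \cref{lemmechoixRpzTpL}. The single fact about the infinite-index hypothesis that does the work is that $\bigcap_N l^N M$ is large, i.e.\ not contained in any small subset of $M$: by \cref{indicePuissance} we have $[l^N M : l^{N+1}M] = [M:lM]$, which is infinite by assumption, so each $l^N M$ is infinite; hence for any small $S$ the partial type $\{\mathfrak{d}_{l^N}(x) : N>0\}\cup\{x\neq s : s\in S\}$ is finitely satisfiable (a finite fragment only constrains $x$ to lie in some $l^{N_0}M$ and off a finite set), and $\lambda$-saturation produces a realization in $\bigcap_N l^N M\setminus S$. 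Recall also that by \cref{ROAGInfiniImpliqueDense} $M$ is dense, so the special subgroups are precisely its pure subgroups.

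Granting this, I would induct on $i$ (the tuple being finite), keeping the invariant that $\beta_1,\dots,\beta_{i-1}$ are $\mathbb{Q}$-free over $B$ and $\alpha_k-\beta_k\in\bigcap_N l^N M$ for $k<i$. At stage $i$, set $D_i=\dcl(B\cup\{\beta_1,\dots,\beta_{i-1}\})$, which by \cref{specialImpliqueDefClos} is the relative divisible closure in $M$ of the subgroup generated by $B$ and the $\beta_k$; since $M$ is torsion-free and $|B|<\lambda$, $D_i$ is small. By the inductive description of $\mathbb{Q}$-linear independence modulo $\div(B)$, the tuple $\beta_1,\dots,\beta_i$ will be $\mathbb{Q}$-free over $B$ as soon as $\beta_i\notin D_i$ (here one uses $\beta_i\in M$, so membership in $\div(D_i)$ is the same as membership in $D_i$). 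Thus it suffices to find $\epsilon\in\bigcap_N l^N M$ avoiding the small coset $D_i-\alpha_i$; such an $\epsilon$ exists because $\bigcap_N l^N M$ is large, and we put $\beta_i=\alpha_i+\epsilon$. Then $\alpha_i-\beta_i=-\epsilon\in\bigcap_N l^N M$ and the invariant is preserved.

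After finitely many steps this yields the desired tuple $(\beta_i)_i$, establishing the lemma. I do not expect a genuine obstacle: the only points requiring care are the verification that $\bigcap_N l^N M$ is large (handled above via \cref{indicePuissance} and saturation) and the bookkeeping that ``$\beta_i\notin D_i$'' is exactly what is needed to preserve $\mathbb{Q}$-freeness over $B$, which is routine.
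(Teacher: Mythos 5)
Your proof is correct and follows essentially the same route as the paper's: an induction where each $\beta_i$ is obtained by perturbing $\alpha_i$ by an element of $\bigcap_N l^N M$ chosen outside a small set (you avoid the coset $D_i-\alpha_i$ of the special subgroup generated by $B\beta_{<i}$, the paper avoids the special subgroup generated by $B$, the $\alpha_k$ and the previous $\epsilon_k$ — a cosmetic difference). Your explicit saturation argument for the largeness of $\bigcap_N l^N M$ is a detail the paper leaves implicit, and it is sound.
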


\begin{proof}
One merely has to define by induction $\epsilon_i\in\bigcap\limits_Nl^NM$ which does not lie in the small special subgroup generated by $B(\alpha_k)_k\epsilon_{<i}$, and choose $\beta_i=\alpha_i+\epsilon_i$.
\end{proof}

\begin{lemma}\label{lemmeIndiceInfini}
Let $N>0$. Suppose we have $f_1,\ldots f_N\in \LC^n(\mathbb{Z})$, as well as $Y_1,\ldots Y_N\in\bigcup\limits_m \faktor{M}{l^mM}$, such that every tuple of $M$ which realizes $p_l$ satisfies the formula $\bigvee\limits_i f_i(x)\in Y_i$. Then at least one of the $Y_i$ must have a point in $B$.
\end{lemma}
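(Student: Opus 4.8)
The plan is to argue by contradiction. Assume that none of the $Y_i$ has a point in $B$, and construct a tuple $d$ in $M$ realizing $p_l$ with $f_i(d)\notin Y_i$ for every $i$; since $d\in M^n$ fails the disjunction $\bigvee_i f_i(x)\in Y_i$, this contradicts the hypothesis. Each $Y_i$ is a coset of some $l^{m_i}M$; fix a representative $y_i$, so the assumption reads $y_i\notin B+l^{m_i}M$ for all $i$. The tuple $d$ will be taken of the form $\sigma(c)$ for a suitable $\sigma\in\Aut(M/B)$, so that $d\models\tp(c/B)\supseteq p_l$ is automatic.

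First I would dispose of the ``trivial'' indices. If $f_i(c)\in B+l^{m_i}M$, say $f_i(c)\equiv b\bmod l^{m_i}M$ with $b\in B$, then $\mathfrak{d}_{l^{m_i}}(f_i(x)-b)\in p_l$, so \emph{every} realization $d$ of $p_l$ has $f_i(d)\equiv b\bmod l^{m_i}M$; were $f_i(d)\in Y_i$ we would get $b\in Y_i\cap B$, a contradiction. For the remaining indices let $N_i\le m_i$ be least with $f_i(c)\notin B+l^{N_i}M$; since then $f_i(c)\in B+l^{N_i-1}M$, \cref{bougerLesCosets} shows that $\tp_l(f_i(c)/B)$ is generated by $\mathfrak{d}_{l^{N_i-1}}(x-b_i)$ (some $b_i\in B$, with $b_i=0$ if $N_i=1$) together with $\{\neg\mathfrak{d}_{l^{N_i}}(x-b'):b'\in B\}$, so every realization $d$ of $p_l$ satisfies $f_i(d)\equiv b_i\bmod l^{N_i-1}M$ and $f_i(d)\notin B+l^{N_i}M$. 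Hence if $y_i\not\equiv b_i\bmod l^{N_i-1}M$, or if $y_i\in B+l^{N_i}M$, then again $f_i(d)\notin Y_i$ for every $d\models p_l$. Let $I_0$ be the set of surviving indices. For $i\in I_0$, using that $M$ is torsion-free put $g_i=l^{-(N_i-1)}(f_i(c)-b_i)$ and $z_i=l^{-(N_i-1)}(y_i-b_i)$ in $M$; since $B$ is pure, $g_i\notin B+lM$ and $z_i\notin B+l^{m_i-N_i+1}M$, and for $d=\sigma(c)$ one has $f_i(d)\in Y_i\iff\sigma(g_i)\in z_i+l^{m_i-N_i+1}M$. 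So it suffices to choose $\sigma\in\Aut(M/B)$ with $\sigma(g_i)\notin z_i+l^{m_i-N_i+1}M$ for all $i\in I_0$.

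For this step I would follow the proof of \cref{bougerNbreFiniCosetsIndiceInfini}, adding the constraint on the $z_i$. Choose $I_1\subseteq I_0$ so that $(g_i)_{i\in I_1}$ lifts an $\mathbb{F}_l$-basis of the span of $\{g_i\bmod(B+lM):i\in I_0\}$ in $\faktor{M}{B+lM}$ (this family is $\mathbb{Q}$-free over $B$, by purity of $B$). Since $[M:lM]$ is infinite, $\faktor{M}{B+lM}$ is large, so one may pick targets $(\beta_i)_{i\in I_1}$ in $M$ whose classes modulo $B+lM$ are $\mathbb{F}_l$-free over the finite-dimensional subspace $W$ spanned by the classes of all $g_k$ and $z_k$, $k\in I_0$. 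As in \cref{completionTypeModL} one checks $\tp_l((g_i)_{i\in I_1}/B)=\tp_l((\beta_i)_{i\in I_1}/B)$; by \cref{lemmechoixRpzTpL} (with $D=B$) one may then replace $(\beta_i)_{i\in I_1}$ by a tuple $(\gamma_i)_{i\in I_1}\equiv_B(g_i)_{i\in I_1}$ with $\gamma_i\equiv\beta_i\bmod\bigcap_N l^NM$, and extend $g_i\mapsto\gamma_i$ to $\sigma\in\Aut(M/B)$ by strong homogeneity. For $i\in I_1$, $\sigma(g_i)\bmod(B+lM)=\beta_i\bmod(B+lM)$ is free over $W$, hence distinct from the nonzero element $z_i\bmod(B+lM)$ of $W$; for $i\in I_0\setminus I_1$, writing $g_i\equiv\sum_{k\in I_1}\lambda_k g_k\bmod(B+lM)$ with not all $\lambda_k$ zero (as $g_i\notin B+lM$), $\sigma(g_i)\bmod(B+lM)=\sum_k\lambda_k(\beta_k\bmod(B+lM))$ is a nonzero element of the span of those free classes, again distinct from $z_i\bmod(B+lM)$. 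In both cases $\sigma(g_i)\not\equiv z_i\bmod l^{m_i-N_i+1}M$, so $d=\sigma(c)$ realizes $p_l$ and avoids every $Y_i$, the desired contradiction.

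The main obstacle is this last step: one must avoid finitely many prescribed cosets over $M$ \emph{simultaneously} while staying inside $p_l$, and the $f_i$ need not be $\mathbb{Q}$-independent over $B$. This is exactly where the assumption $[M:lM]=\infty$ is used — it gives the room in $\faktor{M}{B+lM}$ — and where the $\mathbb{F}_l$-basis bookkeeping of \cref{bougerNbreFiniCosetsIndiceInfini} has to be pushed a little further; the reductions in the first two steps are precisely what turns the problem into one about cosets of groups of the form $l^{m_i-N_i+1}M$ attached to elements lying outside $B+lM$, which is the form the bookkeeping can handle.
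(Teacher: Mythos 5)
Your proof is correct. It runs on the same engine as the paper's own argument --- the largeness of $\faktor{M}{B+lM}$ when $[M:lM]$ is infinite, exploited through the $\mathbb{F}_l$-linear bookkeeping of \cref{bougerNbreFiniCosetsIndiceInfini} --- but you deploy it on the other side of the duality. The paper picks representatives $\alpha_i\in Y_i$, makes them $\mathbb{Q}$-free over $B$ via \cref{ROAGInfiniBLibre}, and applies \cref{bougerNbreFiniCosetsIndiceInfini} as a black box to get $\sigma\in\Aut(M/B)$ with $\bigcup_i\sigma(Y_i)$ disjoint from $\bigcup_i Y_i$, and then contradicts the $\Aut(M/B)$-invariance of $p_l$. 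You instead move the realization: you build $\sigma\in\Aut(M/B)$ so that $d=\sigma(c)$ avoids every $Y_i$, which contradicts the hypothesis immediately, witnessed by a single tuple of $M$; this endgame is arguably more direct than the paper's, which has to confront two consequences of $p_l$ whose witnesses a priori involve different linear forms. The price of your route is that \cref{bougerNbreFiniCosetsIndiceInfini} cannot be quoted verbatim (the cosets to be dodged are $z_i+l^{m_i-N_i+1}M$ rather than the conjugates' own cosets), so you correctly re-run its proof with the classes of the $z_k$ added to the subspace $W$ to be avoided. Your preliminary reduction is sound: the indices with $f_i(c)\in B+l^{m_i}M$, or with $y_i\not\equiv b_i\bmod l^{N_i-1}M$, or with $y_i\in B+l^{N_i}M$, are already excluded by $p_l$ itself, and the passage from $f_i,Y_i$ to $g_i,z_i$ by dividing out $l^{N_i-1}$ is legitimate since $M$ is torsion-free and $B$ is pure. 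One micro-remark: the freeness of $\beta_i\bmod(B+lM)$ over $W$ gives $\beta_i\not\equiv z_i$ modulo $B+lM$ regardless of whether $z_i\bmod(B+lM)$ is nonzero (though it is nonzero, precisely because you discarded the case $y_i\in B+l^{N_i}M$), and it is the non-membership of $\sigma(g_i)-z_i$ in $B+lM\supseteq l^{m_i-N_i+1}M$ that closes the argument, exactly as you state.
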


\begin{proof}
Let $\alpha_i\in Y_i$ for each $i$. By \cref{ROAGInfiniBLibre}, we can assume without loss that $(\alpha_i)_i$ is $\mathbb{Q}$-free over $B$. Suppose by contradiction that none of the $Y_i$ has a point in $B$. Then we can apply \cref{bougerNbreFiniCosetsIndiceInfini} to $(\alpha_i)_i$, we find $(\beta_i)_i\equiv_B (\alpha_i)_i$ such that, for all $m>0$ and for all $i$, $k$, if $\alpha_i\not\in B+l^mM$, then $\beta_i-\alpha_k\not\in l^mM$. In particular, as $Y_k$ does not intersect $B$, we have $\beta_i\not\in Y_k$ for all $i$, $k$. Let $\sigma\in \Aut({M}/B)$ such that $\sigma(\alpha_i)=\beta_i$ for each $i$. Then we must have $\left(\bigcup\limits_i Y_i\right)\cap\left(\bigcup\limits_i\sigma(Y_i)\right)=\emptyset$. However, as $p_l$ is $\Aut(M/B)$-invariant, we must have $p_l(x)\models\bigvee\limits_i f_i(x)\in \sigma(Y_i)$, a contradiction.
\end{proof}

Let us recall that we adopted \cref{ROAGHypPrincipales} in the section.

\begin{proposition}\label{indiceInfini}
Consider the action of $\Aut({M}/A)$ on $Q=S^n_l(M)$. Let $C$ be the group of all $\mathbb{Z}$-linear combinations of $c$. Then the following conditions are equivalent:
\begin{enumerate}
\item Some invariant element of $Q$ extends $p_l$.
\item Some element of $Q$ of bounded orbit extends $p_l$.
\item $p_l$ does not fork over $A$.
\item $p_l$ does not divide over $A$.
\item For each $N>0$, we have $(C+l^NM)\cap (B+l^NM)=A+l^NM$.
\end{enumerate}
\end{proposition}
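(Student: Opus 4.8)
I would prove Proposition~\ref{indiceInfini} by the cycle $(1)\Rightarrow(2)\Rightarrow(3)\Rightarrow(4)\Rightarrow(5)\Rightarrow(1)$. The implications $(1)\Rightarrow(2)$ (a singleton orbit is bounded) and $(3)\Rightarrow(4)$ (non-forking is stronger than non-dividing) are immediate. For $(2)\Rightarrow(3)$, I would start from an element $q_l\in Q=S^n_l(M)$ of bounded orbit extending $p_l$, and assemble it into a complete global type: take $q_{l'}=\tp_{l'}(0/M)$ for every prime $l'\neq l$ (these are $\Aut(M)$-invariant, hence of bounded orbit) and take for the $<$-coordinate some invariant element of $\pi_<(F)$ (for instance the type pushing every $c_i$ above $M+\mathbb{Q}c_{<i}$, which depends only on the order). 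By \cref{bijectionQLibre} and quantifier elimination the tuple $(q_<,(q_{l'})_{l'\textup{ prime}})$ corresponds to a consistent global type in $F$, and its orbit lies inside the product of the orbits of its coordinates, hence is bounded; a global type of bounded orbit does not fork over $A$ (as recalled in the proof of \cref{indepCorrespProduit}), and a fortiori neither does its restriction $p_l$.

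For $(4)\Rightarrow(5)$ I would argue contrapositively. If $(5)$ fails, then, using that $A\subseteq C$ and $A\subseteq B$ force the inclusion $A+l^NM\subseteq (C+l^NM)\cap(B+l^NM)$, there are $N>0$ and $z\in (C+l^NM)\cap(B+l^NM)$ with $z\notin A+l^NM$; replacing $z$ by a congruent element of $C$ we get $f\in\LC^n(\mathbb{Z})$ with $f(c)\in B+l^NM$ and $f(c)\notin A+l^NM$, and a $b\in B$ with $f(c)-b\in l^NM$ (so $b\notin A+l^NM$). Then $\mathfrak{d}_{l^N}(f(x)-b)\in p_l$, and I would show this formula divides over $A$: since $b\notin A+l^NM$, the arguments behind \cref{bougerLesCosets} and \cref{bougerNbreFiniCosetsIndiceInfini} (the index being infinite) let me find $b'\equiv_A b$ with $b-b'\notin l^NM$, and then, by compactness/Ramsey, an $A$-indiscernible sequence $(b_i)_i$ in $\tp(b/A)$ with $b_i-b_j\notin l^NM$ for $i\neq j$; the family $\{\mathfrak{d}_{l^N}(f(x)-b_i)\}_i$ is $2$-inconsistent, so $p_l$ divides over $A$. (Alternatively one can prove $(5)\Rightarrow(4)$ directly: under $(5)$ every formula of $p_l$ either pins $f(x)$ to an $A$-coset modulo some $l^Nm$ or only forbids $B$-cosets, neither of which divides over $A$, and \cref{lemmeIndiceInfini} then upgrades ``no single formula of $p_l$ divides'' to ``$p_l$ does not divide''.)

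The crux is $(5)\Rightarrow(1)$: building an $\Aut(M/A)$-invariant $q\in S^n_l(M)$ extending $p_l$. Here I would analyse each $f\in\LC^n(\mathbb{Z})$ separately via \cref{completionTypeModL}: if $f(c)\in\bigcap_N(A+l^NM)$, then $\tp_l(f(c)/B)$ is generated over $A$ by congruences $\mathfrak{d}_{l^N}(x-a_N)$ with $a_N\in A$, and the induced decision on $f(x)$ is automatically $A$-invariant; if instead $f(c)\notin A+l^{N_0}M$ for a least $N_0$, then condition $(5)$ gives $f(c)\notin B+l^{N_0}M$, so the least index at which $f(c)$ escapes $B$ equals $N_0$, the threshold element $a\in A$ with $f(c)-a\in l^{N_0-1}M$ serves simultaneously for $A$ and $B$, and $\tp_l(f(c)/B)$ is generated by $\{\mathfrak{d}_{l^{N_0-1}}(x-a)\}\cup\{\neg\mathfrak{d}_{l^{N_0}}(x-b'):b'\in B\}$. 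One then completes the partial type obtained by adding, for every $f$, $N$ and every coset $Y$ modulo $l^NM$ disjoint from $A$, the requirement $f(x)\notin Y$ except where $p_l$ already forces the contrary; I would use \cref{lemmechoixRpzTpL} and \cref{bougerNbreFiniCosetsIndiceInfini} to realize this type inside $M$ and \cref{lemmeIndiceInfini} (together with \cref{ROAGInfiniBLibre}) to check consistency and, crucially, that the resulting global $l$-type is fixed by $\Aut(M/A)$. The delicate point --- and the main obstacle --- is precisely this invariance verification: one must confirm that the only coset-choices the completion is ever forced to make are among those that are $\Aut(M/A)$-invariant, which is where the hypothesis $(5)$ is used in full strength via \cref{lemmeIndiceInfini}. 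Once $q$ is produced, $(1)$ holds, closing the cycle.
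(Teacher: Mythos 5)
Your proposal follows the same cycle $(1)\Rightarrow(2)\Rightarrow(3)\Rightarrow(4)\Rightarrow(5)\Rightarrow(1)$ as the paper, with the same ingredients: the $\Aut(M)$-invariant order-type together with the trivial $l'$-types for $(2)\Rightarrow(3)$, repeated use of \cref{bougerNbreFiniCosetsIndiceInfini} to produce a $2$-inconsistent family of $A$-conjugate instances of $\mathfrak{d}_{l^N}(f(x)-b)$ for $(4)\Rightarrow(5)$, and for $(5)\Rightarrow(1)$ the explicit, manifestly $\Aut(M/A)$-invariant partial type (positive congruences to $A$-points where possible, negative requirements on all other cosets) whose consistency with $p_l$ is checked by compactness and \cref{lemmeIndiceInfini} using hypothesis $(5)$. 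This is essentially the paper's proof step for step, so no further comparison is needed.
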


\begin{proof}
The directions $1\Longrightarrow 2$ and $3\Longrightarrow 4$ are trivial.
\par Let us prove $2\Longrightarrow 3$. For each prime $l'\neq l$, we can define $q_{l'}$ just as in \cref{ordreOrbites}. There remains to show that there exists $q_<\in\pi_<(F)$ whose orbit is $\Aut(M/A)$-invariant. In some elementary extension of ${M}$, define by induction $(a_i)_i, (D_i)_i$ such that $D_i$ is the special subgroup generated by $Ma_{<i}$, and $a_i\in\bigcap\limits_{b\in D_i}]b, +\infty[$. Define $q_<=\tp_<(a/M)$. Then, for all $(\lambda_1\ldots \lambda_n)\in\mathbb{Z}^n$, and $b\in M$, whether $\sum\limits_i\lambda_i a_i>b$ depends uniquely on the sign of $(\lambda_i)_i$ in the anti-lexicographic sum $\mathbb{Z}^n$, and it does not depend on $b$. Moreover, $(a_i)_i$ is clearly $\mathbb{Q}$-free over $M$. As a result, $q_<$ is an $\Aut({M})$-invariant (and hence $\Aut({M}/A)$-invariant) element of $\pi_<(F)$, and we conclude.
\par Let us prove $4\Longrightarrow 5$. Let $f\in \LC(\mathbb{Z})$ and $N>0$ be such that $f(c)$ is in $B+l^NM\setminus A+l^NM$. Let $b\in B$ such that $f(c)-b\in l^NM$ (we know that $b\not\in A+l^NM$ by hypothesis, in particular it is $\mathbb{Q}$-free over $A$ as a singleton). Let us show that the formula $\varphi(x, b):=\mathfrak{d}_{l^N}\left(f(x)-b\right)$ divides over $A$, which implies that $4$ fails. We can repeatedly apply \cref{bougerNbreFiniCosetsIndiceInfini} to find $(b_i)_{i<\omega}$ such that $b_0=b$, $b_i\equiv_Bb_{i+1}$, and $b_i-b_j\not\in l^NM$ for all $j\neq i$. Then the set of formulas $\left\lbrace\varphi(x, b_i)|i<\omega\right\rbrace$ is clearly $2$-inconsistent, so we can conclude.
\par Let us prove $5\Longrightarrow 1$. Suppose $5$ holds, and let us build an explicit invariant element of $Q$ which extends $p_l$, witnessing $1$. Define:
$$F=\left\lbrace (f, N)\in \LC^n(\mathbb{Z})\times\omega|f(c)\in A+l^NM\right\rbrace$$
\par For each $(f, N)\in F$, let $a_{f, N}\in A$ such that $f(c)-a_{f, N}\in l^NM$. Define the following partial type:
$$
\begin{array}{ccl}
p(x) & = & \left\lbrace \mathfrak{d}_{l^N}(f(x)-a_{f, N})|(f, N)\in F\right\rbrace\\
&&\cup\left\lbrace\neg\mathfrak{d}_{l^N}(f(x)-e)|(f, N)\not\in F, e\in M\right\rbrace
\end{array}
$$
\par Suppose by contradiction that $p$ is not consistent with $p_l$. Then, by compactness, there exists a finite tuple $(f_i, N_i, e_i)_i$ such that, for every $i$, $(f_i, N_i)\not\in F$, and $p_l(x)\models \bigvee\limits_i \mathfrak{d}_{l^{N_i}}(f_i(x)-e_i)$. By \cref{lemmeIndiceInfini} applied to $Y_i=e_i\mod l^{N_i}M$, there must exist $i$ such that $e_i\in B+l^{N_i}M$. As $(f_i, N_i)\not\in F$, we must have $e_i\not\in A+l^{N_i}M$, thus $e_i\in B+l^{N_i}M\setminus A+l^{N_i}M$. By hypothesis $5$, we have $p_l(x)\models \neg\mathfrak{d}_{l^{N_i}}(f_i(x)-e_i)$, which implies that $p_l(x)\models \bigvee\limits_{k\neq i} \mathfrak{d}_{l^{N_k}}(f_k(x)-e_k)$. We keep decreasing by induction the size of this disjunction, until we inevitably reach a contradiction. As a result, $p$ must be consistent with $p_l$, and it is clearly $\Aut(M/A)$-invariant. This partial type is of course complete as an element of $Q$, because every atomic formula with predicate in $\left\lbrace\mathfrak{d}_{l^N}|N>0\right\rbrace$ and parameters in $M$ either lies in $p$, or its negation lies in $p$.
\end{proof}

\begin{remark}\label{homeoIndiceInfini}
When the conditions of \cref{indiceInfini} hold, the explicit type in the proof of $5\Longrightarrow 1$ is actually the unique global extension of $p_l$ which has a bounded orbit. Indeed, if $q$ is another global extension, and $d$ is a realization of $q$ in some elementary extension, then we must have $f(d)-\alpha\in l^NM$ for some $(f, N)\not\in F$, and $m\in M$. In that case, $m\not\in A+l^NM$, thus the fifth condition from \cref{indiceInfini} fails with $c, B$ replaced by $d, M$. In particular, $q$ divides over $A$, thus its orbit is unbounded.
\end{remark}

\section{Computation of forking}
\begin{theorem}\label{resultatPpal}
Let ${M}\models$\ROAG, let $A$, $B$ be parameter subsets of $M$, let $\kappa=\max(|AB|, 2^{\aleph_0})^+$, and let $c=c_1\ldots c_n\in M^n$. Suppose ${M}$ is $\kappa$-saturated and strongly $\kappa$-homogeneous. Let $C$ be the subgroup of $M$ generated by $c$, and $A', B'$ the special subgroups generated by $A, AB$. Then the following conditions are equivalent:
\begin{itemize}
\item $\indep{c}{A}{B}{\f}$
\item $\indep{c}{A}{B}{\d}$
\item $\indep{c}{A}{B}{\bo}$
\item $\tp(c/AB)$ has a global $\Aut(M^{eq}/A\cup\acl^{eq}(\emptyset))$-invariant extension.
\item the following conditions hold:

%Then $\indep{c}{A}{B}{\f}$ if and only if $\indep{c}{A}{B}{\d}$ if and only if $\indep{c}{A}{B}{\bo}$ if and only if $\tp(c/AB)$ has a global $\Aut(M^{eq}/A\cup\acl^{eq}(\emptyset))$-invariant extension if and only if the following conditions hold:
\begin{enumerate}
\item Every closed bounded interval of $B'$ that has a point in $C$ already has a point in $\div(A')$.
\item For all prime $l$, if $[M:lM]$ is infinite, then we have, for all $N>0$, $(C+l^NM)\cap (B'+l^NM)=A'+l^NM$.
\end{enumerate}
\end{itemize}
\par Moreover, $\indep{c}{A}{B}{\inv}$ if and only if the above conditions hold, and, additionally, for every prime $l$ for which $[M:lM]$ is finite, we have $C\subset A'+\bigcap\limits_{N>0} l^NM$.
\end{theorem}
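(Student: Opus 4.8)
The plan is to assemble \cref{resultatPpal} from the component results of the two preceding sections. \emph{First, reduce to special subgroups.} Since $A'=\dcl(A)$ and $B'=\dcl(AB)$ by \cref{specialImpliqueDefClos}, we have $\Aut(M/A)=\Aut(M/A')$ and $\Aut(M^{eq}/A\cup\acl^{eq}(\emptyset))=\Aut(M^{eq}/A'\cup\acl^{eq}(\emptyset))$; also $\indep{}{}{}{\f}$ and $\indep{}{}{}{\d}$ are insensitive to definable closure, and conditions (1), (2) together with the extra condition are stated intrinsically in terms of $A'$, $B'$, $C$. So we may assume $A=A'$ and $B=B'$ are special. \emph{Second, reduce to a $\mathbb{Q}$-free tuple.} After reordering, write $c=c^{0}c^{1}$ with $c^{0}$ a maximal subtuple of $c$ that is $\mathbb{Q}$-free over $B$; then each coordinate of $c^{1}$ equals $f_{j}(c^{0})+b_{j}$ for some $f_{j}\in\LC(\mathbb{Q})$ and $b_{j}\in B$ (using that $B$ is pure in $M$). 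Using that a singleton outside $A$ has infinitely many pairwise distinct $A$-conjugates by \cref{clotureDefROAG}, so that the algebraic formula $x_{j}-f_{j}(x^{0})=b_{j}$ divides over $A$ as soon as $b_{j}\notin A$, one checks that each of the five listed conditions holds for $c$ exactly when all $b_{j}\in A$ and the same condition holds for $c^{0}$; and condition (1) for $C$ already forces all $b_{j}\in A$, after which replacing $C$ by $\langle c^{0}\rangle$ affects neither (1) nor (2) (the relative divisible closure is absorbed). Hence we may assume $c$ is $\mathbb{Q}$-free over $B$, so \cref{ROAGHypPrincipales} holds (the saturation hypotheses of \cref{indepCorrespProduit}, \cref{ordreOrbites}, \cref{indiceFiniInvariant} and \cref{indiceInfini} all follow from $\kappa$-saturation).

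\emph{Next, locate the condition $(\star)$ ``$\tp(c/AB)$ has a global $\Aut(M^{eq}/A\cup\acl^{eq}(\emptyset))$-invariant extension'' inside the hierarchy.} A global type $p\in S(M)$ that is $\Aut(M/A)$-invariant satisfies $(\star)$, because every automorphism in $\Aut(M^{eq}/A\cup\acl^{eq}(\emptyset))$ restricts to one in $\Aut(M/A)$; conversely, if $p$ satisfies $(\star)$ then, $\acl^{eq}(\emptyset)$ being small, the $\Aut(M/A)$-orbit of $p$ is bounded, so $\indep{c}{A}{B}{\bo}$. Together with \cref{inclusionsIndepTriviales}, the five assertedly equivalent conditions are thus squeezed into $(1)\wedge(2)\Rightarrow(\star)\Rightarrow\indep{c}{A}{B}{\bo}\Rightarrow\indep{c}{A}{B}{\f}\Rightarrow\indep{c}{A}{B}{\d}\Rightarrow(1)\wedge(2)$, and only the first and last implications remain.

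For $\indep{c}{A}{B}{\d}\Rightarrow(1)\wedge(2)$ I argue by contraposition. If (1) fails, then condition $5$ of \cref{ordreOrbites} fails, so by the implication $4\Rightarrow5$ there condition $4$ fails too, i.e. $\tp_{<}(c/B)\subseteq\tp(c/AB)$ divides over $A$. If (2) fails at some prime $l$ --- which forces $[M:lM]$ infinite --- then condition $5$ of \cref{indiceInfini} fails, so by $4\Rightarrow5$ there $\tp_{l}(c/B)\subseteq\tp(c/AB)$ divides over $A$. For $(1)\wedge(2)\Rightarrow(\star)$ I use the equivariant homeomorphism $F\cong\prod_{j\in J}\pi_{j}(F)$ of \cref{bijectionQLibre} (as exploited in \cref{indepCorrespProduit}): for each $j\in J$ I produce a $(\star)$-extension of $\tp_{j}(c/B)$ in $\pi_{j}(F)$ --- for $j={<}$ an $\Aut(M/A)$-invariant one from \cref{ordreOrbites} (condition (1) of the theorem being its condition $5$), for $j=l$ of infinite index one from \cref{indiceInfini} (using condition (2)), and for $j=l$ of finite index an arbitrary extension, which works since $\Aut(M^{eq}/\acl^{eq}(\emptyset))$ acts trivially on $S^{n}_{l}(M)$ by \cref{indiceFiniOrbiteBornee} --- and then glue them via the homeomorphism into a global $(\star)$-extension of $\tp(c/AB)$. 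Running the same argument with ``bounded orbit'' in place of ``invariant'' also gives $\indep{c}{A}{B}{\bo}\Leftrightarrow(1)\wedge(2)$ directly from \cref{indepCorrespProduit}.

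Finally, for the ``moreover'' clause: by the $\indep{}{}{}{\inv}$-version of \cref{indepCorrespProduit}, $\indep{c}{A}{B}{\inv}$ holds iff for every $j\in J$ the partial type $\tp_{j}(c/B)$ extends to an $\Aut(M/A)$-invariant element of $\pi_{j}(F)$; for $j={<}$ this is condition (1) by \cref{ordreOrbites}, for $j=l$ of infinite index it is condition (2) at $l$ by \cref{indiceInfini}, and for $j=l$ of finite index it is precisely the stated extra condition by \cref{indiceFiniInvariant}. Conjoining over $j\in J$ gives the equivalence. I expect the main obstacle to be the bookkeeping in the $\mathbb{Q}$-free reduction --- checking that the purely geometric conditions (1) and (2), phrased via $C=\langle c\rangle$ rather than via the interdefinable $\mathbb{Q}$-free subtuple, transfer correctly under the decomposition $c=c^{0}c^{1}$ and that relative divisible closures are harmless --- together with correctly pinning $(\star)$ strictly between $\indep{}{}{}{\inv}$ and $\indep{}{}{}{\bo}$.
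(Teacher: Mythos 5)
Your proposal is correct and follows essentially the same route as the paper: reduce to the special subgroups $A'$, $B'$, pass to a maximal $\mathbb{Q}$-free subtuple, then decompose the type into its $j$-components via \cref{bijectionQLibre} and \cref{indepCorrespProduit}, and conclude with \cref{ordreOrbites}, \cref{indiceInfini}, \cref{indiceFiniOrbiteBornee} and \cref{indiceFiniInvariant}, exactly as the paper does. The only (harmless) divergence is in the second reduction: the paper extracts a subtuple maximal $\mathbb{Q}$-free over $A'$, which is $A$-interdefinable with $c$ outright, and separately dispatches the case where it fails to be $\mathbb{Q}$-free over $B'$, whereas you extract one maximal over $B'$ and recover interdefinability by showing that the translation parameters $b_j$ must lie in $A'$.
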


\noindent Note that, if $\notIndep{c}{A}{B}{\d}$, then $\tp(c/AB)$ divides over $\acl^{eq}(A)$, thus it does not admit any global $\Aut(M^{eq}/A\cup\acl^{eq}(\emptyset))$-invariant extension.

\begin{proof}
Firstly, we have $A'=\dcl(A)$ and $B'=\dcl(AB)$ by \cref{specialImpliqueDefClos}, thus we have $\indep{c}{A}{B}{}$ if and only if $\indep{c}{A'}{B'}{}$ for every $\indep{}{}{}{}\in\{\indep{}{}{}{\f}, \indep{}{}{}{\d}, \indep{}{}{}{\bo}, \indep{}{}{}{\inv}\}$. Secondly, given $c'=c'_1\ldots c'_m$ a maximal subtuple of $c$ which is $\mathbb{Q}$-free over $A'$, $c$ and $c'$ are $A$-interdefinable, thus one can show that for all of those independence notions we have $\indep{c}{A'}{B'}{}$ if and only if $\indep{c'}{A'}{B'}{}$. Likewise, $\tp(c/AB)$ has an $\Aut(M^{eq}/A\cup\acl^{eq}(\emptyset))$-invariant extension if and only if $\tp(c'/B')$ has an $\Aut(M^{eq}/A'\cup\acl^{eq}(\emptyset))$-invariant extension. %Note that $C$ is exactly the set of $\mathbb{Z}$-linear combinations of $c'$.
\par If $c'$ was not $\mathbb{Q}$-free over $B'$, then one could show that $\notIndep{c'}{A'}{B'}{\d}$ (hence $\notIndep{c'}{A'}{B'}{\f}$, $\notIndep{c'}{A'}{B'}{\bo}$, and $\tp(c/AB)$ does not admit any global extension which is $\Aut(M^{eq}/A\cup\acl^{eq}(\emptyset))$-invariant). Moreover, condition $1$ would fail on some singleton of $B\setminus A$ witnessing the fact that $c'$ is not $\mathbb{Q}$-free over $B$.
\par Suppose $c'$ is $\mathbb{Q}$-free over $B'$. For each $j\in J$, let $p_j=\tp_j(c'/B')$, and let $F_j$ be the image by $\pi_j$ of the space of complete global types with realizations that are $\mathbb{Q}$-free over $M$.
\par If condition 1 fails, then $p_<$ divides over $A'$ by \cref{ordreOrbites}, and if condition 2 fails for some $l$, then $p_l$ divides over $A'$ by \cref{indiceInfini}. Either way, we have $\notIndep{c'}{A'}{B'}{}$ for every $\indep{}{}{}{}\in\{\indep{}{}{}{\f}, \indep{}{}{}{\d}, \indep{}{}{}{\bo}, \indep{}{}{}{\inv}\}$.
\par Suppose conditions 1 and 2 hold, then $p_<$ extends to an $\Aut({M}/A')$-invariant (thus $\Aut({M^{eq}}/A'\cup\acl^{eq}(\emptyset))$-invariant) element of $F_<$ by \cref{ordreOrbites}, and, for every prime $l$ for which $[M:lM]$ is infinite, $p_l$ extends to some $\Aut({M}/A')$-invariant extension of $F_l$ by \ref{indiceInfini}. By \cref{indiceFiniOrbiteBornee}, for each prime $l$ of finite index, any global extension of $p_l$ is automatically $\Aut({M^{eq}}/A'\cup\acl^{eq}(\emptyset))$-invariant, thus it follows that $\tp(c'/A'B')$ has a global $\Aut(M^{eq}/A'\cup\acl^{eq}(\emptyset))$-invariant extension. Moreover, by \cref{indiceFiniOrbiteBornee}, we know that $p_l$ extends to some element of $F_l$ of bounded orbit under $\Aut({M}/A')$ for every prime $l$ for which $[M:lM]$ is finite. By \cref{indepCorrespProduit}, $\indep{c'}{A'}{B'}{\bo}$, thus $\indep{c'}{A'}{B'}{\f}, \indep{c'}{A'}{B'}{\d}$. Moreover, by \cref{indepCorrespProduit}, we have $\indep{c'}{A'}{B'}{\inv}$ if and only if, for every prime $l$ for which $[M:lM]$ is finite, $p_l$ extends to some $\Aut({M}/A')$-invariant element of $F_l$. Then we can conclude using \cref{indiceFiniInvariant}.
\end{proof}

\begin{corollary}
    We have $\indep{}{}{}{\f}=\indep{}{}{}{\Sh}$ in \ROAG, where $\indep{c}{A}{B}{\Sh}$, Shelah-independence, is equivalent to $\tp(c/AB)$ having a global extension which is $\Aut(M^{eq}/\acl^{eq}(A))$-invariant.
\end{corollary}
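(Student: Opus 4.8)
The plan is to obtain $\indep{}{}{}{\Sh}=\indep{}{}{}{\f}$ by proving the two inclusions separately, with \cref{resultatPpal} doing essentially all the work for the nontrivial one.

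First I would dispatch $\indep{}{}{}{\Sh}\subset\indep{}{}{}{\f}$, which holds in any first-order theory. If $\tp(c/AB)$ has a global extension $p$ that is $\Aut(M^{eq}/\acl^{eq}(A))$-invariant, then $p$ does not divide over $\acl^{eq}(A)$ — a global invariant type never divides over its base (see e.g. \cite{Tent2012ACI}) — hence it does not fork over $\acl^{eq}(A)$, hence not over $A$, since forking over a parameter set coincides with forking over its $\acl^{eq}$. So $\tp(c/AB)$ does not fork over $A$, i.e. $\indep{c}{A}{B}{\f}$.

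For the converse $\indep{}{}{}{\f}\subset\indep{}{}{}{\Sh}$, suppose $\indep{c}{A}{B}{\f}$. By \cref{resultatPpal}, $\tp(c/AB)$ admits a global extension $p$ which is $\Aut(M^{eq}/A\cup\acl^{eq}(\emptyset))$-invariant. The point is merely that $A\cup\acl^{eq}(\emptyset)\subset\acl^{eq}(A)$, so $\Aut(M^{eq}/\acl^{eq}(A))$ is a subgroup of $\Aut(M^{eq}/A\cup\acl^{eq}(\emptyset))$; consequently $p$, being fixed by the larger group, is a fortiori fixed by $\Aut(M^{eq}/\acl^{eq}(A))$, and thus witnesses $\indep{c}{A}{B}{\Sh}$.

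There is no real obstacle here: the corollary is a direct repackaging of \cref{resultatPpal}, and the only thing to be careful about is comparing the two notions of invariance over the correct sets — the main theorem is stated with the (possibly smaller) set $A\cup\acl^{eq}(\emptyset)$, which is exactly strong enough to yield invariance over $\acl^{eq}(A)$.
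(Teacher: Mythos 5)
Your proof is correct and follows essentially the same route as the paper: the inclusion $\indep{}{}{}{\Sh}\subset\indep{}{}{}{\f}$ is the general fact that an invariant global type does not fork over its base (and forking over $A$ equals forking over $\acl^{eq}(A)$), while the converse is read off from \cref{resultatPpal} via the containment $A\cup\acl^{eq}(\emptyset)\subset\acl^{eq}(A)$. No gaps.
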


\begin{proof}
Shelah-invariance is a weaker condition than being invariant under $\Aut(M^{eq}/A\cup\acl^{eq}(\emptyset))$, which is weaker than forking in general. The other direction follows from the theorem.
\end{proof}

\par We know (see \cref{DOAGRappelsNIP}) that in any NIP theory, $\indep{}{}{}{\f}$ coincides with $\indep{}{}{}{\inv}$ over models. We can now refine that result for \ROAG:

\begin{corollary}\label{ROAGComparaisonFInv}
Let $A'$ be the special subgroup generated by $A$. Then $\indep{}{A}{}{\f}=\indep{}{A}{}{\inv}$ if and only if $A'+\bigcap\limits_N l^NM=M$ for all primes $l$ of finite index.
\par Moreover, $\indep{}{}{}{\f}=\indep{}{}{}{\inv}$ if and only if, for each prime $l$, either $M$ is $l$-divisible, or the index $[M:lM]$ is infinite.
\end{corollary}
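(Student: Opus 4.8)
The plan is to deduce the whole corollary from \cref{resultatPpal} by comparing the two characterisations it provides. Recall that that theorem says $\indep{c}{A}{B}{\f}$ is equivalent to its conditions (1)--(2), whereas $\indep{c}{A}{B}{\inv}$ is equivalent to (1)--(2) plus one extra clause $(\star)$: for every prime $l$ with $[M:lM]$ finite, $C\subseteq A'+\bigcap\limits_{N>0}l^NM$, where $A'$ denotes the special subgroup generated by $A$ and $C=\langle c\rangle$. Since $\indep{}{}{}{\inv}\subseteq\indep{}{}{}{\f}$ always holds (\cref{inclusionsIndepTriviales}), I would first note that $\indep{}{A}{}{\f}=\indep{}{A}{}{\inv}$ is equivalent to the assertion that every pair $(c,B)$ with $\indep{c}{A}{B}{\f}$ automatically satisfies $(\star)$ --- and crucially $(\star)$ does not mention $B$ at all.

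For the first equivalence, the easy direction is that if $A'+\bigcap\limits_{N>0}l^NM=M$ for all primes $l$ of finite index, then $C\subseteq M=A'+\bigcap\limits_{N>0}l^NM$ for \emph{any} $c$, so $(\star)$ holds vacuously and $\indep{}{A}{}{\f}=\indep{}{A}{}{\inv}$. For the converse I would take a prime $l$ of finite index with $A'+\bigcap\limits_{N>0}l^NM\subsetneq M$, choose a singleton $c\in M\setminus\left(A'+\bigcap\limits_{N>0}l^NM\right)$, and set $B=A$: then $\tp(c/AB)=\tp(c/A)$ does not fork over $A$, so $\indep{c}{A}{A}{\f}$, but if $\indep{c}{A}{A}{\inv}$ held then \cref{resultatPpal} would force $(\star)$, which fails at $l$ since $c\in C$; hence $\notIndep{c}{A}{A}{\inv}$ and $\indep{}{A}{}{\f}\neq\indep{}{A}{}{\inv}$.

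For the ``moreover'' clause, $\indep{}{}{}{\f}=\indep{}{}{}{\inv}$ as ternary relations iff $\indep{}{A}{}{\f}=\indep{}{A}{}{\inv}$ for every parameter set $A$, hence by the first part iff $A'+\bigcap\limits_{N>0}l^NM=M$ for every special subgroup $A'$ and every prime $l$ of finite index. Since $A'\mapsto A'+\bigcap\limits_{N>0}l^NM$ is monotone and $\dcl(\emptyset)$ is the smallest special subgroup, I would reduce this to $A'=\dcl(\emptyset)$ and split on density. If $M$ is dense then $\boldone=0$, $\dcl(\emptyset)=\{0\}$, and the requirement becomes $\bigcap\limits_{N>0}l^NM=M$, i.e. $M=lM$, for each $l$ of finite index; since $l$-divisibility makes $[M:lM]=1$ finite, this is precisely ``for each $l$, $M$ is $l$-divisible or $[M:lM]$ is infinite''. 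If $M$ is discrete then $\dcl(\emptyset)=\mathbb{Z}\cdot\boldone$, every index $[M:lM]$ is finite by \cref{ROAGInfiniImpliqueDense}, $M$ is never $l$-divisible (it has a least positive element), and $\mathbb{Z}\cdot\boldone+\bigcap\limits_{N>0}l^NM$ is a proper subgroup of the (saturated) $M$; so both sides of the asserted equivalence are false, and the formula still holds.

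Essentially everything here is bookkeeping against \cref{resultatPpal}; the one point I expect to need genuine care is the discrete case of the converse, namely that $\dcl(\emptyset)+\bigcap_{N>0}l^NM\neq M$ in the monster. This rests on the standing saturation hypothesis on $M$: one realises an $l$-adic residue pattern (a compatible sequence of classes mod $l^NM$) not coming from any rational integer, so that the resulting element of $M$ lies outside $\mathbb{Z}\cdot\boldone+\bigcap_{N>0}l^NM$. Everything else reduces to producing a point of $M$ outside a proper subgroup, which is immediate.
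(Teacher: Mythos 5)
Your derivation is the intended one --- the paper states this corollary without proof, and reading it off from \cref{resultatPpal} exactly as you do (taking $B=A$ for the converse of the first part, reducing to $A'=\dcl(\emptyset)$ by monotonicity, then splitting on density) is surely what the authors had in mind. The first equivalence and the dense case of the ``moreover'' are fine as you argue them.

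The discrete case, however --- precisely the point you flagged as needing care --- hides a real problem, though it lies in the input rather than in your logic. The extra clause of \cref{resultatPpal} is proved by citing \cref{indiceFiniInvariant}, whose condition is $C\subseteq\bigcap_{N}(A'+l^NM)$, and that is also how the clause is written in the introductory \cref{mainThm}; the bracketing $C\subseteq A'+\bigcap_{N}l^NM$ printed in \cref{resultatPpal} (and echoed in the corollary) is genuinely different. For discrete $M$ one has $\bigcap_{N}(\mathbb{Z}\boldone+l^NM)=M$ for every prime $l$, since Euclidean division puts an integer multiple of $\boldone$ in every coset of $l^NM$; so under the criterion the paper actually proves, every $l$-type is $\emptyset$-type-definable and $\indep{}{}{}{\f}=\indep{}{}{}{\inv}$ holds in, e.g., models of Presburger --- the opposite of what the stated ``moreover'' (and your discrete-case argument) concludes. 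Your saturation argument correctly shows $\mathbb{Z}\boldone+\bigcap_{N}l^NM\neq M$, but that is the wrong bracketing for the invariance criterion. So the proposal is faithful to the statement as printed, but a careful proof must first reconcile \cref{resultatPpal} with \cref{indiceFiniInvariant}, and doing so changes the answer in the discrete case: the ``moreover'' should exempt discrete groups, and the first part should read $\bigcap_{N}(A'+l^NM)=M$.
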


\begin{remark}\label{conclusionHomeo}
Suppose that $M$ is dense. Then, by \cref{propHomeoOrdre}, \cref{homeoIndiceInfini}, \cref{homeoIndiceFini}, and \cref{DOAGDescriptionFinaleHomeo}, we get the three following properties :
\begin{itemize}
\item If $\indep{c}{A}{B}{\f}$, then every global extension of $p_<$ which does not fork over $A$ is $\Aut(M/A)$-invariant. Moreover, the Stone space of those extensions can be written as $S_1$, a finite product of finite coproducts of closed subspaces of $S_{\DOAG}(\{0\})$.
\item Let $l$ be prime such that $[M:lM]$ is finite. If $C\subset A'+\bigcap\limits_N l^NM$, then $p_l$ has only one global extension, and it is $\Aut(M/A)$-invariant. Else, none of the global extensions of $p_l$ is $\Aut(M/A)$-invariant, but they are all non-forking over $A$, and the space of those extensions can be written $S_{2, l}$, a closed subspace of $\mathbb{Z}_l^{|c'|}$.
\item Let $l$ be prime such that $[M:lM]$ is infinite. If $\indep{c}{A}{B}{\f}$, then $p_l$ has only one global extension which does not fork over $A$, and this extension is $\Aut(M/A)$-invariant.
\end{itemize}

\noindent As a result, if we define:
$$L=\left\lbrace l\textup{ prime }|[M:lM]<\omega, C\not\subset A'+\bigcap\limits_N l^NM\right\rbrace,$$
then the space $S$ of global extensions of $\tp(c/AB)$ which do not fork over $A$ is either empty, or homeomorphic to $S_1\times\prod\limits_{l\in L} S_{2, l}$.
\par Moreover, if $S\neq\emptyset$ and $L=\emptyset$, then all the elements of $S$ are $\Aut(M/A)$-invariant, thus $S$ is homeomorphic to $S_1$, and it coincides with the space of global $\Aut(M/A)$-invariant extensions of $\tp(c/AB)$.
%\par We claim that this is also the case when $M$ is discrete, and we gave a sketch of the proof in \cref{homeoOrdre}.
\end{remark}

\section{Examples}\label{sectROAGEx}
\subsection{Non-forking versus finite satisfiability}\label{fs}
\par Let us start with the very basic remark that the cut independence notion does not coincide with finite satisfiability in \DOAG. In an elementary extension of the ordered group $\mathbb{R}$, let $\varepsilon>0$, $c>0$ such that $\Delta(c)<\Delta(\varepsilon)<\Delta(1)$. Let $A=\mathbb{Q}$, $B=\mathbb{Q}+\mathbb{Q}\varepsilon$. Then we clearly have $\indep{c}{A}{B}{\cut}$, but the type of $c$ over $B$ is not finitely satisfiable over $A$: the $B$-definable set $]0, \varepsilon]$ contains $c$ and has no point in $A$.

\subsection{Non-forking versus sequential independence}\label{contreExLemmePairs}
\par We would like to compare non-forking independence with a stronger independence notion:

\begin{definition}
Let ${M}$ be any first-order structure, and $A$, $B$, $C\subset{M}$ parameter sets. We  write $\indep{C}{A}{B}{\seq}$ when, for any finite tuple $c$ from $C$, there exists a finite tuple $d=(d_0,\ldots d_n)$ from $\acl(AC)$ such that $\acl(Ac)\subset\acl(Ad)$ and $\indep{d_i}{Ad_{\leqslant i}}{B}{\f}$ for every $i\leqslant n$.
\end{definition}

This relation, which we  call \textit{sequential independence}, is stronger than $\indep{}{}{}{\f}$ in any theory (an easy application of left-transitivity of non-forking). One can show that it coincides with $\indep{}{}{}{\f}$ in simple theories, and this is also the case in some non-simple theories, like DLO (a parameter set coincides with its algebraic closure, and it is easy to guess the global invariant extensions of the type of some finite tuple). In a theory where $\indep{}{}{}{\seq}=\indep{}{}{}{\f}$, it is really easy to understand forking, because one can reduce the problem to forking in dimension one. We showed in the previous section that forking in \ROAG~ is essentially a dimension one phenomenon, so it makes sense to ask whether $\indep{}{}{}{\seq}=\indep{}{}{}{\f}$ in these structures. We  show that the answer is no with a counterexample in \DOAG. The only other explicit example we know of $\indep{}{}{}{\f}\neq\indep{}{}{}{\seq}$ is in (\cite{HHM}, example 13.6), in the theory of algebraically closed valued fields. This example does not occur exclusively in the value group, one needs field-theoretic tools to make it work, which is why our example is arguably less complicated.
\par In the reduct to the ordered group structure of an elementary extension of the real-closed field $\mathbb{R}$, let $\varepsilon>0$ such that $\Delta(\varepsilon)<\Delta(1)$. Let $A=\mathbb{Q}$, $B=\mathbb{Q}+\mathbb{Q}\sqrt{2}+\mathbb{Q}\sqrt{3}$, and $c=(c_1, c_2)=(\sqrt{2}+\varepsilon,\sqrt{3}+\varepsilon\sqrt{2})$.
\par Let us show that we have $\indep{c}{A}{B}{\f}$. We know that any two distinct real numbers have a different cut over $\mathbb{Q}=A$, so any bounded $B$-definable closed interval that does not have any point in $A$ must be a singleton. As we have $\dcl(Ac)\cap \dcl(B)=A$, we have in fact $\indep{c}{A}{B}{\cut}$. In particular, we have $\indep{c}{A}{B}{\f}$.
\par Now let us show $\notIndep{c}{A}{B}{\seq}$. Let $d=(d_1,\ldots d_n)$ be a finite tuple from $\acl(Ac)$ such that $\acl(Ac)\subset\acl(Ad)$. We can assume that the family $d$ is $\mathbb{Q}$-free over $A$, for if $d_i\in\dcl(Ad_{<i})$, then we trivially have $\indep{d_i}{Ad_{<i}}{B}{\f}$. We can also assume $d_i\in \mathbb{Q}c_1+\mathbb{Q}c_2$, for if $a\in A$, then $\acl(Ad_{\leqslant_i})=\acl(Ad_{<i}(d_i+a))$, so we can freely translate any $d_i$ by an element of $A$. By basic linear algebra, we have $n=2$, and $(d_1, d_2)$ is a $\mathbb{Q}$-basis of $\mathbb{Q}c_1+\mathbb{Q}c_2$. Let $f_i$ be the unique map of $\LC^2(\mathbb{Q})$ such that $f_i(c_1, c_2)=d_i$. As $(c_1, c_2)$ is also $\mathbb{Q}$-free, $(f_1, f_2)$ must be $\mathbb{Q}$-free. For any $g\in \LC^2(\mathbb{Q})$:
$$
\begin{array}{ccl}
d_2-g(1, d_1) & = & f_2(\sqrt{2}, \sqrt{3})-g(1, f_1(\sqrt{2}, \sqrt{3}))\\
&&+\varepsilon\left[f_2(1, \sqrt{2})-g(0, f_1(1, \sqrt{2}))\right]
\end{array}$$
As $(f_1, f_2)$ and $(1, \sqrt{2}, \sqrt{3})$ are both $\mathbb{Q}$-free, we must have:
$$f_2(\sqrt{2}, \sqrt{3})-g(1, f_1(\sqrt{2}, \sqrt{3}))\neq 0$$
so $\Delta(d_2-g(1, d_1))=\Delta(1)$. We established that, for all $x\in \dcl(Ad_1)$, we have $\Delta(d_2-x)>\Delta(\epsilon)$, which implies $\epsilon\in H(d_2/\dcl(Ad_1))$. Now, if we had $\indep{d_2}{Ad_1}{B}{\f}$, then we would necessarily have $\epsilon\in H(d_2/\dcl(Bd_1))$ by \cref{HPlusGrand}. Let us show that this fails. It is enough to show that $\Delta(d_2-b)=\Delta(\epsilon)$ for some $b\in \dcl(Bd_1)$. Choose $b=d_1-f_1(\sqrt{2}, \sqrt{3})+f_2(\sqrt{2}, \sqrt{3})$. Then, as $d_i=f_i(c_1, c_2)=f_i(\sqrt{2}, \sqrt{3})+f_i(\epsilon, \epsilon\sqrt{2})$, we have $d_2-b=f_2(\epsilon, \epsilon\sqrt{2})-f_1(\epsilon, \epsilon\sqrt{2})$. As $(f_1, f_2)$ is $\mathbb{Q}$-free, $f_2-f_1\neq 0$, therefore $d_2-b$ is a non-trivial linear combination of $(\epsilon, \epsilon\sqrt{2})$. This latter family is clearly $\Delta$-separated, thus $\Delta(d_2-b)=\Delta(\epsilon)$, and we can conclude.
\par This holds for any choice for the family $d$, so we have $\notIndep{c}{A}{B}{\seq}$, which concludes the example.

\begin{center}
\begin{tikzpicture}[scale=2]
\draw[black] (0, 0) -- (5, 0);
\draw[red, very thick] (0, 0) -- (1, 0);
\draw[red, very thick] (4, 0) -- (5, 0);
\draw[red, very thick] (0.5, 0) node[anchor=south]{$\dcl(Ac_1)$};
\draw[red, very thick] (4.5, 0) node[anchor=south]{$\dcl(Ac_1)$};
\filldraw[gray] (2.5, 0) circle(1pt) node[anchor=south]{$c_2$};
\filldraw[blue] (1.5, 0) circle(1pt) node[anchor=north]{$\sqrt{3}\in\dcl(Bc_1)$};
\filldraw[blue] (3.5, 0) circle(1pt) node[anchor=north]{$\sqrt{3}+2\varepsilon\in\dcl(Bc_1)$};
\end{tikzpicture}
\end{center}

\subsection{Weak versus strong versions of sequential independence}
\par In the definition of sequential independence, we require that there exists a good enumeration $d$ that witnesses independence. Does it change anything to ask this for all the enumerations? We  show that the answer is yes with a counterexample in \DOAG.
\begin{definition}
Define $\indep{C}{A}{B}{\seq*}$ when, for every finite tuple $c=(c_0..c_n)$ from $\acl(AC)$, we have $\indep{c_i}{Ac_{<i}}{B}{\f}$ for all $i\leqslant n$.
\end{definition}
This notion is at least as strong as $\indep{}{}{}{\seq}$, which is strictly stronger than $\indep{}{}{}{\f}$ in \DOAG~ by the above example. Let us show that $\indep{}{}{}{\seq*}$ is strictly stronger than $\indep{}{}{}{\seq}$ in \DOAG.
Let $\varepsilon>0$, $c_2>0$ such that $\Delta(\varepsilon)<\Delta(c_2)<\Delta(1)$. Let $A=\mathbb{Q}$, $B=\mathbb{Q}+\mathbb{Q}\sqrt{2}$, and $c_1=\sqrt{2}+\varepsilon$.
\par With the same reasoning as in the previous example, $\indep{c_1c_2}{A}{B}{\cut}$. In particular, we have $\indep{c_1}{A}{B}{\f}$ and $\indep{c_2}{A}{B}{\f}$.
\par As $\Delta(c_2)\not\in\Delta(\dcl(Ac_1))$, $c_2$ is ramified over $\dcl(Ac_1)$. The only points of $\dcl(Bc_1)$ that have the same cut over $\dcl(Ac_1)$ as that of $c_2$ are the $\lambda \varepsilon$, with $\lambda\in\mathbb{Q}_{>0}$. They are all smaller than $c_2$, so $c_2$ leans right with respect to $\dcl(Ac_1)$ and $\dcl(Bc_1)$, and $\indep{c_2}{Ac_1}{B}{\f}$, which implies $\indep{c_1c_2}{A}{B}{\seq}$.
\par Let us show $\notIndep{c_1c_2}{A}{B}{\seq*}$. The interval $[\sqrt{2}, \sqrt{2}+c_2]$ contains $c_1$ and has no point in $\dcl(Ac_2)$, which implies $\notIndep{c_1}{Ac_2}{B}{\f}$.

\subsection{Forking versus geometric cut-dependence} 
\par Let us look at a different definition for cut independence:

\begin{definition}
Let ${M}$ be the expansion of some totally ordered set, and $A$, $B$, $C$ parameter sets. We  write $\notIndep{C}{A}{B}{\cut*}$ when there exists $c\in\dcl(AC)$ and $b_1$, $b_2\in\dcl(AB)$ such that $b_1\leqslant c\leqslant b_2$, $b_1\equiv_A b_2$ and $b_i\not\in\dcl(A)$.
\end{definition}

We clearly have the implications $\indep{}{}{}{\d}\subset\indep{}{}{}{\cut*}$ and:
$$\indep{C}{A}{B}{\cut*}\Longleftarrow \indep{\div(\dcl(AC))}{\div(\dcl(A))}{\div(\dcl(AB))}{\cut}$$
in any ordered Abelian group. It is not hard to prove that these two implications are equivalences in \DOAG. In a dense model of \ROAG~$M$, if $B$ is an $|A|^+$-saturated model containing $A$, then one may show that the second implication is an equivalence, i.e. $\indep{C}{A}{B}{\cut*}$ holds in ${M}$ if and only if $\indep{\div(C)}{\div(A)}{\div(B)}{\cut}$ holds in $\div(M)$.
\par Whether the first inclusion is an equality is a little more complicated. Let $G$ be an ordered Abelian group, $A$ a parameter set, and $B$ an $|A|^+$-saturated model containing $A$. In case $G$ is regular and $|\faktor{G}{lG}|$ is infinite for some prime $l$ (this is possible, take for instance $G=\sum\limits_{n<\omega}\mathbb{Z}r_n$, with $(r_n)_n$ a $\mathbb{Q}$-free subfamily of $\mathbb{R}$), the characterization of forking and dividing that we have in \cref{resultatPpal} implies that $\indep{}{A}{B}{\d}$ does not coincide with $\indep{}{A}{B}{\cut*}$, because we have partial types $\mod\equiv_l$ that divide over $A$. However, if $G$ is dp-minimal, then we have indeed $\indep{}{A}{B}{\f}=\indep{}{A}{B}{\cut*}$. In fact, Simon showed in (\cite{Simon2011OnDO}, Proposition 2.5) that, for $c$ a singleton, we have $\indep{c}{A}{B}{\f}$ if and only if $\indep{c}{A}{B}{\cut*}$ if $G$ is dp-minimal, even when $G$ is not regular. However, we will see an example where $G$ is a dp-minimal, poly-regular, non-regular dense ordered Abelian group where $\indep{c}{A}{B}{\cut*}$ (and thus $\indep{c}{A}{B}{\f}$) holds in $G$, but $\indep{c}{\div(A)}{\div(B)}{\cut}$ does not hold in $\div(G)$.
\par Let $G$ be any ordered group elementarily equivalent to the lexicographical product $\mathbb{Z}\left[\dfrac{1}{2}\right]\times\mathbb{Z}\left[\dfrac{1}{2}\right]$. Then $G$ is clearly dp-minimal, poly-regular and dense. In $\mathbb{Z}\left[\dfrac{1}{2}\right]\times\mathbb{Z}\left[\dfrac{1}{2}\right]$, we have $H_3((1, 0))=\{0\}\times\mathbb{Z}\left[\dfrac{1}{2}\right]\neq\{0\}$, so $G$ is not regular, and it has exactly one proper non-trivial definable convex subgroup, let us call it $H(G)$. Let $A$ be a definably closed parameter subset of $G$, $B$ an $|A|^+$-saturated elementary extension of $G$, and ${M}$ a $|B|^+$-saturated elementary extension of $B$. Let $c\in H({M})$ such that $c>H(B)$. Then we clearly have $\indep{c}{A}{B}{\cut*}$, hence $\indep{c}{A}{B}{\f}$ by \cite{Simon2011OnDO}. Let us show $\notIndep{c}{\div(A)}{\div(B)}{\cut}$ in $\div(M)$ (in fact, we  even show $\notIndep{c}{\div(G)}{\div(B)}{\cut}$). Let $b_1\in H(B)$ such that $b_1>H(G)$. Let $b_2\in B$ such that $b_2\not\in H(B)$, but $b_2<g$ for every $g\in G_{>0}\setminus H(G)$. Then the interval $I=[b_1, b_2]$ contains $c$.

\begin{center}
\begin{tikzpicture}[scale=2]
\draw[black] (0, 0) -- (5, 0);
\draw[red, very thick] (0, 0) -- (1, 0);
\draw[blue, very thick] (1, 0) -- (2, 0);
\draw[red, very thick] (0.5, 0) node[anchor=south]{$H(G)$};
\draw[blue, very thick] (1.5, 0) node[anchor=south]{$H(B)$};
\draw[red, very thick] (4, 0) -- (5, 0);
\draw[blue, very thick] (3, 0) -- (4, 0);
\draw[red, very thick] (4.5, 0) node[anchor=south]{$G\setminus H(G)$};
\draw[blue, very thick] (3.5, 0) node[anchor=south]{$B\setminus H(B)$};
\filldraw[gray, thick] (2.5, 0) circle(1pt) node[anchor=north]{$c$};
\filldraw[gray, thick] (1.75, 0) circle(1pt) node[anchor=north]{$b_1$};
\filldraw[gray, thick] (3.25, 0) circle(1pt) node[anchor=north]{$b_2$};
\end{tikzpicture}
\end{center}

\par Let us show that $I$ is disjoint from $\div(G)$. Let $g\in \div(G)$ with $0<g<b_2$, and suppose by contradiction $g>b_1$. Let $0<N<\omega$ such that $Ng\in G$. We have $Ng\geqslant g>b_1$, so $Ng\not\in H(G)$. The ordered group $\faktor{G}{HG}$ is dense (as $\mathbb{Z}\left[\dfrac{1}{2}\right]$ is), so there exists $g'\in G$ such that we have $g'\not\in H(G)$ and $Ng'<(Ng\mod H(G))$ (choose $N$ distinct cosets between $H(G)$ and $(Ng\mod H(G))$, choose $g'$ a representative of the coset that corresponds to the minimal distance between them in $\faktor{G}{H(G)}$). As $g'\not\in H(G)$ and $g'>0$, we have $b_2<g'$. However, we have $Ng'<Ng<Nb_2$, a contradiction. This concludes the example.
\par For future work, we should try to generalize our results to dp-minimal ordered groups. In such a group, one can hope that a global type over a monster model $B$ does not fork over $A$ if and only if its realizations $C$ satisfy $\indep{C}{A}{B}{\cut*}$. The dimension one case has already been proved by Simon.

\subsection{Forking versus Presburger-quantifier-free dependence} For this last example, we  study an ordered Abelian group $G$ with definably closed parameter sets $A\subset B$, and a singleton $c\in G$ for which $\indep{c}{\div(A)}{\div(B)}{\cut}$ in $\div(G)$ (hence $\indep{c}{A}{B}{\cut*}$), and:
$$\left(\dcl(Ac)+l^NG\right)\cap\left(B+l^NG\right)=A+l^NG$$
for every prime $l$, $N>0$, but $\notIndep{c}{A}{B}{\d}$. The dividing definable set will not be built from the cuts or the cosets $\mod l$, but from the chain of definable convex subgroups of $G$. Our example needs to have an infinite spine.
\par Let $\Gamma$ be the lexicographical product $\mathbb{Q}_{\leqslant 0}\times\mathbb{Z}_{\leqslant 0}$. Let $G$ be the ordered group of all Hahn series with coefficients in $\mathbb{Z}$ and powers in $\Gamma$ (the valuation coincides with the Archimedean valuation). Then $G$ is a dense ordered Abelian group. For each $g\in G$, let $S(g)\subset\Gamma$ be the support of $g$. Each $g\in G$ can be written $\sum\limits_{\delta\in S(g)}\lambda_\delta t^\delta$, with $\lambda_\delta\in\mathbb{Z}$, and $t$ an indeterminate. If $\sigma$ is an automorphism of the ordered set $\mathbb{Q}_{\leqslant 0}$, then let $\bar{\sigma}\in \Aut(G)$ be defined as $\sum\limits_{(\mu, n)\in\Gamma}\lambda_{(\mu, n)}t^{(\mu, n)}\longmapsto \sum\limits_{(\mu, n)\in\Gamma}\lambda_{(\mu, n)}t^{(\sigma(\mu), n)}$. Let $A=\left\lbrace a\in G|S(a)\subset\left(\{0\}\times\mathbb{Z}_{\leqslant 0}\right)\right\rbrace$ and $B=\dcl\left(A\cup\{t^{(-1, 0)}\}\right)$. The action on $\mathbb{Q}_{\leqslant 0}$ of its automorphism group has only two orbits: $\{0\}$ and $\mathbb{Q}_{<0}$. As a result, we must have $\dcl(\emptyset)\subset A$. By (\cite{QEOAG}, Corollary 1.10), $\dcl(A)$ is exactly the relative divisible closure in $G$ of $A$, in other words $A$ is definably closed. With the same argument, we have $B=A+\mathbb{Z}t^{(-1, 0)}$. Let $c=t^{(-1, -1)}$, so $\dcl(Ac)=A+\mathbb{Z}c$. We have $\Delta(c)<\Delta(B_{\neq 0})$, so $c$ clearly leans left with respect to $\div(A)$ and $\div(B)$ in $\div(G)$, which implies $\indep{c}{A}{B}{\cut}$ in $\div(G)$. Moreover, for every prime $l$, and $N>0$, the elements of $l^NG$ are exactly the series $\sum\limits_\delta l^N\lambda_\delta t^\delta$ with $\lambda_\delta\in\mathbb{Z}$, so we clearly have $\left(\dcl(Ac)+l^NG\right)\cap\left(B+l^NG\right)=A+l^NG$.

\begin{center}
\begin{tikzpicture}
\draw[black] (0, 0) -- (6, 0);
\draw[red, very thick] (4, 0) -- (6, 0);
\draw[blue, very thick] (2, 0) -- (4, 0);
\draw[red, very thick] (5, 0) node[anchor=south]{$A$};
\draw[blue, very thick] (3, 0) node[anchor=south]{$B$};
\filldraw[gray, thick] (0, 0) circle(2pt) node[anchor=north]{$0$};
\filldraw[gray, thick] (1, 0) circle(2pt) node[anchor=north]{$c$};
\end{tikzpicture}
\end{center}

\par Now let us prove $\notIndep{c}{A}{B}{\d}$. For each $n<\omega$, let $\sigma_n\in \Aut(\mathbb{Q}_{\leqslant 0})$ such that $\sigma_n(1)=-n$. For each prime $l$, for each $g=\sum\limits_\delta\lambda_\delta t^\delta\in G$, if $g\not\in l^NG$, then one can show (see for instance \cite{QEOAG}, example 4.2) that $H_l^N(g)$ is the group of elements of $G$ with an Archimedean class smaller or equal to $\Delta(t^\delta)$, where $\delta$ is the largest element of $S(g)$ for which $\lambda_\delta\not\in l^N\mathbb{Z}$. As a result, there does not exist $g\in G$ for which $H_2(c)<H_2(g)<H_2(t^{(-1, 0)})$. The following $B$-definable set which contains $c$:
$$ X=\left\lbrace g\in G|g\not\in 2G \wedge\left(\forall h\in G,\ \left(H_2(h)\leqslant H_2(g)\vee H_2(t^{(-1, 0)})\leqslant H_2(h)\right)\right)\right\rbrace
$$
divides over $A$, because the $(\bar{\sigma_n}(X))_n$ are pairwise-disjoint. This concludes the example.

%\pagebreak
\tableofcontents
\bibliographystyle{plain}
\bibliography{biblio}

\begin{thebibliography}{10}

\bibitem{KapCherNTP2}
Artem Chernikov and Itay Kaplan.
\newblock Forking and dividing in ntp2 theories.
\newblock {\em J. Symb. Log.}, 77(1):1 -- 20, March 2012.

\bibitem{QEOAG}
Raf Cluckers and Immanuel Halupczok.
\newblock Quantifier elimination in ordered abelian groups.
\newblock {\em Confluentes Mathematici}, 03, 10 2011.

\bibitem{Dolich}
Alfred Dolich.
\newblock Forking and independence in o-minimal theories.
\newblock {\em Journal of Symbolic Logic}, 69, 03 2004.

\bibitem{Farre}
Rafel Farré.
\newblock Strong ordered abelian groups and dp-rank.
\newblock 06 2017.

\bibitem{Gravett1956ORDEREDAG}
Karen Gravett.
\newblock Ordered abelian groups.
\newblock {\em Quarterly Journal of Mathematics}, 7:57--63, 1956.

\bibitem{HHM}
Deirdre Haskell, Ehud Hrushovski, and Dugald Macpherson.
\newblock Stable domination and independence in algebraically closed valued
  fields.
\newblock 12 2005.

\bibitem{Hils2021TheDM}
Martin Hils and Rosario Mennuni.
\newblock The domination monoid in henselian valued fields.
\newblock 2021.

\bibitem{hrushovskiPillay}
Ehud Hrushovski and Anand Pillay.
\newblock On nip and invariant measures.
\newblock {\em Journal of the European Mathematical Society}, 13, 11 2007.

\bibitem{Mennuni2020TheDM}
Rosario Mennuni.
\newblock The domination monoid in o-minimal theories.
\newblock {\em J. Math. Log.}, 22:2150030:1--2150030:36, 2020.

\bibitem{RobinsonZak}
Abraham Robinson and Elias Zakon.
\newblock Elementary properties of ordered abelian groups.
\newblock {\em Transactions of the American Mathematical Society}, 96:222--236,
  1960.

\bibitem{Simon2011OnDO}
Pierre Simon.
\newblock On dp-minimal ordered structures.
\newblock {\em The Journal of Symbolic Logic}, 76:448 -- 460, 2011.

\bibitem{SIMON2013294}
Pierre Simon.
\newblock Distal and non-distal nip theories.
\newblock {\em Annals of Pure and Applied Logic}, 164(3):294--318, 2013.

\bibitem{Starchenko2008ANO}
Sergei Starchenko.
\newblock A note on dolich's paper.
\newblock 2008.

\bibitem{Tent2012ACI}
Katrin Tent and Martin Ziegler.
\newblock {\em A Course in Model Theory}.
\newblock Cambridge University Press, June 2012.

\bibitem{QEROAG}
Volker Weispfenning.
\newblock Elimination of quantifiers for certain ordered and lattice-ordered
  abelian groups.
\newblock {\em Bull. Soc. Math. Belg.}, 33:131 -- 156, 1981.

\bibitem{Zakon}
Elias Zakon.
\newblock Generalized archimedean groups.
\newblock {\em Transactions of the American Mathematical Society}, 99:21--40,
  1961.

\end{thebibliography}

\end{document}